\documentclass[a4paper,12pt]{book}

\usepackage{amsthm}
\usepackage{amssymb,amsmath}

\newtheorem{thm}{Theorem}
\newtheorem{lem}{Lemma}
\newtheorem{prop}{Proposition}
\newtheorem{corol}{Corollary}
\newtheorem{thmabc}{Theorem}

\newtheorem{corolb}{Corollary}

\def\sym{\mathbb}
\def\ex#1{ \exp \{  L_{#1} (1) \} }
\def\er{e}
\def\dr{d}

\begin{document}
\begin{titlepage}
\begin{center}
\vspace{-3cm} \textbf{\large VILNIUS UNIVERSITY\\}
\vspace{4cm}
\textbf{\large Vytas Zacharovas\\} \vspace{3cm} \textbf{\LARGE
DISTRIBUTION OF RANDOM VARIABLES ON THE SYMMETRIC GROUP\\}
\vspace{4cm}
\textbf{\large Doctorial dissertation\\
Physical sciences, mathematics (01P)\\} \vspace{2cm} \textbf{\large
Vilnius 2004}
\end{center}
\end{titlepage}

{The thesis has been written in 2000 -- 2004 at Vilnius University.
\bigskip \\
Research adviser: \\
prof. habil. dr. Eugenijus Manstavi\v cius (Vilnius University, physical sciences, mathematics 01P).}
\bigskip

%
%
%
%


\chapter*{Acknowledgments}
\addcontentsline{toc}{chapter}{Acknowledgments} I would like to
express my sincere gratitude to Professor E. Manstavi\v{c}ius for
his attention to this work as well as for his help and advices.

I would also like to thank the other members  of the doctoral
committee for their agreeing to participate in the defence of this
work.


\chapter*{Introduction}
\addcontentsline{toc}{chapter}{Introduction}
\section*{Actuality}
Distribution of many characteristics of different combinatorial
structures can be very well approximated by the distribution of
some variables defined on the symmetric group $S_n$. One example
of such approximation is the distribution of the degree of the
splitting field of a random polynomial with integer coefficients.
The degree of the splitting field of a polynomial is an important
characteristic which allows us to estimate how many steps it would
take to decompose the polynomial into the product of prime
polynomials. Thus the information about the distribution of the
degree of the splitting field of a random polynomial would allow
us to estimate how much time  would it take to decompose a
randomly chosen polynomial into the product of prime polynomials.
\section*{Aims and problems}
The aim of this work is to  obtain the estimates for the remainder
term in the Erd\H os Tur\'an law and also to prove analogous result
for distribution of the logarithm of the order of a random
permutation on some subsets $S_n^{(k)}$ of the symmetric group. To
obtain these aims we also prove some estimates for the mean values
of multiplicative functions on $S_n$ and $S_n^{(k)}$, which are of
independent interest, and which allow us to estimate the convergence
rate to normal law of additive functions on $S_n$.

We also study the distribution of the degree  of the splitting
field of a random polynomial and obtain sharp estimates for the
convergence rate of it to normal law.
\section*{Methods}
In research we apply both probabilistic and analytic methods. Some
analytic methods used here have their origins in the probabilistic
number theory, and some have their roots in the theory of
summation of divergent series. We also prove some tauberian
theorem for Voronoi summability of divergent series.
\section*{Novelty}
All obtained results are new. They generalize and improve the
earlier results of Erd\H os and Tur\'an, Manstavi\v cius, Nicolas,
Pavlov, Barbour and Tavar\'e.
\section*{Results}
Let $S_n$ be the symmetric group. Each $\sigma \in S_n$ can be
represented as a product of independent cycles
\begin{equation}
\label{sigma}
 \sigma=\kappa_1\kappa_2\ldots \kappa_{\omega(\sigma)}.
\end{equation}
This representation is unique up to the
order of cycles.

On $S_n$ we can define a uniform probability measure $P_n$ by
assigning to each subset $A \subset S_n$ probability
$$
P_n(A)=\frac{|A|}{|S_n|}=\frac{|A|}{n!},
$$
here $|A|$ is the number of elements of the set $A$.
Let us denote by  $\alpha_j(\sigma)$  the number of cycles in the decomposition of $\sigma$
into product of independent cycles (\ref{sigma}) whose length is equal to $j$. Then, obviously
\begin{equation}
\label{eq_alpha}
\alpha_1(\sigma)+2\alpha_2(\sigma)+\cdots+n\alpha_n(\sigma)=n.
\end{equation}
And $\omega(\sigma)$ -- the number of independent cycles in the decomposition of $\sigma$ in (\ref{sigma})
can be expressed as
\begin{equation*}
\omega(\sigma)=\alpha_1(\sigma)+\alpha_2(\sigma)+\cdots+\alpha_n(\sigma).
\end{equation*}
In 1942 V. L. Goncharov \cite{goncharov} proved that
\begin{equation*}
P_n\left( \frac{\omega(\sigma)-\log n}{\sqrt{\log n}}<x \right)\to \Phi(x),\quad \mbox{as}\quad n\to \infty
\end{equation*}
here $\Phi(x)$ is  the standard normal distribution. He also
proved that the distribution of $\alpha_j(\sigma)$ when $j$ is
fixed is asymptotically distributed as poissonian random variable
with parameter $1/j$.

The uniform probability measure is not the only probability
measure with respect to which one can study the distribution of
random variables on $S_n$. Let $\theta>0$ be  fixed parameter. By
putting
\begin{equation}
P_{n,\theta}(\sigma)=\frac{\theta^{\omega(\sigma)}}{\theta(\theta+1)\ldots(\theta+n-1)}
\end{equation}
we define the Ewense probability measure $P_{n,\theta}$.

As in \cite{manstberry} we will call a function $f:S_n \to \sym C$  multiplicative
if  for $\sigma \in S_n$
having representation (\ref{sigma}),
$$
f(\sigma)=f(\kappa_1)f(\kappa_2)\ldots f(\kappa_{\omega})
$$
and $f(\kappa)$ depends on cycle length $|\kappa|$ only.
For any multiplicative function $f$ on $S_n$ we will
denote by $\hat f(j)$ -- the value of $f$ on the cycles whose
length is equal to $j$, \ $1\leqslant j \leqslant n$.
 That means that $f(\kappa)=\hat f(|\kappa |)$,
 where $|\kappa |$ is the length of cycle $\kappa $.

Suppose $d(\sigma )$ is a non-negative multiplicative
function, $d(\sigma )\not \equiv 0$. Then we can define
 a probabilistic measure $\nu_{n,d}$ on $S_n$ by the formula

\begin{equation}
\label{meas}
\nu_{n,d}(\{\sigma\})={{d(\sigma )}\over {\sum_{w \in S_n} d(w )}}.
\end{equation}

If $\hat d(j)=1$, we obtain the
uniform probability measure, and for $\hat d(j)\equiv \theta >0$
we obtain   Ewens probability measure.

Let us denote by $M_n^d(f)$ the weighted mean of a
multiplicative function $f:S_n \to \sym C$ with respect to the
measure $\nu_{n,d}(\sigma)$:
$$
M_n^d(f)=\sum_{\sigma \in S_n}f(\sigma )\nu_{n,d}(\sigma )=
{{\sum_{\sigma \in S_n}f(\sigma )d(\sigma )}\over {\sum_{\sigma
\in S_n}d(\sigma )}}.
$$
In 2002  E. Manstavi\v cius proved the following result.


\begin{thmabc}[\cite{manstdecom}]
\label{t_manstdecomp}
Let $f:S_n \to \sym C$ be a
multiplicative function, such that $|f(\sigma )|\leqslant 1$,
satisfying the conditions:
\begin{equation}
\label{bound}
\sum_{j\leqslant n}{{1 -\Re \hat f(j)}\over j}\leqslant D
\end{equation}

and
$$
{1\over n}\sum_{j=1}^n |\hat f (j) -1| \leqslant \mu_n =o(1),
$$
for some positive constant $D$ and some sequence $\mu_n$.

Suppose that the measure defining multiplicative
function $d(\sigma)$ satisfies the condition
 $0<d^-\leqslant d(\kappa )\leqslant d^+$ on cycles $\kappa \in S_n$  for some fixed positive
constants $d^-$ and $d^+$, then there exist  positive constants
$c_1=c_1(d^-,d^+)$ and $c_2=c_2(d^-,d^+)$  such that
$$
M_n^d(f)=\exp \left\{ \sum_{j\leqslant n}{d_j{{\hat f(j)-1}\over
j}} \right\} +O\left(  \mu_n^{c_1}+{1\over {n^{c_2}}} \right),
$$
where $d_j=\hat d(j)$ is the value of  $d$  on the cycles of
length $j$.
\end{thmabc}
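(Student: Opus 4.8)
The plan is to use the generating function machinery that is standard for multiplicative functions on $S_n$ weighted by the measure $\nu_{n,d}$. First I would recall the basic identity: if we set $\mathcal{D}(x)=\sum_{n\geqslant 0}x^n\sum_{\sigma\in S_n}d(\sigma)$ and $\mathcal{F}(x)=\sum_{n\geqslant 0}x^n\sum_{\sigma\in S_n}f(\sigma)d(\sigma)$, then the exponential formula for cycle-weighted sums gives
\begin{equation*}
\mathcal{D}(x)=\exp\left\{\sum_{j\geqslant 1}\frac{d_j}{j}x^j\right\},\qquad
\mathcal{F}(x)=\exp\left\{\sum_{j\geqslant 1}\frac{d_j\hat f(j)}{j}x^j\right\},
\end{equation*}
at least in the sense of formal power series, with the truncation at $j\leqslant n$ being harmless for the $n$-th coefficient. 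Hence $M_n^d(f)=[x^n]\mathcal{F}(x)\big/[x^n]\mathcal{D}(x)$. Writing $\mathcal{F}(x)=\mathcal{D}(x)\exp\{g(x)\}$ with $g(x)=\sum_{j\geqslant 1}\frac{d_j(\hat f(j)-1)}{j}x^j$, the target quantity $\exp\{\sum_{j\leqslant n}d_j(\hat f(j)-1)/j\}$ is exactly $\exp\{g(1)\}$ (up to the tail $j>n$, which contributes $O(n^{-c})$ by the bound $|\hat f(j)-1|\le 2$ and $|g(x)|$ estimates). So the problem reduces to showing
\begin{equation*}
\frac{[x^n]\big(\mathcal{D}(x)e^{g(x)}\big)}{[x^n]\mathcal{D}(x)}=e^{g(1)}+O\big(\mu_n^{c_1}+n^{-c_2}\big).
\end{equation*}

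The key step is a Cauchy-type coefficient comparison. I would write $e^{g(x)}=e^{g(1)}+\big(e^{g(x)}-e^{g(1)}\big)$, so that the numerator splits as $e^{g(1)}[x^n]\mathcal{D}(x)+[x^n]\big(\mathcal{D}(x)(e^{g(x)}-e^{g(1)})\big)$, and the first term produces exactly $e^{g(1)}$ after dividing by $[x^n]\mathcal{D}(x)$. It remains to bound
\begin{equation*}
R_n:=\frac{[x^n]\big(\mathcal{D}(x)h(x)\big)}{[x^n]\mathcal{D}(x)},\qquad h(x):=e^{g(x)}-e^{g(1)}.
\end{equation*}
Here the conditions on $d$ give, via a classical argument (Flajolet–Odlyzko / Wright-type singularity analysis, or the elementary approach in Manstavičius's papers), that $[x^n]\mathcal{D}(x)\asymp n^{\bar d-1}$ where $\bar d=\lim$ of averages of $d_j$ — more precisely it is bounded below and above by constant multiples of this, using only $0<d^-\le d_j\le d^+$. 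For the numerator I would use the decomposition $\mathcal{D}(x)h(x)=\sum_{k}\big([x^k]h(x)\big)x^k\mathcal{D}(x)$ and the fact that $h(x)$ has a zero of order $\geqslant 1$ at $x=0$ and, crucially, $h$ together with its behaviour near $x=1$ is controlled: $|g(x)-g(1)|$ is small when $x$ is close to $1$ and $|g|$ is globally bounded by $2D$ because of \eqref{bound}, while $\sum_j|\hat f(j)-1|/j$-type quantities are controlled by $\mu_n$ on the relevant range. Concretely, the estimate $\sum_{k\le n}|[x^k]h(x)|\,[x^{n-k}]\mathcal{D}(x)=o\big(n^{\bar d -1}\big)$ with an explicit power saving is what must be extracted.

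The main obstacle — and the heart of the proof — is converting the two hypotheses on $f$ into a quantitative power-type bound $\mu_n^{c_1}+n^{-c_2}$ for $R_n$, rather than a mere $o(1)$. The bound \eqref{bound} controls $\Re(1-\hat f(j))$ summed with weight $1/j$, which keeps $|e^{g(x)}|$ from being too large (this is what prevents the numerator from blowing up), while the $L^1$-average condition $\frac1n\sum_{j\le n}|\hat f(j)-1|\le\mu_n$ is the one that forces smallness. The delicate point is that $\mu_n$ bounds an average over $j\le n$, not a weighted sum $\sum 1/j$, so one cannot directly say $|g(1)-g(x)|$ is small for all $x$; instead I expect to split the sum defining $g$ at some threshold $j\le n^{1-\delta}$ versus $j>n^{1-\delta}$, control the low range by $\eqref{bound}$ combined with $|x^j-1|\le j(1-x)$, and the high range by Cauchy–Schwarz against $\mu_n$, optimizing $\delta$ and the radius $|x|=1-1/n$ of the Cauchy integral to produce explicit exponents $c_1,c_2$ depending only on $d^-,d^+$ (these enter through the exponent $\bar d-1$ and the implied constants in $[x^n]\mathcal{D}(x)\asymp n^{\bar d-1}$). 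I would also need the auxiliary fact that on the circle $|x|=1-1/n$ away from $x=1$, $|\mathcal{D}(x)|$ decays enough relative to its value near $1$; this is exactly where the lower bound $d^->0$ is used, and is the technically heaviest lemma to set up cleanly.
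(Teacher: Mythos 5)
The statement you are proving is quoted by the dissertation from Manstavi\v cius's paper \cite{manstdecom}; the dissertation itself does \emph{not} prove it. Its own machinery (Theorem~\ref{fundthm1} and the Voronoi-summability framework, working with the auxiliary quantities $S(f;m)=\sum_{k\le m}a_kkp_{m-k}$) is deployed to prove the sharper Theorem~\ref{meanf}, and the author explicitly remarks that this route is \emph{different} from Manstavi\v cius's original proof of Theorem~A. Your Cauchy-integral / singularity-analysis plan is therefore a genuinely different route from anything in this paper; it is closer in spirit to a classical Hal\'asz-type argument, and the paper's Voronoi approach replaces the delicate contour estimates you anticipate by coefficient-level recursions for $S(f;m)$, at the cost of needing the auxiliary Theorem~\ref{fundthm1} and Lemmas~\ref{pvbound}--\ref{f_mj}.

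That said, your proposal has a genuine gap: the entire difficulty of the theorem is concentrated in the estimate $R_n=O(\mu_n^{c_1}+n^{-c_2})$, and this is precisely the step you leave at the level of ``I expect to split the sum \dots and optimize $\delta$.'' Several of the supporting claims you lean on do not hold as stated. The assertion that ``$|g|$ is globally bounded by $2D$ because of \eqref{bound}'' is false: condition~\eqref{bound} controls only $\sum_j(1-\Re\hat f(j))/j$, and together with $|\hat f(j)|\le 1$ it yields $\sum_j|\hat f(j)-1|^2/j\ll D$, hence by Cauchy--Schwarz only $\sum_j|\hat f(j)-1|/j\ll\sqrt{D\log n}$; thus $|g(x)|$ itself can grow like $\sqrt{\log n}$. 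What is actually bounded for $x\in[0,1]$ is $\Re g(x)\le 0$, hence $|e^{g(x)}|\le 1$ there — and note this uses only $|\hat f|\le 1$, not \eqref{bound} at all. On the Cauchy circle $|x|=e^{-1/n}$ away from the positive axis, $|e^{g(x)}|=|\mathcal F(x)/\mathcal D(x)|$ can be as large as $n^{2d^+}$, so the contour contribution is not controlled by the boundedness of $h$ alone; one must quantify the decay of $|\mathcal D(x)|$ off the positive axis and show it dominates this polynomial growth, which is the Hal\'asz-type estimate you defer. Finally, your proposed split at $j\le n^{1-\delta}$ with Cauchy--Schwarz against $\mu_n$ in the high range does not obviously close: the high range of $\sum_j|\hat f(j)-1|/j$ bounded via \eqref{bound} gives $\sqrt{D\delta\log n}$, which grows for fixed $\delta$, while bounding it via $\mu_n$ produces a factor $n^{\delta/2}\sqrt{\mu_n}$, which is not small unless $\mu_n$ decays polynomially. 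The near-axis estimate $|g(1)-g(e^{-1/n})|\le d^+\mu_n$ does follow directly from $1-e^{-j/n}\le j/n$, so the diagonal part is fine, but the off-axis contribution is the heart of the matter and remains unproven in your sketch.
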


We prove the following result.


\begin{thm}
\label{meanf}
Let $f:S_n\to \sym C$ be a
multiplicative function satisfying the condition
$|f(\sigma)|\leqslant 1$ for all $\sigma \in S_n$. Suppose that
the measure defining multiplicative function
$d(\sigma)$ is such that $0<d^-\leqslant d_j \leqslant d^+$, where
$d_j$ is the value of $d(\sigma)$ on cycles of length $j$. Then we
have
\begin{multline*}
\Delta_n:=\left| M_n^d(f)- \exp \left\{ \sum_{k=1}^{n}d_k{{\hat
f(k)-1}\over {k}} \right\} \right|
\\
\leqslant c \left( {\biggl(\sum_{j=0}^np_j
\biggr)}^{-1}\sum_{k=1}^{n}|\hat f(k) -1|p_{n-k}+ {1\over {n^{d^-}
}} \sum_{k=1}^{n}|\hat f(k) -1|k^{d^- - 1}\right.
\\
\left. + {1\over
n}\sum_{k=1}^{n}|\hat f(k) -1| \right)
\end{multline*}
for $d^-<1$ and
$$ \Delta_n\leqslant c \left( {\biggl(\sum_{j=0}^np_j
\biggr)}^{-1}\sum_{k=1}^{n}|\hat f(k) -1|p_{n-k}+ {1\over {n }}
\sum_{k=1}^{n}|\hat f(k) -1| \left( 1+\log {n\over k} \right)
\right)
$$
for $d^-\geqslant 1$,  where $c=c(d^-,d^+)$ is a positive constant
which depends on $d^-$ and $d^+$ only, and
$$
p_n={1\over {n!}}\sum_{\sigma \in S_n}d(\sigma)
$$
\end{thm}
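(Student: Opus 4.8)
The plan is to pass to generating functions and reduce $\Delta_n$ to a convolution identity. By the exponential (cycle-index) formula for multiplicative functions on $S_m$, with $p_m=\tfrac1{m!}\sum_{\sigma\in S_m}d(\sigma)$ and $b_m=\tfrac1{m!}\sum_{\sigma\in S_m}f(\sigma)d(\sigma)$ one has $M_n^d(f)=b_n/p_n$ and
$$P(x):=\sum_{m\ge0}p_mx^m=\exp\Bigl\{\sum_{j\ge1}\tfrac{d_j}{j}x^j\Bigr\},\qquad B(x):=\sum_{m\ge0}b_mx^m=\exp\Bigl\{\sum_{j\ge1}\tfrac{\hat f(j)d_j}{j}x^j\Bigr\},$$
where only $\hat f(j),d_j$ with $j\le n$ enter the coefficients of order $\le n$. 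Setting $E(x):=B(x)/P(x)=\exp\bigl\{\sum_{j\ge1}\tfrac{d_j(\hat f(j)-1)}{j}x^j\bigr\}=\sum_{k\ge0}e_kx^k$, the number $E(1)=\exp\bigl\{\sum_{k=1}^{n}\tfrac{d_k(\hat f(k)-1)}{k}\bigr\}$ is exactly the quantity subtracted in $\Delta_n$. Comparing coefficients of $x^n$ in $B=EP$ and using $[x^n]P=p_n$ gives the basic identity
$$M_n^d(f)-E(1)=\frac1{p_n}\,[x^n]\bigl((E(x)-E(1))P(x)\bigr)=\sum_{k=1}^{n}e_k\Bigl(\frac{p_{n-k}}{p_n}-1\Bigr)-\sum_{k>n}e_k.$$

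Everything then reduces to estimating (i) the ratios $p_{n-k}/p_n$ and the sums $\sum_{j\le n}p_j$, and (ii) the Taylor coefficients $e_k$ together with the tail $\sum_{k>n}e_k$. For (i) I would use that $0<d^-\le d_j\le d^+$ makes $P(x)$ behave like $(1-x)^{-\vartheta}$ for exponents $\vartheta$ between $d^-$ and $d^+$: from the convolution recursion $np_n=\sum_{j=1}^{n}d_jp_{n-j}$ one extracts two-sided bounds $n^{d^--1}\ll p_n\ll n^{d^+-1}$ (up to slowly varying factors), comparison estimates for $p_{n-k}/p_n$, and the Tauberian relation $\sum_{j=0}^{n}p_j\asymp n\,p_n$ --- which is exactly why the smoother quantity $\sum_{j=0}^{n}p_j$, rather than $p_n$, is the natural normaliser appearing in the statement.

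The crux, and the main obstacle, is (ii): unlike Theorem~\ref{t_manstdecomp}, the present theorem assumes \emph{only} $|f(\sigma)|\le1$, with no hypothesis of the form $\sum_{j\le n}\tfrac{1-\Re\hat f(j)}{j}\le D$, so $\sum_{j\le n}\bigl|\tfrac{d_j(\hat f(j)-1)}{j}\bigr|$ may be of order $\log n$ and the naive bound $\sum_k|e_k|\le\exp\{2d^+\sum_{j\le n}1/j\}=O(n^{2d^+})$ is worthless. The point to exploit is that $|\hat f(j)|\le1$ forces $\Re(\hat f(j)-1)\le0$, hence, with $G(x)=\sum_{j\le n}\tfrac{d_j(\hat f(j)-1)}{j}x^j$, one has $|E(x)|=\exp\{\Re G(x)\}\le1$ for $x\in[0,1)$. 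Combining this contraction with $E'=G'E$ and the trivial estimate $|G'(u)|\le d^+\sum_{j\le n}|\hat f(j)-1|$ on $[0,1]$, and applying Cauchy's estimate on circles $|z|=1-c/k$ (and $|z|=1-c/n$ for the tail), should bound the relevant quantities by the weighted sums $\sum_{k\le n}|\hat f(k)-1|\,p_{n-k}$, $\sum_{k\le n}|\hat f(k)-1|\,k^{d^--1}$ and $\sum_{k\le n}|\hat f(k)-1|$; the recursion $k e_k=\sum_{j=1}^{k}d_j(\hat f(j)-1)e_{k-j}$ additionally localises $e_k$ near its leading part $\tfrac{d_k(\hat f(k)-1)}{k}$.

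Inserting the estimates of (i) and (ii) into the basic identity and splitting the $k$-sum --- say $k\le n/2$ versus $n/2<k\le n$, the latter further according to the size of $n-k$ --- then produces the two displayed bounds. For $k\le n/2$ the factor $|p_{n-k}/p_n-1|$ is under control and yields the terms $\tfrac1n\sum_k|\hat f(k)-1|$ and, when $d^-<1$, the additional power term $\tfrac1{n^{d^-}}\sum_k|\hat f(k)-1|k^{d^--1}$; for $k$ near $n$, where $n-k$ is small and $p_{n-k}$ may be comparatively large, the term $\bigl(\sum_{j=0}^{n}p_j\bigr)^{-1}\sum_k|\hat f(k)-1|p_{n-k}$ absorbs the contribution. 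The case distinction $d^-\lessgtr1$ arises naturally here from the behaviour of the small-index coefficients $p_m$: when $d^-\ge1$ a summation by parts against the partial sums $\sum_{m\le\ell}p_m$ converts what is otherwise a power-type contribution into the logarithmic term $\tfrac1n\sum_k|\hat f(k)-1|(1+\log(n/k))$. The most delicate bookkeeping is to verify that the tail $\sum_{k>n}e_k$ is genuinely absorbed by the claimed right-hand side, uniformly in $n$ and in $f$.
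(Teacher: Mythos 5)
Your basic generating-function identity
$$M_n^d(f)-E(1)=\sum_{k=1}^n e_k\Bigl(\tfrac{p_{n-k}}{p_n}-1\Bigr)-\sum_{k>n}e_k$$
is correct and is in fact the same starting point the paper uses (the paper writes $M_n=\sum_{j\le n}m_jp_{n-j}$ with $m=E$). The gap is in the estimation step. You propose to bound the Taylor coefficients $e_k$ directly, exploiting $|E(x)|\le1$ on $[0,1)$ together with Cauchy's estimate on circles $|z|=1-c/k$. This does not go through: the contraction $|E(z)|=\exp\{\Re G(z)\}\le1$ holds \emph{only} for $z$ on the positive real axis, since for complex $z$ the real part of each term $d_j(\hat f(j)-1)z^j/j$ can be positive even though $\Re(\hat f(j)-1)\le0$. (Already for $\hat f(j)\equiv-1$, $d_j\equiv1$, one has $|E(-r)|=(1+r)^2>1$.) A bound on a real interval does not feed a Cauchy estimate over a circle, so the inequality $|e_k|\ll\cdots$ you need is not obtained. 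Likewise, the recursion $ke_k=\sum_{j\le k}d_j(\hat f(j)-1)e_{k-j}$ does not by itself "localise $e_k$ near $d_k(\hat f(k)-1)/k$": the lower-order terms are not a priori negligible, and any attempt to run an induction with ansatz $|e_k|\ll|\hat f(k)-1|/k$ fails because the right-hand side of the recursion mixes indices in a way the ansatz does not control.

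The device the paper uses to sidestep exactly this difficulty is Theorem~\ref{fundthm1}: rather than bounding the coefficients $m_k=e_k$ individually, it re-expresses the error through the quantity
$$S(m;j)=[z^j]\bigl(zm'(z)p(z)\bigr)=\sum_{k=1}^j d_k\bigl(\hat f(k)-1\bigr)M_{j-k},$$
where $M_j$ are the Taylor coefficients of $F=mp=\exp\{\sum_j d_j\hat f(j)z^j/j\}$. Because $|\hat f(j)|\le1$, the series defining $F$ is term-by-term majorised by the one defining $p$, giving the clean bound $|M_j|\le p_j$ and hence $|S(m;j)|\le d^+\sum_{k\le j}|\hat f(k)-1|\,p_{j-k}$. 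That majorisation is the crux; it converts the problem into sums of the three displayed types, which are then estimated using the ratio estimates for $p_{n-k}/p_n$ (Lemmas~\ref{pvbound}, \ref{diffpnm}) much as you outline. Your plan is conceptually in the right neighbourhood, and the first part (the convolution identity and the asymptotics of $p_n$) is fine; what is missing is precisely the passage through $S(m;j)$ and the majorisation $|M_j|\le p_j$, which Theorem~\ref{fundthm1} encapsulates and without which neither the weighted sum $\sum|e_k|\,|p_{n-k}/p_n-1|$ nor the tail $\sum_{k>n}e_k$ can be controlled uniformly in $f$.
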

Thus Theorem \ref{meanf} shows that  condition (\ref{bound}) in Theorem \ref{t_manstdecomp} is
superfluous. Our result yields more accurate estimates of the
remainder term than Theorem \ref{t_manstdecomp}. The proof of theorem \ref{meanf} is different from that of
Theorem \ref{t_manstdecomp} of Manstavi\v{c}ius. It is based on some properties of Voronoi means of divergent
series. In chapter 1 we establish a tauberian type theorem for  Voronoi summability which is of independent interest.

As in \cite{manstberry}, we will call a function $h:S_n\to \sym R$
additive if for $\sigma \in S_n$ having decomposition (\ref{sigma}) we have
$$
h(\sigma)=h(\kappa_1)+h(\kappa_2)+\cdots +h(\kappa_\omega)
$$
and we assume that the value of $h$ on cycles $\kappa$ depend on
the length of cycles only. This means that there exist $n$ real
numbers $\hat h(1), \hat h(2),\ldots , \hat h(n)$ such that
$h(\kappa)=\hat h(|\kappa |)$ for all cycles $\kappa \in S_n$.

In 1998 E. Manstavi\v cius proved the following result.


\begin{thmabc}[\cite{manstberry}]
Let $d_j\equiv 1$. Suppose $\tilde
h_{n}(k)$ satisfy the condition
\begin{equation}
\label{normalization}
\sum_{k=1}^n{{\tilde h_{n}^2(k)} \over k}=1.
\end{equation}
Then
   $$
         \sup_{x\in{\sym R}}{\left|\nu_{n}(x)-\Phi(x)-
             {{D_nx}\over {2\sqrt{2\pi}}}e^{-x^2/2}\right|}\ll L_n,
   $$
where $\nu_n(x):=\nu_{n,1}( h(\sigma )-a(n)<x)$,
   $$
            D_n=\sum_{\scriptstyle 1\leqslant k,l \leq n \atop
                    \scriptstyle k+l>n} {{\tilde h_n(k) \tilde h_n(k)} \over kl},\quad
            L_n=\sum_{k=1}^{n}{{|\tilde h_n(k)|^3}\over k},\quad a(n)=\sum_{k=1}^n{{\tilde h_n(k)} \over k}.
   $$
\end{thmabc}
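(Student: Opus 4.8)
The plan is to use the characteristic-function method together with Esseen's smoothing inequality, exploiting the fact that the characteristic function of an additive function on $S_n$ has an exact generating-function representation. Write $F_n(x)=\nu_n(x)$, let $G_n(x)=\Phi(x)+\frac{D_n x}{2\sqrt{2\pi}}e^{-x^2/2}$ be the approximating (signed) distribution function, and note that its derivative $G_n'(x)=\frac{1}{\sqrt{2\pi}}e^{-x^2/2}\bigl(1+\tfrac{D_n}{2}(1-x^2)\bigr)$ is uniformly bounded and that its Fourier--Stieltjes transform equals $\psi_n(t)=e^{-t^2/2}\bigl(1+\tfrac{D_n}{2}t^2\bigr)$. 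Esseen's inequality then gives, for every $T>0$,
$$
\sup_{x\in\sym R}\bigl|F_n(x)-G_n(x)\bigr|\ \ll\ \int_{-T}^{T}\Bigl|\frac{\varphi_n(t)-\psi_n(t)}{t}\Bigr|\,dt+\frac1T,\qquad \varphi_n(t):=\frac1{n!}\sum_{\sigma\in S_n}e^{it(h(\sigma)-a(n))},
$$
so with $T\asymp 1/L_n$ the second term is $\ll L_n$ and everything reduces to bounding the integral by $\ll L_n$.

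For the integrand I use the exact formula for $\varphi_n$. Since $\sigma\mapsto e^{ith(\sigma)}$ is multiplicative with value $e^{it\hat h(j)}$ on $j$-cycles, the cycle-index (Cauchy) formula for $S_n$ --- equivalently the conditioning relation that represents the cycle counts under $\nu_{n,1}$ as independent Poisson$(1/j)$ variables conditioned on $\sum_j j\alpha_j=n$ --- gives, with $[z^n]$ denoting the coefficient of $z^n$,
$$
\varphi_n(t)=e^{-ita(n)}[z^n]\exp\Bigl(\sum_{j=1}^{n}\frac{e^{it\hat h(j)}z^j}{j}\Bigr)=e^{-ita(n)}[z^n]\frac{\exp(R_t(z))}{1-z},\qquad R_t(z):=\sum_{j=1}^{n}\frac{(e^{it\hat h(j)}-1)z^j}{j},
$$
hence $[z^n]\frac{\exp(R_t(z))}{1-z}=\sum_{m=0}^{n}[z^m]\exp(R_t(z))=e^{R_t(1)}-\sum_{m>n}[z^m]\exp(R_t(z))$. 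The bulk term $e^{-ita(n)}e^{R_t(1)}$ is exactly the mean value addressed (for $d\equiv1$) by Theorem~\ref{meanf} applied to $f=e^{ith}$; but to capture the $D_n$-correction I must analyse the remainder $\sum_{m>n}[z^m]\exp(R_t(z))$ directly rather than merely bound it. Before the estimates I would carry out the routine truncation, replacing $\hat h(j)$ by $\tilde h_n(j)$: by a second-moment (Tur\'an--Kubilius-type) bound on $S_n$ using $\frac1{n!}\sum_{\sigma}\alpha_j(\sigma)=\frac1j$, the cost is $\ll L_n$, and from now on all $|\tilde h_n(j)|$ may be assumed below a small level $\delta$.

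Next, the asymptotic analysis. For $|t|$ below a slowly growing threshold I expand $e^{it\tilde h_n(j)}-1=it\tilde h_n(j)-\tfrac{t^2}{2}\tilde h_n^2(j)+O\bigl(|t|^3|\tilde h_n(j)|^3\bigr)$; summing over $j$ and inserting $\sum_j\tilde h_n(j)/j=a(n)$ and the normalization $\sum_j\tilde h_n^2(j)/j=1$ yields $e^{R_t(1)-ita(n)}=e^{-t^2/2}\bigl(1+O(|t|^3L_n)\bigr)$. For the remainder, since $1+R_t(z)$ is a polynomial of degree $\le n$ one has $\sum_{m>n}[z^m]\exp(R_t(z))=\sum_{m>n}[z^m]\bigl(\exp(R_t(z))-1-R_t(z)\bigr)$, whose leading part is
$$
\tfrac12\sum_{m>n}[z^m]R_t(z)^2=\tfrac12\sum_{1\le j,l\le n,\ j+l>n}\frac{(e^{it\tilde h_n(j)}-1)(e^{it\tilde h_n(l)}-1)}{jl}=-\tfrac{t^2}{2}D_n+O\Bigl(|t|^3\delta\sum_{j+l>n}\frac{|\tilde h_n(j)\tilde h_n(l)|}{jl}\Bigr);
$$
here it is essential that the restriction $j+l>n$ keeps this error of order $D_n$ (hence $\ll L_n$) rather than of order $a(n)^2$. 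Combining the two expansions, $\varphi_n(t)=\psi_n(t)+O(|t|^3L_n)$ up to terms of the same or smaller order, and these integrate against $dt/t$ over this window to $\ll L_n$; one also invokes $D_n\ll L_n$ to absorb the gap between $-\tfrac{t^2}{2}D_ne^{R_t(1)-ita(n)}$ and $-\tfrac{t^2}{2}D_ne^{-t^2/2}$ and the (third-moment-size) imaginary part of $\varphi_n$.

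The main obstacle is making the estimate uniform over the remaining part of the Esseen range, up to $T\asymp1/L_n$, which may be as large as $\asymp\sqrt{\log n}$ and is thus far beyond the validity of the Taylor expansion. There one must show directly that $|\varphi_n(t)|$ is of order $e^{-ct^2}$, so that this tail of the integral, together with the rapidly decaying $\psi_n$, is still $\ll L_n$. This is done through the Cauchy integral $\varphi_n(t)=e^{-ita(n)}\frac{1}{2\pi}\int_{-\pi}^{\pi}\frac{\exp(R_t(\rho e^{i\phi}))}{(1-\rho e^{i\phi})(\rho e^{i\phi})^{n}}\,d\phi$ on a circle with $1-\rho\asymp1/n$, split into a short arc about $\phi=0$ and its complement, and by bounding $|\exp(R_t(z))|=\exp\bigl(\sum_j\frac{\cos(t\hat h(j)+j\arg z)-\cos(j\arg z)}{j}|z|^j\bigr)$ from above using the normalization $\sum_j\tilde h_n^2(j)/j=1$, which prevents the $t\hat h(j)$ from lying simultaneously near $2\pi\sym Z$ for a set of $j$ of full logarithmic weight unless $|t|$ already exceeds $T$. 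This uniform bound, and not the largely routine expansions of the previous step, is where the real work lies.
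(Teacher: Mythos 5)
The paper does not actually prove Theorem~B; it is cited verbatim from Manstavi\v{c}ius \cite{manstberry}. The closest result proved here is Theorem~\ref{clt1}, and its proof goes through the linearized mean-value expansion Theorem~\ref{meanM1}, which in turn rests on Theorem~\ref{fundthm1}. The crucial device there is to apply the Voronoi-type coefficient estimate to the auxiliary function $h_n(z)=\exp\{L_N(z)\}-\exp\{L_n(1)\}L_N(z)$, so that $S(h_n;m)$ is automatically quadratic and the main term emerges already in the factored form $\exp\{L_n(1)\}\bigl(1+\text{correction}\bigr)$. Your route (Esseen inequality, exact generating function, split into bulk $e^{R_t(1)}$ and tail $\sum_{m>n}[z^m]e^{R_t(z)}$, crude $e^{-ct^2}$ bound for large $|t|$) has the same overall shape, but the decomposition is different from the paper's, and the critical step is not carried out.

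The gap is in your treatment of the tail. You keep only the contribution of $R_t^2/2$, namely $\tfrac12\sum_{j+l>n}(e^{it\tilde h_n(j)}-1)(e^{it\tilde h_n(l)}-1)/(jl)\approx -\tfrac{t^2}{2}D_n$, and then, without justification, compare this to $-\tfrac{t^2}{2}D_n\exp\{R_t(1)-ita(n)\}$; but your computation yields $-\tfrac{t^2}{2}D_n$ without the factor $\exp\{R_t(1)\}\approx e^{ita(n)}e^{-t^2/2}$, and after multiplying by $e^{-ita(n)}$ this leaves the oscillatory quantity $-\tfrac{t^2}{2}D_n e^{-ita(n)}$, whose discrepancy from $-\tfrac{t^2}{2}D_ne^{-t^2/2}$ is of size $t^2|D_n|$, not of size $O(|t|^3L_n)$. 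Your proposed fix, ``invoke $D_n\ll L_n$,'' is stated without proof and is not available: the paper only records $R_n\ll L_n^{2/3}$ (Corollary~B), consistent with $D_n$ being as large as $L_n^{2/3}\gg L_n$, and in any case a bound $|D_n|\ll L_n$ integrated against $dt/|t|$ over the needed window $|t|\lesssim\sqrt{\log(1/L_n)}$ is still not $\ll L_n$. The missing factor $\exp\{R_t(1)\}$ has to come from the dropped terms $\sum_{m>n}[z^m]R_t^k/k!$ with $k\ge3$; these are not small individually. Already for the model case $\tilde h_n(j)\equiv c_n\asymp(\log n)^{-1/2}$ the $k$-th term behaves like $|t|^k c_n^k(\log n)^{k-2}/k!\asymp|t|^k(\log n)^{k/2-2}/k!$, so for $k\ge5$ the term grows with $\log n$ and the naive bound diverges long before the $e^{-ct^2}$ damping becomes available. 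The cancellation among these terms (which ultimately produces $\sum_{m>n}m_m\approx -\tfrac{t^2}{2}D_n\exp\{R_t(1)\}$ plus an admissible error) is exactly the hard part, and it is absent from your argument; the paper sidesteps the issue by building the linearization into $h_n$ before estimating, yielding a remainder that is genuinely quadratic term by term. Your diagnosis that the large-$|t|$ bound is ``where the real work lies'' is also only partly right: the small-$|t|$ expansion as you have set it up is equally problematic.
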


\begin{corolb}[\cite{manstberry}]
We have
      $$
             R_n:=\sup_{x\in R}|\nu_{n,1}(x)-\Phi(x)|\ll L_n^{2/3}.
      $$
There exists a sequence of constants $\hat h_{n}(k)$, satisfying
the normalizing condition (\ref{normalization}), such that
 $L_n=o(1)$ and
      $$
                  R_n\gg L_n^{2/3}.
      $$
\end{corolb}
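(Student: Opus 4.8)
The plan is to read off both halves from the Edgeworth-type expansion of Manstavi\v cius stated just above, whose correction term is already of precisely the order the statement demands. For the upper bound, first note that $\max_{x\in\mathbb{R}}|x|e^{-x^2/2}=e^{-1/2}$, so that the preceding theorem (taken with $d_j\equiv1$ and the normalisation (\ref{normalization})) gives at once
\[
R_n\le\sup_{x}\Bigl|\nu_{n,1}(x)-\Phi(x)-\frac{D_nx}{2\sqrt{2\pi}}\,e^{-x^2/2}\Bigr|
+\sup_{x}\Bigl|\frac{D_nx}{2\sqrt{2\pi}}\,e^{-x^2/2}\Bigr|\ll L_n+|D_n|.
\]
It therefore suffices to prove the elementary inequality $|D_n|\ll L_n^{2/3}$: if $L_n\le 1$ then $L_n\le L_n^{2/3}$ and we are done, while if $L_n>1$ the bound $R_n\le 1<L_n^{2/3}$ is trivial.

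To prove $|D_n|\ll L_n^{2/3}$ I would argue as follows. Since $k+l>n$ forces $\max(k,l)>n/2$, an inclusion--exclusion on the conditions ``$k>n/2$'' and ``$l>n/2$'' gives
\[
|D_n|\le\sum_{\substack{1\le k,l\le n\\ k+l>n}}\frac{|\tilde h_n(k)|\,|\tilde h_n(l)|}{kl}
\le 2\sum_{n/2<k\le n}\frac{|\tilde h_n(k)|}{k}\ \sum_{n-k<l\le n}\frac{|\tilde h_n(l)|}{l}.
\]
Now apply H\"older's inequality with exponents $3$ and $3/2$ twice. First, for each fixed $k$,
\[
\sum_{n-k<l\le n}\frac{|\tilde h_n(l)|}{l}
\le\Bigl(\sum_{n-k<l\le n}\frac{|\tilde h_n(l)|^3}{l}\Bigr)^{1/3}
\Bigl(\sum_{n-k<l\le n}\frac1l\Bigr)^{2/3}
\le L_n^{1/3}\Bigl(\sum_{n-k<l\le n}\frac1l\Bigr)^{2/3};
\]
substituting this back and applying H\"older once more to the remaining sum over $k$,
\[
|D_n|\le 2L_n^{1/3}\Bigl(\sum_{n/2<k\le n}\frac{|\tilde h_n(k)|^3}{k}\Bigr)^{1/3}
\Bigl(\sum_{n/2<k\le n}\frac1k\sum_{n-k<l\le n}\frac1l\Bigr)^{2/3}
\le 2L_n^{2/3}\Bigl(\sum_{\substack{1\le k,l\le n\\ k+l>n}}\frac1{kl}\Bigr)^{2/3}.
\]
The last double sum is $O(1)$ — in fact it tends to $\operatorname{Li}_2(1)=\pi^2/6$, since $\sum_{k\le n}k^{-1}\log\tfrac{n}{n-k}$ is a Riemann sum for the convergent integral $\int_0^1\frac{-\log(1-t)}{t}\,dt$ — so $|D_n|\ll L_n^{2/3}$, completing the upper bound.

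For the lower bound one must exhibit a concrete sequence for which the correction $D_n$ actually attains the order $L_n^{2/3}$ permitted by the inequality just proved. I would construct an explicit normalised profile $\hat h_n(k)\ge 0$ satisfying (\ref{normalization}) with $L_n=o(1)$, chosen so as to saturate the two H\"older steps above — the mass must be spread finely enough that $L_n\to0$, yet weighted towards the large indices, where the constraint $k+l>n$ is active — so that $D_n>0$ and $D_n\asymp L_n^{2/3}$. Granting such a sequence, the Edgeworth expansion evaluated at $x=1$ yields
\[
R_n\ge\nu_{n,1}(1)-\Phi(1)\ge\frac{e^{-1/2}}{2\sqrt{2\pi}}\,D_n-c\,L_n
\ge L_n^{2/3}\bigl(c'-o(1)\bigr)\gg L_n^{2/3},
\]
where the last step uses $D_n\asymp L_n^{2/3}$ together with $L_n=o\bigl(L_n^{2/3}\bigr)$. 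The part I expect to be the main obstacle is exactly this construction: for ``generic'' normalised sequences the correction $D_n$ is of smaller order than $L_n$ and hence invisible in $R_n$, so keeping $D_n$ at the full extremal order $L_n^{2/3}$ while still forcing $L_n\to0$ requires a carefully tuned profile and a somewhat delicate evaluation of the double sum defining $D_n$. The upper bound, by contrast, is a soft consequence of the cited theorem, the two applications of H\"older's inequality, and the convergence of $\sum_{k+l>n}(kl)^{-1}$.
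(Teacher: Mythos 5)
This corollary is stated in the thesis as a quoted result from \cite{manstberry}; the thesis contains no proof of it, so there is no internal argument to compare against. Judged on its own terms, your proposal splits cleanly into a complete half and an incomplete half.

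The upper bound is correct and self-contained. From Theorem A one gets $R_n\ll L_n+|D_n|$, and since $L_n\le L_n^{2/3}$ whenever $L_n\le 1$ (the case $L_n>1$ being trivial, as $R_n\le 1$), the issue reduces to $|D_n|\ll L_n^{2/3}$. Your double H\"older argument is valid: the decomposition $\sum_{k+l>n}\le 2\sum_{k>n/2}\sum_{l>n-k}$, the two applications with exponents $(3,3/2)$, and the observation that $\sum_{k+l>n}(kl)^{-1}=O(1)$ (it is a Riemann sum for $\int_0^1(-\log(1-t))/t\,dt=\pi^2/6$) are all sound. Incidentally the two H\"older steps can be compressed into one, applying the inequality directly to the double sum with $a_{kl}=|\tilde h_n(k)\tilde h_n(l)|/(kl)^{1/3}$ and $b_{kl}=(kl)^{-2/3}$, yielding $|D_n|\le\bigl(\sum_{k}\frac{|\tilde h_n(k)|^3}{k}\bigr)^{2/3}\bigl(\sum_{k+l>n}\frac{1}{kl}\bigr)^{2/3}$ immediately; but your version is equally correct.

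The lower bound has a genuine gap, which you yourself flag: you never produce the sequence $\hat h_n(k)$. This is not a formality — for naive profiles (constant on $[1,n]$, or constant on an interval $(\alpha n,n]$) one finds $D_n\asymp L_n^2=o(L_n^{2/3})$, so the correction term is invisible and $R_n\gg L_n^{2/3}$ fails. A profile that does work is a two-level step: take $\tilde h_n(k)=a_n$ for $n/2<k\le n$ and $\tilde h_n(k)=b_n$ for $k\le n/2$, with $a_n\to 0$ slowly (e.g.\ $a_n=(\log\log n)^{-1}$) and $b_n$ determined by the normalisation, so that $b_n\asymp(\log n)^{-1/2}$. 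Then $L_n\asymp a_n^3+(\log n)^{-1/2}\asymp a_n^3\to 0$, while the inner block $(n/2,n]^2$ lies entirely in $\{k+l>n\}$ and contributes $D_n\gtrsim a_n^2(\log 2)^2\asymp a_n^2\asymp L_n^{2/3}$, after which your concluding inequality $R_n\ge\nu_{n,1}(1)-\Phi(1)\gg L_n^{2/3}$ goes through. Without some such explicit construction the second assertion of the corollary remains unproved.
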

This result has been generalized for $\hat d(j)\equiv \theta >0$ in our paper \cite{zakclt}.

 We now generalize this result for the case when the probabilistic measure on the symmetric group is $\nu_{n,d}$.

 Let us denote

   $$
           A(n)=\sum_{k=1}^nd_k{{\tilde h_{n}(k)}\over k},\quad
                 C_n=\sum_{j=1}^nd_j{{\tilde h_{n}(j)}\over j}\left( {{p_{n-j}}\over {p_n}}-1
       \right),
   $$
and
   $$
       L_{n,p}=\sum_{k=1}^n{{|\tilde h_{n}(k)|^p}\over k},
       \quad L_{n,2}'=\sum_{j=1}^n{{\tilde h_{n}^2(j)}\over j}\left| {{p_{n-j}}\over
{p_{n}}} -1 \right|.
   $$
Henceforth assume that $\tilde h_{n}(k)$ satisfies the
normalizing condition

\begin{equation}
\label{dnormalize}
\sum_{k=1}^nd_k {{\tilde h_{n}^2(k)}\over k}=1.
\end{equation}


\begin{thm}
\label{clt1}
Suppose $0<d^-\leqslant d_j \leqslant d^+$,
and $p$ is a fixed number  such that
 $\infty ~\geqslant~ p>\max \left\{ 2, 1/d^- \right\}$.
Suppose
   $$
   F_n(x)=\nu_{n,d} \left(
           h(\sigma )-A(n)<x  \right),
   $$
where $h(\sigma )$ is a additive  function satisfying
 condition (\ref{dnormalize}). Then  we have
  $$
      \sup_{x\in \sym R}\left| F_n(x)-\Phi (x)+{1\over {\sqrt{2\pi}}}e^{-x^2/2}
          C_n\right|\ll L_{n,3}+L_{n,p}^{2/p}+L_{n,2}',
  $$
here we assume that
$$
 L_{n,\infty }^{1/\infty}=\lim_{p\to \infty}
  L_{n,p}^{1/p}=\max_{1\leqslant j \leqslant n}|\hat h(j)|
 $$
 for $p=\infty$.
 \end{thm}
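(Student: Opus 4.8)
The plan is to prove Theorem~\ref{clt1} by combining the method of characteristic functions with the sharp mean-value estimate provided by Theorem~\ref{meanf}. The characteristic function of $F_n$ is
$$
\phi_n(t)=\int_{\sym R}e^{itx}\,dF_n(x)=e^{-itA(n)}M_n^d\bigl(e^{it h(\sigma)}\bigr),
$$
and since $\sigma\mapsto e^{it h(\sigma)}$ is a multiplicative function with $\hat f(k)=e^{it\hat h(k)/\sqrt{\cdot}}$ (more precisely, $e^{it\tilde h_n(k)}$, $|\hat f|\equiv 1$), Theorem~\ref{meanf} applies directly. First I would substitute $\hat f(k)=e^{it\tilde h_n(k)}$ into the conclusion of Theorem~\ref{meanf}, so that
$$
M_n^d\bigl(e^{it h(\sigma)}\bigr)=\exp\Bigl\{\sum_{k=1}^n d_k\frac{e^{it\tilde h_n(k)}-1}{k}\Bigr\}+O(\Delta_n^\ast),
$$
where $\Delta_n^\ast$ is the explicit remainder. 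Using $|e^{iu}-1|\le\min\{2,|u|\}$ and $|u|^a\le |u|^p$-type interpolation bounds on the range $|t|\le T_n$ (with $T_n$ a suitable power of $L_{n,p}^{-1}$), the three pieces of $\Delta_n$ get controlled by $L_{n,2}'$, $L_{n,p}^{2/p}$, and lower-order terms; here the hypothesis $p>\max\{2,1/d^-\}$ is exactly what makes the factor $k^{d^--1}$ (or the $\log(n/k)$ factor when $d^-\ge 1$) summable against $|\tilde h_n(k)|^{?}/k$ after H\"older.

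Next I would analyse the main term. Writing $e^{iu}-1=iu-\tfrac12 u^2+O(|u|^3)$ and using the normalization \eqref{dnormalize}, one gets
$$
\sum_{k=1}^n d_k\frac{e^{it\tilde h_n(k)}-1}{k}
= it\sum_k d_k\frac{\tilde h_n(k)}{k}-\frac{t^2}{2}+O\bigl(|t|^3 L_{n,3}\bigr).
$$
The first sum is $A(n)$, which cancels the $e^{-itA(n)}$ prefactor, so $\phi_n(t)=e^{-t^2/2}\bigl(1+O(|t|^3 L_{n,3})\bigr)+O(\Delta_n^\ast)$ on $|t|\le T_n$. The shift term $C_n$ enters through a more careful second-order expansion: $C_n$ is essentially the contribution one loses by replacing the true cumulant/variance coming from the measure $\nu_{n,d}$ (which involves the ratios $p_{n-j}/p_n$) by the ``free'' normalization $\sum d_k\tilde h_n^2(k)/k=1$; I would isolate it by carrying the expansion of $M_n^d$ to include the $p_{n-j}/p_n$ corrections explicitly, so that $\phi_n(t)=e^{-t^2/2}(1+itC_n+\text{errors})$, matching the characteristic function of $\Phi(x)-\tfrac{1}{\sqrt{2\pi}}e^{-x^2/2}C_n$.

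Finally I would invoke the Esseen smoothing inequality: for any $T>0$,
$$
\sup_{x}\Bigl|F_n(x)-G_n(x)\Bigr|\ll\int_{-T}^{T}\frac{|\phi_n(t)-\widehat{G_n}(t)|}{|t|}\,dt+\frac{1}{T},
$$
with $G_n(x)=\Phi(x)-\tfrac{1}{\sqrt{2\pi}}e^{-x^2/2}C_n$ and $\widehat{G_n}(t)=e^{-t^2/2}(1+itC_n)$. Splitting $[-T,T]$ at $T_n$, the inner range contributes $\ll L_{n,3}+L_{n,2}'+\text{(error from }\Delta_n^\ast)$ via the expansions above, the outer range $T_n\le|t|\le T$ is handled by bounding $|\phi_n(t)|$ directly (again through Theorem~\ref{meanf}, since $\Re(e^{it\tilde h_n(k)}-1)\le 0$ gives exponential decay controlled by a Turán–Kubilius-type quantity that is $\ll$ a power of $L_{n,p}$), and choosing $T\asymp L_{n,p}^{-2/p}$ makes $1/T\ll L_{n,p}^{2/p}$. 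Collecting everything yields the stated bound $L_{n,3}+L_{n,p}^{2/p}+L_{n,2}'$. The main obstacle I expect is the outer-range estimate $T_n\le|t|\le T$: one must show $|\phi_n(t)|$ is small there uniformly, which requires a lower bound on $\sum_k d_k\bigl(1-\cos(t\tilde h_n(k))\bigr)/k$ that does not degenerate when most $\tilde h_n(k)$ are tiny — this is precisely where the choice of $p$ and the quantity $L_{n,p}$ must be used delicately, and where an auxiliary lemma (a large-deviation or concentration bound for the measure $\nu_{n,d}$, or a direct estimate of $\bigl(\sum p_j\bigr)^{-1}$) will be needed.
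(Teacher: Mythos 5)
Your overall strategy (characteristic functions plus the Esseen smoothing inequality, with the mean value of the multiplicative function $e^{ith(\sigma)}$ supplied by a Theorem-1-type estimate) is the same as the paper's, and you correctly identify the two danger points. But both of those danger points are in fact genuine gaps in your write-up, and they cannot be closed by Theorem~\ref{meanf} alone.

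First, the $C_n$ term. Theorem~\ref{meanf} gives $M_n^d(f)=\exp\{L_n(1)\}+O(\Delta_n)$, and nothing in $\exp\{L_n(1)\}$ contains the ratios $p_{n-j}/p_n$. The quantity $C_n=\sum_j d_j\tilde h_n(j)\bigl(p_{n-j}/p_n-1\bigr)/j$ is of first order in $t$ and of the \emph{same} size as the quantity $L'_{n,2}$ you want in the final error; once it is absorbed into the $O(\Delta_n)$ of Theorem~\ref{meanf} you cannot recover it. You say you would ``carry the expansion of $M_n^d$ to include the $p_{n-j}/p_n$ corrections explicitly,'' but that is not a refinement of Theorem~\ref{meanf}: it is a different and substantially harder theorem, which the paper states and proves separately as Theorem~\ref{meanM1},
$$
\frac{M_N}{p_N}=\exp\{L_N(1)\}\Bigl(1+\sum_{j=1}^N d_j\frac{\hat f(j)-1}{j}\Bigl(\frac{p_{N-j}}{p_N}-1\Bigr)+O(\rho^2)\Bigr),
$$
via a bootstrap on the implicit remainder $R_m$ together with Lemmas~\ref{diffLmn} and~\ref{sumpn}, applied through Theorem~\ref{fundthm1} to the auxiliary function $h_n(z)=\exp\{L_N(z)\}-\exp\{L_n(1)\}L_N(z)$. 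Without something equivalent to this, the identification $\phi_n(t)=e^{-t^2/2}(1+itC_n+\cdots)$ is not established.

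Second, the large-$|t|$ range. You flag ``the outer-range estimate $T_n\le|t|\le T$'' as the main obstacle, and that flag is justified: Theorem~\ref{meanf} does not give useful decay of $|\phi_n(t)|$ there, because as soon as $|t|\gtrsim L_{n,p}^{-1/p}$ the quantities $|\hat f(k)-1|$ are no longer uniformly small and $\Delta_n$ is no longer $o(1)$. The paper closes this with a separate truncation device, Theorem~\ref{thmeanM1}: writing $\hat f_u(j)=\hat f(j)$ when $|\hat f(j)-1|\le u$ and $\hat f_u(j)=1$ otherwise, one factors $F=F_u\,m^{(u)}$, applies Theorem~\ref{meanM1} to $F_u$ (where $\rho\le u$ is small), and controls the remaining factor by the quantity $E(u)=\exp\{2\sum_{|\hat f(k)-1|>u}|\hat f(k)-1|/k\}$, obtaining $|M_n/p_n|\ll|\exp\{L_n(1)\}|\,E(u)^{d^+}$. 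That bound, together with $\Re L_n(1)\le 0$, is what yields $|\phi_n(t)|\ll e^{-c_1t^2}$ for $|t|\le cL_{n,3}^{-1}$. Your sketch gestures at such a lemma but does not supply it, so the Esseen tail integral is not controlled. Note also that the paper then takes $T=cL_{n,3}^{-1}$, so $1/T\ll L_{n,3}$, rather than $T\asymp L_{n,p}^{-2/p}$; both would be admissible once the tail bound exists, but absent that bound neither choice produces the result.
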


The condition $L_{n,3}=o(1)$ is not necessary to ensure the
convergence of an additive function to the normal law. In
\cite{manstbabubrown} it was presented the example of  an additive
function for which $L_{n,3}\gg 1$ and whose distribution
nevertheless converges to the normal law. In section
\ref{sec_example} we investigate the convergence rate for this
special additive function. Surprisingly  the rate is essentially
fasten than could be obtain from Theorem \ref{clt1}.

Let $a=(a_1,a_2,...,a_n)$ be a vector with nonnegative integer
components. Following \cite{barbour_tavare}, we denote
$$
O_n(a)=\hbox{ l. c. m. }\{i\colon\ 1\leqslant i\leqslant n,\ a_i>0\}
\quad \hbox{\rm and }\quad
P_n(a)=
\prod_{i=1}^ni^{a_i}.
$$
One can easily see that $O_n(a)\leqslant P_n(a)$.

Let us denote $\alpha=\alpha(\sigma)=(\alpha_1(\sigma),\alpha_2(\sigma),\ldots,
\alpha_n(\sigma))$. Then  $O_n(\alpha(\sigma))$ is equal to the order of the permutation  $\sigma$, i. e.
$$
O_n(\alpha(\sigma))=\min \{ m\geqslant 0|\sigma^m=e \},
$$
where $e$ -- unit of the group $S_n$.

Erd\H os and Tur\' an~\cite{erdos_turan_III} proved
that if $P_n$ is the uniform probability measure  on $S_n$, then
$$
P_n\left(\frac{\log O(\alpha(\sigma))-\frac{1}{2}\log^2 n}{\frac{1}{\sqrt{3}}\log^{3/2} n} \right)\to \Phi(x)\quad \mbox{as}\quad n\to \infty
$$
the distribution of $\log O_n(\alpha)$  converges to the standard
normal law. Their proof consisted of two steps. First they proved
that the distributions of $\log O(\alpha(\sigma))$ and $\log
P(\alpha(\sigma))$ are close enough to allow to replace
investigation of $\log O(\alpha(\sigma))$ by that of $\log
P(\alpha(\sigma))$. And then they proved that $\log
P(\alpha(\sigma))$ in asymptotically normally distributed. $\log
P_n(\alpha(\sigma))=\sum_{j=1}^n\alpha_j(\sigma)\log j$ being an
additive function, its distribution is much easier to investigate.

 Nicolas \cite{nicolas_sym} obtained the estimate
${\rm O}\bigl(\log^{-1/2}n\log{\log{\log n}}\bigr)$ for the
convergence rate in their theorem. He has also conjectured that
the iterated logarithm in his estimate is superfluous. Barbour and
Tavar\'e \cite{barbour_tavare} proved that

\begin{multline}
\label{barbth}
\sup_{x\in {\sym R}} \left|    \nu_{n,\theta}
\left\{ {{\log  O_n(\alpha(\sigma))-{({\theta}/ 2)}\log^2 n+\theta \log n \log{\log n}}\over
{   ({\theta /{\sqrt 3})}\log^{3/2}n   } } <x   \right\}-\Phi(x) \right|
\\
\ll
{1 \over {\sqrt{\log n}}},
\end{multline}
where $\nu_{n,\theta}$ is the Ewens probability measure.

In chapter 3 we prove the following theorems.
\begin{thm}
\label{cltaltha}
\begin{multline}
\label{cltalph}
\sup_{x\in {\sym R}} \left|    \nu_{n,1} \left\{ {{\log  O_n(\alpha)-
{\bf M}\log  O_n(\alpha )}\over
{   ({1/ {\sqrt 3})}\log^{3/2}n   } } <x
\right\}-\Phi(x)\right.
\left. -{{3^{3/2}}\over {24\sqrt{2\pi}}}
{{(1-x^2)\er^{-{x^2/2}}}\over \sqrt{\log n}} \right|
\\ \ll
{{\left( {{\log {\log n}}\over {\log n}} \right) }^{2/3}}.
\end{multline}

\end{thm}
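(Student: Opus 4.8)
The plan is to follow the two–step strategy of Erd\H os and Tur\'an, but to control both steps quantitatively so that the second–order Edgeworth term survives. First I would reduce the study of $\log O_n(\alpha)$ to that of the additive function $\log P_n(\alpha(\sigma))=\sum_{j\le n}\alpha_j(\sigma)\log j$. The difference $\log P_n(\alpha)-\log O_n(\alpha)$ is nonnegative and, on the bulk of $S_n$ under $\nu_{n,1}$, is of order $O(\log n\cdot\log\log n)$ or smaller; more precisely I would show that it contributes a shift and an error which, after normalization by $(1/\sqrt3)\log^{3/2}n$, is $O\bigl((\log\log n/\log n)^{2/3}\bigr)$ — this is where the exponent $2/3$ in the error term is forced, matching the Nicolas/Barbour--Tavar\'e phenomenon. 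One does this by bounding the contribution of the small primes $p\le n$ dividing the cycle lengths: the ``defect'' $\log P_n-\log O_n$ is concentrated because, for each prime power, only the largest occurrence matters, and the relevant sums $\sum_{p\le n}(\log p)/p\cdot(\text{something})$ are estimated using the prime number theorem together with the Poisson–type behaviour of the $\alpha_j$.

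Next I would treat $h(\sigma):=\log P_n(\alpha(\sigma))$ as an additive function with $\hat h(j)=\log j$, and apply Theorem~\ref{clt1} with $d_j\equiv1$ (so $p_n\equiv 1$, $C_n=0$, $L_{n,2}'=0$) after the standard normalization. Here I must compute the normalizing quantity $B_n^2:=\sum_{j\le n}(\log^2 j)/j\sim \tfrac13\log^3 n$, the centering $A(n)=\sum_{j\le n}(\log j)/j$, and the third–moment quantity $L_{n,3}=B_n^{-3}\sum_{j\le n}(\log^3 j)/j$. The latter is $\asymp (\log^4 n/4)/((\log^3 n)/3)^{3/2}\asymp \log^{-1/2}n$, which pins down the size of the leading Edgeworth correction. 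However, Theorem~\ref{clt1} as stated only gives an error of order $L_{n,3}\asymp \log^{-1/2}n$ and does not itself isolate the explicit cubic–Hermite term $\tfrac{3^{3/2}}{24\sqrt{2\pi}}(1-x^2)e^{-x^2/2}\log^{-1/2}n$. So the real analytic work is to produce a genuine Edgeworth expansion for this particular additive function: one step beyond Theorem~\ref{clt1}. I would do this via the characteristic function / generating function machinery already developed for Theorem~\ref{meanf} and Theorem~\ref{clt1} — namely writing $M_n^{d}(e^{it h})$ through the exponential $\exp\{\sum_{k\le n}(e^{it\hat h(k)}-1)/k\}$ with an explicit error, expanding the exponent to third order in $t$, and recognizing the third–order term $\tfrac{(it)^3}{6}\sum_{k\le n}(\log^3 k)/k$ as the source of the $(1-x^2)e^{-x^2/2}$ Hermite polynomial after Fourier inversion. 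The constant $3^{3/2}/24$ comes out as $\kappa_3/(6 B_n^3)$ with $\kappa_3\sim\log^4 n/4$ and $B_n^3\sim(\log^3 n)/3^{3/2}\cdot$, i.e. $\tfrac{1}{6}\cdot\tfrac{1/4}{(1/3)^{3/2}}=\tfrac{3^{3/2}}{24}$.

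Carrying this out requires an Esseen–type smoothing inequality: bound $\sup_x|F_n(x)-G_n(x)|$, where $G_n$ is the normal law plus the explicit cubic correction, by an integral of $|\widehat{F_n}(t)-\widehat{G_n}(t)|/|t|$ over $|t|\le T$ plus $1/T$. For small $|t|$ (say $|t|\ll B_n$) the generating–function estimate plus Taylor expansion to fourth order gives the cubic term with error $O(B_n^{-2}+\text{higher})=O(\log^{-1}n)$, which is negligible against $(\log\log n/\log n)^{2/3}$. The main obstacle — and the step I expect to be most delicate — is the range of moderately large $|t|$, where $\widehat{F_n}(t)$ must be shown to be small: this is exactly the region where arithmetic of the cycle lengths enters, because $\sum_{k\le n}(1-\cos(t\log k))/k$ must be bounded below, and $\log k$ is far from equidistributed modulo $2\pi/t$. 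One controls this using a Vinogradov/van der Corput–type estimate on $\sum_{k\le n}k^{it}$, or equivalently bounds on $\zeta(1+it)$ type sums, exactly as in Barbour--Tavar\'e; this is what ultimately caps the accuracy and, combined with the replacement error from Step~1, yields the final error term $O\bigl((\log\log n/\log n)^{2/3}\bigr)$. The remaining bookkeeping — verifying that $\mathbf M\log O_n(\alpha)$ agrees with $A(n)$ up to a quantity absorbed into the error, and that the $O_n$-vs-$P_n$ shift does not perturb the Hermite coefficient — is routine once Steps~1 and~2 are in place.
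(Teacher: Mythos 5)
Your proposal follows essentially the same line of attack as the paper. The one structural difference worth noting is that the paper proves the quantitative Erd\H os--Tur\'an law first for normed polynomials over $F_q$ (Theorem~\ref{cltxith}), with explicit uniformity in $q$ throughout the lemmas on covariances of $(D_{nk}-1)^+$, the characteristic-function expansion via Theorem~\ref{meanM1}, the large-$t$ bound of Theorem~\ref{th_turlarge}, and the comparison Theorem~\ref{difff_S_n-f_E_n}; Theorem~\ref{cltaltha} is then read off by letting $q\to\infty$ for fixed $n$, using the weak convergence of $\xi(P)$ to $\alpha(\sigma)$. You instead propose working directly on $S_n$, which the paper itself observes is the other equally valid option (``one can either repeat the proofs given above in a simplified way, or pass to the limit as $q\to\infty$''). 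Your identification of the three ingredients — the concentration of $\log P_n(\alpha)-\log O_n(\alpha)$ (paper's Proposition, giving the $(\log\log n/\log n)^{2/3}$ cap via Chebyshev), the third-order expansion of the characteristic function through Theorem~\ref{meanM1} with $d_j\equiv 1$, $p=\infty$, producing the $\tfrac{3^{3/2}}{24}(it)^3\log^{-1/2}n$ term and hence the Hermite correction after Esseen smoothing, and the large-$t$ decay of $\phi_{n,\alpha}(t)$ — is exactly right, and your constant computation $\kappa_3/(6B_n^3)=3^{3/2}/24$ matches the paper's $D_n(t)$ calculation. One small caveat: the paper's bound for $\delta_1\log^{1/2}n<|t|\le\log^{3/2}n$ does not invoke a genuine Vinogradov/van der Corput exponential-sum estimate; it is a Parseval argument combined with an elementary Abel summation of $\sum_{k\le n}k^{it'}e^{ixk}$ (geometric sum plus integration by parts), which isolates $\exp\{\sin(t'\log(1/|x|))/t'\}$ and wins by a power of $n$. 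Your intuition that this is the delicate step, governed by partial sums of $\zeta(1-it)$, is nonetheless correct.
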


\begin{thm}
\label{meanoalpha}
\begin{multline*}
{\bf M}\log  O_n(\alpha)
={1\over 2}\log^2n-\log n (\log \log n -1)
\\+\sum_{\rho}(\log n)^{\rho}\Gamma(-\rho)
+{\rm O}\big((\log \log n)^2\big),
\end{multline*}
where $\sum_{\rho}$ is the sum over all nontrivial zeros of the Riemann
zeta-function.
\end{thm}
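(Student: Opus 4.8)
The plan is to reduce ${\bf M}\log O_n(\alpha)$ to a Chebyshev-type sum over prime powers and then bring in the explicit formula for $\psi(t)=\sum_{q\le t}\Lambda(q)$, $\Lambda$ being von Mangoldt's function. Since $O_n(\alpha(\sigma))$ is the least common multiple of the cycle lengths of $\sigma$, its $p$-adic valuation equals $\sum_{a\ge 1}\mathbf 1\{\exists\, i\le n,\ p^a\mid i,\ \alpha_i(\sigma)>0\}$, so
$$
{\bf M}\log O_n(\alpha)=\sum_{q\le n}\Lambda(q)\bigl(1-g_n(q)\bigr),\qquad g_n(q):=\nu_{n,1}\bigl(\alpha_i(\sigma)=0\text{ for all }i\le n\text{ with }q\mid i\bigr).
$$
A standard exponential-formula computation gives $g_n(q)=[x^n]\,(1-x^q)^{1/q}(1-x)^{-1}=\prod_{j=1}^{\lfloor n/q\rfloor}(1-\tfrac1{jq})=\Gamma(\lfloor n/q\rfloor+1-\tfrac1q)/\bigl(\Gamma(1-\tfrac1q)\Gamma(\lfloor n/q\rfloor+1)\bigr)$, and expanding this ratio of Gamma functions yields, uniformly for $2\le q\le n$, $g_n(q)=\exp\{-(\log(n/q)+\gamma)/q\}+O(q^{-2}+n^{-1})$ with $\gamma$ Euler's constant. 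Since $\sum_{q\le n}\Lambda(q)(q^{-2}+n^{-1})=O(1)$, it remains to evaluate $\sum_{q\le n}\Lambda(q)\varphi_n(q)$, where $\varphi_n(t):=1-\exp\{-(\log(n/t)+\gamma)/t\}$ is smooth and decreasing on $[2,n]$.

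By partial summation $\sum_{q\le n}\Lambda(q)\varphi_n(q)=\int_2^n\varphi_n(t)\,d\psi(t)$; substituting $\psi(t)=t-\sum_\rho t^\rho/\rho+O(1)$ (the zero-sum in the symmetric sense; rigorously truncated at a height $T$) and integrating by parts twice, the boundary contributions and the bounded part of $\psi$ produce only $O(1)$, so that
$$
{\bf M}\log O_n(\alpha)=\int_2^n\varphi_n(t)\,dt-\sum_\rho\int_2^n\varphi_n(t)\,t^{\rho-1}\,dt+O(1).
$$

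For the first (elementary) integral I would put $t=n/u$ and write $1-e^{-\eta}=\int_0^\eta e^{-s}\,ds$; interchanging the order of integration makes the inner integral trivial, and the change of variable to $\ell$ defined by $\ell+\log\ell=\log(sn)+\gamma$ (under which $e^{-s}\ell\,\tfrac{ds}{s}=e^{-s}(\ell+1)\,d\ell$ and $s=\ell e^{\ell-\gamma}/n$) turns it, after discarding $O(1)$ terms, into $\int_0^\infty e^{-\ell e^{\ell-\gamma}/n}(\ell+1)\,d\ell=\tfrac12\ell_*^2+(1-\gamma)\ell_*+O(1)$, where $\ell_*$ solves $\ell_*+\log\ell_*=\log n+\gamma$. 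As $\ell_*=\log n-\log\log n+\gamma+O(\log\log n/\log n)$, expanding gives $\int_2^n\varphi_n(t)\,dt=\tfrac12\log^2 n-\log n(\log\log n-1)+\tfrac12\log^2\log n+O(1)$, the elementary part of the asserted formula (the $\tfrac12\log^2\log n$ being absorbed into $O((\log\log n)^2)$). For the zero-integrals I would substitute $t=\log n/s$: then $\varphi_n(\log n/s)\to 1-e^{-s}$, and using $\int_0^\infty(1-e^{-s})s^{-1-\rho}\,ds=\Gamma(1-\rho)/\rho=-\Gamma(-\rho)$ (valid since $0<\Re\rho<1$) one finds $\int_2^n\varphi_n(t)t^{\rho-1}\,dt=-(\log n)^\rho\Gamma(-\rho)+(\text{lower order})$, whence $-\sum_\rho\int_2^n\varphi_n(t)t^{\rho-1}\,dt=\sum_\rho(\log n)^\rho\Gamma(-\rho)+\cdots$, the arithmetic term in the statement (the series converges absolutely, since $|\Gamma(-\rho)|$ decays like $e^{-\pi|\Im\rho|/2}$).

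The main obstacle is the error control in the zero-sum. A term-by-term estimate diverges, because $\Re\rho$ can be close to $1$ while there are infinitely many zeros; the remedy is to use the smoothness of $\varphi_n$. Integrating by parts $K$ times with the operator $t\,\tfrac{d}{dt}$, and using that $(t\,\tfrac{d}{dt})^K\varphi_n$ is bounded uniformly in $t$ and $n$, one obtains $\bigl|\int_2^n\varphi_n(t)t^{\rho-1}\,dt\bigr|\ll_K|\Im\rho|^{-K}(\log n)\log(|\Im\rho|+2)$; taking $K=2$ and the truncation height $T$ a fixed power of $\log n$ renders both the truncation error in the explicit formula (of size $\ll T^{-1}(\log n)^{4}$, since $\int_2^n t|\varphi_n'(t)|\,dt\ll\log^2 n$) and the tail over zeros with $|\Im\rho|>T$ negligible. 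Finally, replacing $\varphi_n(\log n/s)$ by $1-e^{-s}$ introduces the correction $e^{-s}s(\log s+\gamma-\log\log n)/\log n+O((\log\log n)^2/\log^2 n)$, which—integrated against the now rapidly convergent series $\sum_\rho(\log n)^\rho\Gamma(1-\rho)$ and $\sum_\rho(\log n)^\rho\Gamma'(1-\rho)$—contributes only $O(\log\log n)$. Assembling the two integrals then gives Theorem \ref{meanoalpha}.
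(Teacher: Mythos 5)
Your proof is correct, and it takes a genuinely different route from the paper's. The paper deduces this theorem (by letting $q\to\infty$) from its analogue on $F_q[X]$, Theorem \ref{meanoxi}, whose proof writes ${\bf M}\log O_n = {\bf M}\log P_n - \mu_n$, computes ${\bf M}\log P_n = \tfrac12\log^2 n + O(1)$ separately, reduces $\mu_n$ to $S(x)=\sum_m\Lambda(m)\phi(m/x)$ with $\phi(y)=e^{-1/y}-1+1/y$ and $x=\log n$, and then extracts the asymptotics by Mellin inversion: since $\int_0^\infty\phi(y)y^{s-1}\,dy=\Gamma(-s)$ for $1<\Re s<2$, $S(x)$ is a vertical-line integral of $(-\zeta'/\zeta)(s)\Gamma(-s)x^s$, and shifting the contour to $\Re s=-\tfrac12$ collects the residues at $s=1$, $s=0$ and the nontrivial zeros all in one pass. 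You instead work with ${\bf M}\log O_n=\sum_q\Lambda(q)(1-g_n(q))$ directly, with no detour through $\log P_n$, identify $g_n(q)=\prod_{j\le n/q}(1-\tfrac1{jq})$ exactly, and after Abel summation substitute the classical truncated explicit formula for $\psi(t)$. The two methods are dual pictures of the same contour shift: your integral of $\varphi_n$ against the main term $t$ of $\psi$ reproduces the residue at $s=1$ (the substitution $\ell+\log\ell=\log(sn)+\gamma$ and the expansion about $\ell_*$ replacing the paper's residue calculus there), while the integral against each $t^\rho/\rho$ localizes at $\Gamma(-\rho)(\log n)^\rho$ precisely because $\Gamma(-\rho)$ is the Mellin transform of $1-e^{-s}$. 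Your route is more modular, outsourcing the hard analytic number theory to the explicit formula for $\psi$, at the price of the truncation and uniformity-in-$\rho$ bookkeeping you sketch. One point worth being precise about: the bound $|\int_2^n\varphi_n(t)t^{\rho-1}\,dt|\ll|\Im\rho|^{-K}(\log n)\log(|\Im\rho|+2)$ really rests on the $L^1$-concentration of $(t\,d/dt)^K\varphi_n$ in the variable $\log t$ (a bump of width $O(1)$ near $\log t\approx\log\log n$), not merely its pointwise boundedness, which by itself would only give a useless estimate of size $n^{\Re\rho}$; likewise, the higher Taylor corrections in your replacement of $\varphi_n(\log n/s)$ by $1-e^{-s}$ must retain the $e^{-s}$ weight to give $\Gamma$-type decay in $\rho$ before summing. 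Granting these, the pieces assemble to the stated formula with error $O((\log\log n)^2)$.
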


Let $F_q$ be a finite field with $q$ elements.
We  denote by $E_n$ the set of normed polynomials of degree $n$ with
coefficients in $F_q$. Then each element
$P\in E_n$ can be decomposed into a product
of prime (over $F_q$) and normed  polynomials.
This decomposition is unique up to the order of multiplicands.
We denote by
  $\xi_k=\xi_k (P)$ the number of prime polynomials of degree
 $k$ in the canonical decomposition of a polynomial $P$.
On $E_n$, we define the uniform probability measure
$$
{\nu_n}(\ldots )={1\over{q^n}} |\{P\in E_n \colon\ \ldots \}|.
$$

Then $O_n(\xi)$, where
$\xi:=\xi(P)=(\xi_1(P),$ $\xi_2(P),\ldots ,\xi_n(P))$, is equal to the
degree of the splitting field of $P$. Nicolas~\cite{nicolas_poly} has proved that

\begin{equation}
\label{nicilth}
\sup_{x\in {\sym R}}
\left|{\nu_n}\left\{ {{\log  O_n(\xi)-{1\over 2}\log^2 n}\over
{{1\over {\sqrt 3}}\log^{3/2}n   } } <x   \right\}-\Phi(x) \right| \ll
{{(\log {\log n})^4}\over {\sqrt{\log n}}}.
\end{equation}

The investigation of  $\xi(P)$ on $E_n$ has much in common with that of the random
vector $\alpha(\sigma)=(\alpha_1(\sigma),\alpha_2(\sigma),\ldots
,\alpha_n(\sigma))$ defined on the symmetric group $S_n$  with the uniform probability measure. In fact theorems \ref{cltaltha} and \ref{meanoalpha} will be just simple consequences of the following theorems.

\begin{thm}
\label{cltxith}

\begin{multline}
\label{cltxi}
\sup_{x\in {\sym R}} \left|    {\nu_n} \left\{ {{\log O_n(\xi)-
{\bf M}\log O_n(\xi)}\over
{   ({1/{\sqrt 3})}\log^{3/2}n   } } <x   \right\}
-\Phi(x)
-{{3^{3/2}}\over {24\sqrt{2\pi}}}
{{(1-x^2)\er^{-{x^2/2}}}\over \sqrt{\log n}} \right| \\
\quad \ll{{\left( {{\log {\log n}}\over {\log n}} \right) }^{2/3}}.
\end{multline}

\end{thm}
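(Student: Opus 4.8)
The plan is to follow the Erd\H os--Tur\'an two-step strategy adapted to the polynomial setting, replacing the study of $\log O_n(\xi)$ by that of the additive function $\log P_n(\xi)=\sum_{k=1}^n \xi_k(P)\log k$, and then applying the Berry--Esseen-type machinery (Theorem~\ref{clt1} or its polynomial analogue) to the latter. First I would record the elementary bound $O_n(\xi)\le P_n(\xi)$, so that $\log O_n(\xi)\le \log P_n(\xi)$ pointwise, and the reverse comparison $\log P_n(\xi)-\log O_n(\xi)=\sum_{k}(\xi_k-1)^+\log k+\cdots$ which one controls by showing that, with high probability, no prime degree $k$ is repeated among the ``large'' $k$ (say $k\gg \log n$), while the contribution of the small $k$ is $\mathrm{O}(\log n\log\log n)$ in mean. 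This is exactly the place where the $\log\log n$ enters the centering constant and where Nicolas lost a power of $\log\log n$; the improvement to the exponent $2/3$ must come from a more careful large-deviation estimate for $\sum_{k\le y}\xi_k\log k$ together with a second-moment bound on the number of repeated prime factors.

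Second, for the additive function $h(P)=\log P_n(\xi)$ I would set $\hat h(k)=\log k$ and verify that after centering by ${\bf M}\log P_n(\xi)$ and scaling by $(1/\sqrt3)\log^{3/2}n$ the normalized summands $\tilde h_n(k)$ satisfy the normalizing condition and that the quantities $L_{n,3}$, $L_{n,p}^{2/p}$, $L_{n,2}'$ governing the error in Theorem~\ref{clt1} are all of order $\big((\log\log n)/\log n\big)^{2/3}$ or smaller. Here the heuristic is $\sum_{k\le n}(\log k)^3/k \asymp \log^4 n$, so $L_{n,3}\asymp \log^4 n/\log^{9/2}n = \log^{-1/2}n$, which is already too big; the resolution is that the dominant part of $\log P_n(\xi)$ comes from $k$ near $n$ where $\tilde h_n(k)$ is essentially constant, and the genuinely ``small'' degrees contribute an error that, after removing a truncation at $k\le n/\exp\{(\log n)^{1/3}\}$ say, is of the claimed size. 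The Edgeworth correction term $\tfrac{3^{3/2}}{24\sqrt{2\pi}}(1-x^2)e^{-x^2/2}/\sqrt{\log n}$ is the third-cumulant (skewness) term: it arises because $\sum_k \tilde h_n(k)^3/k$ is asymptotically a nonzero constant times $\log^{-1/2}n$, and one must compute that constant precisely — a calculation of the type $\sum_{k\le n}(\log(n/k))^3\cdot(\text{density }1/k)$ — to pin down the coefficient $3^{3/2}/24$.

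Third, I would need the polynomial analogue of the mean-value and Berry--Esseen theorems. The function-field situation is governed by the prime polynomial counting: the number of monic irreducibles of degree $k$ over $F_q$ is $\pi_q(k)=\tfrac1k\sum_{e\mid k}\mu(e)q^{k/e}=q^k/k+\mathrm{O}(q^{k/2}/k)$. Thus $E_n$ with the uniform measure behaves like $S_n$ with Ewens parameter $\theta=1$ up to an error of relative size $q^{-n/2}$, which is exponentially small and absorbed into the $\mathrm{O}$-terms; formally one transfers Theorem~\ref{clt1} by noting that $\xi_k(P)$ on $E_n$ has the same limiting structure as $\alpha_k(\sigma)$ on $S_n$, the weights $d_k$ being replaced by $k\pi_q(k)/q^k = 1+\mathrm{O}(q^{-k/2})$. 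I would make this precise via generating functions: $\sum_n |E_n| u^n = (1-qu)^{-1}$ and the cycle-index identity $\sum_P \prod_k x_k^{\xi_k(P)} u^{\deg P}=\prod_k (1-u^k)^{-\pi_q(k)}$, from which the mean of any multiplicative statistic is extracted by the same Voronoi/Cauchy-integral method used for Theorem~\ref{meanf}.

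The main obstacle I expect is the first step: obtaining the comparison between $\log O_n(\xi)$ and $\log P_n(\xi)$ with an error term of size $o\big(\log^{3/2}n\cdot(\log\log n/\log n)^{2/3}\big)=o\big(\log^{5/6}n(\log\log n)^{2/3}\big)$ in distribution, which is what is needed so that the discrepancy does not swamp the claimed rate. The naive Erd\H os--Tur\'an bound gives an error of order $\log n\log\log n$ which is more than enough, so in fact the delicate point is rather the \emph{second} step — extracting the exact skewness constant and showing that after the appropriate truncation the remainder in Theorem~\ref{clt1} is $\mathrm{O}\big((\log\log n/\log n)^{2/3}\big)$ and not merely $\mathrm{O}(\log^{-1/2}n)$. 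Concretely this requires splitting $\log P_n(\xi)=\sum_{k\le y}+\sum_{y<k\le n}$ with $y=n/\exp\{(\log n)^{1/3}\}$, treating the short sum by a direct moment/large-deviation argument and the long sum by the refined central-limit estimate, and then optimizing $y$; the exponent $2/3$ is exactly what this optimization produces. Once both theorems of this type (the distributional one and the mean-value asymptotic feeding the centering constant, i.e. the function-field analogue of Theorem~\ref{meanoalpha}) are in place, Theorem~\ref{cltxith} follows by combining them, and Theorems~\ref{cltaltha} and~\ref{meanoalpha} drop out as the $q\to$ (symmetric-group) specialization as indicated in the text.
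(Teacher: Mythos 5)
Your overall two-step framing (compare $\log O_n(\xi)$ to the additive function $\log P_n(\xi)$, then apply Berry--Esseen-type machinery to the latter) is the right scaffold, but the plan misidentifies where the rate $\left(\frac{\log\log n}{\log n}\right)^{2/3}$ comes from, and the specific tool you propose for step two cannot produce the required correction term.

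First, you assert that the Erd\H os--Tur\'an comparison step gives a crude error ``more than enough'' and that the delicate point is sharpening the CLT via a truncation at $y=n/\exp\{(\log n)^{1/3}\}$. In the paper it is exactly the other way around. For the additive function $\log P_n(\xi)$ alone, after subtracting the skewness term, the paper proves $\sup_x|G_n(x)-F_n(x)|\ll \log^{-3/4}n$ (estimate (\ref{supG_n-F_n})), which is strictly better than $\left(\frac{\log\log n}{\log n}\right)^{2/3}$ and is obtained with no truncation: one bounds the characteristic function for \emph{all} $|t|\leq\log^{3/4}n$ and applies Esseen. The bottleneck rate $\left(\frac{\log\log n}{\log n}\right)^{2/3}$ arises entirely from the comparison between $\log P_n$ and $\log O_n$: the Proposition after Lemma~\ref{covD_np^s} controls $|\log P_n(\xi)-\log O_n(\xi)-\mu_n|$ via the variance bound ${\bf D}V_1\ll \log n(\log\log n)^2$ (Lemmas~\ref{covD_nk+} and \ref{covD_np^s}) plus Chebyshev's inequality, and optimizing Chebyshev's threshold against the probability bound yields precisely the $2/3$ exponent. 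Your dismissal of that step as ``the naive Erd\H os--Tur\'an bound\ldots more than enough'' is the heart of the error.

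Second, Theorem~\ref{clt1} cannot give you the correction $\frac{3^{3/2}}{24\sqrt{2\pi}}\,(1-x^2)e^{-x^2/2}/\sqrt{\log n}$. The correction in Theorem~\ref{clt1} has the shape $\frac{1}{\sqrt{2\pi}}e^{-x^2/2}C_n$ with a constant $C_n$ (a shift-type term), not a Hermite-polynomial skewness correction $(1-x^2)e^{-x^2/2}$. The paper does not use Theorem~\ref{clt1} here at all. Instead it applies the mean-value expansion Theorem~\ref{meanM1} with $d_j\equiv 1$, $p=\infty$, to get $\phi_{n,\alpha}(t)=\exp\{S_n(t/\frac{1}{\sqrt3}\log^{3/2}n)\}\bigl(1+\mathrm{O}(|t|^2/\log n)\bigr)$, then Taylor-expands the zeta-related function $S_n(t)$ in (\ref{S_n(t)2}) to isolate the cubic term $(it)^3\frac{3^{3/2}}{4!}\log^{-1/2}n$; the factor $3^{3/2}/24=3^{3/2}/4!$ appears directly from this expansion, and Fourier inversion of $(it)^3e^{-t^2/2}$ is what produces $(1-x^2)e^{-x^2/2}$. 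You then need separate bounds for $\log^{1/6}n\leq|t|\leq\log^{3/2}n$ — this is Theorem~\ref{th_turlarge}, proved by contour and Parseval arguments, and entirely missing from your plan — and the transfer to $E_n$ via the coefficient-comparison estimate $|\phi_{n,\alpha}(t)-\phi_{n,\xi}(t)|\ll |t|/\log^{3/2}n$ (Theorem~\ref{difff_S_n-f_E_n}). Without the direct characteristic-function computation and the large-$t$ bound, the proof does not close.
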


\begin{thm}
\label{meanoxi}
\begin{multline*}
{\bf M}\log O_n(\xi)
={1\over 2}\log^2n-\log n (\log \log n -1)
\\
+\sum_{\rho}(\log n)^{\rho}\Gamma(-\rho)
+{\rm O}
\big((\log \log n)^2\big),
\end{multline*}
where $\sum_{\rho}$ denotes the sum over all nontrivial zeros of the
Riemann zeta-function.

\end{thm}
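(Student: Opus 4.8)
The plan is to extract the asymptotics of $\mathbf{M}\log O_n(\xi)$ by the same mechanism that underlies the Erd\H os--Tur\'an--Nicolas analysis: split $\log O_n(\xi)$ into the ``easy'' additive part $\log P_n(\xi)=\sum_{j=1}^n \xi_j(P)\log j$ and the correction $\log P_n(\xi)-\log O_n(\xi)$, which comes from prime-degree overlaps, then evaluate each mean separately. First I would compute $\mathbf{M}\log P_n(\xi)=\sum_{j=1}^n (\log j)\,\mathbf{M}\xi_j$. Over $E_n$ with the uniform measure, $\mathbf{M}\xi_j = \pi_q(j)\cdot \nu_n(\text{a fixed prime of degree }j\text{ divides }P)$; using the prime polynomial theorem $\pi_q(j)=q^j/j+O(q^{j/2}/j)$ and the fact that the ``probability'' that a fixed degree-$j$ prime divides a random element of $E_n$ equals the proportion $q^{-j}$ up to the boundary correction for $j$ near $n$, one gets $\mathbf{M}\xi_j = 1/j + (\text{small})$, uniformly, with controlled error. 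Hence $\mathbf{M}\log P_n(\xi) = \sum_{j\le n}\frac{\log j}{j} + (\text{lower order})$, and Euler--Maclaurin gives $\sum_{j\le n}\frac{\log j}{j}=\tfrac12\log^2 n + \gamma_1 + o(1)$ — but the constant and the secondary terms must be tracked more carefully, which is where the zeta zeros will enter.

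The cleanest route to the full expansion, including the $-\log n(\log\log n-1)$ term and the oscillatory sum $\sum_\rho (\log n)^\rho\Gamma(-\rho)$, is to pass to a Dirichlet/Mellin representation. I would write $\mathbf{M}\log O_n(\xi)$ — or rather its main term after the negligible $O((\log\log n)^2)$ discrepancy between $O_n$ and the relevant truncated product is removed — as a sum over prime powers of the form $\sum_{p^k\le n} g(p^k)$, and represent $g$ via a contour integral $\frac{1}{2\pi i}\int (\cdots)\,\zeta(s)\,\Gamma(\cdots)\,x^s\,ds$ where $x=\log n$ (this is exactly the shape that produces $\Gamma(-\rho)(\log n)^\rho$ upon collecting residues). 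Shifting the contour to the left past $s=1$ (pole of $\zeta$, giving $\tfrac12\log^2 n$), past $s=0$ (giving the $\log n(\log\log n-1)$ and constant pieces from the Laurent expansion of $\zeta$ and $\Gamma$ there), and past the critical strip (picking up $\sum_\rho (\log n)^\rho \Gamma(-\rho)$), then bounding the remaining integral by $O((\log\log n)^2)$, yields the claimed formula. This is essentially Nicolas's computation; the task is to carry it out with the error term $O((\log\log n)^2)$ rather than his weaker bound.

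The main obstacle is twofold. First, justifying that the difference $\mathbf{M}\bigl(\log P_n(\xi)-\log O_n(\xi)\bigr)$ is only $O((\log\log n)^2)$: this requires bounding the expected contribution of repeated prime degrees and of the gap between $\mathrm{lcm}$ and $\prod$, which one estimates via $\mathbf{M}\xi_j(\xi_j-1)$ and via $\sum_{j\le n}(\log j)\,\mathbf{M}\xi_j\mathbf{1}[\exists i<j,\ i\mid j,\ \xi_i>0]$-type sums; here the near-independence of the $\xi_j$ (provable by the same generating-function / singularity-analysis machinery used for the $\alpha_j$ on $S_n$, or by direct counting over $E_n$) is what makes the bound work, and getting the clean power of $\log\log n$ rather than a larger power is delicate. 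Second, the contour-shifting argument needs quantitative control on $\zeta(s)$ in the critical strip and a standard zero-density/convexity input to bound the shifted integral by $O((\log\log n)^2)$; this is routine analytic number theory but must be stated carefully. Once Theorem \ref{meanoxi} is established, Theorem \ref{meanoalpha} follows by the transfer principle between $\xi(P)$ on $E_n$ and $\alpha(\sigma)$ on $S_n$ alluded to in the text, since the relevant means differ only by lower-order terms absorbed into the $O((\log\log n)^2)$.
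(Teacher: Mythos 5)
Your high-level decomposition $\log O_n(\xi) = \log P_n(\xi) - \bigl(\log P_n(\xi) - \log O_n(\xi)\bigr)$ and the use of a Mellin/contour integral over $-\zeta'/\zeta$ to produce the zero sum are both the paper's route, but the distribution of work between the two pieces is reversed in your account, and this reversal makes the argument collapse.

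The crucial point you have wrong is the size of the correction: the mean $\mu_n := {\bf M}\bigl(\log P_n(\xi)-\log O_n(\xi)\bigr)$ is \emph{not} $O((\log\log n)^2)$. Using $(d-1)^+ = d-1+I[d=0]$ and Lemma~\ref{nu_nD_nk_iszero}, the paper reduces $\mu_n$ up to an $O((\log\log n)^2)$ error to $S(x)=\sum_{m\ge 1}\Lambda(m)\,\phi(m/x)$ with $\phi(y)=e^{-1/y}-1+1/y$ and $x=\log n$; this sum is of order $x\log x=\log n\,\log\log n$ and is precisely what produces both the secondary term $\log n(\log\log n-1)$ and the zero sum $\sum_\rho\Gamma(-\rho)(\log n)^\rho$. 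By contrast, ${\bf M}\log P_n(\xi)=\sum_{j\le n}(\log j)/j+O(1)=\tfrac12\log^2 n+O(1)$ is entirely elementary and contributes no zeros. If the correction really were $O((\log\log n)^2)$ as you assert, you would obtain ${\bf M}\log O_n(\xi)=\tfrac12\log^2 n + O((\log\log n)^2)$, missing the $-\log n\,\log\log n$ term.

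Two further consequences of the same confusion: the residue at $s=1$ in the Mellin integral for $S(x)$ gives $x(\log x-1)=\log n(\log\log n-1)$, not $\tfrac12\log^2 n$; and because the arithmetic weight in $S(x)$ is $\Lambda(m)$, the Dirichlet series appearing in the integrand must be $-\zeta'(s)/\zeta(s)$, not $\zeta(s)$ --- with $\zeta(s)$ alone there are no poles on the nontrivial zeros and the oscillatory term never appears. Finally, the $O((\log\log n)^2)$ error in the theorem comes from the elementary approximation of $\mu_n$ by $S(x)$ (Euler--Mertens expansions of $\sum_{j\le [n/m]}1/j$, Taylor expansion of the exponential, tail cuts), while the contour shift itself costs only $O(1/\sqrt{\log n})$; your ``main obstacle (second)'' misattributes the dominant error to the contour bound.
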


Assuming that the Riemann hypothesis is true, the sum over
the nontrivial zeros of the Riemann zeta-function in Theorems \ref{meanoxi} and \ref{meanoalpha}
can be estimated as ${\rm O}(\sqrt{\log n})$. Using the well-known fact that
$\zeta(\sigma +it)\not= 0$ for
 $\sigma \geqslant 1-{c\over {\log (2+|t|)}}$, one can estimate those
 sums over the nontrivial zeros of  $\zeta(s)$ as
${\rm O}(\log n\exp\{-c{{\log\log n}\over{\log\log\log n}}\})$.

   From theorems \ref{cltxith} and \ref{cltaltha} it follows that, for $\theta =1$,
the convergence rate in (\ref{barbth}) cannot be improved in the sense of order.

The proof of (\ref{barbth}) of  Barbour and Tavare is based on approximating
 the distribution of the random vector
$\alpha=(\alpha_1(\sigma),\alpha_2(\sigma),\ldots ,\alpha_n(\sigma))$
by the distribution of a random vector
 $Z=(Z_1,Z_2,\ldots ,Z_n)$, where $Z_j$ are
independent Poisson random variables with parameters
$1/j$. The distributions of $O_n(\alpha)$ and $O_n(Z)$
are approximated by the distributions of
 $O_n(Z)$ and  $P_n(Z)$, respectively.
In the proof of Theorem~ \ref{cltaltha} below, we directly
approximate the variable  $O_n(\xi (P))$ by $P_n(\xi (P))$,
without using any auxiliary independent random variables.

 Let us denote
by $S_n^{(k)}=\{ \sigma=x^k  | x\in S_n  \}$ the subset of $S_n$, which consists of all permutations $\sigma \in S_n$,  from which one can extract root of degree $k$.

On the subset $S_n^{(k)}$
we can define the uniform probability measure $\nu_n^{(k)}$ by means of formula
$$
\nu_n^{(k)}(A)=\frac{|A|}{|S_n^{(k)}|}
$$
for any $A\subset S_n^{(k)}$.

In Chapter \ref{ch_sub} we investigate the distribution additive and multiplicative functions
with respect to measure $\nu_n^{(k)}(\sigma)$, establishing the analog of Theorem \ref{meanf} for the mean values of multiplicative functions on $S_n^{(k)}$, thus generalizing earlier results of Pavlov \cite{pavlov}.

Pavlov \cite{pavlov} proved that $ {\frac{\log  O_n(\alpha(\sigma))-
{\bf M_n}\log  O_n(\alpha(\sigma ))}{  \sqrt{\frac{\phi(k)}{3k}}\log^{3/2}n   } }$
is asymptotically normally distributed on $S_n^{(k)}$ when $n\to \infty$. Here $\phi(k)$  -- Euler's function.
In chapter \ref{ch_sub} we estimate the convergence rate.

\begin{thm}
\label{clt}
For fixed  $k$ we have
\begin{multline*}
 \sup_{x\in {\sym R}} \biggl|    \nu_{n}^{(k)} \biggl\{ {\frac{\log  O_n(\alpha(\sigma))-
{\bf M_n}\log  O_n(\alpha(\sigma) )}{   \sqrt{\frac{\phi(k)}{3k}}\log^{3/2}n   } } <x   \biggr\}-\Phi(x)
-
r_k(x){\frac{e^{-{x^2/2}}}{ \sqrt{\log n}}} \biggr|
\\
\ll{{\left( {{\log {\log n}}\over {\log n}} \right) }^{2/3}},
\end{multline*}
where $r_k(x)=\sqrt{\frac{3}{\gamma_0}}
\frac{(1-8C_0-x^2)}{8\sqrt{2\pi }}$ and
$$
C_0=\gamma_0\int_0^1\frac{(1-y)^{\gamma_0-1}-1}{y}\,dy,
$$
here
$\gamma_0=\frac{\phi(k)}{k}$.
\end{thm}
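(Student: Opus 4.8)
The plan is to follow the Erd\H os--Tur\'an strategy: reduce the study of $\log O_n(\alpha(\sigma))$ to that of the additive function $\log P_n(\alpha(\sigma))=\sum_{j=1}^n\alpha_j(\sigma)\log j$, prove a sharpened central limit theorem for the latter on $S_n^{(k)}$ with the Edgeworth correction term made explicit, and finally show the approximation error $\log P_n(\alpha)-\log O_n(\alpha)$ is negligible at the required scale. The main new ingredient compared with the uniform case is that on $S_n^{(k)}$ the ``effective'' cycle weights change: a permutation lies in $S_n^{(k)}$ iff certain divisibility constraints hold on the $\alpha_j$, and the relevant generating-function analysis produces the constant $\gamma_0=\phi(k)/k$ in place of $1$. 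So first I would record, using the analog of Theorem \ref{meanf} for $\nu_n^{(k)}$ established in Chapter \ref{ch_sub}, the asymptotics of $p_n^{(k)}:={|S_n^{(k)}|}/{n!}$ and the mean values $M_n^{(k)}(f)$ of multiplicative functions; in particular the ratio $p_{n-j}^{(k)}/p_n^{(k)}$ behaves like $j^{-\gamma_0}$ up to constants, which is exactly what feeds the $C_0$ integral.

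Next I would set up the characteristic function. Writing $h_t(\sigma)=\exp\{it(\log P_n(\alpha(\sigma))-a)/B\}$ with $B=\sqrt{\phi(k)/(3k)}\,\log^{3/2}n$ and $a$ the centering constant, $h_t$ is multiplicative with $\hat h_t(j)=j^{it\log j/B}$, so $\widehat{h_t}(j)-1$ is small for $j$ not too large. Applying the mean-value formula for multiplicative functions on $S_n^{(k)}$ gives
$$
M_n^{(k)}(h_t)=\exp\Bigl\{\sum_{j=1}^n \gamma_0\,\frac{\widehat{h_t}(j)-1}{j}\Bigr\}+(\text{error}),
$$
and a Taylor expansion of the exponent in powers of $t$ — keeping terms up to $t^3$, since we want an Edgeworth term of order $1/\sqrt{\log n}$ — yields $M_n^{(k)}(h_t)=e^{-t^2/2}\bigl(1+(\text{cubic term})/\sqrt{\log n}+\cdots\bigr)$ for $|t|\le T$ with $T$ a small power of $\log n$. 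The cubic coefficient, after the Euler--Maclaurin evaluation of $\sum_j \gamma_0(\log j)^3/j$ and the contribution of the $p_{n-j}^{(k)}/p_n^{(k)}$ correction (this is where the $C_0$ term enters), is precisely what produces the polynomial $1-8C_0-x^2$ in $r_k(x)$. Then a standard smoothing / Esseen-type inequality converts the characteristic-function estimate on $|t|\le T$ into the claimed uniform bound on the distribution function, with the $(\log\log n/\log n)^{2/3}$ remainder coming from optimizing the truncation level $T$ against the tail contribution $L_{n,3}$-type terms, exactly as in the Manstavi\v cius--Barbour--Tavar\'e framework.

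The final step is the replacement of $\log O_n(\alpha)$ by $\log P_n(\alpha)$. Here I would bound $\log P_n(\alpha)-\log O_n(\alpha)=\sum_{p}\sum_{m\ge 2}(\text{something})\log p$ in terms of the $\alpha_j$ with $j$ having a repeated prime factor or a large prime factor; the classical estimates of Erd\H os--Tur\'an (and their refinements by Nicolas and by Barbour--Tavar\'e) show this difference is $O(\log n\,(\log\log n))$ in an appropriate probabilistic sense on $S_n$, and the same bounds transfer to $S_n^{(k)}$ because $p_{n-j}^{(k)}/p_n^{(k)}\ll j^{-\gamma_0}$ keeps the relevant sums convergent with only a change of constants. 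Since the normalizing factor is of order $\log^{3/2}n$, a discrepancy of order $\log n\log\log n$ translates into an error of order $\log\log n/\sqrt{\log n}$ in the argument $x$, hence — via the Lipschitz bound on $\Phi$ plus the Edgeworth term — into an additive error well inside $(\log\log n/\log n)^{2/3}$ in the supremum. Combining the two reductions with the Edgeworth expansion for $\log P_n(\alpha)$, and noting that $\mathbf{M}_n\log O_n(\alpha)$ and the centering constant for $\log P_n(\alpha)$ differ by the same negligible amount, gives the theorem.

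The main obstacle I anticipate is the bookkeeping in the cubic term: one must evaluate $\sum_{j\le n}\gamma_0(\log j)^3/j$ and, crucially, the cross term involving $(p_{n-j}^{(k)}/p_n^{(k)}-1)$ with enough precision to pin down the exact constant $1-8C_0$, and to verify that all the omitted contributions (the $O(t^2/\sqrt{\log n}\cdot{}?)$ pieces, the error term in the mean-value theorem, the tail $|t|>T$) are genuinely of smaller order than $1/\sqrt{\log n}$ uniformly. Getting the constant $C_0=\gamma_0\int_0^1\frac{(1-y)^{\gamma_0-1}-1}{y}\,dy$ to come out exactly right is the delicate computational heart of the argument; everything else is an adaptation of the now-standard machinery.
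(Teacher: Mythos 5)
Your overall strategy---reduce $\log O_n$ to the additive function $\log P_n$, establish an Edgeworth expansion for $\log P_n(\alpha)$ on $S_n^{(k)}$ via the mean-value theorem for multiplicative functions plus the Esseen inequality, then control the difference $\log P_n-\log O_n$---matches the paper's proof (which combines Theorem \ref{clt_for_pn}, Proposition \ref{propbarbour}, and Lemma \ref{cmp_x_and_u}). Your description of the Edgeworth term, including where $C_0$ comes from (the $p_{n-j}/p_n-1$ correction in the mean-value formula), is essentially right. However, your treatment of the approximation step contains an order-of-magnitude error that, as written, would not close the argument.

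You claim that the discrepancy $\log P_n-\log O_n-\mu_n$ is of order $\log n\log\log n$ and that dividing by the normalization $\log^{3/2}n$ gives a shift of order $\log\log n/\sqrt{\log n}$, which you then call ``well inside'' $(\log\log n/\log n)^{2/3}$. In fact
\[
\frac{\log\log n/\sqrt{\log n}}{(\log\log n/\log n)^{2/3}}
=(\log\log n)^{1/3}(\log n)^{1/6}\ \longrightarrow\ \infty,
\]
so $\log\log n/\sqrt{\log n}$ is \emph{larger}, not smaller; a shift of that size would destroy the bound. Moreover $\log n\log\log n$ is the order of the mean $\mu_n$, which is subtracted out, not of the fluctuation. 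The $2/3$ exponent does not come from tuning the Esseen truncation $T$ at all (that part of the argument gives $O(1/\log n)$, see Theorem \ref{clt_for_pn}); it comes from balancing a Chebyshev tail bound. One must prove $\mathrm{Var}(\log P_n-\log O_n)\ll\log n(\log\log n)^2$ on $S_n^{(k)}$, so that the normalized remainder $X$ has $\mathrm{Var}(X)\ll(\log\log n)^2/\log^2n$; one then chooses $\varepsilon$ to balance $\varepsilon$ against $P(|X|>\varepsilon)\le\mathrm{Var}(X)/\varepsilon^2$, yielding $\varepsilon\asymp\mathrm{Var}(X)^{1/3}=(\log\log n/\log n)^{2/3}$. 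Lemma \ref{cmp_x_and_u} then turns this into the stated supremum bound. Your sketch misses this optimization entirely, which is precisely what determines the exponent $2/3$.

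Establishing the variance bound on $S_n^{(k)}$ is also not the routine ``transfer'' you suggest. Writing $\log P_n(a)-\log O_n(a)=\sum_p\sum_{s\ge1}(D_{np^s}-1)^+\log p$, one needs covariance estimates $\mathrm{cov}\bigl((D_{nr}-1)^+,(D_{nl}-1)^+\bigr)\ll\log n/(rl)$ for coprime $r,l$ on $S_n^{(k)}$; these require fresh asymptotics for $\nu_n(D_{nd}=0)$ and $\nu_n(D_{nr}=0,D_{nl}=0)$, obtained from singularity analysis of $[p(z)/p(z^d)^{1/d}]_{(n)}$ and $[p(z)p(z^{rl})^{1/rl}/(p(z^r)^{1/r}p(z^l)^{1/l})]_{(n)}$ near the $k_0$-th roots of unity (Lemmas \ref{coefp}, \ref{coefprl}, \ref{nud}, \ref{nulr}), plus the moment estimates of Lemmas \ref{md}, \ref{meanalph}, \ref{covrl}, \ref{cov2}, \ref{vard}. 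Merely noting $p_{n-j}^{(k)}/p_n^{(k)}\ll j^{-\gamma_0}$ keeps sums finite but does not by itself produce the cancellation needed for the $\log n/(rl)$ covariance bound.
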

\begin{thm}
\label{riem}
\begin{multline*}
{\bf M_n }\log O(\alpha)=\frac{\gamma_0}{2}\log^2 n - \gamma_0\log n(\log \log n +C(k))
\\
+\sum_{\rho}\Gamma(-\rho)(\gamma_0\log n)^{\rho} +O((\log \log n)^2),
\end{multline*}
where
$$
C(k)=\log \gamma_0 -1-\int_0^1\frac{(1-y)^{\gamma_0-1}-1}{y}dy -\sum_{p|k}\frac{\log p}{p-1},
$$
and     $\sum_{\rho}$  is the sum over the non-trivial zeroes of the Riemann Zeta function.
\end{thm}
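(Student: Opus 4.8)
The plan is to follow the method used for Theorem~\ref{meanoalpha} (the case $k=1$), with the mean-value theorem for multiplicative functions on $S_n^{(k)}$ proved in Chapter~\ref{ch_sub} playing the role of Theorem~\ref{meanf}, and then to evaluate the resulting prime sums by Mertens-type estimates together with the explicit formula for $\psi(x)$. First I reduce the mean order to a sum over prime powers: since $O_n(\alpha)=\mathrm{lcm}\{i\le n:\alpha_i(\sigma)>0\}$,
$$
\log O_n(\alpha)=\sum_{p\le n}\beta_p(\sigma)\log p,\qquad
\beta_p(\sigma)=\max\{a\ge0:\ p^a\mid i\ \text{for some }i\le n\text{ with }\alpha_i(\sigma)>0\},
$$
so that
$$
{\bf M_n}\log O_n(\alpha)=\sum_{p\le n}\log p\sum_{a\ge1}\bigl(1-q_{p,a}\bigr),\qquad
q_{p,a}:=\nu_n^{(k)}\bigl(\alpha_i(\sigma)=0\ \text{for all }i\le n\text{ with }p^a\mid i\bigr).
$$
For a set $A\subseteq\{1,\dots,n\}$ the indicator of $\{\alpha_i=0\ \forall i\in A\}$ is a multiplicative function with $\hat f(j)=\mathbf 1[j\notin A]$, hence $|f|\le1$, and the $S_n^{(k)}$-version of Theorem~\ref{meanf} gives
$$
q_{p,a}=\exp\{-\Sigma_{p,a}\}+R_{p,a},\qquad
\Sigma_{p,a}=\sum_{j\le n,\ p^a\mid j}\frac{w_j}{j},
$$
where $w_j$ is the cycle weight governing $S_n^{(k)}$ (equal to $1$ when $\gcd(j,k)=1$, and determined by the $p$-adic chains $\{d,dp,dp^2,\dots\}$, $p\mid k$, in the other cases), and $R_{p,a}$ is controlled by the error terms of that theorem, notably the piece $\bigl(\sum_j p_j^{(k)}\bigr)^{-1}\sum_j|\hat f(j)-1|\,p_{n-j}^{(k)}$ with $p_n^{(k)}=|S_n^{(k)}|/n!$.

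The next step is to evaluate the double sum. The integers coprime to $k$ having density $\gamma_0=\phi(k)/k$, one has $\sum_{j\le x,\,\gcd(j,k)=1}1/j=\gamma_0\log x+\gamma_0\gamma+\gamma_0\sum_{p\mid k}\frac{\log p}{p-1}+o(1)$, while the $j$ with $\gcd(j,k)>1$ contribute only $O(1)$; consequently $\Sigma_{p,a}=\gamma_0\,\dfrac{\log(n/p^a)}{p^a}+\dfrac{\gamma_0\gamma+\gamma_0\sum_{p\mid k}\log p/(p-1)}{p^a}+o(p^{-a})$. Writing $m=p^a$ and splitting $\psi(t)=t+(\psi(t)-t)$ in a partial summation of $\sum_{m\le n}\Lambda(m)\bigl(1-e^{-\Sigma_{p,a}}\bigr)$: the ``$t$'' part, handled by the Mertens formulae $\sum_{m\le x}\Lambda(m)/m=\log x+O(1)$ and $\sum_{m\le x}\Lambda(m)\log m/m=\tfrac12\log^2x+O(\log x)$, produces (once all the secondary constants are carried along exactly) $\tfrac{\gamma_0}{2}\log^2n-\gamma_0\log n\bigl(\log\log n+\log\gamma_0-1-\sum_{p\mid k}\tfrac{\log p}{p-1}\bigr)$ up to $O\big((\log\log n)^2\big)$; the ``$\psi(t)-t$'' part, where the Riemann--von Mangoldt explicit formula contributes $-\sum_{\rho}t^{\rho}/\rho$, equals $-\sum_{\rho}\int_{1}^{n}\bigl(1-e^{-\Sigma_{t}}\bigr)\,t^{\rho-1}\,dt$, and since $\Sigma_t=\gamma_0\log n/t$ to leading order in the decisive range, the substitution $t=\gamma_0\log n/y$ converts each integral, using $\int_{0}^{\infty}(1-e^{-y})\,y^{-1-\rho}\,dy=-\Gamma(-\rho)$ (valid because $-1<\Re(-\rho)<0$), into $-(\gamma_0\log n)^{\rho}\Gamma(-\rho)$ up to a negligible error, which is exactly the term $\sum_{\rho}\Gamma(-\rho)(\gamma_0\log n)^{\rho}$. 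The only remaining ingredient of $C(k)$, the constant $-\int_0^1\frac{(1-y)^{\gamma_0-1}-1}{y}\,dy$, comes from the $(1-x)^{-\gamma_0}$ singularity of $\sum p_n^{(k)}x^n$: the correction to $\exp\{-\Sigma_{p,a}\}$ entering through the ratios $p_{n-j}^{(k)}/p_n^{(k)}\sim(1-j/n)^{\gamma_0-1}$ contributes, after $y=j/n$, a term $\sim\gamma_0\int_0^1\frac{(1-y)^{\gamma_0-1}-1}{y}\,dy$ — the same integral as in $C(k)$ (and in $C_0$ of Theorem~\ref{clt}).

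The hard part will be the bookkeeping of the constants and the error control, rather than any single idea. One must isolate cleanly the bulk of the answer — the contribution of the cycle lengths coprime to $k$, which mimics the $k=1$ case with $\log n$ replaced throughout by $\gamma_0\log n$ — from the genuinely $k$-dependent contributions, evaluate the secondary constants exactly (including the singularity constant of $\sum p_n^{(k)}x^n$ and the comparison with the Ewens measure of parameter $\gamma_0$), and verify that every error — the $R_{p,a}$ from the $S_n^{(k)}$ mean-value theorem, the $o(p^{-a})$ slack in $\Sigma_{p,a}$, the $O(\log x)$ and $O(1)$ losses in Mertens' formula and in the explicit formula (which forces a truncation at a suitable height), and the tails of the $t$-integrals — is absorbed into the stated remainder $O\big((\log\log n)^2\big)$.
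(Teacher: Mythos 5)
Your decomposition $\mathbf{M_n}\log O_n(\alpha)=\sum_{m}\Lambda(m)\,\nu_n^{(k)}(D_{nm}>0)$ is algebraically equivalent to the paper's $\mathbf{M_n}\log P_n-\mu_n$ (indeed $\mathbf{M_n}\log P_n=\sum_m\Lambda(m)\mathbf{M_n}D_{nm}$ and $\mu_n=\sum_m\Lambda(m)\mathbf{M_n}(D_{nm}-1)^+$, and the $\mathbf{M_n}D_{nm}$ cancel), and your explicit-formula route for the zeta-zero sum is a legitimate substitute for the Mellin transform the paper uses. But the central tool you choose --- estimating $q_{p,a}=\nu_n^{(k)}(D_{np^a}=0)$ by the $S_n^{(k)}$ mean-value theorem of Chapter~\ref{ch_sub} --- is not strong enough in the range that matters. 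The indicator function $\hat f(j)=\mathbf 1[p^a\nmid j]$ has $\mu_n(p)\asymp\bigl(\tfrac{1}{m}\bigr)^{1/p}$ with $m=p^a$, and both Theorem~\ref{mean} and Theorem~\ref{meanmn} require either $\mu_n(p)$ or $\rho(p)$ to be small, with errors of size $\mu_n(p)$ or $\rho^2$. When you sum $\Lambda(m)\cdot m^{-1/p}$ over $\log^2 n<m\le n$ (and the bulk of $\tfrac{\gamma_0}{2}\log^2 n$ and of the constant $C_0\log n$ lives precisely there, as a short Abel summation shows) the accumulated error is polynomial in $n$, swamping the $O\bigl((\log\log n)^2\bigr)$ target. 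Equally, the $p_{n-j}^{(k)}/p_n^{(k)}$ correction from Theorem~\ref{meanmn} that you invoke to produce $\int_0^1\frac{(1-y)^{\gamma_0-1}-1}{y}\,dy$ comes with an $O(\rho^2)$ error of the same order as the correction itself, so that constant cannot be extracted this way.

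What the paper does instead is precisely to avoid applying the mean-value machinery in the large-$m$ range. It first computes $\mathbf{M_n}\log P_n=\tfrac{\gamma_0}{2}\log^2 n+C_0\log n+O(1)$ \emph{exactly}, from the identity $\mathbf{M_n}\alpha_j=c_{n-j}/(jc_n)$ for $(j,k)=1$ together with Pavlov's asymptotic $c_m\sim c\,m^{\gamma_0-1}$; the singularity constant $C_0$ falls out of $\sum_j\tfrac{\log j}{j}\bigl((1-j/n)^{\gamma_0-1}-1\bigr)$ with no mean-value theorem needed at all. The correction $\mu_n$ is then small and needs $\nu_n(D_{nm}=0)$ only for $m\le\log^2 n$, where Lemma~\ref{nud} (proved by contour integration / singularity analysis, Lemma~\ref{coefp}, with relative error $O(n^{-\epsilon})$) gives the sharp asymptotic; for larger $m$ the crude bound $\mathbf{M_n}(D_{nm}-1)^+\ll(\log n/m)^2+\dots$ of Lemma~\ref{meanalph} already suffices. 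So the ``$\log P_n$ detour'' is not bookkeeping --- it is what isolates the bulk of the answer into a quantity computable without the mean-value theorem. If you want to keep your direct decomposition, you would still have to split $\nu_n(D_{nm}>0)=\mathbf{M_n}D_{nm}-\mathbf{M_n}(D_{nm}-1)^+$ for $m>\log^2 n$ and use Lemmas~\ref{md} and \ref{meanalph} rather than the mean-value theorem --- at which point you have reconstructed the paper's argument under a thin disguise.
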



\chapter{Additive and multiplicative functions on $S_n$}

\section{Voronoi sums}

  Let
 $\alpha_k(\sigma)$ be the number of cycles in $\sigma$ whose length is
 equal to $k$. Then $\alpha_1(\sigma)+
2\alpha_2(\sigma)+\cdots +n\alpha_n(\sigma)=n$ and we have the
following representation for multiplicative function
$$
f(\sigma)=\hat f(1)^{\alpha_1 (\sigma)} \hat f (2)^{\alpha_2
(\sigma)} \ldots \hat f(n)^{\alpha_n(\sigma)},
$$
where as before $\hat f(j)$ are the values of
multiplicative function $f(\sigma )$ on cycles of length $j$. We
assume that $0^0=1$ in the above relationship.

 Since the quantity of $\sigma \in S_n$ such that
$\alpha_k(\sigma)=s_k$ for $1\leqslant k \leqslant n$ is equal to
$$
n!\prod_{j=1}^{n} {1 \over {s_j!j^{s_j}}},
$$ when
$s_1+2s_2+\cdots +ns_n=n$, we have for any
multiplicative function $f(\sigma )$
$$
\sum_{\sigma \in S_n}f(\sigma )= n!
 \sum_{k_1+2k_2+\cdots +nk_n=n} {\prod_{j=1}^{n}}
          {     {     \left(     {{ \hat f(j)}\over j}     \right)   }^{k_j}    }
           {1 \over {k_j!}}.
$$
One can easily see that if $ N_m={1\over {m!}}\sum_{\sigma \in
S_m}f(\sigma ) $ then

\begin{equation}
\label{genfunc}
\sum_{j=0}^{\infty}N_jz^j=\exp \left\{ \sum_{j=1}^{\infty} {{\hat
f(j) \over j}z^j}
         \right\}
\end{equation}

Here we assume that $N_0=1$. Therefore
$$
M_n^d(f)= {{\sum_{\sigma \in S_n}f(\sigma )d(\sigma )}\over
{\sum_{\sigma \in S_n}d(\sigma )}}={{M_n}\over {p_n}},
$$
where $M_j$ and $p_j$ are defined by relationships
$$
\sum_{j=0}^{\infty}M_jz^j=\exp \left\{
 \sum_{j=1}^{\infty}d_j {{{\hat
f(j)} \over j}z^j}
         \right\}\quad \hbox{and} \quad
        p(z) = \sum_{j=0}^{\infty}p_jz^j=\exp \left\{
 \sum_{j=1}^{\infty} {{d_j} \over j}z^j \right\}.
$$
We have
\begin{equation}
\label{genfunc2}
F(z)=\exp \left\{\sum_{j=1}^{\infty} d_j{{\hat f(j) \over j}z^j}
\right\} =\sum_{j=0}^{\infty}M_jz^j= {{\exp\{ L(z)
            \}}p(z)},     
\end{equation}
here $L(z)=\sum_{j=1}^{\infty}d_j{{\hat f(j)-1}\over j}z^j$. Let us
denote $\exp\{ L(z) \} =\sum_{j=0}^{\infty}m_jz^j$, then
$$
M_n^d(f)={{M_n}\over {p_n}}={1\over {p_{n}}}\sum_{j=0}^n
m_jp_{n-j}.
$$

The proof of the theorem \ref{meanf} will be based on the following theorem.

\begin{thm}
\label{fundthm1}
Suppose $f(z)=\sum_{n=0}^{\infty}a_nz^n$,
when $|z|<1$, and $0<d^-<d_j<d^+$. Then for $n\geq 1$ we have
\begin{multline*}
\left|{1\over
{p_n}}\sum_{k=0}^na_kp_{n-k}-f(e^{-1/n})-{{S(f;n)}\over
{np_n}}\right|
\\
\leqslant c\left( {1\over {n^\theta}}\sum_{j=1}^n {{|S(f;j)|}\over
{p(e^{-1/j})}}j^{\theta -1}+ {1\over {p(e^{-1/n
})}}\sum_{j>n}{{|S(f;j)|}\over j}e^{-j/n}\right),
\end{multline*}
where $S(f;m)=\sum_{k=1}^ma_kkp_{m-k}$, \ $\theta =\min \{
d^-,1\}$ and $c=c(d^+,d^-)$ is a constant which depends on $d^+$
and $d^-$ only. 
\end{thm}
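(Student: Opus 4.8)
The plan is to turn the assertion into a renewal‑type recursion for the quantity to be bounded and then to estimate the inhomogeneous term using the identity $zf'(z)p(z)=\sum_jS(f;j)z^j$ together with two regularity estimates for the coefficients $p_n$. Write $T_n=\sum_{k=0}^na_kp_{n-k}$, so that $\sum_nT_nz^n=f(z)p(z)=:F(z)$ and the quantity to estimate is $r_n:=T_n/p_n$. Differentiating $F$ and using $zp'(z)/p(z)=\sum_{j\ge1}d_jz^j$ and $zf'(z)p(z)=\sum_{j\ge1}S(f;j)z^j$, then comparing coefficients of $z^n$, gives
$$nT_n=S(f;n)+\sum_{j=1}^nd_jT_{n-j},\qquad np_n=\sum_{j=1}^nd_jp_{n-j},$$
the second being the case $f\equiv1$. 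Dividing the first identity by $p_n$ and using the second to observe that $\sum_{j=1}^n\tfrac{d_jp_{n-j}}{np_n}=1$, one obtains
$$r_n-\frac{S(f;n)}{np_n}=\sum_{j=1}^n\frac{d_jp_{n-j}}{np_n}\,r_{n-j},$$
i.e.\ $r_n-\tfrac{S(f;n)}{np_n}$ is a weighted average, with nonnegative weights summing to one, of $r_0=a_0,\dots,r_{n-1}$. This already peels off the advertised correction term; what remains is to show that this average equals $f(e^{-1/n})$ up to the stated error.

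To that end set $\eta_m:=r_m-\tfrac{S(f;m)}{mp_m}-f(e^{-1/m})$, the quantity to be bounded; with $\eta_0:=0$, $S(f;0):=0$ and $e^{-1/0}:=0$ one has $\eta_0=a_0-f(0)=0$. Subtracting $f(e^{-1/n})$ from the last display (legitimate since the weights sum to one), inserting $r_{n-j}=\eta_{n-j}+\tfrac{S(f;n-j)}{(n-j)p_{n-j}}+f(e^{-1/(n-j)})$, multiplying through by $np_n$, and writing $\Sigma_m:=mp_m\eta_m$ so that $p_{n-j}\eta_{n-j}=\Sigma_{n-j}/(n-j)$, one is left with the triangular recursion
$$\Sigma_n=\sum_{l=1}^{n-1}\frac{d_{n-l}}{l}\,\Sigma_l+\Lambda_n,\qquad \Lambda_n=\sum_{l=1}^{n-1}\frac{d_{n-l}}{l}\,S(f;l)+\sum_{j=1}^nd_jp_{n-j}\bigl(f(e^{-1/(n-j)})-f(e^{-1/n})\bigr),$$
(the $j=n$ term of the last sum being the boundary contribution $d_n(f(0)-f(e^{-1/n}))$), so that $\eta_n=\Sigma_n/(np_n)$ is entirely determined by the two explicit sums forming $\Lambda_n$.

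Now I would estimate $\Lambda_n$. For the second sum, write $f(e^{-1/(n-j)})-f(e^{-1/n})=\int f'(t)\,dt$ over the relevant subinterval of $(0,1)$ and use $tf'(t)p(t)=\sum_lS(f;l)t^l$, so that $|f'(t)|\le\tfrac1{tp(t)}\sum_l|S(f;l)|t^l$; interchanging the sums, integrating, and invoking the monotonicity of $t\mapsto p(t)$ on $(0,1)$ together with the regularity estimates
$$p(e^{-1/m})\asymp m\,p_m,\qquad \frac{p_{m-l}}{p_m}\lesssim\Bigl(\frac{m}{m-l}\Bigr)^{1-\theta}\quad(1\le l\le m),$$
both valid, with $\theta=\min\{d^-,1\}$, for $p(z)=\exp\{\sum_jd_jz^j/j\}$ under $0<d^-\le d_j\le d^+$ and provable via Cauchy's formula and comparison with $(1-z)^{-d^\pm}$, one bounds $|\Lambda_n|/(np_n)$ — and in the same manner the first sum in $\Lambda_n$ — by precisely the right‑hand side of the theorem. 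The split at $j=n$ there reflects the range where $t^j\asymp e^{-j/n}$ remains of constant order versus where it decays, and the factors $n^{-\theta}j^{\theta-1}$ (resp.\ $1/p(e^{-1/n})$) come from the displayed bound on $p_{m-l}/p_m$ near $l=m$.

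The remaining step, which is the main obstacle, is to pass from this bound on $|\Lambda_n|$ to the same bound on $|\eta_n|=|\Sigma_n|/(np_n)$ by solving the Volterra‑type recursion above. The delicate point is to obtain a sharp rather than merely finite bound: a naive triangle‑inequality induction would control $|\Sigma_n|$ only by the sum of all the $|\Lambda_m|$, which is far too crude. Instead one uses that each $\Lambda_m$ is itself of tail type at scale $m$, so that after applying the resolvent of the kernel $(d_{n-l}/l)$ and re‑summing against the weights $p_{m-l}$ the estimate telescopes without accumulating — equivalently, one exploits the good behaviour of the Voronoi kernels $\bigl(\tfrac{d_jp_{n-j}}{np_n}\bigr)_j$, which is precisely the material this chapter develops. (Alternatively, the estimates of the previous two paragraphs can be obtained analytically, by writing $T_n$ and $p_n$ as Cauchy integrals over the circle $|z|=e^{-1/n}$, freezing $f$ at the saddle $z=e^{-1/n}$, and controlling the remainder through $wf'(w)p(w)=\sum_jS(f;j)w^j$; both routes rest on the same regularity input for $p$.)
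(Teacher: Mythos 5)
Your reduction to the renewal identity $nT_n=S(f;n)+\sum_{j=1}^n d_jT_{n-j}$ and the recursion $\Sigma_n=\sum_{l<n}\tfrac{d_{n-l}}{l}\Sigma_l+\Lambda_n$ is algebraically correct, and it is in fact dual to what the paper does: the paper writes $q(z)=1/p(z)$ and extracts $ma_m=\sum_{k\leqslant m}S(f;k)q_{m-k}$, which is precisely the closed-form solution of your Volterra recursion obtained via the integrating factor $(\psi/p)'=\lambda/(zp)$. The difference is that the paper actually carries out the resulting coefficient extraction: after reordering the double sum over $j$ and $k$, the coefficient of $S(f;j)$ is the explicit kernel $f_{n-j,j}-p_n\int_0^{e^{-1/n}}x^{j-1}/p(x)\,dx$ (for $j<n$) and $p_n\int_0^{e^{-1/n}}x^{j-1}/p(x)\,dx$ (for $j\geqslant n$), and these are then bounded via the integral representation $f_{m,j}=\int_0^1 g_{m,x}x^{j-1}\,dx$ together with Lemmas \ref{pvbound}--\ref{f_mj}. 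None of that analysis appears in your proposal.

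The concrete gap is in your last paragraph. You say, correctly, that a triangle-inequality induction on the recursion would accumulate and be far too crude, and then assert that "the estimate telescopes without accumulating" after applying "the resolvent of the kernel $(d_{n-l}/l)$." That is exactly the statement to be proved, and it is the entire content of the theorem; invoking "the good behaviour of the Voronoi kernels" without quantifying it is circular, since that good behaviour is what Lemmas \ref{diffpnm}, \ref{diffcmn}, \ref{diffgnm}, \ref{intbounds}, \ref{boundofg} and \ref{f_mj} are designed to establish. Moreover, even your claimed estimate for the inhomogeneous term is not clearly of the stated shape: bounding $\tfrac{1}{np_n}\sum_{l<n}\tfrac{d_{n-l}}{l}|S(f;l)|$ naively gives $\tfrac{1}{p(e^{-1/n})}\sum_{l<n}\tfrac{|S(f;l)|}{l}$, with the single factor $p(e^{-1/n})^{-1}$ outside the sum, whereas the theorem's main term $n^{-\theta}\sum_j|S(f;j)|j^{\theta-1}/p(e^{-1/j})$ carries $p(e^{-1/j})^{-1}$ inside the sum; when $d^-<1$ these are genuinely different, and obtaining the inside weighting requires precisely the kernel cancellation that the recursion step is supposed to supply. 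The parenthetical Cauchy-integral alternative is likewise only a heuristic and would not, without substantial further work, produce the stated discrete bound in terms of $S(f;j)$.
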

Hence we obtain


\begin{thm}
\label{voronmean}
Let $f(z)$ and $p(z)$ be the same as in theorem \ref{fundthm1}.
Then the  relation
$$
\lim_{n \to \infty}{1\over {p_n}}\sum_{k=0}^{n}a_kp_{n-k}=A\in
\sym C,
$$
holds if and only if the following two conditions are
satisfied:
$$
\leqno1)\quad \lim_{x\uparrow 1}f(x)= A
$$
$$
\leqno {2)}\quad S(f;n)=\sum_{k=1}^{n}a_kkp_{n-k}=o(np_n) \quad
\hbox{as}\quad n\to \infty .
$$
\end{thm}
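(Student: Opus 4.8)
The plan is to prove the two implications separately: the ``if'' direction is a short consequence of the explicit estimate of Theorem~\ref{fundthm1}, while the ``only if'' direction rests on an exact generating‑function identity for $S(f;m)$. Throughout I will use a few elementary facts about $p(z)=\exp\{\sum_{j\ge1}(d_j/j)z^j\}$. Differentiating gives $zp'(z)=\bigl(\sum_{j\ge1}d_jz^j\bigr)p(z)$, hence the recursion $mp_m=\sum_{j=1}^m d_jp_{m-j}$; with $d^-\le d_j\le d^+$ this yields $d^-\sum_{l=0}^{m-1}p_l\le mp_m\le d^+\sum_{l=0}^{m-1}p_l$. Since $\sum_j 1/j=\infty$, $p(x)\to\infty$ as $x\uparrow1$, so $\sum_l p_l=\infty$ and therefore $mp_m\to\infty$. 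Finally, truncating $p(e^{-1/n})=\sum_{m\ge0}p_me^{-m/n}$ at $m=n$ gives $p(e^{-1/n})\ge e^{-1}\sum_{m=0}^{n}p_m\ge (ed^+)^{-1}np_n$.

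\textbf{Sufficiency.} Assume 1) and 2). In the bound of Theorem~\ref{fundthm1} the left-hand side already converges to $A$: $f(e^{-1/n})\to A$ by 1) and $S(f;n)/(np_n)\to0$ by 2); so it remains to show the two sums on the right are $o(1)$. For the first, use $p(e^{-1/j})^{-1}\le ed^+(jp_j)^{-1}$ to bound it by $ed^+n^{-\theta}\sum_{j=1}^n\bigl(|S(f;j)|/p_j\bigr)j^{\theta-2}$; given $\varepsilon>0$ we have $|S(f;j)|\le\varepsilon jp_j$ for $j\ge J$, and since $\theta=\min\{d^-,1\}>0$ one has $\sum_{j\le n}j^{\theta-1}\asymp n^\theta$, so the inner sum is $\le C_J+C\varepsilon n^\theta$ and the whole term is $O(C_Jn^{-\theta})+O(\varepsilon)=o(1)$. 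For the second sum, 2) gives $|S(f;j)|/j\le\varepsilon p_j$ for all $j\ge n$ once $n\ge J$, whence $\sum_{j>n}(|S(f;j)|/j)e^{-j/n}\le\varepsilon\sum_{j\ge0}p_je^{-j/n}=\varepsilon\,p(e^{-1/n})$, so this term is $\le\varepsilon$. Letting $\varepsilon\downarrow0$ completes this direction.

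\textbf{Necessity.} Assume $M_n/p_n\to A$, where $F(z)=f(z)p(z)=\sum_{m\ge0}M_mz^m$ and $M_m=\sum_{k=0}^m a_kp_{m-k}$. Since $\sum_m S(f;m)z^m=zf'(z)p(z)=zF'(z)-zf(z)p'(z)=zF'(z)-\bigl(\sum_{j\ge1}d_jz^j\bigr)F(z)$, comparing coefficients of $z^m$ gives the identity $S(f;m)=mM_m-\sum_{j=1}^m d_jM_{m-j}$. Writing $M_l=Ap_l+\eta_l$ with $\eta_l=o(p_l)$ and invoking $mp_m=\sum_{j=1}^m d_jp_{m-j}$, the $A$‑terms cancel exactly and $S(f;m)=m\eta_m-\sum_{j=1}^m d_j\eta_{m-j}$, so $|S(f;m)|\le m|\eta_m|+d^+\sum_{l=0}^{m-1}|\eta_l|$. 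Here $m|\eta_m|=o(mp_m)$; splitting $\sum_{l<m}|\eta_l|$ at an index $J$ beyond which $|\eta_l|\le\varepsilon p_l$ and using $\sum_{l<m}p_l\le(d^-)^{-1}mp_m$ together with $mp_m\to\infty$ shows $\sum_{l<m}|\eta_l|=o(mp_m)$; hence $S(f;m)=o(mp_m)$, which is condition 2). For condition 1), write $F(x)=Ap(x)+\sum_n\eta_nx^n$; the same $\varepsilon$‑splitting, divided by $p(x)\to\infty$, gives $\sum_n\eta_nx^n=o(p(x))$, so $f(x)=F(x)/p(x)\to A$ as $x\uparrow1$.

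\textbf{Main obstacle.} There is no single deep step; the content is organizational. On the sufficiency side the subtlety is that 2) is assumed only qualitatively, with no rate, so the two error sums of Theorem~\ref{fundthm1} must be tamed by $\varepsilon$‑truncation, for which the crude comparison $p(e^{-1/j})\gg jp_j$ and the elementary asymptotics $mp_m\asymp\sum_{l<m}p_l$ and $\sum_{j\le n}j^{\theta-1}\asymp n^\theta$ are exactly what is needed. On the necessity side everything hinges on spotting the identity $S(f;m)=mM_m-\sum_{j=1}^m d_jM_{m-j}$ and the twin recursion $mp_m=\sum_{j=1}^m d_jp_{m-j}$, which make the leading terms cancel; after that only routine Abelian/Ces\`aro‑type estimates remain.
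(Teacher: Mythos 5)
Your proof is correct and follows essentially the same route as the paper's: the sufficiency direction is obtained by feeding the qualitative hypothesis $S(f;n)=o(np_n)$ into the error terms of Theorem~\ref{fundthm1} (you supply the $\varepsilon$–truncation details the paper leaves implicit), and the necessity direction rests on exactly the identity $S(f;m)=mM_m-\sum_{j=1}^m d_jM_{m-j}$ together with the twin recursion $mp_m=\sum_{j=1}^m d_jp_{m-j}$, which is the computation the paper performs with $w_n=Ap_n(1+\epsilon_n)$. The deduction of condition 1) via $f(x)=\bigl(\sum_n w_nx^n\bigr)/p(x)$ and $p(x)\to\infty$ is also the paper's.
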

Theorem \ref{voronmean} can be reformulated in terms of Voronoi summation theory
(see \cite{hardy}). Suppose $\sum_{j=0}^{\infty}a_k$ is a formal series and
$r_j$ is a sequence of non-negative real numbers. If
$$
\lim_{n\to \infty }{{r_0s_n+r_1s_{n-1}+\cdots+r_ns_0}\over
{r_0+r_1+\cdots +r_n}}=s\in \sym R,
$$
where $s_k=a_0+a_1+\cdots +a_k$, then we say that series
$\sum_{j=0}^{\infty}a_k$ can be summed in the sense of Voronoi and
its Voronoi sum is equal to $s$. In such case we write
$$
(W,r_n)\sum_{j=0}^{\infty}a_k=s.
$$
Theorem \ref{voronmean} yields the necessary and sufficient conditions for a
series to have a Voronoi sum when $r_n$ are defined by formula
$$
\sum_{m=0}^{\infty}r_mz^m=\exp
\left\{\sum_{j=1}^{\infty}{{\lambda_j}\over j}z^j \right\},
$$
where $-1<\lambda^-\leqslant\lambda_j\leqslant\lambda^+<\infty$.
For such $r_j$ we have that
$$
(W,r_n)\sum_{j=0}^{\infty}a_k=A
$$
if and only if
$$
\lim_{x\uparrow 1}\sum_{j=0}^{\infty}a_jx^j=A\in \sym C
$$
and
$$
{{r_0D_n+r_1D_{n-1}+\cdots +r_nD_0}\over {r_0+r_1+\cdots +r_n}}=o(n)\quad
\hbox{as}\quad n\to \infty,
$$
where $D_n=1a_1+2a_2+\cdots+na_n$. When $\lambda_j \equiv 0$, this condition takes form $D_n=o(n)$ and
we obtain the classical theorem of Tauber.

Note that in such case $r_n$ can be negative, though condition
$-1<\lambda^-\leqslant \lambda_j$ ensures that $r_0+r_1+\cdots
+r_m \geqslant {{m+\lambda^-} \choose {m}} >0$  for
$m\geqslant 1$.

 Let us now find the generating function of the characteristic
function of the distribution of $h_n(\sigma)$.
 Since for additive function we have
 $h(\sigma)=\hat h(1)\alpha_1(\sigma) +\hat h (2) \alpha_2 (\sigma)+\cdots +\hat h(n)\alpha_n (\sigma)$
 and
$$
\nu_{n,d}(\alpha_1(\sigma)=s_1,\ldots,\alpha_n(\sigma)=s_n)=
    {{1}\over  {p_n}}
       \prod_{j=1}^{n} {\left({d_j \over
       j}\right)}^{s_j}{1\over{s_j!}},
$$
therefore
   $$
       g_n(t)=\sum_{\sigma \in S_n}
           { \exp \left\{ ith_n(\sigma) \right\} }
           \nu_{n,d}(\sigma)
           ={1\over {p_n}}
         \sum_{m_1+2m_2+\cdots +nm_n=n} {\prod_{j=1}^{n}}
          {     {     \left(     d_j{{ \hat f(j)}\over j}     \right)   }^{m_j}    }
           {1 \over {m_j!}},
       $$
where  $\hat f(k)=\exp\{it\tilde h_{n}(k)\}$. Thus the
characteristic function of the random variable $\tilde h_{n}(k)$
is the weighted mean of a multiplicative function.





Let $p(z)$ be denoted as before.
In what follows we assume that $0<d^-\leqslant d_k\leqslant d^+$,
and $\theta =\min \{ 1,d^- \}$, where $d^-, d^+$ are fixed
positive numbers. We also denote $\tilde d_k=d_k-\theta$ and
$$
\tilde p(z)=\exp \left\{ \sum_{k=1}^{\infty}{{\tilde d_k}\over {k}}z^k  \right\}
=\sum_{n=0}^{\infty}\tilde p_nz^n.
$$
One can easily see that
$$
\tilde p(z)={{p(z)}{(1-z)^{\theta}}}.
$$

\begin{lem}
\label{pvbound}
If $m\geqslant n\geqslant 1$, then
$$
{\left( {m\over n}  \right) }^{d^-} e^{-{{d^-}/ {n}}}\leqslant
{{p( e^{-{1/ {m}}})}\over {p( e^{-{1/ n} })}}\leqslant {\left(
{m\over n} \right) }^{d^+} e^{{{d^+}/ {m}}},
$$
and
$$
{\left( {m\over n}  \right) }^{\tilde d^-} e^{-{{\tilde d^-}/
{n}}} \leqslant {{\tilde p( e^{-{1/ {m}}})}\over {\tilde p(
e^{-{1/ n} })}}\leqslant {\left( {m\over n}  \right) }^{\tilde
d^+} e^{{{\tilde d^+}/ {m}}},
$$
where $\tilde d_k^+=d_k^+ -\theta$, \ $\tilde d_k^-=d_k^- -\theta$
and $\tilde d^+=d^+-\theta$.
\end{lem}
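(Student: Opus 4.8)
The plan is to take logarithms and reduce everything to an elementary estimate for $1-e^{-x}$. Using $p(z)=\exp\{\sum_{k\geqslant1}(d_k/k)z^k\}$ and the fact that $m\geqslant n$ forces $e^{-k/m}\geqslant e^{-k/n}$ for every $k\geqslant1$, I would start from
$$
\log\frac{p(e^{-1/m})}{p(e^{-1/n})}=\sum_{k=1}^{\infty}\frac{d_k}{k}\bigl(e^{-k/m}-e^{-k/n}\bigr),
$$
in which every summand is non-negative. Since $d^-\leqslant d_k\leqslant d^+$, this sum is sandwiched between $d^-$ and $d^+$ times $\sum_{k\geqslant1}\tfrac1k(e^{-k/m}-e^{-k/n})$, and the latter equals $\log\bigl((1-e^{-1/n})/(1-e^{-1/m})\bigr)$ by the expansion $-\log(1-x)=\sum_{k\geqslant1}x^k/k$. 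Because $n\leqslant m$ gives $1-e^{-1/n}\geqslant 1-e^{-1/m}>0$, this logarithm is non-negative, so exponentiating yields
$$
\Bigl(\tfrac{1-e^{-1/n}}{1-e^{-1/m}}\Bigr)^{d^-}\leqslant\frac{p(e^{-1/m})}{p(e^{-1/n})}\leqslant\Bigl(\tfrac{1-e^{-1/n}}{1-e^{-1/m}}\Bigr)^{d^+}.
$$

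Next I would invoke the elementary two-sided bound $xe^{-x}\leqslant 1-e^{-x}\leqslant x$, valid for all $x>0$ (the right inequality is standard, the left one is $e^{x}\geqslant 1+x$). Applying it with $x=1/n$ to the numerator and with $x=1/m$ to the denominator gives
$$
\frac{1-e^{-1/n}}{1-e^{-1/m}}\leqslant\frac{1/n}{(1/m)e^{-1/m}}=\frac mn\,e^{1/m},
\qquad
\frac{1-e^{-1/n}}{1-e^{-1/m}}\geqslant\frac{(1/n)e^{-1/n}}{1/m}=\frac mn\,e^{-1/n}.
$$
Raising these to the powers $d^+$ and $d^-$ respectively and combining with the previous display yields exactly the claimed chain $(m/n)^{d^-}e^{-d^-/n}\leqslant p(e^{-1/m})/p(e^{-1/n})\leqslant(m/n)^{d^+}e^{d^+/m}$.

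For the statement about $\tilde p$ I would repeat the same three steps verbatim with $d_k$ replaced by $\tilde d_k=d_k-\theta$ (equivalently, using $\tilde p(z)=p(z)(1-z)^{\theta}$). The only point to check is that the intermediate sum $\sum_{k\geqslant1}(\tilde d_k/k)(e^{-k/m}-e^{-k/n})$ still has non-negative terms and can be sandwiched between $\tilde d^-$ and $\tilde d^+$ times $\log((1-e^{-1/n})/(1-e^{-1/m}))$; this holds because $\theta=\min\{1,d^-\}$ forces $\tilde d_k\geqslant\tilde d^-=d^- -\theta\geqslant0$ (both when $d^-<1$, where $\theta=d^-$, and when $d^-\geqslant1$, where $\theta=1\leqslant d^-$). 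I do not expect any genuine obstacle here: the whole argument is a short computation, the only mildly delicate point being the legitimacy of the term-by-term manipulation of the series, which is immediate since every series involved converges absolutely for arguments in $[0,1)$.
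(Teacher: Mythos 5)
Your proof is correct and follows essentially the same route as the paper: take the logarithm of $p(e^{-1/m})/p(e^{-1/n})$, bound $d_k$ between $d^-$ and $d^+$, recognize the resulting sum as $\log\bigl((1-e^{-1/n})/(1-e^{-1/m})\bigr)$, and finish with the elementary two-sided bound $xe^{-x}\leqslant 1-e^{-x}\leqslant x$. Your extra remark that $\tilde d_k\geqslant 0$ (so the same sandwich argument applies to $\tilde p$) is a small but welcome detail that the paper leaves implicit under ``analogous''.
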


\begin{proof}
We have
\begin{equation*}
\begin{split}
{{p( e^{-{1/ {m}}})}\over
{p( e^{-{1/ n} })}}&=\exp \left\{ \sum_{k=1}^{\infty}{{d_k}\over k}
( e^{-{k/ {m}}} -e^{-{k/ n} }) \right\}
\leqslant \exp \left\{ d^+\sum_{k=1}^{\infty}
{{ e^{-{k/ {m}}} -e^{-{k/ n} }}\over k} \right\}
\\
&=\exp \left\{ d^+ \log {{1- e^{-{1/ n} }}\over{1- e^{-{1/ {m}}}}}
\right\}={\left( {{1- e^{-{1/ n} }}\over{1- e^{-{1/ {m}}}}}
\right)}^{d^+} \leqslant {\left( {m\over n}  \right) }^{d^+}
e^{{{d^+}/ {m}}}.
\end{split}
\end{equation*}
here we have used the inequalities $e^{-x}x\leqslant
1-e^{-x}\leqslant x$ for $x\geqslant 0$.

In the same way we obtain the lower bound estimate.

The proof of the second inequality is analogous.

The lemma is proved.
\end{proof}

Further we will often use the inequality
$$
a_0+a_1+\cdots +a_n \leqslant eg(e^{-1/n}),
$$
where $g(x)=\sum_{j=0}^{\infty}a_kx^k$ and $a_k\geqslant 0$,
$k\geqslant 0$.
Differentiating $p(z)$ and $\tilde p(z)$ we obtain
$$
zp'(z)=p(z)\sum_{k=1}^\infty d_kz^k \quad \hbox{and} \quad z\tilde
p'(z)=\tilde p(z)\sum_{k=1}^\infty \tilde d_kz^k,
$$
hence we have for $n\geqslant 1$
\begin{equation}
\label{p_n}
p_n={1\over n}\sum_{k=1}^nd_kp_{n-k}\quad \hbox{and} \quad \tilde
p_n={1\over n}\sum_{k=1}^n\tilde d_k{\tilde p_{n-k}}.
\end{equation}
Hence we have
\begin{equation}
\label{p_n2}
p_n\leqslant {{d^+e}\over n} p(e^{-1/n}) \quad \hbox{and} \quad
\tilde p_n\leqslant {{d^+e}\over n} \tilde p(e^{-1/n}).
\end{equation}

It has been proved in \cite{manstdecom} that there is a positive constant
$c(d^+)$ such that
\begin{equation}
\label{plwbound}
p_n\geqslant {d^-c(d^+)}{{p(e^{-1/n})}\over n}.
\end{equation}
For the sake of
completeness we will give here another proof of this estimate
based on the following theorem which is of interest in itself.

\begin{thm}
\label{lowcoefth}
Suppose $f(x)=\sum_{k=0}^{\infty}a_kx^k$,
where $a_k\geqslant 0$ and
$$
{{f'(x)}\over{f(x)}}\leqslant {c\over {1-x}}
$$
when $0\leqslant x <1$. Then there exists such a positive constant
$K=K(c)$ that
$$
\sum_{j=0}^Na_k\geqslant K(c) f(e^{-1/N})
$$
when $N\geqslant 2c$.
\end{thm}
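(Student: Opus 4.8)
The plan is to compare the tail-weighted partial sum of the coefficients with the function value $f(e^{-1/N})$ by exploiting the logarithmic-derivative bound $f'(x)/f(x)\le c/(1-x)$. The first step is to integrate this differential inequality: for $0\le x<y<1$ one gets $\log f(y)-\log f(x)=\int_x^y f'(t)/f(t)\,dt\le c\log\frac{1-x}{1-y}$, so that $f(y)/f(x)\le\bigl((1-x)/(1-y)\bigr)^c$. Equivalently $f(x)(1-x)^c$ is non-increasing on $[0,1)$; this is the analytic engine of the whole argument and is where the hypothesis is used.

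Next I would split $f(e^{-1/N})$ into the contribution of the coefficients with $k\le N$ and the tail $k>N$. For the tail, since $a_k\ge 0$ we have $\sum_{k>N}a_k e^{-k/N}\le \sum_{k>N} a_k e^{-k/N}$, and I want to dominate this by $\sum_{k>N}a_k x^k$ for an $x$ slightly larger than $e^{-1/N}$, or alternatively bound it using the monotonicity of $f(x)(1-x)^c$ together with the elementary inequality $e^{-k/N}\le \bigl(1+k/N\bigr)^{-1}\cdot$(something) — more cleanly, pick $x_1=e^{-1/(2N)}$ (or $x_1 = 1 - 1/(2N)$) and note that for $k>N$ we have $e^{-k/N}\le e^{-N/(2N)} x_1^{k}\cdot(\text{const})$ is not quite right; instead use $e^{-k/N} = (e^{-1/N})^k \le C\,\rho^{k}$ with $\rho<1$ fixed only does not help. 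The right move is: $\sum_{k>N}a_k e^{-k/N}\le e^{-1}\sum_{k>N}a_k e^{-k/(2N)}e^{-k/(2N)}\le e^{-1}\sup_{k>N}e^{-k/(2N)}\cdot f(e^{-1/(2N)})\le e^{-1}\cdot e^{-1/2}\,f(e^{-1/(2N)})$, and then by the monotonicity step $f(e^{-1/(2N)})\le\bigl(\frac{1-e^{-1/N}}{1-e^{-1/(2N)}}\bigr)^c f(e^{-1/N})\le 2^c f(e^{-1/N})\cdot(\text{absorbing }e^{\pm O(1/N)})$. Hence the tail is at most a fixed fraction $\theta(c)<1$ of $f(e^{-1/N})$ once $N\ge 2c$, provided the constant $e^{-3/2}2^c$ times the $e^{O(1/N)}$ corrections stays below $1$ — this is exactly where the restriction $N\ge 2c$ is needed, and one may have to replace $2N$ by $\lambda N$ for a larger constant $\lambda=\lambda(c)$ to make the fraction strictly less than one.

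Finally, for the head, $\sum_{k\le N}a_k e^{-k/N}\le\sum_{k\le N}a_k=\sum_{j=0}^N a_k$ trivially (here I abuse $a_k$ for $a_j$ to match the statement), so combining the two pieces gives $f(e^{-1/N})\le\sum_{j=0}^N a_j+\theta(c)f(e^{-1/N})$, whence $\sum_{j=0}^N a_j\ge(1-\theta(c))f(e^{-1/N})$ and we may take $K(c)=1-\theta(c)>0$. The main obstacle is the bookkeeping in the tail estimate: choosing the auxiliary radius and the splitting parameter so that the resulting geometric factor times $2^c$ (or $\lambda^c$) is genuinely below $1$, uniformly for $N\ge 2c$, while controlling the $e^{O(1/N)}$ error terms coming from $1-e^{-1/N}\asymp 1/N$; everything else is routine. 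Once Theorem \ref{lowcoefth} is in hand, (\ref{plwbound}) follows by applying it to $f=p$, since $zp'(z)/p(z)=\sum_{k\ge1}d_k z^k\le d^+\sum_{k\ge1}z^k=d^+z/(1-z)$ gives $p'(x)/p(x)\le d^+/(1-x)$, and then tracking the dependence of $K$ on $c=d^+$ together with the extra factor $d^-$ from $p_n=\frac1n\sum_{k=1}^n d_k p_{n-k}\ge\frac{d^-}{n}\sum_{j=0}^{n-1}p_j$.
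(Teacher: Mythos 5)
The integrated form of the hypothesis --- that $f(x)(1-x)^c$ is non-increasing, equivalently $f(y)/f(x)\le((1-x)/(1-y))^c$ for $x<y$ --- is correct and is also a component of the paper's proof. But the tail estimate you propose cannot close for $c\geqslant 1$, and this is a genuine gap, not a bookkeeping issue. Your bound reads
\[
\sum_{k>N}a_ke^{-k/N}
   =\sum_{k>N}a_k\,e^{-(1-1/\lambda)k/N}e^{-k/(\lambda N)}
   \leqslant e^{-(1-1/\lambda)}\,f\bigl(e^{-1/(\lambda N)}\bigr)
   \leqslant e^{-(1-1/\lambda)}\lambda^c\,e^{O(1/N)}\,f\bigl(e^{-1/N}\bigr),
\]
and the argument closes only if $e^{-(1-1/\lambda)}\lambda^c<1$, i.e.\ $c\log\lambda<1-1/\lambda$. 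Since $\log\lambda>1-1/\lambda$ for every $\lambda>1$, this forces $c<1$; for $c\geqslant 1$ there is no admissible $\lambda$, no matter how the splitting parameter is chosen. (Your displayed constant $e^{-3/2}2^c$ also carries a spurious extra $e^{-1}$; the correct factor for $\lambda=2$ is $e^{-1/2}2^c$, which already fails at $c\approx 0.72$.) This is not a corner case: in the intended application $c=d^+$, and the uniform measure $d_j\equiv 1$ gives exactly $c=1$. The extremal example $f(x)=(1-x)^{-c}$ confirms the obstruction: there the tail $\sum_{k>N}a_ke^{-k/N}$ is asymptotic to $\bigl(\Gamma(c,1)/\Gamma(c)\bigr)f(e^{-1/N})$, which tends to the whole of $f(e^{-1/N})$ as $c\to\infty$, so any tail bound that only uses the two-radius comparison of $f$, as yours does, is bound to lose.

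The missing idea is to use the derivative hypothesis \emph{pointwise}, via the identity $\sum_k k a_k x^k = xf'(x)$, rather than only its integrated (monotonicity) consequence, and to evaluate at a radius $e^{-1/n}$ with $n\asymp N/c$ rather than $e^{-1/(\lambda N)}$ with $\lambda$ slightly above $1$. Concretely, since $1\leqslant k/N$ for $k>N$,
\[
 f(x)\leqslant\sum_{k\leqslant N}a_k+\frac{1}{N}\sum_{k\geqslant 0}k a_k x^k
      =\sum_{k\leqslant N}a_k+\frac{xf'(x)}{N}
      \leqslant\sum_{k\leqslant N}a_k+\frac{cx}{N(1-x)}\,f(x).
\]
Now take $x=e^{-1/n}$ with $n=[N/(2c)]$; then $x/(1-x)\leqslant n$ gives $cx/(N(1-x))\leqslant cn/N\leqslant 1/2$, hence $\tfrac12 f(e^{-1/n})\leqslant\sum_{k\leqslant N}a_k$. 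If $c\leqslant 1/2$ then $n\geqslant N$ and we are done with $K(c)=1/2$; if $c>1/2$ then $n\leqslant N$ and one recovers $f(e^{-1/N})$ from $f(e^{-1/n})$ by your monotonicity step, losing only the bounded factor $\bigl((1-e^{-1/N})/(1-e^{-1/n})\bigr)^c\geqslant e^{-c/N}(n/N)^c$, which is bounded below by a positive constant depending only on $c$ for $N\geqslant 2c$. Note that this comparison runs in the opposite direction from yours (smaller radius $e^{-1/n}$ with $n<N$, so the factor $(N/n)^c\approx(2c)^c$ is a harmless constant), whereas your comparison necessarily produces a factor $\lambda^c$ that must be beaten by the gain $e^{-(1-1/\lambda)}$, and for $c\geqslant 1$ it cannot be.
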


\begin{proof}
 If $0\leqslant x < 1$, we have
\begin{eqnarray*}
f(x)&\leqslant& \sum_{k=0}^Na_kx^k+{1\over N}\sum_{k=0}^{\infty}ka_kx^k\leqslant
\sum_{k=0}^Na_k +{{xf'(x)}\over {N}}
\\
&\leqslant& \sum_{k=0}^Na_k +f(x){x \over N}{{f'(x)}\over {f(x)}}
\leqslant \sum_{k=0}^Na_k +f(x){{cx}\over {N(1-x)}}.
\end{eqnarray*}
Inserting here $x=e^{-1/n}$ with $n=\left[{N\over{2c}}\right]$, we obtain
$$
f(e^{-1/n})\leqslant  \sum_{k=0}^Na_k+{{cn}\over N}f(e^{-1/n})\leqslant
 \sum_{k=0}^Na_k+{1\over 2}f(e^{-1/n}),
$$
therefore
$$
{1\over 2}f(e^{-1/n})\leqslant  \sum_{k=0}^Na_k.
$$
 If $c\leqslant 1/2$, then $N\leqslant [\frac{N}{2c}]=n$ and
$$
{1\over 2}f(e^{-1/N})\leqslant {1\over 2}f(e^{-1/n})\leqslant  \sum_{k=0}^Na_k,
$$
therefore in such a case the theorem will be true with $K(c)={1\over 2}$.

Suppose now that $c>{1\over 2}$, then  $N\geqslant [\frac{N}{2c}]=n$ and we have

\begin{equation*}
\begin{split}
{{f(e^{-1/n})}\over{f(e^{-1/N})}}&= \exp \left\{ \log f(e^{-1/n})
-\log f(e^{-1/N})  \right\}= \exp \left\{
-\int_{e^{-1/n}}^{e^{-1/N}}{{f'(x)}\over{f(x)}}\,dx  \right\}
\\
&\geqslant \exp \left\{ -c\int_{e^{-1/n}}^{e^{-1/N}}{{dx}\over{(1-x)}}  \right\}
={\left( {{1-e^{-1/N}}\over{1-e^{-1/n}}} \right)}^c
\geqslant e^{-c/N}{\left( {n\over N}\right)}^c\\
&=
e^{-c/N}{\left( {1\over N} {\left[ {N\over {2c}} \right] }\right)}^c
\geqslant
K(c)>0,
\end{split}
\end{equation*}
where $K(c)=\inf_{m\geqslant 2c}
e^{-c/m}{\left( {1\over m} {\left[ {m\over {2c}} \right] }\right)}^c$.

The theorem is proved.
\end{proof}
The application of this theorem for $f(z)=p(z)$ together with (\ref{p_n})
yields the proof of estimate (\ref{plwbound}).

\begin{lem}
\label{diffpnm}
If  $0 \leqslant s \leqslant n/2$, then
$$
|p_{n+s}-p_{n}|\ll {{p(e^{-1/n})}\over n} \left( {s\over
n}\right)^{\theta} \ll  p_n \left( {s\over n}\right)^{\theta},
$$
where $\theta =\min\{ d^-,1 \}$.
\end{lem}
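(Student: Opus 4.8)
The plan is to bound the difference $p_{n+s}-p_n$ by telescoping through the one-step differences $p_{m+1}-p_m$ and then summing. First I would use the recursion (\ref{p_n}), namely $m p_m=\sum_{k=1}^m d_k p_{m-k}$, to get a workable expression for a single increment. Writing $(m+1)p_{m+1}-m p_m=\sum_{k=1}^{m+1}d_kp_{m+1-k}-\sum_{k=1}^{m}d_kp_{m-k}$ and reorganizing, one finds
$$
(m+1)p_{m+1}-m p_m=d_{m+1}p_0+\sum_{k=1}^{m}d_k\bigl(p_{m+1-k}-p_{m-k}\bigr),
$$
which exhibits the increment $p_{m+1}-p_m$ in terms of smaller increments plus the leading term $m(p_{m+1}-p_m)=d_{m+1}p_0+\sum_{k=1}^{m}d_k(p_{m+1-k}-p_{m-k})-p_{m+1}$. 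This looks like it could lead to an induction, but I think the cleaner route is the integral/contour representation: since $p'(z)=p(z)\sum_{k\ge1}d_kz^{k-1}$ and all $d_k$ lie in $[d^-,d^+]$, the coefficients $p_n$ of $p(z)=(1-z)^{-\theta}\tilde p(z)$ behave, up to constants, like those of $(1-z)^{-\theta}$ times a slowly varying factor; the second bound of Lemma \ref{pvbound} controls the ratio $\tilde p(e^{-1/m})/\tilde p(e^{-1/n})$.

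So the main step I would carry out is: estimate $|p_{m+1}-p_m|$ for a single $m$ in the range $n\le m\le 3n/2$. Using $p_m={1\over m}\sum_{k=1}^m d_kp_{m-k}$ and $p_{m+1}={1\over m+1}\sum_{k=1}^{m+1}d_kp_{m+1-k}$, subtract to get
$$
p_{m+1}-p_m=\frac{1}{m+1}\sum_{k=1}^{m}d_k(p_{m-k+1}-p_{m-k})+\frac{d_{m+1}p_0}{m+1}-\frac{1}{m(m+1)}\sum_{k=1}^{m}d_kp_{m-k}.
$$
The last term is $-\,p_m/(m+1)$, which by (\ref{p_n2}) and (\ref{plwbound}) is $\asymp p(e^{-1/m})/m^2$. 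The delicate piece is the first sum, which again involves increments; here I would split the sum at $k\approx m/2$, use Lemma \ref{diffpnm} itself inductively (or a bootstrap starting from the trivial bound $|p_{m-k+1}-p_{m-k}|\le p_{m-k+1}+p_{m-k}$) on the low-$k$ part where the argument $m-k$ is still of order $m$, and use $\sum d_k p_{m-k}\le e\,d^+ p(e^{-1/m})$-type bounds plus Lemma \ref{pvbound} on the high-$k$ part. Summing the one-step bound $|p_{m+1}-p_m|\ll p(e^{-1/m})/m^{2-\theta}\cdot$(adjustment) over $n\le m<n+s$ and using that $p(e^{-1/m})\asymp p(e^{-1/n})$ uniformly for $m$ in this range (Lemma \ref{pvbound} with $m/n\le 3/2$) should give $|p_{n+s}-p_n|\ll s\cdot p(e^{-1/n})/n^{2-\theta}\cdot n^{?}$; matching exponents one wants exactly $p(e^{-1/n})n^{-1}(s/n)^{\theta}$, so the per-step bound I am really aiming to prove is $|p_{m+1}-p_m|\ll p(e^{-1/n})\,n^{-1}\,(s_0/n)^{\theta-1}\cdot n^{-1}$ where $s_0=m-n$, whose sum over $s_0<s$ telescopes correctly because $\sum_{s_0<s}s_0^{\theta-1}\asymp s^{\theta}$ for $\theta\le1$.

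The main obstacle, as usual with these difference estimates, is the self-referential appearance of increments $p_{j+1}-p_j$ inside the recursion: a naive bound loses too much. The way I would handle it is a two-stage bootstrap — first prove the weaker bound $|p_{n+s}-p_n|\ll p(e^{-1/n})/n$ (just from $p_n\asymp p(e^{-1/n})/n$ and monotonicity-type control of the $p_j$ over the dyadic block), then feed that back into the split sum above to upgrade the exponent from $0$ to $\theta$ on the factor $(s/n)$. The second stated inequality, $p(e^{-1/n})/n\ll p_n$, is immediate from (\ref{plwbound}), so once the first inequality is in hand the lemma follows. I also need to keep track that the constants depend only on $d^-$ and $d^+$, which is automatic since every appeal is to Lemma \ref{pvbound}, (\ref{p_n2}), or (\ref{plwbound}), all of which have that dependence.
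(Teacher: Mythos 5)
Your proposal correctly identifies the factorization $p(z)=\tilde p(z)(1-z)^{-\theta}$ as relevant, and the second inequality of the lemma is, as you say, immediate from (\ref{plwbound}). But the main plan --- telescope through one-step increments $p_{m+1}-p_m$ and sum --- cannot work for $\theta<1$, and this is not a matter of a missing technical detail.

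The target bound $\frac{p(e^{-1/n})}{n}\left(\frac{s}{n}\right)^{\theta}$ is \emph{sublinear} in $s$ when $\theta<1$, whereas telescoping $|p_{n+s}-p_n|\leqslant\sum_{m=n}^{n+s-1}|p_{m+1}-p_m|$ against a per-step bound that is uniform over $m\in[n,3n/2]$ necessarily produces something \emph{linear} in $s$. Specializing the lemma itself to $s=1$, the correct per-step bound is $|p_{m+1}-p_m|\ll p(e^{-1/m})/m^{1+\theta}\asymp p(e^{-1/n})/n^{1+\theta}$ throughout the range, and summing $s$ of these gives $s\,p(e^{-1/n})/n^{1+\theta}$, which for $1\leqslant s\leqslant n/2$ and $\theta<1$ is strictly \emph{weaker} than the claimed $s^{\theta}p(e^{-1/n})/n^{1+\theta}$. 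The anchor-dependent per-step bound you propose to remedy this, $|p_{m+1}-p_m|\ll p(e^{-1/n})\,n^{-1-\theta}\,s_0^{\theta-1}$ with $s_0=m-n$, reverse-engineers the right sum but cannot be a genuine bound on $p_{m+1}-p_m$: for $s_0>1$ it is smaller (by the factor $s_0^{\theta-1}<1$) than the correct one-step bound above, and a statement about $p_{m+1}-p_m$ cannot legitimately depend on a free anchor $n$. The two-stage bootstrap you sketch stalls for the same reason: feeding the trivial bound $|p_{j+1}-p_j|\ll p_j$ into the recursion $(m+1)(p_{m+1}-p_m)=d_{m+1}p_0+\sum_{k}d_k(p_{m+1-k}-p_{m-k})-p_m$ only reproduces $|p_{m+1}-p_m|\ll p_m$, with no gain on the exponent of $s$.

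The paper's argument sidesteps telescoping entirely. It uses the factorization you mention, but as a convolution: $p_n=\sum_{k=0}^{n}\tilde p_k\binom{n-k+\theta-1}{n-k}$, so that
$$
p_{n+s}-p_n=\sum_{k=0}^{n}\tilde p_k\left(\tbinom{n+s-k+\theta-1}{n+s-k}-\tbinom{n-k+\theta-1}{n-k}\right)+\sum_{n<k\leqslant n+s}\tilde p_k\tbinom{n+s-k+\theta-1}{n+s-k}=:S_1+S_2.
$$
Here the binomial coefficients $\binom{m+\theta-1}{m}\asymp m^{\theta-1}/\Gamma(\theta)$ are explicit, their $s$-shift differences are $\ll s\,(n-k)^{\theta-2}$ away from the boundary, and $\tilde p_k$ obeys the growth estimates (\ref{p_n2}) and Lemma~\ref{pvbound}. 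The sublinear exponent $\theta$ arises from the boundary range $n-s<k\leqslant n+s$, where $\sum_{l\leqslant s}l^{\theta-1}\asymp s^{\theta}$, together with the factor $\tilde p(e^{-1/n})\asymp p(e^{-1/n})(1-e^{-1/n})^{\theta}$. This is a direct coefficientwise estimate of the convolution, and there is no per-step/telescoping structure to it; that is the idea your proposal is missing.
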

\begin{proof}
 Since $p(z)={{\tilde p(z)}\over {(1-z)^{\theta}}}$,
then we have
$$
p_n=\sum_{k=0}^n\tilde p_k {{n-k+\theta -1}\choose{n-k}} ,
$$
therefore
\begin{eqnarray*}
p_{n+s}-p_n&=&\sum_{k=0}^n\tilde p_k
\left( {{n+s-k+\theta -1}\choose{n+s-k}}
-{{n-k+\theta -1}\choose{n-k}} \right)
\\
&&\mbox{}
+\sum_{n+s\geqslant k >n}\tilde
p_k{{n+s-k+\theta -1}\choose{n+s-k}}=:S_1+S_2.
\end{eqnarray*}
If $s=0$, then the estimate of the theorem is trivial, therefore
we assume that $s>0$. Applying here the estimate (\ref{p_n2}) together with
Lemma \ref{pvbound} we have
\begin{equation*}
\begin{split}
S_2&\leqslant \sum_{l=0}^s {{l+\theta -1}\choose{l}}
\max_{n+s\geqslant k>n} \tilde
p_k \leqslant {{s+\theta }\choose{s}} \tilde
p(e^{-1/n})ed^+ \max_{n+s\geqslant k>n} {{\tilde p(e^{-1/k})}\over
{\tilde p(e^{-1/n})}} {1\over k}
\\
&\ll s^\theta {{\tilde p(e^{-1/n})}\over n}=s^\theta {{
p(e^{-1/n})(1-e^{-1/n})^\theta }\over n}\leqslant \left( {s\over
n}\right)^\theta  {{p(e^{-1/n})}\over n}.
\end{split}
\end{equation*}
If $\theta =1$, then $S_1=0$, therefore estimating $S_1$ we may
assume that $\theta <1$.

It is well-known that ${{n-k+\theta -1}\choose{n-k}}={{n^{\theta -1}}\over {\Gamma
(\theta)}}\left( 1+O\left( {1\over n}\right) \right)$ (see e. g.
\cite{flajolet}).
 Once again applying Lemma \ref{pvbound} and the estimate (\ref{p_n2}) we have

\begin{eqnarray*}
S_1&\ll& \sum_{k=0}^{n-s}\tilde p_k{{n-k+\theta -1}\choose{n-k}}
\left|{{{n+s-k+\theta -1}\choose{n+s-k}}\over{{n-k+\theta -1}\choose{n-k}}}-1 \right|
\\
&&\mbox{}+
\sum_{n-s<k\leqslant n}\tilde p_k(s^{\theta-1}+(n-k+1)^{\theta -1}
)
\\
&\ll& \sum_{k=0}^{n-s}\tilde p_k {{n-k+\theta -1}\choose{n-k}} {s\over{n-k}} +
{{\tilde p(e^{-1/n})}\over n}s^\theta
\\
&\ll & \sum_{k\leqslant n/2}\tilde p_k{s\over n}n^{\theta -1}
+\sum_{n/2<k\leqslant n-s}\tilde p_k s(n-k)^{\theta-2}
+ {{p(e^{-1/n})}\over n}\left( {s\over n} \right)^\theta
\\
&\ll &  \tilde p(e^{-1/n}){s\over n}n^{\theta-1}+
 {{\tilde p(e^{-1/n})}\over n}s\sum_{l\geqslant s}l^{\theta -2}
+{{p(e^{-1/n})}\over n}\left( {s\over n} \right)^\theta
\\
&\ll & {{p(e^{-1/n})}\over n}{s\over n}+ {{p(e^{-1/n})}\over n}\left(
{s\over n} \right)^\theta \ll {{p(e^{-1/n})}\over n}\left( {s\over
n} \right)^\theta.
\end{eqnarray*}

The lemma is proved.
\end{proof}

For $0\leqslant x \leqslant 1$ we denote
$$
G_x(z)={{p(z)}\over {p(zx)}}=\sum_{k=0}^\infty g_{k,x}z^k \quad
\hbox{and} \quad \tilde G_x(z)={{\tilde p(z)}\over {\tilde
p(zx)}}=\sum_{k=0}^\infty \tilde g_{k,x}z^k,
$$
and

$$
C_x(z)= \left( {{1-zx}\over {1-z}}
\right)^\theta=\sum_{k=0}^\infty c_{k,x}z^k.
$$

Since $\tilde p(z)={{p(z)}{(1-z)^{\theta}}}$, we have
$$
G_x(z)=\tilde G_x(z) \left( {{1-zx}\over {1-z}} \right)^\theta .
$$
Differentiating $C_x(z)$ and $G_x(z)$ with respect to $z$ we have
$$
zC_x'(z)=C_x(z)\theta \sum_{k=1}^\infty z^k(1-x^k)\quad \hbox{and}
\quad zG_x'(z)=G_x(z)\sum_{k=1}^\infty d_k z^k(1-x^k).
$$
Whence we obtain that
$$
c_{n,x}={{\theta } \over n} \sum_{k=1}^n c_{n-k,x}(1-x^k)\quad
\hbox{and} \quad g_{n,x}={1\over n}\sum_{k=1}^n
g_{n-k,x}d_k(1-x^k),
$$
for $n\geqslant 1$ and $c_{0,x}=g_{0,x}=1$. Hence we deduce that
$c_{n,x},g_{n,x}\geqslant 0$ and therefore
$$
\sum_{m=0}^nc_{m,x}\leqslant e C_x(e^{-1/n}) \quad \hbox{and}\quad
\sum_{m=0}^ng_{m,x}\leqslant e G_x(e^{-1/n}).
$$
It follows hence
$$
c_{n,x}\leqslant {{e\theta C_x(e^{-1/n}) }\over n} \quad
\hbox{and}\quad g_{n,x}\leqslant {{ed^+ G_x(e^{-1/n}) }\over n}.
$$

\begin{lem}
\label{diffcmn}
Suppose $0<x<1$ and $s\leqslant
m/2$, then we have
$$
|c_{m,x}-c_{m-s,x}|\ll sm^{\theta-2}(1-x)^\theta +{s \over {m^2}},
$$
for $m\geqslant 1$.
\end{lem}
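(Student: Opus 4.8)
\emph{Plan.} The idea is to reduce the statement to a bound for one Taylor coefficient of $(1-z)C_x(z)$ and to extract that coefficient by integrating around a compact branch cut.

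First I would telescope. Writing $C_x(z)=\sum_{k\ge0}c_{k,x}z^k$ and $q_j:=c_{j,x}-c_{j-1,x}=[z^j]\bigl((1-z)C_x(z)\bigr)$, one has, for $1\le s\le m/2$,
$$
c_{m,x}-c_{m-s,x}=\sum_{j=m-s+1}^{m}q_j ,
$$
and when $m\ge 2$ every index here satisfies $m/2+1\le j\le m$, so $j\ge2$ and $j\asymp m$. Thus it is enough to prove
$$
|q_j|\ll(1-x)^{\theta}\,j^{\theta-2}\qquad (j\ge 2),
$$
uniformly in $x\in(0,1)$: summing $s$ such terms and using $j\asymp m$ then yields $|c_{m,x}-c_{m-s,x}|\ll s\,(1-x)^{\theta}m^{\theta-2}$, a little stronger than claimed. (When $\theta=1$ we have $C_x(z)=(1-zx)/(1-z)$, hence $q_j=0$ for $j\ge2$; so assume $0<\theta<1$.)

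Next I would represent $q_j$ by a cut integral. Put $Q_x(z):=(1-z)C_x(z)=(1-z)^{1-\theta}(1-zx)^{\theta}$, with principal branches. Individually $(1-z)^{1-\theta}$ is cut along $[1,\infty)$ and $(1-zx)^{\theta}$ along $[1/x,\infty)$, but because the exponents add up to $1$ the jumps of $Q_x$ across $(1/x,\infty)$ cancel, so $Q_x$ is single-valued and analytic on $\sym C\setminus[1,1/x]$. Starting from $q_j=\frac{1}{2\pi i}\oint_{|z|=\rho}Q_x(z)z^{-j-1}\,\dr z$ with $0<\rho<1$ and pushing the contour outward past the compact cut to a large circle $|z|=R$, one collects $\oint_{|z|=R}$ and a loop around $[1,1/x]$. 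Since the Laurent expansion of $Q_x$ near $\infty$ has leading term $-x^{\theta}z$ and no term of order $z^{-1}$, the large-circle integral is $0$ for every $j\ge2$ (this is the only place $j\ge2$ is used; for $j=1$ it would not vanish). Shrinking the loop onto the cut and using the boundary values $Q_x(t\pm i0)$ for $1<t<1/x$, whose difference $Q_x(t+i0)-Q_x(t-i0)$ equals $-2i\sin(\pi\theta)(t-1)^{1-\theta}(1-tx)^{\theta}$, gives the exact formula
$$
|q_j|=\frac{\sin\pi\theta}{\pi}\int_{1}^{1/x}\frac{(t-1)^{1-\theta}(1-tx)^{\theta}}{t^{\,j+1}}\,\dr t\qquad(j\ge2).
$$

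Finally I would estimate this integral, and this is where the one real difficulty lies: keeping the factor $(1-x)^{\theta}$ and making the bound uniform as $x\uparrow 1$. The naive worry is that the two branch points $z=1$ and $z=1/x$ of $Q_x$ coalesce; but in the formula above the cut is the \emph{compact} segment $[1,1/x]$, the integrand is nonnegative, and the only dangerous factor is controlled by $1-tx=(1-x)-x(t-1)\le1-x$ there. Hence, enlarging the range to $[1,\infty)$,
$$
|q_j|\le\frac{\sin\pi\theta}{\pi}(1-x)^{\theta}\int_{1}^{\infty}\frac{(t-1)^{1-\theta}}{t^{\,j+1}}\,\dr t=\frac{\sin\pi\theta}{\pi}(1-x)^{\theta}\,\frac{\Gamma(2-\theta)\,\Gamma(j-1+\theta)}{\Gamma(j+1)},
$$
and since $\Gamma(j-1+\theta)/\Gamma(j+1)\asymp j^{\theta-2}$ uniformly for $j\ge2$, we obtain $|q_j|\ll(1-x)^{\theta}j^{\theta-2}$, which with the first step completes the proof. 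The only bookkeeping that needs care in writing this out is the verification that the jump of $Q_x$ across $(1/x,\infty)$ vanishes and that the large-circle term disappears for $j\ge2$; everything else is a routine Beta-integral computation.
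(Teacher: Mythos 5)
Your proof is correct, and it takes a genuinely different — and in fact slightly sharper — route than the paper's.

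The paper also starts from the observation that $c_{m,x}-c_{m-1,x}=[z^m]\bigl((1-z)C_x(z)\bigr)$ and telescopes, but it extracts the coefficient by Cauchy's formula over a keyhole-type contour $L_\epsilon$ built from an outer circle $|z|=2$, a small circle $|z-1|=1/m$, and two connecting radial slits. To separate the $(1-x)^\theta$ dependence the paper then bounds the integrand pointwise using the triangle inequality $|1-xz|\leqslant|1-z|+|z||1-x|$ followed by $(a+b)^\theta\leqslant a^\theta+b^\theta$; the term $|1-z|^\theta\cdot|1-z|^{1-\theta}=|1-z|$ coming from this splitting is what produces the extra $O(1/m^2)$ after integration, which then telescopes to the $s/m^2$ in the statement. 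Your argument sidesteps this loss entirely: you observe that $Q_x(z)=(1-z)^{1-\theta}(1-zx)^\theta$ is single-valued off the \emph{compact} cut $[1,1/x]$ (since the exponents sum to $1$), deform all the way to infinity (where the $z^{-1}$ Laurent coefficient vanishes for $j\geqslant 2$), obtain the exact discontinuity integral over the cut, and pull out $(1-tx)^\theta\leqslant(1-x)^\theta$ before evaluating the remaining Beta integral $\int_1^\infty(t-1)^{1-\theta}t^{-j-1}\,\dr t=\Gamma(2-\theta)\Gamma(j-1+\theta)/\Gamma(j+1)\asymp j^{\theta-2}$. The cut lies between the two singularities rather than the contour passing near $z=1$, so the $(1-x)^\theta$ factor is extracted without loss and the $s/m^2$ term disappears; the resulting estimate $|c_{m,x}-c_{m-s,x}|\ll s\,(1-x)^\theta m^{\theta-2}$ implies the lemma as stated. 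Both proofs handle $\theta=1$ trivially ($q_j=0$ for $j\geqslant2$), and your reduction to indices $j>m/2\geqslant 1$ via $s\leqslant m/2$ correctly ensures the $j\geqslant2$ hypothesis needed for the large-circle term to vanish.
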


\begin{proof}
Suppose  $L_{\epsilon}$ is a contour
$
L_\epsilon=L_1\cup L_2\cup L_3 \cup L_4
$, where
$$
L_1=\{ z| z={2e^{it}}, \pi \geqslant |t|\geqslant \epsilon \},\quad
L_2=\left\{ z| z=1+{e^{it}\over m}, \pi \geqslant |t|\geqslant
\epsilon \right\},
$$
$$
L_3=\left\{ z| z=t\left(1+{e^{i\epsilon }\over m}\right)
+(1-t)2e^{i\epsilon } ,\quad 0\leqslant t\leqslant 1\right\},
$$
$$
L_4=\left\{ z| z=t\left(1+{e^{-i\epsilon }\over m}\right)
+(1-t)2e^{-i\epsilon } ,\quad 0\leqslant t\leqslant 1\right\}.
$$
 Applying Cauchy formula we have
\begin{equation*}
\begin{split}
|c_{m,x}-c_{m-1,x}|&=
\left| {1\over {2\pi i}}\int_{L_\epsilon}
C_x(z)
{{(1-z)}\over{z^{m+1}}}\,dz \right|
\leqslant
 {1\over {2\pi}}\int_{L_\epsilon}
{{|1-xz|^\theta|1-z|^{1-\theta}}\over{|z|^{m+1}}}\,|dz|
\\
&\leqslant
 {1\over {2\pi}}\int_{L_\epsilon}
{{(|1-z|+|z||1-x|)^\theta|1-z|^{1-\theta}}\over{|z|^{m+1}}}\,|dz|
\\
&\ll
 \int_{L_\epsilon}
{{|1-z|+|1-z|^{1-\theta}|1-x|^\theta}\over{|z|^{m+1}}}\,|dz|
\end{split}
\end{equation*}

Allowing now $\epsilon \to 0$, we have

\begin{equation*}
\begin{split}
|c_{m,x}-c_{m-1,x}|&\ll \frac{1}{2^m}
+\int_{1+{1\over m}}^2 {{(y-1)+(y-1)^{1-\theta}|1-x|^\theta}\over{y^{m+1}}}\,dy
\\
&\quad+\int_{|z-1|={1\over m}} {{|1-z|+|1-z|^{1-\theta}|1-x|^\theta}\over{|z|^{m+1}}}\,|dz|
\\
&\ll
\frac{(1-x)^\theta}{m}\int_{m\log \left( 1+{1\over m}\right) }^{2m}
\frac{(e^{u/m}-1)^{1-\theta}}{e^u}
\,du +\frac{1}{m^2}+m^{\theta -2}(1-x)^\theta
\\
&\ll(1-x)^\theta m^{\theta -2}\int_{m\log \left( 1+{1\over m}\right) }^{2m}
{u^{\theta}}{e^{-u}}
\,du +\frac{1}{m^2}+m^{\theta -2}(1-x)^\theta
\\
&\ll
 m^{\theta-2}(1-x)^\theta +{1 \over {m^2}}.
 \end{split}
\end{equation*}
Now we have for $s\leqslant m/2$
\begin{equation*}
\begin{split}
|c_{m,x}-c_{m-s,x}|&\leqslant |c_{m,x}-c_{m-1,x}|+|c_{m-1,x}-c_{m-2,x}|+\cdots
+|c_{m-s+1,x}-c_{m-s,x}|
\\
&\ll sm^{\theta-2}(1-x)^\theta +{s \over {m^2}}.
\end{split}
\end{equation*}
The lemma is proved.
\end{proof}



\begin{lem}
\label{diffgnm}
For $0\leqslant x \leqslant e^{-1/n}$ and
$k\leqslant n/8$, we have
$$
g_{n,x} -g_{n-k,x} \ll {{p(e^{-1/n})}\over {np(x)}}\left( \left(
{k\over n} \right)^\theta +{1\over {(n(1-x))^\theta}} \right).
$$
\end{lem}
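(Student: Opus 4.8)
The plan is to mimic the decomposition already used in Lemma \ref{diffpnm}, but now for the coefficients $g_{n,x}$ of $G_x(z)=\tilde G_x(z)\,C_x(z)$ rather than for $p_n$ themselves, and to exploit the two difference estimates we have just proved, namely Lemma \ref{diffpnm} (which controls differences of $\tilde p_k$, hence of $\tilde g_{k,x}$ after the appropriate rescaling) and Lemma \ref{diffcmn} (which controls $|c_{m,x}-c_{m-s,x}|$). First I would write, using $G_x(z)=\tilde G_x(z)C_x(z)$,
$$
g_{n,x}=\sum_{j=0}^n \tilde g_{j,x}\,c_{n-j,x},\qquad g_{n-k,x}=\sum_{j=0}^{n-k}\tilde g_{j,x}\,c_{n-k-j,x},
$$
and split the difference into the ``bulk'' part $\sum_{0\le j\le n-k}\tilde g_{j,x}\,(c_{n-j,x}-c_{n-k-j,x})$, where both binomial-type coefficients are present, plus the ``tail'' part $\sum_{n-k<j\le n}\tilde g_{j,x}\,c_{n-j,x}$ where only $g_{n,x}$ contributes. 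Since $x\le e^{-1/n}$ we have $1-x\ge 1-e^{-1/n}\gg 1/n$, so the error term $s/m^2$ in Lemma \ref{diffcmn} is dominated by $sm^{\theta-2}(1-x)^\theta$ in the range that matters, which simplifies the bookkeeping.

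Next I would estimate the two pieces. For the tail part, since $c_{m,x}\le e\theta C_x(e^{-1/n})/m$ and $\tilde g_{j,x}\ge 0$ with $\sum_{j\le n}\tilde g_{j,x}\le e\tilde G_x(e^{-1/n})$, I would bound $\sum_{n-k<j\le n}\tilde g_{j,x}\,c_{n-j,x}$ by grouping the $c$-factors: $\sum_{l=0}^{k-1}c_{l,x}\le eC_x(e^{-1/n})$ times the maximum of $\tilde g_{j,x}$ over $n-k<j\le n$, which by the analogue of the bound $\tilde g_{j,x}\ll \tilde G_x(e^{-1/j})/j$ and Lemma \ref{pvbound} is $\ll \tilde G_x(e^{-1/n})/n$; multiplying $\tilde G_x C_x=G_x$ and using $G_x(e^{-1/n})=p(e^{-1/n})/p(xe^{-1/n})\asymp p(e^{-1/n})/p(x)$ (again via Lemma \ref{pvbound}, since $xe^{-1/n}$ and $x$ differ by a bounded factor on the relevant scale) gives a contribution $\ll \frac{p(e^{-1/n})}{np(x)}\cdot\frac{k}{n}\cdot\big((n(1-x))^{\theta}\cdot\frac1{(n(1-x))^\theta}\big)$, i.e. of the required order with the extra $(k/n)^{1-\theta}$ to spare. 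For the bulk part I would insert Lemma \ref{diffcmn} with $s=k$ and $m=n-j$, splitting the $j$-sum at $j\le n/2$ and $n/2<j\le n-k$ exactly as in the proof of Lemma \ref{diffpnm}: on $j\le n/2$ one has $m=n-j\asymp n$, giving $k\,n^{\theta-2}(1-x)^\theta$ times $\sum_j\tilde g_{j,x}\ll\tilde G_x(e^{-1/n})$; on $n/2<j\le n-k$ one sums $k(n-j)^{\theta-2}(1-x)^\theta$ against $\tilde g_{j,x}\ll \tilde G_x(e^{-1/(n-j)})/(n-j)$ and uses $\sum_{l\ge k}l^{\theta-2}\ll k^{\theta-1}$ together with Lemma \ref{pvbound} to pull out $\tilde G_x(e^{-1/n})$. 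Collecting terms and again multiplying through by $C_x(e^{-1/n})$ to rebuild $G_x(e^{-1/n})\asymp p(e^{-1/n})/p(x)$ yields
$$
g_{n,x}-g_{n-k,x}\ll \frac{p(e^{-1/n})}{np(x)}\Big(\Big(\frac kn\Big)^\theta (n(1-x))^\theta+\frac1{(n(1-x))^\theta}\Big),
$$
and since $x\le e^{-1/n}$ forces $(n(1-x))^\theta\ll 1$, the first term is $\ll (k/n)^\theta$, which is the claimed bound.

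The main obstacle I anticipate is not any single estimate but the careful matching of scales: one must check that replacing $\tilde p$-type coefficients by $\tilde g_{\cdot,x}$-type coefficients is legitimate uniformly in $x\in[0,e^{-1/n}]$, i.e. that the bounds $\tilde g_{j,x}\ll \tilde G_x(e^{-1/j})/j$ and $g_{j,x}\ll G_x(e^{-1/j})/j$ hold with constants independent of $x$ (these follow from the recursions for $g_{n,x},c_{n,x}$ already derived, since $\tilde d_k\le \tilde d^+$ uniformly), and that the passage from $G_x(e^{-1/n})$ to $p(e^{-1/n})/p(x)$ via Lemma \ref{pvbound} does not lose powers of $n(1-x)$. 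The restriction $k\le n/8$ (rather than $k\le n/2$) is exactly what gives room in the tail estimate to apply Lemma \ref{diffcmn} with $s=k\le m/2$ for every $m=n-j$ with $j\le n-k$, and to keep $n-j\asymp n$ comfortably in the first bulk range; once those scale comparisons are pinned down, the rest is the same bookkeeping as in Lemma \ref{diffpnm}.
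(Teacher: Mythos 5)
Your overall strategy is the same as the paper's: write $G_x(z)=\tilde G_x(z)C_x(z)$, split the difference $g_{n,x}-g_{n-k,x}$ into a bulk sum of differences of $c$-coefficients against a tail, and invoke Lemma~\ref{diffcmn} for the bulk and the recursive bounds $\tilde g_{j,x}\ll \tilde G_x(e^{-1/j})/j$, $c_{l,x}\geqslant 0$ for the tail. So the route is right. But two of the steps, as you wrote them, do not actually go through.

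First, your split at $j\leqslant n-k$ is incompatible with Lemma~\ref{diffcmn}. That lemma estimates $c_{m,x}-c_{m-s,x}$ only for $s\leqslant m/2$, so with $m=n-j$ and $s=k$ you need $j\leqslant n-2k$, not $j\leqslant n-k$. For $n-2k<j\leqslant n-k$ you cannot apply the lemma at all, and the hypothesis $k\leqslant n/8$ does nothing to repair this, contrary to your last paragraph. The paper therefore splits at $n-2k$ and absorbs the range $n-2k<j\leqslant n$ into the tail (giving the two tail sums $S_2$, $S_3$).

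Second, and more seriously, your tail estimate is too crude to yield the claimed bound. You bound $\sum_{l<k}c_{l,x}$ by $eC_x(e^{-1/n})$, but then the tail contributes $\ll \tilde G_x(e^{-1/n})C_x(e^{-1/n})/n=G_x(e^{-1/n})/n\asymp p(e^{-1/n})/(np(x))$, with no smallness factor at all. The factor $k/n$ you then write down in your final display is not produced by any step you stated; the identity $(n(1-x))^\theta\cdot(n(1-x))^{-\theta}=1$ just records that $C_x(e^{-1/n})\tilde G_x(e^{-1/n})=G_x(e^{-1/n})$ and does not help. What is actually needed is $\sum_{l\leqslant 2k}c_{l,x}\leqslant eC_x(e^{-1/(2k)})$ — a bound at the scale of the short range, not at scale $n$ — followed by the comparison
$$
\frac{C_x(e^{-1/(2k)})}{C_x(e^{-1/n})}\ll \Bigl(\frac{k}{n}\Bigr)^{\theta}\Bigl(1+\frac{1}{k(1-x)}\Bigr)^{\theta}\ll \Bigl(\frac{k}{n}\Bigr)^{\theta}+\frac{1}{(n(1-x))^{\theta}},
$$
which is exactly where the gain $\bigl(k/n\bigr)^{\theta}+(n(1-x))^{-\theta}$ comes from. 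Without this your tail estimate is weaker than the lemma's conclusion, so the argument does not close. These are both fixable, but as written the proof has a gap in the bulk range and a wrong bound in the tail.
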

\begin{proof}
Since $G_x(z)=\tilde G_x(z) C_x(z)$, we obtain
\begin{equation*}
\begin{split}
g_{n,x}-g_{n-k,x}&=
\sum_{s=0}^n \tilde g_{s,x}c_{n-s,x}
-\sum_{s=0}^{n-k} \tilde g_{s,x}c_{n-k-s,x}
=\sum_{s=0}^{n-2k}\tilde g_{s,x}(c_{n-s,x}-c_{n-k-s,x})
\\
&\quad+\sum_{n-2k<s \leqslant n} \tilde g_{s,x}c_{n-s,x}
-\sum_{n-2k<s\leqslant n-k} \tilde g_{s,x}
c_{n-k-s,x}=:S_1+S_2+S_3.
\end{split}
\end{equation*}

If $\theta=1$ then $c_j=1-x$, for $j\geqslant 1$, it follows hence that in this case $S_1=0$.
Therefore, while estimating $S_1$, we may assume that $\theta<1$.
 Applying Lemma \ref{diffcmn} we have
\begin{eqnarray*}
S_1&\ll&
\sum_{0\leqslant s \leqslant n-2k}
\tilde g_{s,x}\left(
 k(n-s)^{\theta-2}(1-x)^\theta +{k \over {(n-s)^2}}
     \right)
\\
&\ll&  kn^{\theta-2}(1-x)^\theta
\sum_{0\leqslant s \leqslant n/2} \tilde g_{s,x}
\\
&&\mbox{}+\sum_{n/2 \leqslant s \leqslant n-2k} \tilde g_{s}\left(
 k(n-s)^{\theta-2}(1-x)^\theta +{k \over {(n-s)^2}}
     \right)
\\
&\ll&  kn^{\theta-2}(1-x)^\theta \tilde G_x(e^{-1/n})
\\
&&\mbox{}+k{{\tilde
G_x(e^{-1/n})}\over n} \sum_{n/2 \leqslant s \leqslant n-2k}
\left(
 (n-s)^{\theta-2}(1-x)^\theta +{1 \over {(n-s)^2}}
     \right)
\\
&\ll&  kn^{\theta-2}(1-x)^\theta  G_x(e^{-1/n}) \left(
{{1-e^{-1/n}}\over {1-xe^{-1/n}}} \right)^\theta
\\
&&\mbox{}+k
{{G_x(e^{-1/n})}\over n} \left( {{1-e^{-1/n}}\over {1-xe^{-1/n}}}
\right)^\theta \left(
 k^{\theta-1}(1-x)^\theta +{1 \over k}
     \right).
\end{eqnarray*}
Since   $1-xe^{-1/n}\geqslant 1-x$, we have
$$
S_1\ll {k\over {n^2}}{{p(e^{-1/n})}\over {p(xe^{-1/n})}} +{1\over
n}{{p(e^{-1/n})}\over {p(xe^{-1/n})}} \left( \left( {k\over n}
\right)^\theta +{1\over {(n(1-x))^\theta}} \right).
$$
In a similar way we obtain
\begin{equation*}
\begin{split}
S_2+S_3&\ll {1\over n} \tilde G_x(e^{-1/n})\sum_{0\leqslant l \leqslant 2k}c_{x,l}
\\
&\ll {1\over n}{{p(e^{-1/n})}\over {p(xe^{-1/n})}}
 \left(
{{1-e^{-1/n}}\over {1-xe^{-1/n}}}
\right)^\theta    \left(
{{1-xe^{-1/2k}}\over {1-e^{-1/2k}}}
\right)^\theta
\\
&\ll {1\over n}{{p(e^{-1/n})}\over {p(xe^{-1/n})}}
 \left(
{{1-x +x(1-e^{-1/2k})}\over {1-x}}
\right)^\theta   \left( {k\over n} \right)^\theta
\\
&\ll {1\over n} {{p(e^{-1/n})}\over {p(xe^{-1/n})}}
 \left(
1+{1\over {k(1-x)}}
\right)^\theta  \left( {k\over n} \right)^\theta
\\
&\ll {1\over n} {{p(e^{-1/n})}\over {p(xe^{-1/n})}}
\left( \left( {k\over n}
\right)^\theta +{1\over {(n(1-x))^\theta}} \right).
\end{split}
\end{equation*}
Since $p(xe^{-1/n})\gg p(x)$ if $0\leqslant x \leqslant e^{-1/n}$,
 the proof of the lemma follows.
\end{proof}

\begin{lem}
\label{intbounds}
Suppose $u(x)=\exp \left\{
\sum_{k=1}^{\infty}{u_k\over k}x^k
  \right\}$ and $0\leqslant u_k \leqslant A$, then the following estimates hold:
$$
\leqno1)\quad\int_0^1{{x^{j-1}}\over {u(x)}}\,dx \ll {1\over
{ju(e^{-1/j})}} \quad \hbox{if} \quad j\geqslant 1;
$$
$$
\leqno2)\quad\int_0^{e^{-1/n}}{{x^{j-1}}\over {u(x)}}\,dx \ll
{{e^{-j/n}}\over {ju(e^{-1/n})}} \quad \hbox{if} \quad j\geqslant
n.
$$
\end{lem}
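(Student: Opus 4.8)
The proof rests on three elementary facts about $u$. First, $\log u(x)=\sum_{k\geqslant1}u_kx^k/k$ has non-negative coefficients, so $u$ is non-decreasing on $[0,1)$. Second, an estimate of the type of Lemma~\ref{pvbound}: for $0\leqslant x\leqslant y<1$,
$$
\frac{u(y)}{u(x)}=\exp\Bigl\{\sum_{k\geqslant1}\frac{u_k}{k}\bigl(y^k-x^k\bigr)\Bigr\}\leqslant\exp\Bigl\{A\log\frac{1-x}{1-y}\Bigr\}=\Bigl(\frac{1-x}{1-y}\Bigr)^{A}.
$$
Since $1-e^{-1/m}\geqslant m^{-1}e^{-1/m}\geqslant(em)^{-1}$ for $m\geqslant1$, this gives, for every $m\geqslant1$ and $0\leqslant x\leqslant e^{-1/m}$,
$$
\frac1{u(x)}\leqslant\frac1{u(e^{-1/m})}\Bigl(\frac{1-x}{1-e^{-1/m}}\Bigr)^{A}\leqslant\frac{(em)^{A}(1-x)^{A}}{u(e^{-1/m})}.
$$
Third, the Beta integral $\int_0^1x^{j-1}(1-x)^{A}\,dx=\Gamma(j)\Gamma(A+1)/\Gamma(j+A+1)\ll j^{-A-1}$, with the implied constant depending only on $A$.

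To prove 1) I would split $\int_0^1=\int_0^{e^{-1/j}}+\int_{e^{-1/j}}^1$. On $[e^{-1/j},1)$ monotonicity gives $u(x)\geqslant u(e^{-1/j})$, hence
$$
\int_{e^{-1/j}}^1\frac{x^{j-1}}{u(x)}\,dx\leqslant\frac1{u(e^{-1/j})}\int_{e^{-1/j}}^1x^{j-1}\,dx=\frac{1-e^{-1}}{j\,u(e^{-1/j})}.
$$
On $[0,e^{-1/j}]$ I apply the displayed ratio bound with $m=j$ followed by the Beta integral:
$$
\int_0^{e^{-1/j}}\frac{x^{j-1}}{u(x)}\,dx\leqslant\frac{(ej)^{A}}{u(e^{-1/j})}\int_0^1x^{j-1}(1-x)^{A}\,dx\ll\frac{j^{A}}{u(e^{-1/j})}\,j^{-A-1}=\frac1{j\,u(e^{-1/j})}.
$$
Adding the two contributions yields 1).

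For 2) the whole integral lies in $[0,e^{-1/n}]$, so the ratio bound with $m=n$ gives
$$
\int_0^{e^{-1/n}}\frac{x^{j-1}}{u(x)}\,dx\leqslant\frac{(en)^{A}}{u(e^{-1/n})}\int_0^{e^{-1/n}}x^{j-1}(1-x)^{A}\,dx,
$$
and it remains to show $\int_0^{e^{-1/n}}x^{j-1}(1-x)^{A}\,dx\ll e^{-j/n}/(j\,n^{A})$ when $j\geqslant n$. Here I would substitute $x=e^{-t/n}$, turning this integral into $\tfrac1n\int_1^\infty e^{-jt/n}(1-e^{-t/n})^{A}\,dt\leqslant n^{-A-1}\int_1^\infty e^{-jt/n}t^{A}\,dt$, and then $s=jt/n$ reduces it to $j^{-A-1}\int_{j/n}^\infty e^{-s}s^{A}\,ds$. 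Since $j\geqslant n$ the lower limit $a:=j/n$ is at least $1$, and writing $s=a+r$ with $a+r\leqslant a(1+r)$ gives $\int_a^\infty e^{-s}s^{A}\,ds=e^{-a}\int_0^\infty e^{-r}(a+r)^{A}\,dr\leqslant a^{A}e^{-a}\int_0^\infty e^{-r}(1+r)^{A}\,dr\ll a^{A}e^{-a}$. Hence $\int_0^{e^{-1/n}}x^{j-1}(1-x)^{A}\,dx\ll j^{-A-1}(j/n)^{A}e^{-j/n}=e^{-j/n}/(j\,n^{A})$, and substituting back we obtain $\int_0^{e^{-1/n}}x^{j-1}u(x)^{-1}\,dx\ll e^{-j/n}/(j\,u(e^{-1/n}))$, which is 2). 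One can equally avoid the substitution by using $(1-x)^{A}\ll(1-e^{-1/n})^{A}+(e^{-1/n}-x)^{A}\leqslant n^{-A}+(e^{-1/n}-x)^{A}$ together with two Beta integrals, the hypothesis $j\geqslant n$ being exactly what makes the resulting $j^{-A-1}$ term admissible.

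The delicate point is 2): the ratio bound costs a factor $n^{A}$, which must be recovered from $\int_0^{e^{-1/n}}x^{j-1}(1-x)^{A}\,dx$; one cannot simply estimate $(1-x)^{A}\leqslant1$ there, since that would only give $e^{-j/n}/j$ and leave an unwanted $n^{A}$. The gain comes from the fact that for $j\geqslant n$ the density $x^{j-1}$ concentrates near the upper endpoint $e^{-1/n}$, where $1-x\asymp1/n$, so that effectively a factor $n^{-A}$ becomes available. Part 1), where the integration runs all the way to $1$, is comparatively routine.
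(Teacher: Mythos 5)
Your proof is correct and follows essentially the same strategy as the paper's: the same split of $[0,1]$ at $e^{-1/j}$ for part~1), the same ratio bound $u(y)/u(x)\leqslant\bigl((1-x)/(1-y)\bigr)^A$ combined with a Beta-type integral on the inner piece, and for part~2) the same reduction to the incomplete Gamma tail $\int_{j/n}^\infty s^A e^{-s}\,ds\ll (j/n)^A e^{-j/n}$, which you prove rather than cite. The only cosmetic differences are that the paper bounds the Beta integral via the substitution $x=e^{-y}$ rather than via Stirling's estimate for $\Gamma(j)\Gamma(A+1)/\Gamma(j+A+1)$, and that it crudely bounds $x^{j-1}\leqslant1$ on the outer piece where you evaluate the integral exactly.
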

\begin{proof} 1) For $j\geqslant 1$ we have

\begin{equation*}
\begin{split}
\int_0^1{{x^{j-1}}\over {u(x)}}\,dx
&=\int_0^{e^{-1/j}}{{x^{j-1}}\over {u(x)}}\,dx
+\int_{e^{-1/j}}^1{{x^{j-1}}\over {u(x)}}\,dx
\\
&\leqslant {1\over
{u(e^{-1/j})}}\int_0^{e^{-1/j}}{{u(e^{-1/j})}\over {u(x)}}
x^{j-1}\,dx
+{{1-e^{-1/j}}\over {u(e^{-1/j})}}
\\
&\leqslant {1\over
{u(e^{-1/j})}}\int_0^{e^{-1/j}} x^{j-1}\exp \left\{
\sum_{k=1}^{\infty}{u_k\over k}(e^{-k/j}-x^k)
  \right\}\,dx+ {1\over {ju(e^{-1/j})}}
\\
&\leqslant {1\over {u(e^{-1/j})}}\int_0^{e^{-1/j}} x^{j-1} \left(
{{1-x}\over {1-e^{-1/j}}} \right)^A   \,dx+ {1\over
{ju(e^{-1/j})}}
\\
&\leqslant {{e^{A/j}j^A}\over {u(e^{-1/j})}}\int_0^{e^{-1/j}}
x^{j-1}(1-x)^A\,   dx+ {1\over {ju(e^{-1/j})}}
\\
&= {{e^{A/j}j^A}\over
{u(e^{-1/j})}}\int_0^{\infty } (1-e^{-y})^A e^{-jy}\,   dx
+ {1\over {ju(e^{-1/j})}}
\\
&\leqslant {{e^{A/j}j^A}\over
{u(e^{-1/j})}}\int_0^{e^{-1/j}} y^A e^{-jy}\,   dx + {1\over
{ju(e^{-1/j})}} = {{e^{A/j} \Gamma (A+1)+1}\over {ju(e^{-1/j})}},
\end{split}
\end{equation*}

here we have used the inequalities $e^{-y}y\leqslant
1-e^{-y}\leqslant y$, for $y\geqslant 0$.

2) Suppose now that $j\geqslant n$, then
\begin{equation*}
\begin{split}
\int_0^{e^{-1/n}}{{x^{j-1}}\over {u(x)}}\,dx
&\leqslant {1\over
{u(e^{-1/n})}}\int_0^{e^{-1/n}} x^{j-1} \left( {{1-x}\over
{1-e^{-1/n}}} \right)^A \,  dx
\\
&\leqslant
 {{e^{A/n}n^A}\over {u(e^{-1/n})}}\int_0^{e^{-1/n}}
x^{j-1}(1-x)^A  \, dx
\\
&={{e^{A/n}n^A}\over {u(e^{-1/n})}}\int_{1/n}^{\infty }
(1-e^{-y})^Ae^{-jy}  \, dy
\\
&\leqslant {{e^{A/n}n^A}\over
{u(e^{-1/n})}}\int_{1/n}^{\infty } y^Ae^{-jy}  \, dy
\\
&={{e^{A/n}n^A}\over {u(e^{-1/n})}}{1\over
{j^{A+1}}}\int_{j/n}^{\infty } y^Ae^{-y} \,  dy \ll
{{e^{-j/n}}\over {ju(e^{-1/n})}},
\end{split}
\end{equation*}
since $\int_{w}^{\infty } y^Ae^{-y}  \, dy \ll w^Ae^{-w}$, as
$w\to \infty$.

The lemma is proved.
\end{proof}


\begin{lem}
\label{boundofg}
$$
\int_0^{e^{-1/n}} \left| g_{x,n} -{{p_n}\over {p(x)}}
\right|x^{j-1}\, dx \ll {{p(e^{-1/n})}\over n } \left( {{j^{\theta
-1}}\over{n^\theta}} \right) {1\over {p(e^{-1/j})}},
$$
when $1\leqslant j\leqslant n$.
\end{lem}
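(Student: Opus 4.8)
The plan is to reduce the assertion to the pointwise estimate
$$
\Bigl|\,g_{n,x}-\frac{p_n}{p(x)}\,\Bigr|\ \ll\ \frac{p(e^{-1/n})}{n\,p(x)\,\bigl(n(1-x)\bigr)^{\theta}},\qquad 0\le x\le e^{-1/n},
$$
with constant depending only on $d^-,d^+$, and then to integrate it. Granting the pointwise bound, note that $p(x)(1-x)^{\theta}=\tilde p(x)$, where $\tilde p(x)=\exp\{\sum_{k\ge1}\tilde d_kx^k/k\}$ has $0\le\tilde d_k\le\tilde d^{+}$; so Lemma~\ref{intbounds}(1) applied to $\tilde p$ gives $\int_0^{1}x^{j-1}\tilde p(x)^{-1}\,dx\ll\bigl(j\,\tilde p(e^{-1/j})\bigr)^{-1}$, and since $\tilde p(e^{-1/j})=p(e^{-1/j})(1-e^{-1/j})^{\theta}\asymp p(e^{-1/j})\,j^{-\theta}$ we get $\int_0^{e^{-1/n}}x^{j-1}\tilde p(x)^{-1}\,dx\ll j^{\theta-1}p(e^{-1/j})^{-1}$. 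Multiplying the pointwise bound by $x^{j-1}$ and integrating over $[0,e^{-1/n}]$ therefore yields exactly
$$
\int_0^{e^{-1/n}}\Bigl|g_{n,x}-\frac{p_n}{p(x)}\Bigr|x^{j-1}\,dx\ \ll\ \frac{p(e^{-1/n})}{n^{1+\theta}}\cdot\frac{j^{\theta-1}}{p(e^{-1/j})}\ =\ \frac{p(e^{-1/n})}{n}\Bigl(\frac{j^{\theta-1}}{n^{\theta}}\Bigr)\frac1{p(e^{-1/j})}.
$$

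For the pointwise estimate the starting point is the exact identity
$$
g_{n,x}-\frac{p_n}{p(x)}\ =\ \bigl[z^n\bigr]\Bigl(G_x(z)\bigl(1-\tfrac{p(zx)}{p(x)}\bigr)\Bigr)\ =\ \frac1{p(x)}\sum_{j\ge1}p_jx^j\bigl(g_{n,x}-g_{n-j,x}\bigr),
$$
obtained from $1-p(zx)/p(x)=p(x)^{-1}\sum_{j\ge1}p_jx^j(1-z^j)$, with the convention $g_{m,x}=0$ for $m<0$. The essential feature is the cancellation encoded here: for $d_j\equiv1$ all summands with $1\le j\le n-1$ vanish and the remaining ones cancel, giving $g_{n,x}=p_n/p(x)$ exactly — so one must not bound the sum termwise in absolute value. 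It remains to show $\sum_{j\ge1}p_jx^j\,|g_{n,x}-g_{n-j,x}|\ll p(e^{-1/n})\,\bigl(n(n(1-x))^{\theta}\bigr)^{-1}$.

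To do this I would split the sum at $j=n/8$. For $1\le j\le n/8$ one applies Lemma~\ref{diffgnm} (whose argument in fact bounds $|g_{n,x}-g_{n-j,x}|$), then sums using $\sum_{j\ge1}p_jx^j\le p(x)$ together with Hölder's inequality $\sum_{j\ge1}j^{\theta}p_jx^j\le\bigl(\sum_{j\ge1}jp_jx^j\bigr)^{\theta}\bigl(\sum_{j\ge1}p_jx^j\bigr)^{1-\theta}\le(d^+p(x)/(1-x))^{\theta}p(x)^{1-\theta}$ to absorb the fractional power from the $(j/n)^{\theta}$ term; this contributes $\ll\frac{p(e^{-1/n})}{n\,p(x)}\cdot\frac{p(x)}{(n(1-x))^{\theta}}$. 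For $j>n/8$ one uses $|g_{n,x}-g_{n-j,x}|\le g_{n,x}+g_{n-j,x}$. The $g_{n,x}$‑part (including all $j>n$) is $g_{n,x}\sum_{j>n/8}p_jx^j$, where $g_{n,x}\le\frac{ed^+}{n}G_x(e^{-1/n})\ll\frac{p(e^{-1/n})}{n\,p(x)}$ (using $p(xe^{-1/n})\gg p(x)$ for $x\le e^{-1/n}$) and, with $x=e^{-b/n}$, $b\ge1$, one gets $\sum_{j>n/8}p_jx^j\le e^{-b/16}p(e^{-b/(2n)})\ll e^{-b/16}p(x)\ll(n(1-x))^{-\theta}p(x)$ by Lemma~\ref{pvbound} (for $b>n$ the tail is super‑exponentially small). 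The $g_{n-j,x}$‑part is $\sum_{0\le m<7n/8}p_{n-m}x^{\,n-m}g_{m,x}$, where $p_{n-m}\ll p(e^{-1/n})/n$ by (\ref{p_n2}) and, crucially, one controls the \emph{whole} partial sum $\sum_{m\le n}g_{m,x}\le e\,G_x(e^{-1/n})\ll p(e^{-1/n})/p(x)$ rather than individual $g_{m,x}$, so that $\sum_{0\le m<7n/8}x^{\,n-m}g_{m,x}\le x^{n/8}\sum_{m\le n}g_{m,x}\ll x^{n/8}p(e^{-1/n})/p(x)$; the factor $x^{n/8}=e^{-b/8}$ then absorbs the polynomial factors $p(e^{-1/n})/p(x)\ll b^{d^+}$ and $(n(1-x))^{\theta}\asymp b^{\theta}$. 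Collecting the pieces gives the required bound, hence the pointwise estimate and the lemma.

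The hard part is the cancellation: the whole proof hinges on rewriting $g_{n,x}-p_n/p(x)$ in the form above (exploiting that $1-p(zx)/p(x)$ vanishes at $z=1$) so that every surviving summand is genuinely small; a naive termwise bound is hopeless. The only delicate estimate afterwards is the contribution of $g_{n-j,x}$ for $j$ near $n$ — i.e. of $g_{m,x}$ with $m$ small while $x$ is near $e^{-1/n}$, where an individual bound on $g_{m,x}$ is too crude — which is why one bounds $\sum_{m\le n}g_{m,x}$ collectively and uses the decay of $x^{\,n-m}$.
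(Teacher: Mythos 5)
Your proof is correct and rests on the same key identity as the paper's (rewriting $g_{n,x}-p_n/p(x)$ via $p_n=\sum_{k\le n}p_kx^kg_{n-k,x}$ so that all summands involve differences $g_{n,x}-g_{n-k,x}$), the same split at $k=n/8$, the same appeal to Lemma~\ref{diffgnm}, and the same collective bound $\sum_{m\le n}g_{m,x}\le eG_x(e^{-1/n})$ for the $n/8<k\le n$ range. The genuine difference is organizational and buys you a cleaner computation: you first prove the \emph{pointwise} estimate $|g_{n,x}-p_n/p(x)|\ll p(e^{-1/n})/(np(x)(n(1-x))^\theta)$ and then integrate once, whereas the paper never states a pointwise bound but instead bounds $\int S_i(x)x^{j-1}\,dx$ for $i=1,2,3$ separately. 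In particular, for the $S_1$ contribution the paper interchanges sum and integral, applies Lemma~\ref{intbounds} with $u=p^2$ and $u=p^2(1-x)^\theta$, and then does a case analysis $k\le j$ versus $k>j$ using Lemma~\ref{pvbound}; your replacement — the H\"older bound $\sum_k k^\theta p_kx^k\le(d^+)^\theta p(x)/(1-x)^\theta$ combined with the identity $p(x)(1-x)^\theta=\tilde p(x)$ and a single application of Lemma~\ref{intbounds}(1) to $\tilde p$ — collapses that multi-step computation into two lines. The paper, for its part, gets away with weaker control on $S_2,S_3$ (only $O(n^{-2})$ after integration, combined with the observation that the target bound is $\gg n^{-2}$), whereas you need — and verify — the stronger pointwise statement for those pieces too, at the mild cost of the elementary $b\mapsto e^{-b/8}b^{d^++\theta}$ boundedness check. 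Both arguments need to confirm that the hypotheses of Lemma~\ref{intbounds} are met; for your route the relevant fact is $0\le\tilde d_k\le d^+-\theta$, which follows from $\theta=\min\{1,d^-\}$, and you correctly use this.
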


\begin{proof} Since $p(z)=p(xz){{p(z)}\over p(xz)}=p(xz)G_x(z)$,
therefore $p_n=\sum_{k=0}^np_kx^kg_{n-k,x}$ and
\begin{equation*}
\begin{split}
\left| g_{x,n} -{{p_n}\over {p(x)}}  \right| &=
 {1\over
{p(x)}}\left|p(x)g_{n,x} - \sum_{k=0}^np_kx^kg_{n-k,x} \right|
\\
&\leqslant {1\over {p(x)}} \left| \sum _{k=0}^{n}p_{k}x^{k}\left(
g_{n,x}- g_{n-k,x} \right) \right| +{{g_{n,x}}\over
{p(x)}}\sum_{k>n}p_{k}x^{k}
\\
&\leqslant {1\over {p(x)}} \sum _{k\leqslant n/8}p_{k}x^{k}\left|
g_{n,x}- g_{n-k,x} \right|  +{{g_{n,x}}\over
{p(x)}}\sum_{k>n/8}p_{k}x^{k}
\\
&\quad+{1\over {p(x)}}\sum_{n/8< k
\leqslant n}p_{k}x^{k}g_{n-k,x},
\end{split}
\end{equation*}
Suppose $0\leqslant x\leqslant e^{-1/n}$. Applying here Lemma \ref{diffgnm} we
have
\begin{equation*}
\begin{split}
\left| g_{x,n} -{{p_n}\over {p(x)}}  \right|&\ll
{{p(e^{-1/n})}\over {np(x)^{2}}}\sum_{k\leqslant
n/8}p_{k}x^{k}\left( \left( {k\over n} \right)^\theta + {1\over
{(n(1-x))^\theta }}
 \right)
\\
&\quad+{{G_x(e^{-1/n})}\over {np(x)}}\sum_{k> n/8}p_{k}x^{k}+
{{p(e^{-1/n})}\over n}{{x^{n/8}}\over {p(x)}}G_x(e^{-1/n})
\\
&=:S_{1}(x) +S_{2}(x) +S_{3}(x).
\end{split}
\end{equation*}
Applying Lemma \ref{intbounds} with $u(x)=p(x)^2$ and
 $u(x)=p(x)^2(1-x)^\theta$, we have
\begin{multline*}
\int_{0}^{e^{-1/n}} S_{1}(x)x^{j-1}\,dx
\\
={{p(e^{-1/n})}\over n}
\sum_{k\leqslant n/8}p_k\left( \left( {k\over n}\right)^\theta
\int_{0}^{e^{-1/n}} {{x^{k+j}}\over{p(x)^2}}\,dx +{1\over
{n^\theta}} \int_{0}^{e^{-1/n}}
{{x^{k+j}dx}\over{{p(x)^2}(1-x)^\theta }} \right)
\\
\shoveleft{\ll {{p(e^{-1/n})}\over n} \sum_{k\leqslant n/8}p_k\left( \left(
{k\over n}\right)^\theta  {1\over{(k+j)p(e^{-1/(k+j)})^2}}\right.}
\\
\shoveright{ \left.+{1\over{n^\theta}}
{1 \over{(k+j){p(e^{-1/(k+j)})^2}(1-e^{-1/(k+j)})^\theta }}
\right)}
\\
\shoveleft{\ll {{p(e^{-1/n})}\over n} \sum_{k\leqslant n/8}
{{p_k}\over{(k+j)p(e^{-1/(k+j)})^2}}\left( \left( {k\over
n}\right)^\theta
 +{{(k+j)^\theta}\over{n^\theta}}
\right)}
\\
\shoveleft{\ll {{p(e^{-1/n})}\over n} \left(
 {1\over j}\left( {j\over n}\right)^\theta
{1\over{p(e^{-1/j})^2}} \sum_{k\leqslant j} {p_k}
 +\sum_{k>j} {{p_k}\over k} \left( {k\over
n}\right)^\theta {1\over{p(e^{-1/k})^2}} \right).\hfill}
\end{multline*}
Applying here inequality $\sum_{k\leqslant j} {p_k}\leqslant
ep(e^{-1/j})$ in the first sum and the estimate (\ref{p_n2}) in the second one
we have
\begin{multline*}
\int_{0}^{e^{-1/n}} S_{1}(x)x^{j-1}\,dx
\\
\ll {{p(e^{-1/n})}\over n}
\left(
 {1\over j}\left( {j\over n}\right)^\theta
{1\over{p(e^{-1/j})}}
 +{1\over{p(e^{-1/j})}}\sum_{k>j} {1\over {k^2}} \left( {k\over
n}\right)^\theta {{p(e^{-1/j})}\over{p(e^{-1/k})}} \right).
\end{multline*}

Lemma \ref{pvbound} yields the estimate
${{p(e^{-1/j})}\over{p(e^{-1/k})}}\leqslant \left( {j\over k}
\right)^{d^-}e^{d^-/j}$, therefore
\begin{multline*}
\int_{0}^{e^{-1/n}} S_{1}(x)x^{j-1}\,dx
\\
\ll {{p(e^{-1/n})}\over n}
\left(
 {1\over j}\left( {j\over n}\right)^\theta
{1\over{p(e^{-1/j})}}
 +{1\over{p(e^{-1/j})}}\sum_{k>j} {1\over {k^2}} \left( {k\over
n}\right)^\theta \left( {j\over k} \right)^{d^-}  \right)
\\
\ll {{p(e^{-1/n})}\over n } \left( {{j^{\theta
-1}}\over{n^\theta}} \right) {1\over {p(e^{-1/j})} }.
\end{multline*}

Let us now estimate $S_2(x)$
$$
S_2(x)\ll {{p(e^{-1/n})}\over {np(x)^2} } \sum_{k>n/8}p_kx^k\leqslant
{{p(e^{-1/n})x^{n/16}}\over {np(x)^2} }p(\sqrt{x}) \ll
{{p(e^{-1/n})x^{n/16}}\over {np(x)} },
$$
since $p(y)\ll p(y^2)$ uniformly for $0\leqslant y <1$. Hence,
applying Lemma \ref{intbounds} we have
$$
\int_{0}^{e^{-1/n}}S_2(x)x^{j-1}\,dx \ll {{p(e^{-1/n})}\over n }
\int_{0}^{e^{-1/n}}{{x^{n/16}}\over {p(x)}}\,dx\ll {1\over {n^2}}.
$$
In a similar  way we obtain the estimate
$$
\int_{0}^{e^{-1/n}}S_3(x)x^{j-1}\,dx \ll {{p(e^{-1/n})^2}\over n }
\int_{0}^{e^{-1/n}}{{x^{n/8}}\over {p(x)^2}} \,dx\ll {1\over {n^2}}.
$$

Collecting the obtained estimates and noticing that
$$
{{p(e^{-1/n})}\over n } \left( {{j^{\theta
-1}}\over{n^\theta}} \right) {1\over {p(e^{-1/j})}}\gg \frac{1}{nj} \geqslant {1\over
{n^2}},
$$
for $1\leqslant j \leqslant n$, we obtain the proof of the lemma.
\end{proof}
Let us define
$$
F_j(z)=p(z)\int_{0}^{1}{{x^{j-1}}\over {p(zx)}}\,dx
=\sum_{m=0}^{\infty }f_{m,j}z^m.
$$
Denoting
$$
q(z)={1\over{p(z)}}=\sum_{m=0}^{\infty}q_mz^m
$$
we can
easily see that
$$
F_j(z)=p(z)\int_{0}^{1}{{x^{j-1}}{q(zx)}}\,dx
=\sum_{m=0}^{\infty}p_mz^m \sum_{s=0}^{\infty}{{q_s}\over
{s+j}}z^s=
 \sum_{m=0}^{\infty }f_{m,j}z^m,
$$
thus
$$
f_{m,j}=\sum_{s=0}^m{{p_{m-s}q_s}\over {s+j}}.
$$
On the other hand,
$$
F_j(z)=\int_{0}^{1}x^{j-1}G_x(z)\,dx=\sum_{m=0}^{\infty}z^m\int_0^1g_{m,x}x^{j-1}\,dx=\sum_{m=0}^{\infty
}f_{m,j}z^m,
$$
therefore
$$
f_{m,j}=\int_0^1g_{m,x}x^{j-1}\,dx\geqslant 0 .
$$
 Then the following lemma holds


\begin{lem}
\label{f_mj}
We have
$$
f_{m,j}\ll {1\over{j^2}}\quad \hbox{when}\quad j\geqslant
m\geqslant 1
$$
and  $f_{0,j}={1\over j}$.
\end{lem}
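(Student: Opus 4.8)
The plan is to work from the integral representation $f_{m,j}=\int_0^1 g_{m,x}x^{j-1}\,dx$ established just above the lemma. The case $m=0$ is immediate, since $g_{0,x}\equiv 1$ gives $f_{0,j}=\int_0^1 x^{j-1}\,dx=1/j$. So fix $m\ge 1$ and $j\ge m$. The naive route — bound $g_{m,x}\le ed^+G_x(e^{-1/m})/m$, substitute $y=xe^{-1/m}$, and apply part 2) of Lemma~\ref{intbounds} — only produces $f_{m,j}\ll 1/(mj)$, which is \emph{weaker} than the claimed $1/j^2$ precisely because $m\le j$. The idea is to recover the missing factor by one integration by parts before estimating.

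First I would note that $g_{m,x}$ is a polynomial in $x$, namely $g_{m,x}=\sum_{s=0}^m p_{m-s}q_sx^s$, and that $g_{m,1}=[z^m]\bigl(p(z)q(z)\bigr)=[z^m]1=0$ for $m\ge 1$; since also $j\ge 1$, the boundary term at $x=0$ vanishes, and integration by parts gives $f_{m,j}=-\frac1j\int_0^1 x^j\,\partial_xg_{m,x}\,dx$. Next, differentiating $G_x(z)=p(z)/p(xz)$ in $x$ and using $wp'(w)/p(w)=\sum_{k\ge 1}d_kw^k$ yields $\partial_xG_x(z)=-G_x(z)\sum_{k\ge 1}d_kx^{k-1}z^k$, i.e.\ $\partial_xg_{m,x}=-\sum_{k=1}^m d_kx^{k-1}g_{m-k,x}$. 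Since the coefficients $g_{l,x}$ are nonnegative, $|\partial_xg_{m,x}|\le d^+\sum_{l=0}^{m-1}g_{l,x}\le ed^+G_x(e^{-1/m})$ — and, crucially, this bound on the derivative carries no factor $1/m$.

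Inserting this estimate, $|f_{m,j}|\le \frac{ed^+}{j}\int_0^1 G_x(e^{-1/m})x^j\,dx=\frac{ed^+p(e^{-1/m})}{j}\int_0^1\frac{x^j}{p(xe^{-1/m})}\,dx$, and the substitution $y=xe^{-1/m}$ turns the integral into $e^{(j+1)/m}\int_0^{e^{-1/m}}\frac{y^j}{p(y)}\,dy$. Now apply part 2) of Lemma~\ref{intbounds} with $u=p$ (so $A=d^+$), exponent $j+1$ and $n=m$, which is legitimate since $j+1\ge m$; it bounds the last integral by $\ll e^{-(j+1)/m}/\bigl((j+1)p(e^{-1/m})\bigr)$. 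The factors $p(e^{-1/m})$ and $e^{\pm(j+1)/m}$ then cancel, leaving $|f_{m,j}|\ll 1/(j(j+1))\ll 1/j^2$. The only genuine obstacle is the one flagged above: the trivial bound on $g_{m,x}$ loses a factor $m$, and recovering it forces the integration by parts — which is useful only because $g_{m,1}=0$ kills the boundary term.
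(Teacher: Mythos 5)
Your proof is correct, and it reaches the same estimate by a different, though closely related, route. The paper does not integrate by parts in $x$: it differentiates the generating function $F_j(z)=p(z)\int_0^1 x^{j-1}/p(zx)\,dx$ in $z$ and reads off the recurrence $zF_j'(z)=F_j(z)\sum_k d_kz^k+1-jF_j(z)$, whose coefficient at $z^m$ gives $(m+j)f_{m,j}=\sum_{k=1}^m d_kf_{m-k,j}$ for $m\geq 1$. That recurrence hands over the crucial extra factor $\frac{1}{m+j}\leq\frac1j$ (beyond the naive $\frac1m$) for free; after bounding $\sum_{k<m}f_{k,j}\leq eF_j(e^{-1/m})$ and applying Lemma~\ref{intbounds}, one lands at $f_{m,j}\ll\frac{1}{j(m+j)}\leq\frac1{j^2}$. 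You obtain the same extra $\frac1j$ by integrating $\int_0^1 g_{m,x}x^{j-1}\,dx$ by parts in $x$, using the nontrivial observation $g_{m,1}=[z^m]\bigl(p(z)q(z)\bigr)=0$ to kill the boundary term, and then controlling $\partial_xg_{m,x}$ via the recurrence coming from $\partial_xG_x(z)=-G_x(z)\sum_k d_kx^{k-1}z^k$. Both arguments are valid and really encode the same underlying algebraic identity — differentiating $F_j$ in $z$ and integrating $x^{j-1}$ by parts are two faces of the same computation — but the paper's version bypasses the $g_{m,1}=0$ observation and the shifted-exponent application of Lemma~\ref{intbounds}, so it is slightly leaner. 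Your explicit diagnosis of why the naive bound only gives $1/(mj)$, and why the missing factor must come from the boundary behavior rather than from sharpening the pointwise bound on $g_{m,x}$, is a genuinely good explanation that the paper leaves implicit.
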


\begin{proof}
Differentiating $F_j(z)$ we can easily obtain, that
$$
zF_j'(z)=F_j(z)\sum_{k=1}^{\infty}d_kz^k +1-jF_j(z).
$$
Putting here $z=0$, we have $f_{0,j}={1/j}$.

For $m\geqslant 1$, we have for $j\geqslant m$
\begin{equation*}
\begin{split}
f_{m,j}&={1\over {m+j}}\sum_{k=1}^m d_kf_{m-k,j}\leqslant
{{d^+eF_j(e^{-1/m})}\over {m+j}}={{d^+e}\over
{m+j}}p(e^{-1/m})\int_0^1 {{x^{j-1}dx}\over {p(xe^{-1/m})}}
\\
&={{d^+e}\over {m+j}}p(e^{-1/m})e^{j/m}\int_0^{e^{-1/m}}
{{x^{j-1}dx}\over {p(x)}}\ll {1\over {j^2}},
\end{split}
\end{equation*}
here we have applied Lemma \ref{intbounds}.

The lemma is proved.
\end{proof}

{\it Proof of Theorem \ref{fundthm1}.} One can easily see, that
$$
\sum_{m=1}^{\infty}S(f;m)z^{m}=zf'(z)p(z),
$$
therefore
$$
ma_m=\sum_{k=1}^{m}S(f;k)q_{m-k},
$$
where
$$
q(z)={1\over{p(z)}}=\sum_{m=0}^{\infty}q_mz^m.
$$
Therefore we have
\begin{equation*}
\begin{split}
\lefteqn{\sum_{k=0}^na_kp_{n-k}-p_n
f(e^{-1/n})}
\\
&=\sum_{k=1}^{n}p_{n-k}{1\over
k}\sum_{j=1}^{k}S(f;j)q_{k-j}
-p_n\sum_{k=1}^{\infty} {{e^{-k/n}}\over k
}\sum_{j=1}^{k}S(f;j)q_{k-j}
\\
&= \sum_{j=1}^n S(f;j)\sum_{k=j}^n
{{p_{n-k}q_{k-j}}\over k}
-p_n\sum_{j=1}^{\infty}S(f;j)\sum_{k=j}^{\infty}{{e^{-k/n}}\over
k}q_{k-j}
\\
&={{S(f;n)}\over
n}+\sum_{j=1}^{n-1}S(f;j)\sum_{s=0}^{n-j}{{p_{n-j-s}q_s}\over
{s+j}}-p_n
\sum_{j=1}^{\infty}S(f;j)\sum_{k=j}^{\infty}{{e^{-k/j}}\over
k}q_{k-j},
\end{split}
\end{equation*}
recalling that
$$
f_{m,j}=\sum_{s=0}^m{{p_{m-s}q_s}\over
{s+j}}=\int_0^1g_{m,x}x^{j-1}\,dx
$$
we have
\begin{multline*}
\sum_{k=0}^na_kp_{n-k}-p_n
f(e^{-1/n})
-{{S(f;n)}\over n}
\\
=\sum_{j=1}^{n-1}S(f;j)\left( f_{n-j,j}
-p_n \int_{0}^{e^{-1/n}} {{x^{j-1}}\over {p(x)}}\,dx \right)
\\
+p_n
\sum_{j=n}^{\infty}S(f;j)\int_{0}^{e^{-1/n}} {{x^{j-1}}\over
{p(x)}}\,dx.
\end{multline*}
Let us denote
$$
R_n=\sum_{k=0}^na_kp_{n-k}-p_n f(e^{-1/n})-{{S(f;n)}\over n},
$$
then we have
\begin{equation*}
\begin{split}
|R_n|&\leqslant \sum_{1\leqslant j \leqslant n/2}|S(f;j)|\left|
\int_{0}^{1}{x^{j-1}}g_{n-j,x}
\,dx -p_{n-j} \int_{0}^{e^{-1/n}} {{x^{j-1}}\over {p(x)}}dx
\right|
\\
&\quad+\sum_{1\leqslant j \leqslant n/2}|S(f;j)| |p_n -p_{n-j}|
\int_{0}^{e^{-1/n}} {{x^{j-1}}\over {p(x)}}\,dx
\\
&\quad +
\sum_{n/2\leqslant j \leqslant n-1}|S(f;j)| f_{n-j,j}
+p_n \sum_{j\geqslant n}|S(f;j)|\int_{0}^{e^{-1/n}}
{{x^{j-1}}\over {p(x)}}\,dx.
\end{split}
\end{equation*}
Since $g_{m,x}  \leqslant ed^+{{G_j(e^{-1/m})}\over m}= {{ed^+}
\over m} {{p(e^{-1/m})}\over {p(e^{-1/m}x)}}$, therefore we have
\begin{multline*}
\int_{e^{-1/n}}^{1}{x^{j-1}}g_{n-j,x}
\,dx\leqslant {{ep(e^{-1/(n-j)})}\over {n-j}}\int_{e^{-1/n}}^{1}
{{dx}\over {p(xe^{-1/(n-j)})}}
\\
\ll {{p(e^{-1/n})}\over
{n^2}}{1\over {p(e^{-1/n})}}={1\over {n^2}},
\end{multline*}
when $j\leqslant n/2$.

 Applying here Lemma \ref{intbounds}, Lemma \ref{boundofg} and Lemma  \ref{f_mj}, we have
\begin{equation*}
\begin{split}
|R_n|&\ll \sum_{1\leqslant j \leqslant n/2}|S(f;j)|
{{p(e^{-1/(n-j)})}\over {n-j} } \left( {{j^{\theta -1}}\over{(n-j)^\theta}}
\right) {1\over {p(e^{-1/j})}}
\\
&\quad+\sum_{1\leqslant j \leqslant n/2}|S(f;j)|  p_n \left( {j\over n}
\right)^{\theta} {1\over
{jp(e^{-1/j})}}+{1\over{n^2}}\sum_{n/2\leqslant j \leqslant n-1}|S(f;j)|
\\
&\quad
+p_n\sum_{ j > n/2}|S(f;j)| {{e^{-j/n}}\over {jp(e^{-1/n})}}
\\
&\ll
p_n \left( {1\over {n^\theta}}\sum_{j=1}^n {{|S(f;j)|}\over
{p(e^{-1/j})}}j^{\theta -1}+ {1\over
{p(e^{-1/n})}}\sum_{j>n}{{|S(f;j)|}\over j}e^{-j/n}\right).
\end{split}
\end{equation*}
The theorem is proved.



\begin{proof}[ Proof of theorem \ref{voronmean}.]

{\it 1) Sufficiency.} Applying Theorem \ref{fundthm1} one can easily see that
condition 2 of Theorem \ref{voronmean} implies that
$$
{1\over
{p_n}}\sum_{k=0}^{n}a_kp_{n-k} =f(e^{-1/n})+o(1).
$$
Condition 1
finally proves the sufficiency of conditions 1 and 2 of Theorem \ref{voronmean}.

{\it 2) Necessity.} Suppose now that
$$
\lim_{n \to \infty}{1\over {p_n}}\sum_{k=0}^{n}a_kp_{n-k}=A.
$$
Let us denote
$$
w_n=\sum_{k=0}^{n}a_kp_{n-k}.
$$
Under our assumption we have $w_n=Ap_n(1+\epsilon_n )$, where
$\epsilon_n \to \infty$ as $n\to \infty$.

We have
\begin{equation*}
\begin{split}
\sum_{m=1}^{\infty}S(f;m)z^{m}&=zf'(z)p(z)=z(p(z)f(z))'-zp'(z)f(z)
\\
&=z(p(z)f(z))'-p(z)f(z)\sum_{k=1}^{\infty}d_kz^k
\\
&=\sum_{n=1}^{\infty}nw_nz^n-\sum_{n=1}^{\infty}w_nz^n
\sum_{k=1}^{\infty}d_kz^k.
\end{split}
\end{equation*}
Hence we obtain
\begin{equation*}
\begin{split}
S(f;n)&=nw_n-\sum_{k=1}^nd_kw_{n-k}
\\
&=Anp_n-A\sum_{k=1}^nd_kp_{n-k}+Anp_n\epsilon_n-
A\sum_{k=1}^nd_kp_{n-k}\epsilon_{n-k}
\\
&=Anp_n\epsilon_n- A\sum_{k=1}^nd_kp_{n-k}\epsilon_{n-k}=o(np_n),
\end{split}
\end{equation*}
since $np_n=\sum_{k=1}^nd_kp_{n-k}$ and $np_n\gg p(e^{-1/n})\to
\infty$ as $n\to \infty$.

The necessity of condition 2) is proved.

The necessity of condition 1) is well known, see e. g.
\cite{hardy}. It is obtained by noticing that
$$
f(x)=\frac{f(x)p(x)}{p(x)}=\frac{w_0+w_2x+\cdots+w_nx^n+\cdots}{p_0+p_2x+\cdots+p_nx^n+\cdots}.
$$
Putting here estimate $w_n=Ap_n(1+o(1))$ and using the fact that $p(x)\to \infty$ as $x\nearrow 1$ we finally obtain
$$
f(x)\to A\quad\mbox{for}\quad x\nearrow 1.
$$
The theorem is proved.
\end{proof}
\section{Mean value theorems}

\begin{proof}[Proof of Theorem \ref{meanf}]
Let $M_j$ be defined by
$$
F(z)=\exp \left\{ \sum_{k=0}^{\infty}d_k{{\hat f(k)}\over
k}z^k \right\} =\sum_{n=0}^{\infty}M_nz^n ,
$$
where $\hat f(k) \in C$, $|\hat f(k)|\leqslant 1$.

One can easily see that
$$
F(z)=p(z)m(z),
$$
where
$$
m(z)=\exp \left\{ \sum_{k=1}^{\infty}{{d_k(\hat f(k)-1)}\over
{k}}z^k \right\}=\sum_{j=1}^{\infty}m_jz^j,
$$
then
$$
M_n=\sum_{k=0}^np_km_{n-k}.
$$
In the notations of Theorem \ref{fundthm1} we have
\begin{equation*}
\begin{split}
\sum_{j=1}^{\infty}S(m;j)z^j&=zm'(z)p(z)=
m(z)p(z)\sum_{k=1}^{\infty}z^kd_k(\hat f(k) -1)
\\
&=F(z)\sum_{k=1}^{\infty}z^kd_k(\hat f(k) -1)
\end{split}
\end{equation*}
hence we have
$$
S(m;j)= \sum_{k=1}^{j}d_k\bigl( \hat f(k) -1 \bigr)M_{j-k}.
$$
Since $|\hat f(k)|\leqslant 1$ implies $|M_j|\leqslant p_j$,
therefore
$$
|S(m;j)|\leqslant d^+ \sum_{k=1}^{j}|\hat f(k) -1|p_{j-k}.
$$
Since $\hat f(k)$ for $k>n$ do not influence $M_n$ we assume that
$\hat f(k)=1$ for $k>n$. Applying here Theorem \ref{fundthm1} with $f(z)=m(z)$
we have
\begin{multline*}
\left| {{M_n}\over {p_n}}- m(e^{-1/n}) \right|=\left| {{M_n}\over
{p_n}}- \exp \left\{ \sum_{k=1}^{n}{{d_k(\hat f(k)-1)}\over
{k}}e^{-k/n} \right\} \right|
\\
\leqslant c \left( {1\over {np_n}}\sum_{k=1}^{n}|\hat f(k) -1|p_{n-k}+
{1\over {n^\theta }}\sum_{j=1}^n{{j^{\theta -1}}\over
{p(e^{-1/j})}} \sum_{k=1}^{j}|\hat f(k) -1|p_{j-k} \right.
\\
\left. \quad\mbox{}+ {1\over {p(e^{-1/n})}}\sum_{j>n}{{e^{-j/n}}\over
j}\sum_{k=1}^{j}|\hat f(k) -1|p_{j-k} \right) ,
\end{multline*}
here  $\theta =\min \{ 1, d^- \}$ and $c=c( d^+, d^- )$. We have
\begin{multline*}
{1\over {n^\theta }}\sum_{j=1}^n{{j^{\theta -1}}\over
{p(e^{-1/j})}} \sum_{k=1}^{j}|\hat f(k) -1|p_{j-k}= {1\over
{n^\theta }}\sum_{k=1}^{n}|\hat f(k)-1| \sum_{n\geqslant j\geqslant
k}{{j^{\theta -1}}\over {p(e^{-1/j})}}p_{j-k}
\\
\shoveleft{\ll {1\over {n^\theta }}\sum_{k=1}^{n}|\hat f(k)-1|
\sum_{n\geqslant j\geqslant 2k}{{j^{\theta -1}}\over
{p(e^{-1/j})}}{{p(e^{-1/(j-k)})}\over {j-k}} }
\\
\shoveright{+
 {1\over
{n^\theta }}\sum_{k=1}^{n}|\hat f(k)-1| k^{\theta -1}\sum_{2k\geqslant
j\geqslant k}{{p_{j-k}}\over {p(e^{-1/j})}}         }
\\
\shoveleft{\ll {1\over {n^\theta}}\sum_{k=1}^{n}|\hat f(k)-1|\sum_{n\geqslant
j \geqslant 2k} j^{\theta -2}+{1\over
{n^\theta}}\sum_{k=1}^{n}|\hat f(k)-1|k^{\theta -1} {1\over
{p(e^{-1/k})}}\sum_{s=0}^k p_s}
\\
\shoveleft{   \ll {1\over {n^\theta}}\sum_{k=1}^{n}|\hat f(k)-1|\left( k^{\theta
-1} +\int_k^nx^{\theta-2}\,dx \right) \hfill  }
\end{multline*}
and
\begin{multline*}
{1\over {p(e^{-1/n})}}\sum_{j>n}{{e^{-j/n}}\over
j}\sum_{k=1}^n|\hat f(k)-1|p_{j-k}={1\over
{p(e^{-1/n})}}\sum_{k=1}^n|\hat f(k)-1|\sum_{j>n}{{e^{-j/n}}\over
j}p_{j-k}
\\
\leqslant {1\over n}\sum_{k=1}^n|\hat f(k)-1|{{e^{-k/n}}\over
{p(e^{-1/n})}}\sum_{j>n}{{e^{-(j-k)/n}}}p_{j-k}\leqslant {1\over
n}\sum_{k=1}^n|\hat f(k)-1|.
\end{multline*}

 The theorem is proved.
 \end{proof}

Let us define
$$
L_n(z)=\sum_{j=1}^nd_j{{\hat f(j)-1}\over j}z^j \quad \hbox{and}
\quad \quad \rho (p)=\left( \sum_{k=1}^n{{|\hat f(j)-1|^p}\over j}
\right)^{1/p},
$$
moreover we assume that
$$
\rho (\infty)=\lim_{p\to \infty}\rho (p)=\max_{1\leqslant k \leqslant
n}|\hat f(k) -1|.
$$

\begin{lem}
\label{diffLmn}
For $m,n\geqslant 1$ and $1/p+1/q=1$ with $\infty
\geqslant p>1$, we have
      $$
           |L_n(1)-L_m(1)|\leqslant d^+ \rho (p) {\left| \log {n\over m}
              \right|}^{1/q}.
       $$
\end{lem}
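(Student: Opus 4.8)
The plan is to write $L_n(1)-L_m(1)$ as a partial sum of the coefficients $d_j(\hat f(j)-1)/j$ over the range of indices that separates $m$ and $n$, and then to bound that sum by Hölder's inequality. Concretely, assume without loss of generality $n\geqslant m$; then
$$
L_n(1)-L_m(1)=\sum_{j=m+1}^{n}d_j\,\frac{\hat f(j)-1}{j}.
$$
I would factor each term as $d_j\bigl(\hat f(j)-1\bigr)j^{-1/p}\cdot j^{-1/q}$, using $1/p+1/q=1$, so that the first factor is tailored to the $\ell^p$-type quantity $\rho(p)$ and the second to an $\ell^q$-sum of $1/j$.

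Applying Hölder's inequality to the sum $\sum_{m<j\leqslant n}$ with exponents $p$ and $q$, and using the uniform bound $d_j\leqslant d^+$, I get
$$
|L_n(1)-L_m(1)|\leqslant d^+\Bigl(\sum_{j=m+1}^{n}\frac{|\hat f(j)-1|^p}{j}\Bigr)^{1/p}\Bigl(\sum_{j=m+1}^{n}\frac{1}{j}\Bigr)^{1/q}\leqslant d^+\,\rho(p)\Bigl(\sum_{j=m+1}^{n}\frac{1}{j}\Bigr)^{1/q},
$$
where in the last step I extend the $p$-sum to the full range $1\leqslant j\leqslant n$, which only increases it (all terms are nonnegative), recovering exactly $\rho(p)$. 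It then remains to bound $\sum_{m<j\leqslant n}1/j$ by $\log(n/m)$; this is the standard comparison of the harmonic sum with the integral $\int_m^n dx/x=\log(n/m)$, since $1/j\leqslant \int_{j-1}^{j}dx/x$. Taking the $1/q$ power gives the claimed $(\log(n/m))^{1/q}$, and by symmetry in $m,n$ one writes $|\log(n/m)|$ in general.

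The case $p=\infty$ (hence $q=1$) should be handled as a limiting/degenerate case: then $\rho(\infty)=\max_k|\hat f(k)-1|$, and one simply estimates $|L_n(1)-L_m(1)|\leqslant d^+\max_k|\hat f(k)-1|\sum_{m<j\leqslant n}1/j\leqslant d^+\rho(\infty)|\log(n/m)|$ directly, without Hölder. I do not anticipate any genuine obstacle here; the only mild point of care is making sure the harmonic-sum bound $\sum_{m<j\leqslant n}1/j\leqslant\log(n/m)$ is applied in the correct direction (it is the upper bound $1/j\leqslant\int_{j-1}^j dx/x$ that is needed, valid for $j\geqslant 1$), and that extending the $\rho(p)$ sum to all indices $\leqslant n$ is legitimate because every summand is nonnegative.
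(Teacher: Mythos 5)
Your proposal is correct and follows essentially the same route as the paper: pull out $d_j\leqslant d^+$, apply H\"older (which the paper calls Cauchy's inequality with parameters $p,q$) to $\sum_{m<j\leqslant n}|\hat f(j)-1|/j$ with the split $j^{-1}=j^{-1/p}j^{-1/q}$, bound the harmonic sum by $\int_m^n dx/x=\log(n/m)$, and extend the $p$-sum to recover $\rho(p)$. The only cosmetic difference is at $p=\infty$, where the paper passes to the limit $p\to\infty$ rather than arguing directly, but both are immediate.
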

\begin{proof}
Suppose $n\geqslant m$ and $p<\infty$ then applying
Cauchy's inequality with parameters $p,q$ we have
\begin{equation*}
\begin{split}
       |L_n(1)-L_m(1)|&\leqslant d^+\sum_{m<j\leqslant n} {{|\hat f (j)-1|}\over j}
         \leqslant d^+{\left( \sum_{m<j\leqslant n} {{|\hat f (j)-1|^p}\over j}  \right) }^{1\over p}
                   {\left( \sum_{m<j\leqslant n} {1\over j}  \right)}^{1\over q}
\\
       &\leqslant d^+
       {\left( \sum_{1\leqslant j \leqslant N} {{|\hat f (j)-1|^p}\over j}  \right) }^{1\over p}
        {\left( \int_{m}^n{{dx}\over {x}}  \right)}^{1\over q}
        \leqslant d^+ \rho (p) {\left| \log {n\over m}
              \right|}^{1/q}.
\end{split}
\end{equation*}
Allowing $p\to \infty$ we see that this inequality is true for
$p=\infty$
 also.

The lemma is proved.
\end{proof}


 \begin{lem}
 \label{sumpn}
 For $n\geqslant 1$ and $q(d^-
-1)>-1$, we have
$$
\sum_{j=1}^{n}
                {1\over j}
       {\left| { {p_{n-j}}\over {p_{n}} }
       -1 \right|}^q\ll 1.
$$
\end{lem}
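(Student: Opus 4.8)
The plan is to rest everything on the two-sided bound $p_m\asymp p(e^{-1/m})/m$, valid for all large $m$ by (\ref{p_n2}) and (\ref{plwbound}), used together with Lemma~\ref{pvbound} (to compare $p(e^{-1/m})$ with $p(e^{-1/n})$) and Lemma~\ref{diffpnm} (to compare nearby $p_m$'s). Since for any fixed $n_0$ the left-hand sum is a finite sum of quantities lying between fixed positive constants (each $p_k$ being a fixed function of $d_1,\dots,d_k\in[d^-,d^+]$), I may assume $n$ large, and I would split the sum at $j=n/3$.

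\emph{Range $1\leqslant j\leqslant n/3$.} Here $n-j\asymp n$, so $p_{n-j}\asymp p_n$, and Lemma~\ref{diffpnm} applied with $n-j$ in place of $n$ and $j$ in place of $s$ (legitimate since $j\leqslant(n-j)/2$) gives $|p_n-p_{n-j}|\ll p_{n-j}(j/(n-j))^{\theta}\ll p_n(j/n)^{\theta}$. Hence $|p_{n-j}/p_n-1|\ll(j/n)^{\theta}$, and
$$\sum_{1\leqslant j\leqslant n/3}\frac1j\Bigl|\frac{p_{n-j}}{p_n}-1\Bigr|^{q}\ll n^{-q\theta}\sum_{1\leqslant j\leqslant n/3}j^{q\theta-1}\ll 1,$$
since $q\theta>0$ makes the inner sum $\ll n^{q\theta}$. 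This range needs nothing beyond $d^->0$.

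\emph{Range $n/3<j\leqslant n$.} Estimate $|p_{n-j}/p_n-1|^{q}\ll(p_{n-j}/p_n)^{q}+1$; the added $1$ contributes $\sum_{n/3<j\leqslant n}1/j\ll1$. For the rest set $i=n-j$, so $0\leqslant i<2n/3$ and $1/j\ll1/n$, which reduces matters to estimating $\frac1n\sum_{0\leqslant i<2n/3}(p_i/p_n)^{q}$. For $1\leqslant i<2n/3$, combining $p_i\leqslant(d^+e/i)\,p(e^{-1/i})$ from (\ref{p_n2}), $p_n\gg p(e^{-1/n})/n$ from (\ref{plwbound}), and Lemma~\ref{pvbound} gives $p_i/p_n\ll(i/n)^{d^--1}$, so
$$\frac1n\sum_{1\leqslant i<2n/3}\Bigl(\frac{p_i}{p_n}\Bigr)^{q}\ll\frac{n^{-q(d^--1)}}{n}\sum_{1\leqslant i<2n/3}i^{q(d^--1)}\ll\frac{n^{-q(d^--1)}}{n}\,n^{q(d^--1)+1}=1,$$
where $\sum_{i=1}^{m}i^{q(d^--1)}\ll m^{q(d^--1)+1}$ is valid precisely because $q(d^--1)>-1$. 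The term $i=0$ equals $n^{-1}p_n^{-q}$; since $p(x)\geqslant(1-x)^{-d^-}$ yields $p(e^{-1/n})\gg n^{d^-}$ and hence $p_n\gg n^{d^--1}$, this is $\ll n^{q(1-d^-)-1}\leqslant1$, again because $q(d^--1)>-1$.

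The one delicate point --- and the only place the hypothesis $q(d^--1)>-1$ enters --- is the second range, where $p_{n-j}/p_n$ must be controlled for $n-j$ small: both the $i=0$ boundary term and the growth exponent of $\sum_i i^{q(d^--1)}$ are sharp exactly at $q(d^--1)=-1$. Everything else is routine bookkeeping with $p_m\asymp p(e^{-1/m})/m$ and Lemmas~\ref{pvbound} and~\ref{diffpnm}.
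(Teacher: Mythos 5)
Your proof is correct and follows essentially the same route as the paper's: split the sum at a fixed fraction of $n$ (you use $n/3$, the paper uses $n/4$ — immaterial), bound the first range by $(j/n)^{\theta}$ via Lemma~\ref{diffpnm}, and in the second range reduce to $\frac1n\sum_s(p_s/p_n)^q$ with $p_s/p_n\ll(s/n)^{d^--1}$ obtained from (\ref{p_n2}), (\ref{plwbound}) and Lemma~\ref{pvbound}, after which $q(d^--1)>-1$ closes the estimate. You are in fact slightly more careful than the paper's writeup, which after reindexing $s=n-j$ writes the second sum as $\sum_{1\leqslant s\leqslant 3n/4}$ and thus quietly drops the boundary term $s=0$ (i.e.\ $j=n$); you handle it explicitly via $p_n\gg n^{d^--1}$, which is the right fix.
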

\begin{proof}
We have
$$
\sum_{j=1}^{n} {1\over j}
       {\left| { {p_{n-j}}\over {p_{n}} }-1 \right|}^q\ll
\sum_{1\leqslant j\leqslant n/4} {1\over j}
       {\left| { {p_{n-j}-p_n}\over {p_{n}} } \right|}^q+
{1\over n}\sum_{n/4<j\leqslant n }
       {\left| { {p_{n-j}}\over {p_{n}} } \right|}^q+1,
$$
applying Lemma \ref{diffpnm} and Lemma \ref{pvbound} together with (\ref{p_n2}) and (\ref{plwbound}) we have
\begin{equation*}
\begin{split}
\sum_{j=1}^{n} {1\over j}
       {\left| { {p_{n-j}}\over {p_{n}} }-1 \right|}^q&\ll
\sum_{1\leqslant j\leqslant n/4} {1\over j}
       {\left|
       {j\over n} \right|}^{q\theta}+
{1\over n}\sum_{1\leqslant s\leqslant 3n/4 }
       {\left| { {p(e^{-1/s})}\over {p(e^{-1/n})} } {n\over s}  \right|}^q+1,
\\
&\ll 1+{1\over n}\sum_{s\leqslant n/4 }
       {\left| \left( { s\over n }\right)^{d^-} {n\over s}
       \right|}^q\ll 1+{1\over n}\sum_{1\leqslant s\leqslant 3n/4 }
        \left| { s\over n }\right|^{q(d^--1)}\ll 1.
\end{split}
\end{equation*}
The lemma is proved.
\end{proof}

The following theorem has been proved for $d_j\equiv 1$ by E.
Manstavi\v cius \cite{manstadditive}, later generalized for $d_j\equiv \theta >0$
in \cite{zakclt}.

 \begin{thm}
\label{meanM1}
For any fixed $\infty \geqslant p>\max \left\{ 1, 1/d^- \right\}$,
there exists such a positive $\delta=\delta (d^-,d^+,p)$ that, if
$\rho \leqslant \delta$, then

\begin{equation}
\label{manst_general}
{{M_N}\over {p_N}}=\exp \{  L_N(1) \} \left( 1+
\sum_{j=1}^Nd_j{{\hat f(j)-1}\over j} \left( {{p_{N-j}}\over
{p_{N}}}-1 \right) +O(\rho^2)\right).
\end{equation}

\end{thm}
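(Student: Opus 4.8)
The plan is to treat the quantity $M_N/p_N$ via the exact identity $M_N = \sum_{k=0}^N p_k m_{N-k}$, where $m(z)=\exp\{L_N(z)\}$ with $L_N(z)=\sum_{j\le N} d_j(\hat f(j)-1)z^j/j$, and to feed this into Theorem~\ref{meanf}. Recall that Theorem~\ref{meanf} already gives
$$
\left| \frac{M_N}{p_N} - \exp\Bigl\{ \sum_{k=1}^N d_k\frac{\hat f(k)-1}{k} \Bigr\} \right| \ll \Bigl(\sum_j p_j\Bigr)^{-1}\sum_k |\hat f(k)-1|\,p_{N-k} + \text{(tail terms)} ,
$$
and the exponent there is $L_N(e^{-1/N})$ rather than $L_N(1)$; so the first step is to show $\exp\{L_N(e^{-1/N})\} = \exp\{L_N(1)\}(1+O(\rho^2))$ — or more precisely to control the passage between the two. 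By Lemma~\ref{diffLmn} with $m\to\infty$ (i.e.\ comparing $L_N(1)$ to the truncation built from $e^{-k/N}$), one gets a bound of the form $\rho(p)\cdot(\text{small})$, but a single application only yields $O(\rho)$; to reach $O(\rho^2)$ I would instead expand $\exp\{L_N(1)\} = \sum_{j} m_j$ (partial sums of the power series at $x=1$) and use the quadratic structure of the exponential.

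The cleaner route, which I would actually pursue, is to write $\exp\{L_N(1)\} = 1 + L_N(1) + \tfrac12 L_N(1)^2 + \dots$, note that $|L_N(1)| \ll \rho$ when $p>1$ and $N$-sums converge (indeed $|L_N(1)|\le d^+\sum_{j\le N}|\hat f(j)-1|/j \le d^+ \rho(p)(\log N)^{1/q}$ is too weak — so here one genuinely needs $\rho$ itself small, which is exactly the hypothesis $\rho\le\delta$, but one still needs $L_N(1)$ bounded, which requires $p>1$ and controlling $\sum 1/j$; this is the subtle point). The resolution is that the main term $\exp\{L_N(1)\}$ multiplies the correction $1+\sum_j d_j\frac{\hat f(j)-1}{j}(\frac{p_{N-j}}{p_N}-1)$, and this correction sum is itself the leading part of $M_N/p_N - \exp\{L_N(1)\}$ once one writes $M_N/p_N = \sum_k m_{N-k}p_k/p_N$ and does a one-term Taylor expansion of $m(z)$ around $z=0$: $m_{N-j} = [z^{N-j}]\exp\{L_N(z)\}$, and $\sum_{k}p_k m_{N-k}/p_N = \sum_j (\text{coeff})(\frac{p_{N-j}}{p_N})$. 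Extracting the linear-in-$(\hat f-1)$ piece gives exactly $\sum_j d_j\frac{\hat f(j)-1}{j}\frac{p_{N-j}}{p_N}$, and comparing with $\exp\{L_N(1)\}\approx 1+\sum_j d_j\frac{\hat f(j)-1}{j}$ produces the stated $(\frac{p_{N-j}}{p_N}-1)$ factor, with everything of second order or higher absorbed into $O(\rho^2)$.

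Concretely, the steps I would carry out: (1) write $M_N/p_N - \exp\{L_N(1)\}$ as a sum over $j$ of $d_j(\hat f(j)-1)$ times an explicit kernel, separating the term linear in $(\hat f-1)$ from the higher-order remainder — the higher-order remainder is handled by Theorem~\ref{meanf}'s bound combined with $|\hat f(j)-1|\le 2$ to pull out one factor of $\rho$ and bound the rest, using Lemma~\ref{pvbound}, Lemma~\ref{diffpnm} and Lemma~\ref{sumpn} for the weighted $p$-sums; (2) for the linear term, use Lemma~\ref{boundofg}-type estimates (or directly $f_{m,j}$, Lemma~\ref{f_mj}) to identify its value as $\sum_j d_j\frac{\hat f(j)-1}{j}\frac{p_{N-j}}{p_N}$ up to $O(\rho^2)$; (3) absorb the difference between $\exp\{L_N(1)\}(1+\sum_j d_j\frac{\hat f(j)-1}{j})$ and $\exp\{L_N(1)\} + \sum_j d_j\frac{\hat f(j)-1}{j}$ into $\exp\{L_N(1)\}\cdot O(\rho^2)$, using $|\exp\{L_N(1)\}|\asymp 1$ which follows from the condition (\ref{bound})-type bound $\Re L_N(1) = -\sum_j d_j\frac{1-\Re\hat f(j)}{j}$ — here is where the Hölder inequality (Lemma~\ref{diffLmn}) and the smallness $\rho\le\delta$ enter to guarantee $|L_N(1)|$ is controlled.

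\textbf{Main obstacle.} The delicate step is (3), or rather making precise in what sense $\exp\{L_N(1)\}$ is the right main term and bounding $|L_N(1)|$: the naive bound via Hölder costs a spurious $(\log N)^{1/q}$ factor, so one cannot simply say $|L_N(1)|\ll\rho$. One must exploit that the \emph{error} being estimated is a \emph{difference} $M_N/p_N - \exp\{L_N(1)\}$ in which divergent logarithmic pieces cancel between the two terms — equivalently, estimate $m(e^{-1/N})$ versus $m(1)=\exp\{L_N(1)\}$ by summing the coefficient differences $\sum_j |m_j|(1-e^{-j/N})$ and using $|m_j|\le$ coefficients of $\exp\{\sum_j d^+|\hat f(j)-1|z^j/j\}$ together with the smallness of $\rho$ to keep that auxiliary exponential bounded. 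That is the technical heart; once it is in place the rest is routine bookkeeping with the lemmas already established.
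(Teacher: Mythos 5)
You correctly sense that the real difficulty is that $L_N(1)$ need not be bounded --- H\"older gives only $|L_N(1)|\le d^+\rho(p)(\log N)^{1/q}$, so $|\exp\{L_N(1)\}|$ can be exponentially small --- and that the theorem is a statement about \emph{relative} error. But your proposed resolution of this obstacle would not go through. You suggest bounding $m_j$ by the coefficients of $\exp\{\sum_j d^+|\hat f(j)-1|z^j/j\}$ and ``using the smallness of $\rho$ to keep that auxiliary exponential bounded.'' That auxiliary exponential is \emph{not} bounded: its value at $z=1$ is $\exp\{\sum_j d^+|\hat f(j)-1|/j\}$, and by the very same H\"older bound, $\sum_j|\hat f(j)-1|/j$ can be as large as $\rho(\log N)^{1/q}$, so this factor blows up even for small $\rho$. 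Replacing $m_j$ by its absolute value destroys exactly the cancellation you are trying to exploit. Likewise, Theorem~\ref{meanf} cannot be the engine here: it gives an \emph{additive} estimate for $M_N/p_N-\exp\{L_N(1)\}$, whereas the claim is that this difference is of size $|\exp\{L_N(1)\}|\cdot O(\rho^2)$, and when $|\exp\{L_N(1)\}|$ is tiny these are incomparable.

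The missing ingredient is a bootstrap. The paper defines the unknown relative errors $R_m$ by~(\ref{M_n}), sets $R=\sup_{m\ge 0}|R_m|$ (finite a priori, since $L_m=L_N$ for $m\ge N$ and $|M_m|\le p_m$), and then applies Theorem~\ref{fundthm1} not to $m(z)$ directly but to the auxiliary function
$$h_n(z)=\exp\{L_N(z)\}-\exp\{L_n(1)\}\,L_N(z),$$
whose associated $S(h_n;m)=\sum_k d_k(\hat f(k)-1)\bigl(M_{m-k}-p_{m-k}\exp\{L_n(1)\}\bigr)$ is linear in the differences $M_{m-k}-p_{m-k}\exp\{L_n(1)\}$. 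Substituting the defining relation for $M_{m-k}$ back into this sum produces a factor $|\exp\{L_n(1)\}|$ (via Lemma~\ref{diffLmn} to compare $L_{m-k}(1)$ with $L_n(1)$) together with $\rho(\rho+R)$ from Cauchy's inequality and Lemma~\ref{sumpn}. Theorem~\ref{fundthm1} then yields, after Taylor-expanding $\exp\{L_N(e^{-1/n})\}$ around $\exp\{L_n(1)\}$, the self-improving inequality $R\le A(d^-,d^+,p)(\rho^2+\rho R)$, from which $R\ll\rho^2$ follows once $\delta$ is small enough. This fixed-point device is what turns the naive $O(\rho)$ estimate into $O(\rho^2)$ and, crucially, makes every error term proportional to $|\exp\{L_n(1)\}|$; without it, the $(\log N)^{1/q}$ loss you identified cannot be avoided by direct coefficient bounds.
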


\begin{proof}
We will assume during this proof that $\hat f(j)=1$,
if $j>N$ and as before $M_j$ will be defined by (\ref{genfunc}). We will
suppose that $R_m$ are complex numbers
 satisfying the relation
\begin{equation}
\label{M_n}
{{M_m}\over {p_{m}}}=\exp \{  L_m(1) \} \left(
        1+ \sum_{j=1}^m d_j{{\hat f(j)-1}\over j}
       \left( {{p_{m-j}}\over {p_{m}}}
          -1 \right) +R_m \right). 
\end{equation}

We set
      $$
          R:=\sup_{m \geqslant 0} |R_m|.
       $$
One can easily see that $R$ is finite since $L_k(z)=L_N(z)$ if
$k\geqslant N$ and $|M_m|\leqslant p_m$.

 Suppose
  $$
           h_n(z)=\exp \{ L_N(z) \}
                   -\exp \{  L_n(1) \}  L_N(z),
  $$
where $n\geqslant 1$. It is easy to see that the generating function of
$S(h_n;j)$ is
$$
\sum_{j=1}^{\infty}S(h_n;j)z^j={zh_n'(z)} p(z)=
F_N(z)zL_N'(z)-\exp \{  L_n(1) \}zL'_N(z)  p(z),
$$
therefore
$$
S(h_n;m)=\sum_{k=1}^md_k\bigl( \hat f(k)-1
\bigr)\left( M_{m-k}-p_{m-k}\ex{n} \right) .
$$
Inserting here expression (\ref{M_n}) of $M_n$, we get
\begin{multline*}
      |S(h_n;m)|\leqslant  \sum_{k=1}^m d_k\bigl| \hat f(k)-1 \bigr| \bigl|
       \ex{m-k}-\ex{n} \bigr| p_{m-k}
   \\
   +d^+\rho \sum_{k=1}^{m-1}d_k|\hat f(k)-1| |\ex{m-k} |
      { \left( \sum_{j=1}^{m-k}
                {1\over j}
       {\left| { {p_{m-k-j}}\over {p_{m-k}} }
       -1 \right|}^q \right) }^{1/q}p_{m-k}
   \\
          + R\sum_{k=1}^{m-1} d_k|\hat f(k)-1||\ex{m-k}|p_{m-k}.
\end{multline*}
  Here we have applied Cauchy's inequality with parameters
${1/p}+{1/q}=1$ and used the fact that $R_0=0$.

\def \l#1{{\left| \log {n\over #1}
\right|}^{1/q}}


Applying once more Cauchy inequality, we have
\begin{multline*}
         |S(h_n;m)|\ll\rho m^{1/p}
          {\left( \sum_{k=0}^{m-1}|\ex{k}-\ex{n}|^q {p_{k}}^q
          \right)  }^{1/q}+
      \\
+(\rho +R)\rho m^{1/p} {\left(
\sum_{k=1}^{m-1}|\ex{k}|^q{p_{k}}^q\right)}^{1/q}.
\end{multline*}

Applying here lemma \ref{diffLmn} we have
\begin{multline*}
 |S(h_n;m)| \ll (\rho^2+\rho R)m^{1/p}|\ex{n}|
 \left(
\exp \{ q\delta d^+ {\left| \log {n} \right|}^{1/q}\} \right.\nonumber \\
+{\left.
 \sum_{k=1}^{m-1}
\left( 1+\left| \log {n\over k} \right| \right)
 \exp \left\{ q \delta  d^+  \l{k} \right\}
              p_k^q
\right) }^{1/q}.\nonumber
 \end{multline*}
Here and further the constant in symbol $\ll$ will depend on
$\theta$ and $p$ only. As  $q(d^-  -1)>-1$ so we can chose such
a positive
 \mbox{$\epsilon =\epsilon (p,d^-,d^+) <  \theta$}, that $q(d^- -1)-\epsilon >-1$.
 When $\theta = 1$
 we might take, for example, $\epsilon ={1 \over 2}$ and for $d^-<1$
we might take $\epsilon =\min \left\{ {{1+q(d^- -1)}\over {2}},
{\theta \over 2} \right\}$. It is easy to see that there
 exists such a positive constant
$C_{\epsilon}$, that inequality
$$
\left( 1+\left| \log {x} \right|
\right) \exp \left\{ \epsilon /2 |\log x|^{1/q} \right\}
 \leqslant C_{\epsilon}\left( x^{\epsilon}
+{1\over {x^{\epsilon}}} \right),
$$
 holds for $x>0$.
Supposing that $\rho <\delta \leqslant {{\epsilon }\over {2 qd^+
}}$ and applying this inequality we get
$$
 |S(h_n;m)| \ll (\rho^2+\rho R)m^{1/p}|\ex{n}|
{ \left( 1+ \sum_{k=1}^{m-1} \left( { \left({n\over k}
\right)}^\epsilon + {\left( {k\over n} \right)}^\epsilon  \right)
              p_k^q
\right) }^{1/q}
$$
$$
 \ll (\rho^2+\rho R)m^{1/p}|\ex{n}| { \left( 1+ p(e^{-1/m})^q\sum_{k=1}^{m-1}
\left( { \left({n\over k} \right)}^\epsilon + {\left( {k\over n}
\right)}^\epsilon  \right)
              \left( {{p(e^{-1/k})}\over {kp(e^{-1/m})}}\right)^q
\right) }^{1/q}
$$
$$
 \ll (\rho^2+\rho R)m^{1/p}|\ex{n}| { \left( 1+ p(e^{-1/m})^q\sum_{k=1}^{m-1}
\left( { \left({n\over k} \right)}^\epsilon + {\left( {k\over n}
\right)}^\epsilon  \right)
              {1\over {k^q}}\left( {{k}\over {m}}\right)^{qd^-}
\right) }^{1/q}
$$
here we have estimated $p_k$ by means of (\ref{p_n2}) and have applied
Lemma \ref{pvbound}.  Estimating the sums occurring in this estimate we have

\begin{equation}
\label{s}
|S(h_n;m)|\ll |\ex{n}|\rho (\rho +R)p(e^{-1/m})\left(
{\left( {n\over m} \right) }^{\epsilon /q}
 +{\left( {m\over n} \right)}^{\epsilon /q}
\right),
\end{equation}

Applying Theorem \ref{fundthm1} with $f(z)=h_n(z)$ and using the estimate (\ref{s})
we have
\begin{multline*}
\lefteqn{{{M_n}\over {p_{n}}}-\ex{n}\sum_{j=1}^nd_j{{\hat
f(j)-1}\over j}{{p_{n-j}}\over {p_{n}}}-}
\\
-\left( \exp \{ L_N(e^{-1/n}) \} -\ex{n}
L_N(e^{-1/n}) \right)
\\
\ll \rho (\rho +R)|\ex{n}|.
\end{multline*}

Inserting here estimates
$$
\exp \{  L_N(e^{-1/n})\} = \exp \{  L_n(e^{-1/n})\}
\left( 1+ \bigl( L_N(e^{-1/n})- L_n(e^{-1/n}) \bigr)
+O(\rho^2) \right)
$$
  and
$$\
\exp \{  L_n(e^{-1/n})\} =\ex{n}\left( 1+ \bigl(
L_n(e^{-1/n})- L_n(1) \bigr) +O(\rho^2) \right),$$
 we obtain
\begin{multline*}
{{M_n}\over {p_n}}-\exp \{ L_n(1) \} \left( 1+
\sum_{j=1}^nd_j{{\hat f(j)-1}\over j} \left( {{p_{n-j}}\over {p_n}}
-1 \right)  \right)\\
\ll |\ex{n}|\rho (\rho +R).
\end{multline*}
Recalling the definition of $R_n$, we see that there exists such a
constant $A=A(d^-,d^+,p)$, that
$$
|\ex{n}||R_n|\leqslant A(d^-,d^+,p) |\ex{n}|\rho (\rho +R).
$$
Dividing each part of this inequality by $\ex{n}$ and taking the
supremum by $n$ we obtain
$$
R\leqslant A(d^- ,d^+,p)( \rho^2 +\rho R).
$$
Supposing now that $\delta=\min \left\{ {1\over
{2A(d^-,d^+,p)}},{{\epsilon }\over {2qd^- }} \right\}$, we have
$$
R\leqslant 2 A(d^-,d^+,p)\rho^2.
$$

The theorem is proved.
\end{proof}

For $u>0$ we define
$$
E(u):=\exp \left\{ 2\sum_{{\scriptstyle k=1}\atop {\scriptstyle
|\hat f(k)-1|>u}}^n {{|\hat f(k)-1|}\over k} \right\}.
$$


\begin{thm}
\label{thmeanM1}
There exists such a constant $\eta =\eta
(d^-,d^+)$ that for any $u\leqslant \eta$ we have
$$
\left| {{M_n}{p_n}^{-1}} \right|\ll \left| \exp \left\{  L_n(1)
\right\} \right| {\bigl( E(u) \bigr)}^{d^+ } ,
$$
\end{thm}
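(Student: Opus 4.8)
The plan is to split $f$ into a ``bad'' part carried by the finite set $T=\{k\le n:\ |\hat f(k)-1|>u\}$ and a remainder whose cycle values lie within $u$ of $1$, so that Theorem~\ref{meanM1} (taken with $p=\infty$) applies to the remainder, and then to recombine the two contributions using only the $p$-function estimates of this section. Throughout I would assume $\hat f(k)=1$ for $k>n$.

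First I would introduce the multiplicative function $g$ with $\hat g(k)=\hat f(k)$ for $k\notin T$ and $\hat g(k)=1$ for $k\in T$, and the entire function $B(z)=\exp\{\sum_{k\in T}d_k(\hat f(k)-1)k^{-1}z^k\}$. Since the generating function $\sum_jM_jz^j=\exp\{\sum_kd_k\hat f(k)k^{-1}z^k\}$ factors as the product of $B(z)$ with the corresponding generating function $\sum_jM^g_jz^j$ for $g$, comparing coefficients of $z^n$ gives the finite identity $M_n=\sum_{b=0}^nB_b\,M^g_{n-b}$. Because $\max_k|\hat g(k)-1|\le u$, if $\eta$ is chosen no larger than the constant $\delta(d^-,d^+)$ of Theorem~\ref{meanM1} at $p=\infty$ (and small enough that the resulting error terms are $\le1$, the correction $\sum_jd_j(\hat g(j)-1)j^{-1}(p_{m-j}p_m^{-1}-1)$ being bounded by Lemma~\ref{sumpn} with $q=1$), then Theorem~\ref{meanM1}, applied for each $m$ to $g$, yields $|M^g_m|\ll|\exp\{L^g_m(1)\}|\,p_m$ uniformly in $m$. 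A short computation with $xe^{-x}\le1-e^{-x}\le x$ then shows $|\exp\{L^g_m(1)\}|\,p_m\asymp\mathcal G_m$, where $\mathcal G(z)=\exp\{\sum_kd_k(\Re\hat g(k))k^{-1}z^k\}$ is itself a $p$-function with coefficients in $[d^-(1-u),d^+]$, so Lemma~\ref{pvbound}, Theorem~\ref{lowcoefth} and (\ref{p_n2}), (\ref{plwbound}) apply to $\mathcal G$ just as to $p$.

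Next I would dominate $|B_b|$ coefficientwise by $\widetilde B_b$, the (non-negative) coefficients of $\widetilde B(z)=\exp\{\sum_{k\in T}d_k|\hat f(k)-1|k^{-1}z^k\}$, and feed both bounds into the identity:
$$|M_n|\le\sum_{b=0}^n\widetilde B_b\,|M^g_{n-b}|\ \ll\ \sum_{b=0}^n\widetilde B_b\,\mathcal G_{n-b}\ =\ \mathcal P_n,$$
where $\mathcal P(z):=\widetilde B(z)\mathcal G(z)=\exp\{\sum_ke_kk^{-1}z^k\}$ is again a $p$-function, with $e_k\in[d^-(1-u),3d^+]$. The point of this packaging is that all dependence on $b$ is now inside a single Taylor coefficient, so $\mathcal P_n\ll\mathcal P(e^{-1/n})/n$ (the analogue of (\ref{p_n2})) reduces the problem to evaluating $\mathcal P(e^{-1/n})=\widetilde B(e^{-1/n})\,\mathcal G(e^{-1/n})$. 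Writing $L^{(s)}_n$, $L^{(b)}_n$ for the parts of $L_n$ over $k\notin T$ and $k\in T$, one has $\mathcal G(e^{-1/n})\asymp|\exp\{L^{(s)}_n(1)\}|\,p(e^{-1/n})$ (again the $xe^{-x}\le1-e^{-x}\le x$ estimate, using that the extra coefficients of $\mathcal G$ over those of $p$ vanish for $k>n$), while the term-by-term inequality $d_k\Re(\hat f(k)-1)+2d^+|\hat f(k)-1|\ge d^+|\hat f(k)-1|\ge d_k|\hat f(k)-1|e^{-k/n}$ — valid because $|\hat f(k)|\le1$ — gives $\widetilde B(e^{-1/n})\le|\exp\{L^{(b)}_n(1)\}|\,E(u)^{d^+}$. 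Multiplying, using $L_n(1)=L^{(s)}_n(1)+L^{(b)}_n(1)$ and $p(e^{-1/n})/n\asymp p_n$ from (\ref{p_n2}) and (\ref{plwbound}), I would conclude $|M_n|\ll|\exp\{L_n(1)\}|\,E(u)^{d^+}p_n$, which is the assertion; $\eta$ need only be taken below $\min\{\delta(d^-,d^+),1\}$ and small enough for the corrections above.

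I expect the main obstacle to be the bookkeeping of the last step — in particular, checking that $\mathcal G$ and $\mathcal P=\widetilde B\mathcal G$ are admissible $p$-functions so that the coefficient estimates of this section may be invoked for them, matching $\mathcal G(e^{-1/n})$ to $|\exp\{L^{(s)}_n(1)\}|p(e^{-1/n})$, and, above all, the inequality $\widetilde B(e^{-1/n})\le|\exp\{L^{(b)}_n(1)\}|E(u)^{d^+}$, which is precisely what forces the exponent $d^+$ in the statement and is where the hypothesis $|f(\sigma)|\le1$ enters. A naive head/tail splitting of $\sum_bB_bM^g_{n-b}$ in which $|M^g_{n-b}|$ is bounded merely by $p_{n-b}$ loses exactly the damping factor $|\exp\{L^{(s)}_n(1)\}|$ and fails; routing the whole sum through the single $p$-function $\mathcal P$ is what avoids this.
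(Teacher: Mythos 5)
Your proof is correct. The decomposition is the same as the paper's: your $g$ is the paper's $\hat f_u$, your $B$ is the paper's $m^{(u)}$, and the factorization $M_n=\sum_bB_bM^g_{n-b}$ is exactly the paper's $M_n=\sum_kM_k^{(u)}m_{n-k}^{(u)}$. Where you diverge is in how the convolution is estimated, and this is a genuinely different route. The paper first proves a pointwise coefficient bound $|m_j^{(u)}|\leqslant \frac{2d^+}{j}E^{d^+/2}(u)$ from the recurrence, then splits the convolution at $k=n/2$ and estimates the head $\sum_{k\leqslant n/2}\frac{p_k}{n-k}|\exp\{L_k^{(u)}(1)\}|$ by transferring $|\exp\{L_k^{(u)}(1)\}|$ to $|\exp\{L_n^{(u)}(1)\}|$ via $|\exp\{L_k^{(u)}(1)-L_n^{(u)}(1)\}|\leqslant (n/k)^{ud^+}$; the resulting sum $\sum_{k\leqslant n/2}k^{-1}(n/k)^{ud^+-d^-}$ converges only if $ud^+<d^-$, which is what forces the paper's choice $\eta\leqslant d^-/(2d^+)$. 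You instead dominate $|B_b|$ by the nonnegative $\widetilde B_b$, dominate $|M^g_{n-b}|$ by the coefficient of an auxiliary $p$-function $\mathcal G$ with exponents $d_k\Re\hat g(k)\in[d^-(1-u),d^+]$, and observe that the entire convolution is then a single coefficient of $\mathcal P=\widetilde B\,\mathcal G$, which is again an admissible $p$-function (exponents in $[d^-(1-u),3d^+]$) to which (\ref{p_n2}), (\ref{plwbound}), Lemma~\ref{pvbound} and Theorem~\ref{lowcoefth} apply directly. This sidesteps the head/tail split altogether, requires only $\eta<1$ (plus $\eta\leqslant\delta(d^-,d^+,\infty)$ from Theorem~\ref{meanM1}), and the final comparison $\widetilde B(e^{-1/n})\leqslant|\exp\{L_n^{(b)}(1)\}|E(u)^{d^+}$, obtained from the correct term-by-term inequality $d_k\Re(\hat f(k)-1)+2d^+|\hat f(k)-1|\geqslant d_k|\hat f(k)-1|e^{-k/n}$, reproduces exactly the exponent $d^+$ of the statement. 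Your remark that a naive head/tail split bounding $|M^g_{n-b}|$ merely by $p_{n-b}$ would lose the damping factor is accurate, and explains why the paper has to carry $|\exp\{L_k^{(u)}(1)\}|$ through the head; routing through $\mathcal P$ is the cleaner alternative.
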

\begin{proof}
Let us denote for $u>0$
$$
\hat f_u(j)=\begin{cases}
1,& \text{if   $|\hat f(j)-1|>u$;} \\
\hat f(j), &
\text{if  $|\hat f(j)-1|\leqslant u$,}
\end{cases}
$$
and
$$
F_u(z):=\exp \left\{  \sum_{j=1}^{\infty}d_j{{\hat f_u(j)}\over
{j}}z^j \right\} ={{\exp \{ L_n^{(u)}(z)\} }p(z)  }
=\sum_{k=0}^{\infty} M_k^{(u)}z^k
$$
here
$$
L_n^{(u)}(z)=\sum_{j=1}^{n}d_j{{\hat f_u(j)-1}\over {j}}z^j=
\sum_{\substack{j\leqslant n \\|\hat f(j)-1|\leqslant u}}d_j{{\hat f(j)-1}
\over {j}}z^j.
$$
 Then
$$
F(z)=F_u(z)\exp \left\{  \sum_{|\hat f(j)-1|>u}d_j{{\hat
f(j)-1}\over {j}}z^j \right\}.
$$
Therefore
$$
M_n=\sum_{k=0}^n M_k^{(u)}m_{n-k}^{(u)},
$$
where $m_k^{(u)}$ are defined by
$$
m^{(u)}(z)=\sum_{j=0}^{\infty} m_j^{(u)}z^j=\exp\left\{ \sum_{|\hat
f(j)-1|>u} d_j{{\hat f(j)-1}\over {j}}z^j \right\}.
$$
Differentiating $m^{(u)}(z)$ one can easily see that $m_k$
satisfy the recurrent relationship
$$
m_j^{(u)}= {1\over {j}}\sum_{{\scriptstyle 1\leqslant k \leqslant
j} \atop {\scriptstyle |\hat f(j)-1| >u}}d_j(\hat
f(j)-1)m_{j-k}^{(u)},
$$
for $j\geqslant 1$. From which we have
\begin{equation*}
\begin{split}
|m_j^{(u)}|&=\left| {1 \over {j}}\sum_{{\scriptstyle 1\leqslant k
\leqslant j} \atop {\scriptstyle |\hat f(j)-1| >u}}d_j(\hat
f(j)-1)m_{j-k}^{(u)}\right| \leqslant {{2d^+}\over
j}\sum_{k=0}^{\infty}|m_k^{(u)}|= {{2d^+}\over j}
\sum_{j=0}^{\infty} |m_j^{(u)}|\\
&\leqslant  {{2d^+}\over j}\exp\left\{ d^+\sum_{|\hat f(j)-1|>u}
{{|\hat f(j)-1|}\over {j}} \right\}={{2d^+}\over j}E^{d^+ /2}(u).
\end{split}
\end{equation*}

Suppose that $\eta\leqslant \delta(d^-,d^+, \infty )$, then
 Theorem \ref{meanM1} implies that
$M_k^{(u)}=p_k\exp \{ L_k^{(u)}(1) \}(1+O(\eta))$ and we have

\begin{equation*}
\begin{split}
|M_n|&=\sum_{k=0}^n |M_k^{(u)}m_{n-k}^{(u)}|
\\
&\ll
E^{d^+/2}(u)\sum_{k\leqslant n/2}{{p_k}\over {n-k}} |\exp \{
L_{k}^{(u)}(1) \}| +|\exp \{  L_{n}^{(u)}(1) \}|p_n\left(
\sum_{k=0}^{\infty}|m_k^{(u)}| \right)
\\
 &\ll E^{d^+/2}(u)|\exp
\{ L_n^{(u)}(1) \} | \left( {1\over n}\sum_{k\leqslant n/2}
{{p(e^{-1/k})}\over {k}} {\left( {n\over k} \right) }^{ud^+} +1
\right)
\\
&\ll {{p(e^{-1/n})}\over n}|\exp \{ L_n^{(u)}(1) \} |
E^{d^+ /2}(u)\left( \sum_{k\leqslant n/2} {{1}\over k}{\left(
{n\over k} \right) }^{ud^+-d^-} +1 \right),
\end{split}
\end{equation*}

here we have used the estimate $|\exp \{
(L_k^{(u)}(1)-L_n^{(u)}(1)) \} | \leqslant {\left( {n\over
k}\right)}^{ud^+ }$ for $k\leqslant n$ and Lemma \ref{pvbound}.
 Assuming that $\eta$ fixed such that
$u\leqslant\eta < {{d^-}\over {d^+}}$, we obtain
$$
|M_n|\ll   p_n|\exp \{ L_n^{(u)}(1) \} | E^{d^+ /2}(u) \leqslant
p_n|\exp \{  L_n(1) \} | E^{d^+ }(u).
$$
Thus we have that the theorem holds with $\eta =\min
\{\delta(d^-,d^+,\infty ),d^+/(2d^-)\}$.
\end{proof}


{\it Proof of theorem \ref{clt1}.} Putting
 $\hat f(k)=e^{ith_{n}(k)}$ in Theorem \ref{meanM1}, for
$|t|\leq \delta L_{n,p}^{-1/p}$ we have
\begin{equation}
\label{phi}
\begin{split}
\phi_n (t)&:=\int_{-\infty}^{\infty}e^{itx}\,dF_n(x)
\\
&=e^{-itA(n)}\exp \{  L_n(1) \} \left( 1+
\sum_{j=1}^nd_j{{f(j)-1}\over j}\left( {{p_{n-j}}\over {p_{n}}} -1
\right) +O(\rho^2)\right)
\\
&=e^{-t^2/2+O(|t|^3L_{3,n})} \left(
1+C_nit+O\biggl(|t|^2\sum_{j=1}^nd_j{{a_{nk}^2}\over j}\left|
{{p_{n-j}}\over {p_{n}}} -1 \right| \biggr)+O\bigl(
|t|^2L_{n,p}^{2/p}\bigr) \right)
\\
&=e^{-t^2/2}\left( 1+C_nit+ O\bigl(
 |t|^2(1+|t|^3)(L_{n,p}^{2/p}+L_{n,3} +L'_{n,2})\bigr)
\right).
\end{split}
\end{equation}

As in \cite{manstberry}, from Theorem \ref{thmeanM1} we deduce the existence of some
sufficiently small  $c=c(d^-,d^+)$, that if
 $|t|\leq cL_{n,3}^{-1}=:T$ then

\begin{equation}
\label{phibound}
|\phi (t)|\ll e^{-c_1t^2},
\end{equation}

here $c_1=c_1(d^-,d^+)$ is some fixed positive constant. Applying
the generalized Eseen inequality (see for example \cite{petrov}), we obtain
$$
\sup_{x\in \sym R}\left| F_n(x)-\Phi (x)+{1\over
{\sqrt{2\pi}}}e^{-x^2/2}
          C_n\right|\ll \int_{- T}^T{{\left| \phi_n(t)- e^{-t^2/2}\left( 1+C_nit
\right) \right|}\over {|t|}}\,dt+{1\over T}.
$$
Representing the integral on the right hand side of this
inequality as a sum of integrals over the intervals $|t|\leq
\delta L_{n,p}^{-1/p}$ and $\delta L_{n,p}^{-1/p}<|t|\leq T$ and
 applying estimates (\ref{phi}) and (\ref{phibound}) in those intervals we obtain
 the proof of the theorem.

The theorem is proved.

\section{Example of a special function}
\label{sec_example}
Let us consider now the uniform probability measure  $\nu_{n,1}$  on $S_n$.

As noted in
\cite{manstberry}, the Cauchy inequality yields that $L_n\gg
\frac{1}{\sqrt{\log n}}$ for any sequence $\hat h_{n}(k)$
satisfying the normalizing condition (\ref{normalization}). Thus,
the convergence rate is at most of the logarithmic order.

In the work \cite{manstbabubrown} G. J. Babu and E. Manstavi\v
cius  investigated the additive function $h_n(\sigma)$ with $\hat
h_n(j)=d(j)\left({j}/{n}\right)^{1/2}$, where
$$
d(j)=\begin{cases}
\Phi^{-1} (\{j\sqrt{2}\}),&\text{if  $|\Phi^{-1} (\{j\sqrt{2}\})| \leqslant \log j$}, \\
0,&\text{in other cases.}
\end{cases}
$$
Here $\Phi(x)$ is the standard normal distribution and $\{x\}$ is
the fractional part of a real number $x$. They showed that,
although for  this function $L_n\gg 1$, distribution converges to
the normal law. Now the summands corresponding to the long cycles
are not negligible for the asymptotic distribution. The in this section we will  estimate the convergence rate of the distribution
$F_n(x)=\nu_n(h_n(\sigma)<x)$ of this special additive function.

\begin{thm}
\label{supdiff}
There exists  a  positive constant $c>0$
such that
$$
\sup_{x\in \sym R}|F_n(x)-\Phi(x)|\ll \frac{1}{n^c}.
$$
\end{thm}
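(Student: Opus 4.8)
The approach I would take exploits the one genuine structural feature of $d(j)=\Phi^{-1}(\{j\sqrt2\})$ (truncated to $|d(j)|\le\log j$): the sequence $(\{j\sqrt2\})_{j\le N}$ has discrepancy $\ll(\log N)/N$, since $\sqrt2=[1;\overline2]$ is badly approximable. The general mean-value machinery is useless here --- for $\hat f(j)=e^{it\hat h_n(j)}$ the error term of Theorem~\ref{meanf} is only $O(|t|)$, and the hypothesis $L_{n,3}=o(1)$ of Theorem~\ref{clt1} fails --- so I would instead estimate the characteristic function $\phi_n(t)=\int_{\sym R}e^{itx}\,dF_n(x)$, which is the $\nu_{n,1}$-mean of the multiplicative function $\sigma\mapsto e^{ith_n(\sigma)}$, on the range $|t|\le T=n^{c_0}$, and finish with Esseen's smoothing inequality.

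The first step is a truncation. Fix a small $\delta>0$, put $n_0=n^{1-\delta}$, and write $h_n=h_n'+h_n''$, where $h_n'$ keeps the cycles of length $\le n_0$ and $h_n''$ the longer ones. Because $\nu_{n,1}$ is the uniform measure and $2n_0<n$, one has exactly $\mathrm{Var}(h_n')=\sum_{j\le n_0}\hat h_n(j)^2/j=\tfrac1n\sum_{j\le n_0}d(j)^2\ll n^{-\delta}\log^2n$, while $|{\bf M}h_n'|=\tfrac1{\sqrt n}\bigl|\sum_{j\le n_0}d(j)/\sqrt j\bigr|\ll n^{-1/2+o(1)}$, the latter by partial summation together with $\sum_{j\le k}d(j)\ll k^{o(1)}$ (from $\int_0^1\Phi^{-1}=0$ and the discrepancy bound). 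Chebyshev then gives $\nu_{n,1}(|h_n'|>n^{-\delta/3})\ll n^{-\delta/3}\log^2n$, and a standard comparison (using that $\Phi$ is Lipschitz) reduces the theorem to $\sup_{x\in\sym R}|\nu_{n,1}(h_n''<x)-\Phi(x)|\ll n^{-c}$.

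For the characteristic function $\phi_n''(t)$ of $h_n''$, set $\hat f''(j)=e^{it\hat h_n(j)}$ for $j>n_0$ and $\hat f''(j)=1$ otherwise; by~\eqref{genfunc} with $p_j\equiv1$, $\sum_{k\ge0}M_k''z^k=(1-z)^{-1}\exp\bigl\{\sum_{j>n_0}(\hat f''(j)-1)z^j/j\bigr\}$, so that
$$\phi_n''(t)=\sum_{m\ge0}\frac1{m!}\sum_{\substack{j_1,\dots,j_m>n_0\\ j_1+\dots+j_m\le n}}\ \prod_{i=1}^m\frac{e^{it\hat h_n(j_i)}-1}{j_i}.$$
Since each $j_i>n^{1-\delta}$ forces $m\le n^{\delta}$ and $\sum_{j>n_0}|e^{it\hat h_n(j)}-1|/j\ll\delta\log n$, the terms with $m>C\log n$ contribute $\ll n^{-2}$ once $C$ is large. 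I would then replace, one variable at a time, the factor $e^{it\hat h_n(j_i)}-1=e^{itd(j_i)(j_i/n)^{1/2}}-1$ by its mean over $\{j_i\sqrt2\}$; since $\int_0^1e^{its\Phi^{-1}(v)}\,dv=e^{-t^2s^2/2}$, this mean is $e^{-t^2j_i/(2n)}-1$. Re-summing over all $m$, this identifies $\phi_n''(t)$, up to the accumulated error, with $M_n^1(g)$, where $\hat g(j)=e^{-t^2j/(2n)}$ for $j>n_0$ and $\hat g(j)=1$ otherwise. Finally, by~\eqref{eq_alpha}, $\prod_j\hat g(j)^{\alpha_j(\sigma)}=e^{-t^2/2}\exp\bigl\{\tfrac{t^2}{2n}\sum_{j\le n_0}j\alpha_j(\sigma)\bigr\}$, and since $\sum_{j\le n_0}j\alpha_j(\sigma)$ has, under $\nu_{n,1}$, moment generating function at most $\exp\bigl\{\sum_{j\le n_0}(e^{sj}-1)/j\bigr\}$, which at $s=t^2/(2n)$ is $1+O(t^2n^{-\delta})$ whenever $|t|\le n^{\delta/2}$, we get $M_n^1(g)=e^{-t^2/2}\bigl(1+O(t^2n^{-\delta})\bigr)$. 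Combining this with the truncation step, feeding everything into Esseen's inequality and splitting the frequency integral at $|t|=1$ and $|t|=T$, one obtains the bound $n^{-c}$ for a suitable $c=c(\delta)>0$ once $c_0$ and $\delta$ are small enough.

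The main obstacle is the quantitative equidistribution estimate behind the replacement $e^{itd(j)(j/n)^{1/2}}-1\rightsquigarrow e^{-t^2j/(2n)}-1$ inside the nested sums. The cleanest route is a two-dimensional Koksma--Hlawka inequality: the points $(j/n,\{j\sqrt2\})$, $j\le n$, have star-discrepancy $\ll(\log n)/n$ (the first coordinate is evenly spaced, and on each initial segment $\{j\sqrt2\}$ has discrepancy $\ll(\log n)/n$), so $\sum_{n_0<j\le n}j^{-1}\bigl(e^{itd(j)(j/n)^{1/2}}-1\bigr)$ is close to $\int_{n_0/n}^1 u^{-1}(e^{-t^2u/2}-1)\,du\approx\sum_{n_0<j\le n}j^{-1}(e^{-t^2j/(2n)}-1)$, the error being controlled by the Hardy--Krause variation of $u^{-1}\bigl(e^{it\sqrt u\Phi^{-1}(v)}-1\bigr)$ on $[n^{-\delta},1]\times[0,1]$. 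It is here that the truncation $|d(j)|\le\log j$ is essential: it bounds that variation by $\ll(|t|+t^2)\,n^{O(\delta)}\mathrm{poly}(\log n)$, whereas without truncation the variation in $v$ is already infinite. Propagating this per-variable error through the product over $i$ and the sum over $m\le C\log n$ inflates it by a further factor $n^{O(\delta)}$; keeping the total polynomially small while still allowing $|t|$ up to $n^{c_0}$ is precisely what forces $\delta$ and $c_0$ to be small. Everything else --- the variance and mean estimates for $h_n'$, the moment-generating bound for $\sum_{j\le n_0}j\alpha_j$, the normalization $\tfrac1n\sum_{j\le n}d(j)^2=1+O(n^{-1+o(1)})$, the near-origin behaviour of $\phi_n''$, and the Esseen tail $1/T$ --- is routine.
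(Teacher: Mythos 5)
Your proposal is structurally different from the paper's. The paper's opening move (Lemmas~\ref{peronlike} and~\ref{peronmean}) is a Perron-type smoothing: instead of extracting $M_n=[z^n]F(z)$ directly, it writes $M_n$ as a contour integral of $\exp\bigl\{\sum_{k\le n}\frac{\hat f(k)-1}{k}e^{-sk/n}\bigr\}$ over $\Re s=1$, which removes the hard constraint $j_1+\cdots+j_m\le n$ from the Taylor expansion of the exponential and leaves a \emph{single, unconstrained} one-dimensional sum over $k$. All the equidistribution input then goes into estimating that one sum blockwise (with $H=n/p$, $p=n^{1/2}$), using the identity $\int_0^1\frac{e^{-zy}}{y}\int(e^{ity^{1/2}x}-1)\,d\Phi(x)\,dy=\log\frac{z}{z+t^2/2}-\int_{z+t^2/2}^z\frac{e^{-w}}{w}\,dw$ and the contour-integral evaluation $\frac{1}{2\pi i}\int\frac{e^z}{z+t^2/2}\exp\{-\int\}\,dz=e^{-t^2/2}+O(\log n/n^{1/4})$, both quoted from~\cite{manstbabubrown}. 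You instead retain the constrained nested sum $\sum_m\frac{1}{m!}\sum_{j_1+\cdots+j_m\le n,\,j_i>n_0}\prod(\hat f''(j_i)-1)/j_i$ and propose to replace each factor $e^{itd(j_i)(j_i/n)^{1/2}}-1$ by $e^{-t^2 j_i/(2n)}-1$ one coordinate at a time. That replacement step is where the genuine gap is, and you flag it yourself: for each fixed choice of the other indices, $j_i$ ranges over the \emph{variable} window $(n_0,\,n-\sum_{l\ne i}j_l]$, the per-variable Koksma--Hlawka error must be summed against $\prod_{l\ne i}(1/j_l)$ over all those choices, then over $m\le C\log n$, and it must be uniform in $|t|\le n^{c_0}$. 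Your sketch handles the unconstrained sum $\sum_{n_0<j\le n}$, but accumulating the per-variable errors across the nested index set is precisely the book-keeping that the paper's Perron step was engineered to avoid, and the proposal does not carry it out.

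What you do write out is correct and in places cleaner than what the paper does. Splitting off $h_n'$ and computing ${\rm Var}_{\nu_{n,1}}(h_n')=\sum_{j\le n_0}\hat h_n(j)^2/j$ exactly (from ${\rm Cov}(\alpha_i,\alpha_j)=0$ for $i\ne j$, $i+j\le n$, and ${\rm Var}(\alpha_j)=1/j$ for $2j\le n$) is a nice reduction the paper does not use. The identification $\prod_j\hat g(j)^{\alpha_j(\sigma)}=e^{-t^2/2}\exp\{\frac{t^2}{2n}\sum_{j\le n_0}j\alpha_j(\sigma)\}$ via~(\ref{eq_alpha}), together with the coefficient bound $[z^n]\frac{1}{1-z}\exp\{\sum_{j\le n_0}(e^{sj}-1)z^j/j\}\le\exp\{\sum_{j\le n_0}(e^{sj}-1)/j\}$, gives a genuinely elegant closed-form estimate of $M_n^1(g)$ that would replace the contour integral the paper borrows from~\cite{manstbabubrown}. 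But without closing the replacement step --- at a minimum, a clean statement of the equidistribution lemma you need over sub-windows and a proof that the accumulated error over $m$ and over the nested indices stays $\ll n^{-c}$ uniformly in $|t|\le n^{c_0}$ --- the argument is incomplete.
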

  The proof allows to obtain some numerical estimate of $c$.


\begin{lem}
\label{peronlike}
Suppose $F(z)=\sum_{k=0}^{\infty}a_kz^k$ is
analytic for $|z|<1$. Then we have for $T\geqslant 2$
$$
\sum_{k=0}^n a_k ={1\over {2\pi i}}{\int_{1-iT}^{1+iT}}{{e^s F(e^{-{s/n}})}\over s}ds+O\left( \sum_{k=0}^{\infty}{{|a_k|e^{-{k/n}}}\over{1+T\left|{k\over n}-1 \right|}} \right).
$$
\end{lem}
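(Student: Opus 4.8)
The plan is to start from the classical Perron-type (truncated Mellin) formula and substitute the exponential kernel. Recall the standard estimate: for $y>0$ and $T\geqslant 2$,
$$
{1\over {2\pi i}}\int_{1-iT}^{1+iT}{{y^s}\over s}\,ds = \mathbf{1}[y\geqslant 1] + O\left( {{y}\over {1+T|\log y|}} \right),
$$
with a companion bound $O(1/T)$ when $y$ is bounded away from $1$ on a logarithmic scale, which holds because the line segment can be pushed to $\Re s\to+\infty$ when $y<1$ and to $\Re s\to-\infty$ when $y>1$, picking up the residue $1$ at $s=0$ in the latter case. I would apply this with $y=e^{k/n}$ for each $k$: note $y\geqslant 1$ iff $k\geqslant n$, and $\log y = k/n$, so $|\log y| = |k/n|$; the kernel contributes $\mathbf{1}[k\geqslant n]$ plus an error $O\bigl(e^{k/n}/(1+T|k/n|)\bigr)$. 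This is not yet the stated form, so a small adjustment is needed.

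The key manipulation is to write $e^s F(e^{-s/n}) = \sum_{k=0}^\infty a_k e^{s(1-k/n)} = \sum_k a_k (e^{1-k/n})^s$, i.e. apply the Perron kernel to $y_k = e^{1-k/n}$ rather than $e^{k/n}$. Then $y_k\geqslant 1 \iff k\leqslant n$, $\log y_k = 1 - k/n$, and
$$
{1\over {2\pi i}}\int_{1-iT}^{1+iT}{{e^sF(e^{-s/n})}\over s}\,ds = \sum_{k=0}^\infty a_k\left( \mathbf{1}[k\leqslant n] + O\!\left( {{e^{1-k/n}}\over {1+T\bigl|1-{k\over n}\bigr|}} \right) \right).
$$
The main term is exactly $\sum_{k=0}^n a_k$ (one must check the boundary $k=n$: there $y_n=1$, and the truncated integral of $y^s/s$ over $[1-iT,1+iT]$ equals $1/2 + O(1/T)$; the discrepancy $\tfrac12 a_n$ is absorbed into the error term since $e^{1-n/n}/(1+T\cdot 0) = 1$ dominates it). Pulling the absolute value inside and dropping the harmless factor $e$ gives the claimed error $O\bigl(\sum_k |a_k| e^{-k/n}/(1+T|k/n-1|)\bigr)$, where I have rewritten $e^{1-k/n} = e\cdot e^{-k/n} \ll e^{-k/n}$ and $|1-k/n| = |k/n-1|$.

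The step I expect to require the most care is the interchange of the summation over $k$ with the contour integral, together with the uniform control of the tail in $k$. For large $k$ the Perron error $e^{1-k/n}/(1+T|1-k/n|)$ decays like $e^{-k/n}$, so $\sum_k |a_k| e^{-k/n} < \infty$ follows from analyticity of $F$ at radius $e^{-1/n}<1$; this same bound justifies absolute convergence and hence Fubini. One should also verify that the Perron estimate is genuinely uniform in $y$ over the whole range $y_k = e^{1-k/n}$, including $y_k$ close to $1$ (where the $O(y/(1+T|\log y|))$ form is used) and $y_k$ far from $1$ (where both the $O(y/(1+T|\log y|))$ and the shifting-the-contour argument apply); this is standard but is the technical heart of the argument. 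Everything else is bookkeeping.
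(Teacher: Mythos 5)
Your proposal is correct and follows essentially the same route as the paper: expand $e^sF(e^{-s/n})=\sum_k a_k e^{s(1-k/n)}$, apply the truncated Perron estimate for $\frac{1}{2\pi i}\int_{1-iT}^{1+iT}e^{xs}\,ds/s$ with $x=1-k/n$ (using one form of the bound for $|x|T>1$ and the companion bound for $|x|T\leqslant 1$), interchange sum and integral by absolute convergence on $|z|=e^{-1/n}$, and absorb the boundary terms near $k=n$ into the error since there $1+T|k/n-1|\ll 1$. The paper organizes this as a split into three sums $I_1,I_2,I_3$ rather than isolating the $k=n$ term explicitly, but the content is the same.
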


\begin{proof}
 The proof of this result is the same as that of
Peron's formula for Dirichlet series (see e. g.  \cite{tenenbaum}).

Applying the well known estimates

\begin{equation}
\label{contourint}
{1\over {2\pi i}}{\int_{1-iT}^{1+iT}}{{e^{xs} }\over
s}ds=\begin{cases}
 1+O\left( {{e^x}\over {T|x|}}\right)&\text{if $x>0$},\\
O\left( {{e^x}\over {T|x|}} \right)&\text{if $x<0$}
\end{cases}
\end{equation}
and noticing that

\begin{eqnarray}
\frac{1}{2\pi i}{\int_{1-iT}^{1+iT}}{{e^{xs} }\over s}\,ds&=&\frac{e^x}{2\pi }\int_{|t|<T}\frac{dt}{1+it}+\frac{e^x}{2\pi }
\int_{|t|<T}\frac{e^{ixt}-1}{1+it}\,dt\nonumber
\\ \label{intestim}
&\ll& e^x(1+|x|T),
\end{eqnarray}
we have
\begin{eqnarray*}
&&{1\over {2\pi i}}{\int_{1-iT}^{1+iT}}{{e^s F(e^{-{s/n}})}\over s}ds=\sum_{k=0}^\infty a_k
\frac{1}{2\pi i}{\int_{1-iT}^{1+iT}}{{e^{s(1-k/n)} }\over s}\,ds
\\
&=&\sum_{\left( 1-k/n \right)T>1  } a_k
\frac{1}{2\pi i}{\int_{1-iT}^{1+iT}}{{e^{s(1-k/n)} }\over s}\,ds
\\
&&\mbox{}+
\sum_{\left| 1-k/n \right|T\leqslant 1  } a_k
\frac{1}{2\pi i}{\int_{1-iT}^{1+iT}}{{e^{s(1-k/n)} }\over s}\,ds
\\
\mbox{}
&&+\sum_{\left( 1-k/n \right)T<-1  } a_k
\frac{1}{2\pi i}{\int_{1-iT}^{1+iT}}{{e^{s(1-k/n)} }\over s}\,ds=I_1+I_2+I_3.
\end{eqnarray*}
Estimating $I_1$ and $I_3$ we  use (\ref{contourint}). Estimating $I_2$    by means of
(\ref{intestim}), we finally obtain the desired result.

The lemma is proved.
\end{proof}
Suppose $k_j(\sigma)$ is the number of cycles in the $\sigma$
whose length is equal to $j$.
 Then obviously $\sum_{j=1}^njk_j(\sigma)=n$, and any additive function can be represented
 as $h_n(\sigma)=\sum_{j=1}^n\hat h_n(j)k_j(\sigma)$. Therefore
$$
\phi_n(t)={\bf M_n}e^{ith(\sigma)}=\frac{1}{n!} \sum_{\sigma \in S_n}e^{it h_n(\sigma)}=
\sum_{s_1+2s_2 +\cdots+ns_n=n}
\prod_{j=1}^n\left( \frac{e^{it \hat h_n(j)}}{j} \right)^{s_j}\frac{1}{s_j!},
$$
here we have used the well known fact that
$$
\nu(k_1(\sigma)=s_1,k_2(\sigma)=s_2,\ldots,
k_n(\sigma)=s_n)=\prod_{j=1}^n\frac{1}{j^{s_j}s_j!},
$$
for $s_1+2s_2+\cdots+ns_n=n$. Hence
$$
1+\sum_{n=1}^{\infty}\phi_n(t)z^n=\exp
\biggl\{\sum_{j=1}^\infty\frac{e^{it \hat h_n(j)}}{j}z^j \biggr\}.
$$
Suppose
$$
F(z)=\exp \left\lbrace \sum_{k=1}^{\infty}\frac{\hat f(k)}{k}z^k
\right\rbrace = \frac{1}{1-z}\exp \left\lbrace
\sum_{k=1}^{\infty}\frac{\hat f(k)-1}{k}z^k
\right\rbrace=\sum_{k=0}^{\infty}M_kz^k.
$$
Let us define $m_j$ by relationship
$$
m(z)=\exp \left\lbrace \sum_{k=1}^{\infty}\frac{\hat f(k)-1}{k}z^k \right\rbrace =\sum_{k=0}^{\infty}m_kz^k.
$$
Hence we have $M_n=m_0+m_1+\cdots +m_n$.

\begin{lem}
\label{peronmean}
Suppose $2\leqslant T\leqslant n$ and $|\hat f(j)|\leqslant 1$, then
$$
M_n ={1\over {2\pi i}} {\int_{1-iT}^{1+iT}}{{e^s }\over s} \exp
\left\lbrace \sum_{k=1}^{n}\frac{\hat f(k)-1}{k}e^{-sk/n}
\right\rbrace ds+ O\left( \frac{\log T}{T}\exp \left\lbrace
\sum_{k=1}^{n}\frac{|\hat f(k)-1|}{k} \right\rbrace \right).
$$
\end{lem}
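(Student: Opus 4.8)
The plan is to apply Lemma \ref{peronlike} to the power series $m(z)=\sum_{k\ge0}m_kz^k$ — which plays the role of ``$F$'' there — since its partial sums are precisely $M_n=m_0+m_1+\cdots+m_n$. Adopting, as in the proof of Theorem \ref{meanf}, the harmless convention $\hat f(k)=1$ for $k>n$ (this alters neither $M_n$ nor either side of the asserted identity), we have $m(z)=\exp\{\sum_{k=1}^n\tfrac{\hat f(k)-1}{k}z^k\}$, an entire function. Thus for $2\le T\le n$ Lemma \ref{peronlike} gives
$$
M_n=\frac{1}{2\pi i}\int_{1-iT}^{1+iT}\frac{e^s}{s}\exp\Bigl\{\sum_{k=1}^n\frac{\hat f(k)-1}{k}e^{-sk/n}\Bigr\}\,ds+O\Bigl(\sum_{k=0}^{\infty}\frac{|m_k|e^{-k/n}}{1+T\,|k/n-1|}\Bigr),
$$
and the whole matter reduces to showing the error sum is $\ll\frac{\log T}{T}\,B$ with $B:=\exp\{\sum_{k=1}^n\tfrac{|\hat f(k)-1|}{k}\}$.

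I would first record two facts about the majorant series $\hat m(z):=\exp\{\sum_{k=1}^n\tfrac{|\hat f(k)-1|}{k}z^k\}=\sum_{k\ge0}\hat m_kz^k$. Applying the triangle inequality termwise to the exponential series gives $|m_k|\le\hat m_k$ for all $k$; and since the $\hat m_k$ are nonnegative with $\sum_k\hat m_k=\hat m(1)=B$, comparison of coefficients in $\hat m'(z)=\hat m(z)\sum_{k=1}^n|\hat f(k)-1|z^{k-1}$ yields $k\hat m_k=\sum_{j=1}^{\min(k,n)}|\hat f(j)-1|\hat m_{k-j}\le 2\sum_{i=0}^{k-1}\hat m_i\le 2B$, i.e. $\hat m_k\le 2B/k$ for $k\ge1$. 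Hence the error sum is at most $\sum_{k\ge0}\hat m_ke^{-k/n}/(1+T|k/n-1|)$, which I would estimate by splitting the range of $k$.

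For $0\le k\le n/2$ the denominator exceeds $T/2$, so this range contributes $\le\tfrac2T\sum_k\hat m_k=2B/T$. For $k>n/2$ we may use $\hat m_k\le 2B/k\le 4B/n$: the at most $2n/T+1\le3n/T$ terms with $|k-n|\le n/T$ contribute $\le\tfrac{4B}{n}\cdot\tfrac{3n}{T}=12B/T$, while for the remaining terms $1+T|k/n-1|\ge T|k-n|/n$, so they contribute $\le\tfrac nT\sum_{k>n/2,\,|k-n|>n/T}\hat m_ke^{-k/n}/|k-n|$; splitting this last sum at $k=2n$ and using $\hat m_ke^{-k/n}\le4B/n$ together with $\sum_{n/T<m\le n}1/m\ll\log T$ for $n/2<k\le2n$, and $|k-n|>n$ together with $\sum_k\hat m_k=B$ for $k>2n$, bounds it by $\ll\tfrac{B\log T}{T}$. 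Collecting the pieces gives the error estimate $\ll\tfrac{\log T}{T}B$, completing the proof. The only place requiring care is this final range-splitting; everything else is the coefficient bound $\hat m_k\le 2B/k$ and a direct appeal to Lemma \ref{peronlike}.
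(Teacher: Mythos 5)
Your proof is correct and follows essentially the same route as the paper: apply Lemma \ref{peronlike} to $m(z)$ (under the convention $\hat f(k)=1$ for $k>n$), establish the pointwise bound $|m_k|\ll B/k$ together with the absolute bound $\sum_k|m_k|\leqslant B$, and then split the error sum by the distance of $k$ from $n$. The only cosmetic difference is that you derive $|m_k|\ll B/k$ by passing to the majorant series $\hat m(z)=\exp\{\sum_k|\hat f(k)-1|z^k/k\}$, whereas the paper obtains the same bound directly from the recurrence satisfied by the $m_k$; the estimates and the range-splitting are the same in substance.
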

\begin{proof} Since $\hat f(j)$ for $j>n$ do not influence the
value of $M_n$ we may assume that $\hat f(j)=1$ for $j>n$. One can
easily see that

\begin{equation}
\label{summ_ke^-k/n}
\sum_{k=0}^{\infty}|m_k|e^{-k/n}\leqslant \exp
\left\lbrace \sum_{k=1}^{n}\frac{|\hat f(k)-1|}{k}e^{-k/n}
\right\rbrace.
\end{equation}

Differentiating $m(z)$ one can easily verify that
$m_j$ satisfy the recurrence relationship
$$
m_N=\frac{1}{N}\sum_{k=1}^{N} (\hat f(j)-1)m_{N-k},
$$
for $N\geqslant 1$. Hence, recalling that $|\hat f(j)|\leqslant
1$, we have

\begin{equation}
\label{m_Nleq}
|m_N|\leqslant \frac{2}{N}\sum_{k=1}^{N}|m_{N-k}|\leqslant \frac{2}{N}\exp \left\lbrace \sum_{k=1}^{n}\frac{|\hat f(k)-1|}{k} \right\rbrace.
\end{equation}

These estimates yield
\begin{eqnarray*}
\sum_{k=0}^{\infty}{{|m_k|e^{-{k/n}}}\over{1+T\left|{k\over n}-1 \right|}}&\ll& \sum_{n/2<k<3n/2}{{|m_k|e^{-{k/n}}}\over{1+T\left|{k\over n}-1 \right|}}+
\frac{1}{T}\sum_{k=0}^{\infty}|m_k|e^{-{k/n}}
\\
&\ll& \exp \left\lbrace \sum_{k=1}^{n}\frac{|\hat f(k)-1|}{k} \right\rbrace\left(
\frac{\log T}{T}+ \frac{1}{T} \right)
\end{eqnarray*}
here we have applied (\ref{summ_ke^-k/n}) to estimate the sum over $k$ such that $|n-k|\geqslant n/2$
and used (\ref{m_Nleq}) to estimate the sum over $k$ such that $n/2<k<3n/2$.

Applying now lemma \ref{peronlike} with $a_n=m_n$ together with the last
estimate we complete the proof of the lemma.

The lemma is proved.
\end{proof}

\begin{proof}[Proof of Theorem \ref{supdiff} ]
In the work \cite{manstbabubrown} it has been shown that
$$
\frac{1}{n}\sum_{\scriptstyle 1 \leqslant j \leqslant n \atop
\scriptstyle d(j)<x}1=\Phi(x)+
O(n^{-1/2})\quad\hbox{and}\quad\frac{1}{n}\sum_{j=1}^n|d(j)|=\int_{-\infty}^{\infty}|x|\,d\Phi(x)+O\left(\frac{\log^2
n}{\sqrt{n}}\right).
$$
Let us denote $\hat f(j)=\exp \left\lbrace it d(j){\left( \frac{j}{n}\right) }^{1/2} \right\rbrace$
for $1\leqslant j \leqslant n$.
Then we have for $1<|t|\leqslant \sqrt{ n}$
$$
\sum_{j=1}^{n}\frac{|\hat f(j)-1|}{j}
\leqslant |t| \sum_{1\leqslant j <\frac{n}{|t^2|}}
\frac{|d(j)|}{j}{\left( \frac{j}{n}\right) }^{1/2}
+2\sum_{\frac{n}{|t^2|}\leqslant j \leqslant n}\frac{1}{j} \leqslant 4 \log |t| +O(1).
$$
For $|t|\leqslant 1$, we have $\sum_{j=1}^{n}\frac{|\hat
f(j)-1|}{j}=O(|t|)$.

Using these estimates together with lemma \ref{peronmean} we have

\begin{equation}
\label{charassympt}
M_n={1\over {2\pi i}} {\int_{1-iT}^{1+iT}}{{e^z }\over z} \exp
\bigl\{ S_n(t,z)  \bigr\} \,dz +O\left(\frac{\log T}{T}(1+|t|^4)
\right),
\end{equation}
here, as in \cite{manstbabubrown}, we denote $S_n(t,z)=\sum_{j=1}^n\frac{\exp
\bigl\lbrace it d(j){\left( {j}/{n}\right) }^{1/2}\bigr\rbrace
-1}{j}e^{-zj/n} $ where $z=1+iu$, and $u \in  R$. Further we
assume that $p$ is a natural number such that
 $p<n$ and we put $H=n/p$.
 We have
\begin{eqnarray*}
S_n(t,z)
&=&\sum_{k=1}^{p-1}\sum_{Hk<j\leqslant H(k+1)}\frac{\exp \bigl\{ it d(j){\left( {j}/{n}\right) }^{1/2}\bigr\} -1}{j}e^{-zj/n}+O\left(\frac{|t|}{p^{1/2}}
\right).
\\
 &=&\sum_{k=1}^{p-1}\sum_{Hk<j\leqslant H(k+1)}\frac{\exp \bigl\{ it d(j){\left( {Hk}/{n}\right) }^{1/2}\bigr\} -1}{Hk}e^{-zHk/n}
 \\
 &&\mbox{}+R_n + O\left(\frac{|t|}{p^{1/2}}\right),
\end{eqnarray*}
where
\begin{eqnarray*}
R_n&\ll& \sum_{k=1}^{p-1}\sum_{Hk<j\leqslant H(k+1)}\left|\frac{\exp \bigl\{ it d(j){\left( {j}/{n}\right) }^{1/2}\bigr\} -1}{j}\right||e^{-zj/n}-e^{-zHk/n}|
\\
&&\mbox{}+\sum_{k=1}^{p-1}\sum_{Hk<j\leqslant H(k+1)}\left|\frac{\exp \bigl\{ it d(j){\left( {j}/{n}\right) }^{1/2}\bigr\} -1}{j}-\frac{\exp \bigl\{ it d(j){\left( {Hk}/{n}\right) }^{1/2}\bigr\} -1}{Hk}\right|
\\
&=:&R_{n1}+R_{n2}.
\end{eqnarray*}
Now
\begin{eqnarray*}
R_{n1}&\ll& \sum_{k=1}^{p-1}\sum_{Hk<j\leqslant H(k+1)}\frac{|z|}{j}\frac{j-Hk}{n}\ll
\frac{|z|}{n}\sum_{k=1}^{p-1}\frac{1}{Hk}\sum_{Hk<j\leqslant H(k+1)}|j-Hk|
\\
&\ll&
\frac{|z|}{n}H\log p=|z|\frac{\log p}{p}.
\end{eqnarray*}
In a similar way we obtain
\begin{eqnarray*}
R_{n2}&\ll& \sum_{k=1}^{p-1}\sum_{Hk<j\leqslant H(k+1)}\frac{\bigl|\exp \bigl\{ it d(j){\left( {j}/{n}\right) }^{1/2}\bigr\} -\exp \bigl\{ it d(j){\left( {Hk}/{n}\right) }^{1/2}\bigr\}\bigr|}{j}
\\
&&\mbox{}+ \sum_{k=1}^{p-1}\sum_{Hk<j\leqslant H(k+1)}
\bigl|\exp \bigl\{ it d(j){\left( {Hk}/{n}\right) }^{1/2}\bigr\}-1\bigr|\left|\frac{1}{j}- \frac{1}{Hk}\right|
=:R_{n2}'+R_{n2}''
\end{eqnarray*}
We have
\begin{eqnarray*}
R_{n2}'&\ll&|t|\sum_{k=1}^{p-1}\left(\frac{Hk}{n}\right)^{1/2}\sum_{Hk<j\leqslant H(k+1)}
\frac{|d(j)|}{j}\left| \left(\frac{j}{Hk}\right)^{1/2}-1\right|
\\
&\ll&
|t|\sum_{k=1}^{p-1}\left(\frac{Hk}{n}\right)^{1/2}\frac{1}{Hk}\frac{1}{k}
\sum_{Hk<j\leqslant H(k+1)}|d(j)|
\\
&\ll&
|t|\log n\left( \frac{H}{n}\right)^{1/2}=|t|\frac{\log n}{p^{1/2}},
\end{eqnarray*}
since $|d(j)|\leqslant \log j$. Using similar considerations we obtain
\begin{eqnarray*}
R_{n2}''&\ll&|t|\sum_{k=1}^{p-1}\sum_{Hk<j\leqslant H(k+1)}
\left(\frac{Hk}{n}\right)^{1/2}|d(j)|\left|\frac{1}{j}- \frac{1}{Hk}\right|
\\
&\ll&|t|\log n\sum_{k=1}^{p-1}\left(\frac{Hk}{n}\right)^{1/2}\frac{1}{(Hk)^2}\sum_{Hk<j\leqslant H(k+1)}
|j-Hk|
\\
&\ll&|t|\log n\sum_{k=1}^{p-1}\left(\frac{H}{n}\right)^{1/2}\frac{1}{k^{3/2}}\ll|t|\frac{\log n}{p^{1/2}}.
\end{eqnarray*}
Therefore
$$
R_n\ll|z|\frac{\log p}{p}+|t|\frac{\log n}{p^{1/2}}.
$$

One can  see that
$$
V_k(x):=\frac{1}{H}\sum_{\scriptstyle Hk<j\leqslant H(k+1) \atop \scriptstyle d(j)<x}1=\Phi(x)+\alpha_k(x),
$$
where $|\alpha_k(x)|=O(H^{-1/2})$, because
$$
\frac{1}{H}\sum_{\scriptstyle Hk<j\leqslant H(k+1) \atop \scriptstyle \{j\sqrt{2}\}<x}1=
\frac{1}{H}\sum_{\scriptstyle 0<s\leqslant H \atop \scriptstyle \{[Hk]\sqrt{2}+\{s\sqrt{2}\}\}<x}1
+O\left( \frac{1}{H} \right)=x +O\left( \frac{1}{H^{1/2}} \right),
$$
for $0<x<1$.
Now we can estimate
\begin{eqnarray*}
\lefteqn{\frac{1}{H}\sum_{Hk<j\leqslant H(k+1)}\left( \exp \bigl\{ it d(j){\left( {Hk}/{n}\right) }^{1/2}\bigr\} -1\right)}
\\
&=&\int_{-\log n}^{\log n}\left( \exp \bigl\{ it{\left( {Hk}/{n}\right) }^{1/2}x\bigr\} -1\right)
dV_k(x)
\\
&=&\int_{-\log n}^{\log n}\left( \exp \bigl\{ it{\left( {Hk}/{n}\right) }^{1/2}x\bigr\} -1\right)d\Phi(x)
\\
&&\mbox{}+
\int_{-\log n}^{\log n}\left( \exp \bigl\{ it{\left( {Hk}/{n}\right) }^{1/2}x\bigr\} -1\right)d\alpha_k(x)
\\
&=&\int_{-\infty}^{\infty}\left( \exp \bigl\{ it{\left(
{Hk}/{n}\right) }^{1/2}x\bigr\} -1\right)
d\Phi(x)
\\
&&\mbox{}+O\left(\frac{1}{n} \right) +O(H^{-1/2})+O\left(|t|
\left(\frac{Hk}{n} \right)^{1/2} \frac{\log n}{H^{1/2}}   \right).
\end{eqnarray*}
Hence recalling that $H=n/p$ and $|z|>1$, we have
\begin{eqnarray*}
S(t,z)&=&\sum_{k=1}^{p}\frac{e^{-zk/p}}{k}\int_{-\infty}^{\infty}\left( \exp \bigl\{ it{\left( {k}/{p}\right) }^{1/2}x\bigr\} -1\right)\,d\Phi(x)
\\
&&\mbox{}+O\left( |z|\frac{\log p}{p}+|t|\frac{\log n}{p^{1/2}}
+(1+|t|)\frac{\log n}{H^{1/2}} \right).
\end{eqnarray*}
Now we will apply the well-known formula
$$
\sum_{n=a}^bf(n)=f(a)+\int_{a}^bf(x)\,dx+\int_a^b\{x\}f'(x)\,dx,
$$
where $a,b\in {\sym Z}$. Putting here
$$
f(y)=\frac{e^{-zy/p}}{y}\int_{-\infty}^{\infty}\left( \exp \bigl\{
it{\left( {y}/{p}\right) }^{1/2}x\bigr\} -1\right)\,d\Phi(x),
$$
we have
\begin{eqnarray*}
&&\sum_{k=1}^{p}\frac{e^{-zk/p}}{k}\int_{-\infty}^{\infty}\left( \exp \bigl\{ it{\left( {k}/{p}\right) }^{1/2}x\bigr\} -1\right)\,d\Phi(x)
\\
&=&\int_1^p\frac{e^{-zy/p}}{y}\int_{-\infty}^{\infty}\left( \exp \bigl\{ it{\left( {y}/{p}\right) }^{1/2}x\bigr\} -1\right)\,d\Phi(x)\,dy
\\
&&\mbox{}+O\left( \int_1^p|f'(y)|\,dy \right)+O\left( \frac{|t|}{p^{1/2}} \right)
\\
&=&\int_{0}^1\frac{e^{-zy}}{y}\int_{-\infty}^{\infty}\left( \exp \bigl\{ it{y }^{1/2}x\bigr\} -1\right)\,d\Phi(x)\,dy+O\left( |t|\frac{|z|}{p}+\frac{|t|}{p^{1/2}} \right),
\end{eqnarray*}
since $|f'(y)|\ll\frac{ |t||z|}{p^{3/2}y^{1/2}}+\frac{|t|}{y^{3/2}p^{1/2}}$.

As noted in \cite{manstbabubrown}
$$
\int_{0}^1\frac{e^{-zy}}{y}\int_{-\infty}^{\infty}\left( \exp \bigl\{ it{y }^{1/2}x\bigr\} -1\right)\,d\Phi(x)\,dy
=\log\frac{z}{z+t^2/2}-\int_{z+t^2/2}^z\frac{e^{-w}}{w}\,dw,
$$
therefore, inserting the obtained estimates into
(\ref{charassympt}) and taking $p=n^{1/2}$ and $T=n^{1/4}$, we
have for $|t|<\frac{n^{1/4}}{\log n}$
\begin{eqnarray*}
M_n&=&\frac{1}{2\pi i}\int_{1-iT}^{1+iT}\frac{e^z}{z+\frac{t^2}{2}}
\exp\left\lbrace -\int_{z+\frac{t^2}{2}}^z\frac{e^{-w}}{w}dw \right\rbrace \,dz
\\
&&\mbox{}+O\left( (1+|t|)\frac{\log^2 n}{n^{1/4}}+(1+|t|^4)\frac{\log n}{n^{1/4}} \right).
\end{eqnarray*}
Since as shown in \cite{manstbabubrown}
$$
\frac{1}{2\pi
i}\int_{1-in^{1/4}}^{1+in^{1/4}}\frac{e^z}{z+\frac{t^2}{2}}
\exp\left\lbrace -\int_{z+\frac{t^2}{2}}^z\frac{e^{-w}}{w}dw
\right\rbrace dz= e^{-t^2/2}+\left( \frac{\log n}{n^{1/4}}
\right),
$$
we finally have the following estimate for $M_n$

\begin{equation}
\label{M_nis}
M_n=e^{-t^2/2}+O\left( (1+|t|)\frac{\log^2 n}{n^{1/4}}+(1+|t|^4)\frac{\log n}{n^{1/4}} \right).
\end{equation}

We will also need some crude estimate of $g_n(t)=M_n$ for small $|t|\leq 1$.

\begin{equation}
\label{M_n-1}
|M_n-1|\leqslant\sum_{k=1}^{\infty}|m_k|\leqslant \exp \left\lbrace \sum_{k=1}^{n}\frac{|\hat f(k)-1|}{k} \right\rbrace-1\ll |t|.
\end{equation}

The Berry-Eseen inequality (see e. g. \cite{petrov}) gives
$$
\sup_{x\in \sym R}|F_n(x)-\Phi(x)|\ll \int_{-U}^U\frac{|g_n(t)-e^{-t^2/2}|}{|t|}\,dt+\frac{1}{U}.
$$
Putting here $U=n^{1/17}$, we split the integration contour into
two parts $|t|\leqslant \frac{1}{n}$ and $\frac{1}{n}\leqslant
|t|\leqslant U$ and applying in each of these interval
correspondingly the estimates (\ref{M_n-1}) and (\ref{M_nis}) we complete the proof of
the theorem.

The theorem is proved.
\end{proof}

\chapter{Erd\H os Tur\'an law}


\section{Proofs}

Let  $f\colon\ ({\sym Z}^+)^n \to {\sym C}$ be a function of the form
$$
f((a_1,a_2,\ldots ,a_n))=\prod_{j=1}^n\hat f(j)^{a_j},
$$
where $\hat f(j)\in {\sym C}$, and we set $0^0=1$.
 Then we have the identity

\begin{equation}
\label{identity_for_Mn}
\sum_{n=0}^{\infty}{\bf M_n}f(\xi)z^n=
\prod_{j=0}^{\infty}{\left(1+\sum_{n=1}^{\infty}
{{\left( {z\over q}\right)}^{nj}\hat f(j)^n}\right) }^{I_j}=
 \prod_{j \geqslant 1}{{{\left( 1-\hat f(j){\left({z \over q} \right)}^j
 \right)}^{-I_j}}},
\end{equation}
where
$$
{\bf M_n}f(\xi)={1\over {q^n}}\sum_{P\in E_n}f(\xi(P)), \quad n\geqslant 1,
$$
and ${\bf M_0}f(\xi)=1$; here
$\xi=\xi(P)=(\xi_1(P),\xi_2(P),\ldots ,\xi_n(P))$, $\xi_k(P)$
denotes the number of normed prime polynomials of degree $k$
in the canonical decomposition of $P$,
and $I_n$ is the number of prime polynomials in $E_n$.
Relation (\ref{identity_for_Mn}) can be obtained by calculating the
coefficient at $z^n$ in the Taylor expansion of the infinite product on the
right-hand side of (\ref{identity_for_Mn}) 
Putting here
 $\hat f(j)\equiv 1$, we obtain the well-known relation
(see, e.g., \cite{nicolas_poly})

 \begin{equation}
\label{product_for_In}
\prod_{n \geqslant 1}{{{\left( 1-{\left({z \over q} \right)}^n
 \right)}^{-I_n}}}={1 \over {1-z}},
\end{equation}
from which it follows, in particular, that

\begin{equation}
\label{assympt_In}
I_n={{q^n}\over n}+A_n {{q^{n/2}}\over n},
\end{equation}
where $-2\leqslant A_n\leqslant 0$.
Putting, in (\ref{identity_for_Mn}),
$$
\hat f(j)=\begin{cases}
{\rm e}^{it},&\hbox{\rm if }
j=k, \\
1, &\hbox{\rm if } j\not=k,
\end{cases}
$$
and using (\ref{product_for_In}), we obtain

\begin{eqnarray}
\sum_{n=0}^{\infty}{z^n}{\bf M_n}{\rm e}^{it\xi_k}
&=&
{{{\left( 1-{\left({z \over q} \right)}^k{\rm e}^{it}
 \right)}^{-I_k}}}\prod_{{\scriptstyle n\geqslant 0},\
{\scriptstyle n\not= k}}{{{\left( 1-{\left({z \over q} \right)}^n
 \right)}^{-I_n}}}\nonumber\\
& =&
{1\over {1-z}}{ {{\left( 1-{\left({z \over q} \right)}^k
 \right)}^{I_k}}\over {{\left( 1-{\left({z \over q} \right)}^k{\rm e}^{it}
 \right)}^{I_k}}  }.\nonumber
\end{eqnarray}

Differentiating the obtained formula with respect to
 $t$ and putting $t=0$, we obtain

\begin{equation}
\label{gen_func_meanxi}
\sum_{n=1}^{\infty}{z^n}{\bf M_n}\xi_k={ {z^k}\over
{(1-z)\left( {1-{\left({z \over q} \right)}^k}\right) }}
 {{I_k}\over {q^k}}.
\end{equation}
Hence, it follows that, for $k\leqslant n$, we have

 \begin{equation}
\label{meanxi_assympt}
{\bf M_n}\xi_k={{I_k}\over {q^k}}\sum_{j\colon\ 1\leqslant kj\leqslant n}{1 \over {q^{k(j-1)}}}=
{1\over k}+{\rm O}\left( {1 \over {q^{k/2}}} \right).
\end{equation}

Similarly, we obtain
$$
\sum_{m\geqslant 1}z^m{\bf M_m}\xi_k^2=
{1\over{(1-z)}}
\left[
      { {z^k}\over {\left(1-\left({{z}\over q}\right)^k\right)}  }
       {{I_k}\over {q^k}}
       +{{z^{2k}}\over {\left(1-\left({{z}\over q}\right)^k\right)}}
        {{I_k}\over {q^k}} {{I_k+1}\over {q^k}}
\right]
$$
and, for $k\leqslant n$,

\begin{equation}
\label{mean_xi_square}
{\bf M_n}\xi_k^2={1\over k}+{\rm O}\left({1\over {k^2}}\right).
\end{equation}

 Estimating the closeness of $\log P_n(\xi)$ to $\log O_n(\xi)$
we will use, as in \cite{barbour_tavare}, the formula

\begin{equation}
\label{logP_minus_logO}
\log P_n(a)-\log O_n(a)=\sum_p \sum_{s\geqslant 1}(D_{np^s}-1)^+\log p, 
\end{equation}

where the sum is taken over all prime numbers,
$a\in ({\sym Z}^+)^n$, $(d-1)^+=d-1+I[d=0]$, and
$$
D_{nk}=D_{nk}(a)=\sum_{j \leqslant n \colon\  k|j}a_j,
$$

\begin{eqnarray}
\label{M_nD_nk}
{\bf M_n}D_{nk}(\xi)
&=&\sum_{j\leqslant n \colon\  k|j}
{\bf M_n}\xi_j=\sum_{j\leqslant n \colon\  k|j}\left({1\over j}+
{\rm O}\left(
{1\over {q^{j/2}}}
\right)
\right)\nonumber\\
&=&
{1\over k}\sum_{j=1}^{[n/k]}{1\over j}+{\rm O}\left(
{1\over {q^{k/2}}}
\right). 
\end{eqnarray}

Let us find the generating function of
${\nu_n}(D_{nk}=0)$.
 Taking, in (\ref{identity_for_Mn}),
$$
\hat f(j)=\begin{cases}
0,&\hbox{\rm if }
k|j, \\
1, &\hbox{\rm if } k\nmid j,
\end{cases}
$$
and using (\ref{product_for_In}), we get
\begin{equation}
\label{nu_nD_nk_is_0}
\begin{split}
\sum_{n=1}^{\infty}{\nu_n}(D_{nk}=0)z^n&=
\prod_{n\colon k {\not \vert} n}
{{{\left( 1-{\left({z \over q} \right)}^n
 \right)}^{-I_n}}}={1\over {(1-z)}}\prod_{n\colon k | n}
{{\left( 1-{\left({z \over q} \right)}^n
 \right)}^{I_n}}\\
&={1\over {(1-z)} }\exp
\left\{
-\sum_{n\colon k|n}{{z^n}\over n}+
\sum_{n\colon k|n}I_n
\left[
 \log{\left( 1-{\left({z \over q}\right)}^n \right)}\right.\right.\\
&\quad +\left.\left.
{{\left({z\over q}\right)}^n}
\right]-
\sum_{n\colon k|n}{   {A_n}\over {nq^{n/2}}  }z^n
\right\}
={ { {(1-z^k)}^{1\over k} }\over {1-z} }\exp\{F_k(z)\},
\end{split}
\end{equation}

where
$$
F_k(z)=\sum_{n\colon\ k|n}I_n
\left[
 \log{\left( 1-{\left({z \over q}\right)}^n \right)}+
{{\left({z\over q}\right)}^n}
\right]-
\sum_{n\colon\ k|n}{   {A_n}\over {nq^{n/2}}  }z^n.
$$

We further  use the following notation:
if $f(z)=\sum_{n=0}^{\infty}a_nz^n$, then we denote
 $[f(z)]_{(n)}=a_n$.
We shall often use the following elementary property of this notation.
Let $f_1(z)$, $f_2(z)$, $g_1(z)$, and $g_2(z)$ be analytic functions in a
 neighborhood of zero.  If
$|[f_i(z)]_{(s)}|\leqslant [g_i(z)]_{(s)}$
for $s\geqslant 0$ and $i=1,2$, then
$$
\big|[f_1(z)f_2(z)]_{(s)}\big|\leqslant [g_1(z)g_2(z)]_{(s)}\quad
\hbox{\rm for}\
s\geqslant 0.
$$

The notation of the form $u(\ldots )\ll v(\ldots )$
means that
$u(n,q,t\ldots )={\rm O}( v(n,q,t\ldots ))$.
The constants in the symbols  ${\rm O}(\ldots )$ and $\ll $
are always assumed to be absolute and independent of $q$.

\begin{lem}
\label{coefF_k(z)}
If $k\geqslant 2$, then
$$
\big|[\exp \{ F_k (z) \} ]_{(s)}\big|\leqslant
\left[
 {\left(
 1-{\left( {z\over { {\sqrt 2}}}\right) }^{k}
        \right)}^{-1}
\right]_{(s)}, \quad  s \geqslant 0.   \eqno({\rm i})
$$
$$
\left[
(1-z^k)^{1/k}\exp \{ F_k(z) \}
 \right]_{(m)}\ll
\left[ 1+\sum_{n=1}^{\infty}{{z^{kn}}\over {kn}} \right]_{(m)},
\quad m\geqslant 0. \eqno({\rm ii})
$$
\end{lem}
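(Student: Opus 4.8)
The plan is to prove (i) first and then deduce (ii) from it; throughout one keeps in mind that $F_k$, and hence $\exp\{F_k\}$, is a power series in $z^k$ (every index $n$ in the definition of $F_k$ is a multiple of $k$), which is exactly why a comparison with $(1-(z/\sqrt2)^k)^{-1}$ and with $(1-z^k)^{1/k}$ makes sense term by term.

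First I would untangle $F_k$ using $I_n=q^n/n+A_nq^{n/2}/n$ from (\ref{assympt_In}): substituting this and noticing that the resulting term $\frac{A_nq^{n/2}}{n}(z/q)^n$ equals $\frac{A_n}{nq^{n/2}}z^n$ and so cancels the last sum in the definition of $F_k$, one gets $F_k(z)=F_k^{(1)}(z)+F_k^{(2)}(z)$ with
$$F_k^{(1)}(z)=\sum_{n\colon k\mid n}\frac{q^n}{n}\Bigl[\log\bigl(1-(z/q)^n\bigr)+(z/q)^n\Bigr],\qquad F_k^{(2)}(z)=\sum_{n\colon k\mid n}\frac{A_nq^{n/2}}{n}\log\bigl(1-(z/q)^n\bigr).$$
Since $\log(1-w)+w=-\sum_{j\ge2}w^j/j$ and $\log(1-w)=-\sum_{j\ge1}w^j/j$ have nonpositive coefficients, while $q^n/n\ge0$ and (by (\ref{assympt_In})) $A_n\le0$, the series $F_k^{(1)}$ has only nonpositive coefficients and $F_k^{(2)}$ only nonnegative ones; hence for each $\ell\ge1$ the coefficient $[F_k]_{(k\ell)}$ lies between a nonpositive and a nonnegative number, so $\bigl|[F_k]_{(k\ell)}\bigr|\le\max\bigl(\bigl|[F_k^{(1)}]_{(k\ell)}\bigr|,\ [F_k^{(2)}]_{(k\ell)}\bigr)$. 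Extracting coefficients gives the closed forms $[F_k^{(1)}]_{(k\ell)}=-\frac1{k\ell}\sum_{d\mid\ell,\ d<\ell}q^{-k(\ell-d)}$ and $[F_k^{(2)}]_{(k\ell)}=\frac1{k\ell}\sum_{d\mid\ell}(-A_{kd})\,q^{-k(\ell-d/2)}$, whose dominant summands are the ones with $d$ largest, namely $q^{-k\ell/2}$ and $(-A_{k\ell})q^{-k\ell/2}$ respectively, the rest being $O(q^{-2k\ell/3})$. Summing the geometric tails and using $q\ge2$, one gets $|[F_k^{(1)}]_{(k\ell)}|\le\frac2{k\ell}2^{-k\ell/2}\le\frac1\ell 2^{-k\ell/2}$ for $k\ell\ge6$ (the finitely many smaller cases being trivial since $[F_k^{(1)}]_{(k)}=0$), and the crude bound $|A_n|\le2$ handles $F_k^{(2)}$ when $k\ge4$. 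The goal is then $\bigl|[F_k(z)]_{(s)}\bigr|\le\bigl[-\log(1-(z/\sqrt2)^k)\bigr]_{(s)}$ for all $s\ge0$; granted this, I would apply the elementary product property of the excerpt repeatedly to $F_k^m$, obtaining $|[F_k^m]_{(s)}|\le[(-\log(1-(z/\sqrt2)^k))^m]_{(s)}$, and sum over $m$ with weights $1/m!$ (legitimate since $F_k$ has no constant term, so only finitely many $m$ contribute to a given $s$) to conclude $|[\exp\{F_k(z)\}]_{(s)}|\le[(1-(z/\sqrt2)^k)^{-1}]_{(s)}$, which is (i). (As a consistency check, the identity $(1-z^k)^{1/k}\exp\{F_k(z)\}=\prod_{n\colon k\mid n}(1-(z/q)^n)^{I_n}$ implicit in (\ref{nu_nD_nk_is_0}) shows $\exp\{F_k\}$ is analytic for $|z|<\sqrt q$, so the geometric rate $q^{-s/2}$ is exactly what one expects.)

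For (ii) I would write $(1-z^k)^{1/k}=1-\sum_{\ell\ge1}b_\ell z^{k\ell}$, and use $\binom{1/k}{\ell}=\frac{(-1)^{\ell-1}}{k\ell}\prod_{j=1}^{\ell-1}\bigl(1-\tfrac1{kj}\bigr)$ with all factors in $(0,1]$ to get $b_\ell\le\frac1{k\ell}$, i.e. $|[(1-z^k)^{1/k}]_{(k\ell)}|\le1$ for $\ell=0$ and $\le\frac1{k\ell}$ for $\ell\ge1$. Convolving with (i), for $m\ge1$,
$$\bigl|[(1-z^k)^{1/k}\exp\{F_k(z)\}]_{(km)}\bigr|\le 2^{-km/2}+\sum_{a=1}^{m}\frac1{ka}\,2^{-k(m-a)/2},$$
and then I would estimate the right-hand side: the term $a=m$ contributes $\frac1{km}$, and splitting the rest at $a\le m/2$ (where the factor $2^{-k(m-a)/2}$ is summable against $\frac1{ka}$) versus $a>m/2$ (where $\frac1{ka}\le\frac2{km}$ and $\sum 2^{-k(m-a)/2}=O(1)$), together with $2^{-km/2}\le\frac1{km}$, shows the whole expression is $\ll\frac1{km}$. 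Since the coefficient equals $1$ at $m=0$ and both sides vanish at indices not divisible by $k$, this is precisely $[(1-z^k)^{1/k}\exp\{F_k(z)\}]_{(m)}\ll[1+\sum_{n\ge1}z^{kn}/(kn)]_{(m)}$.

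The main obstacle is the coefficient estimate in (i) in the borderline case $q=2$ (equivalently $k\in\{2,3\}$), where the intrinsic decay rate $q^{-s/2}$ of $\exp\{F_k\}$ coincides exactly with the target rate $2^{-s/2}$ and leaves no slack: there the crude bound $|A_n|\le2$ is insufficient for $F_k^{(2)}$, and one must instead exploit the arithmetic structure of $A_n$ — Möbius inversion in (\ref{product_for_In}) gives $I_n=\frac1n\sum_{d\mid n}\mu(d)q^{n/d}$, hence $A_n=\sum_{d\mid n,\ d\ge2}\mu(d)q^{n/d-n/2}=-\mathbf 1[2\mid n]+O\bigl(d(n)q^{-n/6}\bigr)$, so $-A_{kd}\le1+d(kd)q^{-kd/6}$ — which makes $[F_k^{(2)}]_{(k\ell)}\le\frac1\ell 2^{-k\ell/2}$ for all $\ell$ beyond a small explicit threshold, the finitely many remaining $(k,\ell)$ being checked by hand (all bounds being uniform in $q\ge2$). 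Everything downstream — the exponentiation in (i) and the convolution estimate in (ii) — is then routine.
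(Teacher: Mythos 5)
Your approach to (i) differs from the paper's in a way that creates real trouble at the borderline case, and your proposed patch glosses over the delicate part. The paper does \emph{not} substitute $I_n=q^n/n+A_nq^{n/2}/n$ into the logarithm to split $F_k=F_k^{(1)}+F_k^{(2)}$; it keeps $I_{kl}$ intact, arrives at
$[F_k]_{(km)}=-\tfrac1{km}q^{-km/2}\bigl(P_m+A_{km}\bigr)$
with $P_m=q^{-km/2}\sum_{lj=m,\,j\geqslant2}kmI_{kl}/j\geqslant0$, and bounds the \emph{combined} bracket directly: $I_{kl}\leqslant q^{kl}/(kl)$ gives $P_m\leqslant q^{-km/2}\sum_{l\leqslant m/2}q^{kl}\leqslant q^k/(q^k-1)\leqslant\tfrac43$ for $q,k\geqslant2$, while $-2\leqslant A_{km}\leqslant0$; since the two summands have opposite signs, $P_m+A_{km}\in[-2,\tfrac43]$ and so $|P_m+A_{km}|\leqslant2$, whence $|[F_k]_{(km)}|\leqslant\tfrac2k\cdot\tfrac1m q^{-km/2}\leqslant\tfrac1m 2^{-km/2}$ using $k\geqslant2$ and $q\geqslant2$, with no case analysis whatsoever. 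Your inequality $|[F_k]_{(km)}|\leqslant\max\bigl(|[F_k^{(1)}]_{(km)}|,|[F_k^{(2)}]_{(km)}|\bigr)$ is correct, but it discards exactly this cancellation: you are now forced to show that $|[F_k^{(2)}]_{(km)}|$ alone is $\leqslant\tfrac1m 2^{-km/2}$, and the leading term of $[F_k^{(2)}]_{(km)}$ is $\tfrac{-A_{km}}{km}q^{-km/2}$, which with the crude $|A_n|\leqslant2$ already \emph{equals} the target at $q=k=2$, so the lower-order contributions from the proper divisors $l<m$ strictly exceed it (e.g.\ $q=2,k=2,m=2$: the crude bound gives $\tfrac3{16}>\tfrac18$).

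Your diagnosis of the obstacle is correct, but two things about your patch are off. First, the parenthetical ``$q=2$ (equivalently $k\in\{2,3\}$)'' conflates the two independent parameters: the failure of the crude bound is specifically at $q=2$ \emph{and} $k=2$; the cases $k=3$ or $q\geqslant3$ do go through with $|A_n|\leqslant2$. Second, the Möbius estimate $-A_n\leqslant1+\tau(n)q^{-n/6}$ is not a usable refinement for small $n$: at $n=2$, $q=2$ it gives $-A_2\leqslant1+2\cdot2^{-1/3}\approx2.6$, \emph{worse} than the crude $\leqslant2$ (and at $n=6$, $q=2$ one in fact has $-A_6=\tfrac54>1$, so one cannot simply replace $2$ by $1$). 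So the asymptotic part of the argument only kicks in beyond an unspecified threshold, and the ``finitely many remaining cases checked by hand'' are precisely the ones that are doing the work at $q=k=2$; carrying out that verification (with the exact values of $A_{2l}$ for $q=2$, as you sketch) would close the gap, but as written the argument is not complete. Part (ii), by contrast, is essentially the paper's convolution estimate and is fine.
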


\begin{proof} Using estimate (\ref{assympt_In}), we obtain
\begin{equation*}
\begin{split}
F_k(z)&=-\sum_{n\colon\ k|n}{I_n}
\sum_{j=2}^{\infty}
{1\over j}{{\left({z\over q}\right)}^{nj}}-
\sum_{n\colon\ k|n}{   {A_n}\over {nq^{n/2}}  }z^n
 \\
&=-\sum_{m=1}^{\infty}{{\left( z\over {\sqrt q} \right)}^{km}}
{1\over {km}} \left({1\over {q^{km/2}}}
\sum_{{\scriptstyle l \geqslant 1, j\geqslant 2},\ {\scriptstyle lj=m}}
{{kmI_{kl}}\over j}+A_{km} \right).
\end{split}
\end{equation*}
Since $0<I_n\leqslant {{q^n}\over n}$ and $-2\leqslant A_{j}\leqslant 0$,
we have
$$
\left| {1\over {q^{km/2}}}
\sum_{{\scriptstyle l \geqslant 1, j\geqslant 2},\ {\scriptstyle lj=m}}
{{kmI_{kl}}\over j}+A_{km}\right| \leqslant 2,
$$
whence, for  $s\geqslant 0$, we have
\begin{equation*}
\begin{split}
\big|[F_k(z)]_{(s)}\big|
&\leqslant
\left[
{2\over k}\sum_{m=1}^{\infty}{1\over {m}}{\left(
{{z}\over {\sqrt {q}}} \right) }^{km}
\right]_{(s)}
\leqslant \left[
\sum_{m=1}^{\infty}{1\over {m}}{\left(
{{z}\over {\sqrt {2}}} \right) }^{km}
\right]_{(s)}
\\
&=\left[
 \log {\left(
 1-{\left( {z\over {\sqrt {2}}}\right) }^{k}
        \right)}^{-1}
\right]_{(s)}.
\end{split}
\end{equation*}

Hence, for $s\geqslant 0$, we obtain

\begin{eqnarray}
&&\big|[\exp \{ F_k (z) \} ]_{(s)}\big|=\left| \sum_{j=0}^{\infty}
{{[{ F_k (z) }^j ]_{(s)}}\over {j!}}\right|\cr
&&\quad \leqslant
\sum_{j=0}^{\infty}
 {1\over {j!}}
\left[ \left(
 \log {\left(
 1-{\left( {z\over {\sqrt {2}}}\right) }^{k}
        \right)}^{-1} \right)^j
\right]_{(s)}
=\left[  {\left(
 1-{\left( {z\over { {\sqrt 2}}}\right) }^{k}
        \right)}^{-1} \right]_{(s)}.\nonumber
\end{eqnarray}

In the proof of estimate (ii), we shall use estimate (i).
Since
$$
\big|[(1-z)^{1/k}]_{(j)}\big|=
{1\over k}\prod_{l=2}^j\left( 1-{{1+1/k}\over l} \right)
\leqslant {1\over k}\prod_{l=2}^j\left( 1-{{1}\over l} \right)={1\over {kj}}
$$
for $j\geqslant 1$, we have that
$$
\big|[(1-z^k)^{1/k}]_{(j)}\big|\leqslant
\left[ 1+\sum_{s=1}^{\infty}{{z^{ks}}\over {ks}} \right]_{(j)}
$$
for  $j\geqslant 0$. Using this estimate and $(i)$, we obtain
\begin{eqnarray}
&&\left| \left[
(1-z^k)^{1/k}\exp \{ F_k(z) \}
 \right]_{(m)}\right| \leqslant {\left[
\left( 1+\sum_{s=1}^{\infty} {{z^{ks}}\over {ks}} \right)
\left( \sum_{s=0}^{\infty} {{z^{ks}}\over {2^{ks/2}}} \right)
\right] }_{(m)}\nonumber\\
&&\quad =\left[ 1+ \sum_{n=1}^{\infty}z^{kn}\left( {1\over {2^{kn/2}} }+
{1\over k}\sum_{s=1}^{n}{1\over {s2^{k(n-s)/2}}}\right) \right]_{(m)}
\ll \left[ 1+\sum_{n=1}^{\infty}{{z^{kn}}\over {kn}} \right]_{(m)},\nonumber
\end{eqnarray}
and the lemma is proved.
\end{proof}
\begin{lem}
 \label{nu_nD_nk_iszero}
 If $k\geqslant 2$, then
$$
{\nu_n}(D_{nk}(\xi)=0)=\exp \left\{ {-{1\over k}\sum_{j=1}^{[n/k]}{1\over j} }
\right\} +{\rm O}\left(
{1\over {k^2}} \right).
$$
\end{lem}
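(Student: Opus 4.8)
The plan is to read off $\nu_n(D_{nk}(\xi)=0)$ as the coefficient of $z^n$ in the generating function computed in (\ref{nu_nD_nk_is_0}), namely
$$\nu_n(D_{nk}(\xi)=0)=\left[\frac{(1-z^k)^{1/k}}{1-z}\exp\{F_k(z)\}\right]_{(n)},$$
and then to split $\exp\{F_k(z)\}=1+(\exp\{F_k(z)\}-1)$ and handle the two resulting contributions separately. Throughout I write $N:=[n/k]$ and use that $(1-z^k)^{1/k}$ and $\exp\{F_k(z)\}$ are power series in $z^k$, while dividing by $1-z$ turns a coefficient sequence into its sequence of partial sums; in particular, for $g(z)=\sum_j c_j z^{kj}$ one has $[g(z)/(1-z)]_{(n)}=\sum_{j=0}^{N}c_j$.

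For the contribution of the summand $1$, the elementary identity $\sum_{l=0}^{N}\binom{\alpha}{l}(-1)^l=(-1)^N\binom{\alpha-1}{N}$ (divide $(1-z)^\alpha$ by $1-z$) with $\alpha=1/k$ gives
$$\left[\frac{(1-z^k)^{1/k}}{1-z}\right]_{(n)}=\sum_{l=0}^{N}\binom{1/k}{l}(-1)^l=(-1)^N\binom{1/k-1}{N}=\prod_{j=1}^{N}\left(1-\frac{1}{jk}\right).$$
Since $\exp\left\{-\frac{1}{k}\sum_{j=1}^{N}\frac{1}{j}\right\}=\prod_{j=1}^{N}e^{-1/(jk)}$, the ratio of these two products equals $\exp\left\{\sum_{j=1}^{N}\left(\log\left(1-\frac{1}{jk}\right)+\frac{1}{jk}\right)\right\}$, whose exponent lies in $\left[-k^{-2}\sum_{j\ge1}j^{-2},\,0\right]$ because $-x^2\le\log(1-x)+x\le 0$ for $0\le x\le\frac{1}{2}$. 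As the exponential factor is at most $1$, this yields $\prod_{j=1}^{N}\left(1-\frac{1}{jk}\right)=\exp\left\{-\frac{1}{k}\sum_{j=1}^{N}\frac{1}{j}\right\}+O(k^{-2})$.

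For the contribution of $\exp\{F_k(z)\}-1$, I would expand the Cauchy product of the two series in $z^k$ (the second having vanishing constant term), sum the coefficients of indices $\le n$, and interchange the order of summation (using the above identity once more), which gives
$$\left[\frac{(1-z^k)^{1/k}(\exp\{F_k(z)\}-1)}{1-z}\right]_{(n)}=\sum_{b=1}^{N}\bigl[\exp\{F_k(z)\}\bigr]_{(kb)}\prod_{i=1}^{N-b}\left(1-\frac{1}{ik}\right).$$
Here each product $\prod_{i=1}^{N-b}\left(1-\frac{1}{ik}\right)$ lies in $(0,1]$, while Lemma~\ref{coefF_k(z)}(i) bounds $\bigl|[\exp\{F_k(z)\}]_{(kb)}\bigr|\le\bigl[(1-(z/\sqrt 2)^k)^{-1}\bigr]_{(kb)}=2^{-kb/2}$, so the whole contribution is, in absolute value, at most $\sum_{b\ge1}2^{-kb/2}\le 2^{1-k/2}\ll k^{-2}$ for $k\ge 2$. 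Adding the two contributions proves the lemma (the case $N=0$, i.e.\ $n<k$, being trivially included, both sides then equalling $1$). The step that needs care is precisely this error bound: a term-by-term estimate of the coefficients of $(1-z^k)^{1/k}(\exp\{F_k(z)\}-1)$ only gives $O(k^{-1}\log(n/k))$, so one must genuinely use the geometric decay $2^{-kb/2}$ from Lemma~\ref{coefF_k(z)}(i) --- which stems from the cancellation $\log(1-x)+x=O(x^2)$ that already removed the leading power of $q$ in $F_k$ --- together with the uniform bound $\prod_{i=1}^{N-b}\left(1-\frac{1}{ik}\right)\le 1$ coming from the partial-summation identity. The remaining computations (that identity and the comparison of $\prod_{j}\left(1-\frac{1}{jk}\right)$ with $\exp\{-\frac{1}{k}\sum_{j}\frac{1}{j}\}$) are routine.
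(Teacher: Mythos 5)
Your proof is correct and follows essentially the same route as the paper: isolate the leading term $\left[(1-z^k)^{1/k}/(1-z)\right]_{(n)}=\prod_{j\le N}(1-1/(jk))$, compare it to $\exp\{-\frac{1}{k}\sum_{j\le N}1/j\}$ with error $O(k^{-2})$, and bound the contribution of $\exp\{F_k(z)\}-1$ by the geometric tail $\sum_{b\ge1}2^{-kb/2}\ll 2^{-k/2}\ll k^{-2}$ via Lemma~\ref{coefF_k(z)}(i). The only cosmetic difference is that you rederive the product formula for the leading coefficient from the binomial identity, where the paper simply cites the exact expression from Erdős--Turán.
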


\begin{proof} In \cite{erdos_turan_II}, an exact expression has been obtained for

\begin{equation*}
\begin{split}
&\left[{   { {(1-z^k)}^{1/ k} }\over {1-z}   }\right]_{(n)}=
\sum_{j \leqslant n}\left[   { {(1-z^k)}^{1/ k} }   \right]_{(j)}=
\sum_{j\colon\ jk\leqslant n}\left[ { {(1-z)}^{1/ k} }\right]_{(j)}
\\
&\quad =\left[{   { {(1-z)}^{1/ k} }\over {1-z}   }\right]_{([n/k])}=
\prod_{j=1}^{[n/k]}\left( 1- {1 \over {jk} } \right)=
\exp \left\{ {-{1\over k}\sum_{j=1}^{[n/k]}{1\over j}} \right\}
\left( 1+{\rm O}\left( {1\over {k^2}}\right) \right).
\end{split}
\end{equation*}
Hence, using (\ref{nu_nD_nk_is_0}) and Lemma \ref{coefF_k(z)}, we obtain
\begin{equation*}
\begin{split}
{\nu_n}(D_{nk}(\xi)=0)&=\left[
{ { {(1-z^k)}^{1/k} }\over {1-z} }\exp\{F_k(z)\} \right]_{(n)}=
\left[
{ { {(1-z^k)}^{1/ k} }\over {1-z} } \right]_{(n)}
\\
&\quad +\sum_{m=1}^n
\left[
{ { {(1-z^k)}^{1/ k} }\over {1-z} }\right]_{(n-m)}
\left[ \exp\{F_k(z)\} \right]_{(m)}
\\
&=
\left[
{ { {(1-z^k)}^{1/k} }\over {1-z} } \right]_{(n)}+{\rm O}\left(
{1\over {2^{k/2}}} \right)
=\exp \left\{ {-{1\over k}\sum_{j=1}^{[n/k]}{1\over j} }\right\} +{\rm O}\left(
{1\over {k^2}} \right).
\end{split}
\end{equation*}
The lemma is proved.
\end{proof}

\begin{proof}[Proof of theorem \ref{meanoxi}]
  From  (\ref{mean_xi_square}) and Lemma \ref{nu_nD_nk_iszero} it follows that
$$
\mu_n:={\bf M_n}\big(\log P_n(\xi)-\log O_n(\xi)\big)=
\sum_p \sum_{s\geqslant 0}\big({\bf
M_n}D_{np^s}-1+{\nu_n}(D_{np^s}=0)\big)\log p,
$$

\begin{eqnarray}
\mu_n&=&\sum_{m\leqslant n}\Lambda(m)
\left[
\sum_{j=1}^{[n/m]}{1\over {jm}}
-\left( 1-\er^{-\sum_{j=1}^{[n/m]}{1\over {jm}}} \right)
\right]
+{\rm O}(1)\cr
&=&\sum_{m\leqslant n}\Lambda(m)
\bigg[
{
{\log {n\over {m}}+\gamma +{\rm O}\left( {{m}\over n} \right) }
\over {m}
}\cr
&&\quad -\left.\left( 1-{\rm e}^{ -{{\log {n\over {m}} }\over {m}} }
\left( 1-{{\gamma}\over {m}} +
{\rm O}\left( {1\over {m^{2}}}+{1\over n} \right)
\right)
\right)
\right]
+{\rm O}(1)\cr
&=&
\sum_{m\leqslant n}\Lambda(m)
\left[
{
{\log {n\over {m}} }
\over {m}
}
-\left( 1-\exp \left\{ { -{{\log {n\over {m}} }\over {m}} }\right\}
\right)
\right]\cr
&&\quad +
\gamma \sum_{m\leqslant n}\Lambda(m)
{{ 1-\exp \left\{ {-{{\log {n\over {m}} }\over {m}} }\right\} }\over {m}}
+{\rm O}(1),\nonumber
\end{eqnarray}
where $\gamma $ is the Euler constant, and $\Lambda(m)$  is the Mangoldt
function.
We further denote by $B$ bounded constants (not necessarily the
same in different places). Since
\begin{multline*}
\sum_{m\leqslant n}\Lambda(m)
{{ 1-\exp\left\{ { -{{\log {n\over {m}} }\over {m}} }\right\} }\over {m}}
=
\sum_{m\leqslant \log n} \Lambda(m)
{{ 1-\exp \left\{ { -{{\log {n\over m} }\over m} }\right\} }\over m}
\\
\quad +B\sum_{m> \log n} {{\Lambda(m)} \over {m^2}} \log n+{\rm O}(1)
={\rm O}(\log \log n),
\end{multline*}
we have
\begin{equation*}
\begin{split}
\mu_n&=\sum_{m\leqslant n}\Lambda(m)
\left[
{
{\log {n\over {m}} }
\over {m}
}
-\left(
1-\exp\left\{   -{{\log {n\over {m}} }\over {m}}  \right\}
\right)
\right]+{\rm O}(\log \log n)
\\
&=\sum_{m\leqslant n}\Lambda(m)
\left[
{ {\log n}\over {m} }-
{ {\log {m}}\over {m} }
-\left( 1-\exp\left\{ { -{{\log n }\over {m}} }\right\}\right.\right.
\\
&\quad \times\left.\left.\left( 1+{ {\log {m}}\over {m} }+
{\rm O}\left( { {\log {m}}\over {m} } \right)^2\right)
\right)
\right]
+{\rm O}(\log \log n)
\\
&=
\sum_{m\leqslant n}\Lambda(m)
\left[
{ {\log n}\over {m} }
-\left( 1-\exp \left\{ { -{{\log n }\over {m}} }\right\}
\right)
\right]
\\
&\quad +B
\sum_{m\leqslant n}\Lambda(m)
{ {\log {m}}\over {m} }
\left( 1-\exp \left\{ { -{{\log n }\over {m}} }\right\}
\right)+{\rm O}(\log \log n)
\\
&=S(x)+{\rm O}\bigl( (\log x)^2 \bigr) ,
\end{split}
\end{equation*}
where
$$
S(x)=\sum_{m=1}^{\infty}\Lambda(m)\phi \left( {m\over x} \right),
$$
$\phi (y)={\rm e}^{-{1\over y}}-1+{1\over y}$, and $x=\log n$.
As in the proof of one formula of Linnik (see, e.g., \cite{karatsuba}, p.~83,
Problem 6),
 we  represent $S(x)$ by a sum over nontrivial zeros of  $\zeta(s)$.
 Calculating the Mellin transform of the function $\phi (y)$, we obtain
$$
\int\limits_{0}^{\infty}\phi (x)x^{s-1}{\rm d}x=\Gamma (-s)
$$
for $1<\Re s <2$; here $\Gamma (s)$ is the Euler Gamma function.
Then we have
$$
{1\over {2\pi i}}\int\limits_{\sigma -i \infty}^{\sigma +i \infty}
\left( -{{\zeta'(s)}\over {\zeta(s)}} \right) \Gamma(-s)x^s{\rm d}s=
\sum_{m=1}^{\infty}\Lambda(m){1\over {2\pi i}}
\int\limits_{\sigma -i\infty}^{\sigma +i \infty}\Gamma(-s)
\left( {m\over x} \right)^{-s}{\rm d}s=S(x)
$$
for $1< \sigma <2$.
Let us change the integration line $\Re s=\sigma$
in the above integral by the line $\Re s=-{1\over 2}$ in the following way.
Let us consider the integral over the rectangle with vertices
 $-{1\over 2}+ iT$, $-{1\over 2}- iT$, $\sigma + iT$, and $\sigma - iT$
with  $T$ chosen so that the distance from the closest zeros of $\zeta(s)$
is $\gg {1\over {\log T}}$.
Letting now  $T$ to infinity and applying the well-known estimates of
 ${{\zeta'(s)}\over {\zeta(s)}}$ in the critical strip,
 we see that the integrals over $[-{1\over 2}+iT, \sigma +i T]$ and
$[-{1\over 2}-iT, \sigma -i T]$ tend to zero as $T\to\infty$.
Applying now the residue theorem, we obtain

\begin{equation*}
\begin{split}
S(x)&={1\over {2\pi i}}\int\limits_{\sigma -i \infty}^{\sigma +i \infty}
\left( -{{\zeta'(s)}\over {\zeta(s)}} \right) \Gamma(-s)x^s{\rm d}s=
{1\over {2\pi i}}\int\limits_{-{1\over 2} -i \infty}^{-{1\over 2} +i \infty}
\left( -{{\zeta'(s)}\over {\zeta(s)}} \right) \Gamma(-s)x^s{\rm d}s
\\
&\quad +\sum_{\rho}res_{s=\rho}
\left( -{{\zeta'(s)}\over {\zeta(s)}} \right) \Gamma(-s)x^s
+\left( res_{s=0}+res_{s=1} \right)
\left( -{{\zeta'(s)}\over {\zeta(s)}} \right) \Gamma(-s)x^s
\\
&=x(\log x -1)+{{\zeta'(0)}\over {\zeta(0)}}
-\sum_{\rho}\Gamma(-\rho)x^{\rho}+{\rm O}\left( {1\over {\sqrt x}}\right) ,
\end{split}
\end{equation*}
where $\sum_{\rho}$ is the sum over nontrivial zeros of the Riemann
zeta-function. Now using estimate~(\ref{gen_func_meanxi}), we
calculate
$$
{\bf M_n}\log P_n(\xi(P))=\sum_{j=1}^n{\bf M_n}\xi_j\log j=\sum_{j=1}^n
{{\log j}\over j}+{\rm O}(1)={1\over 2}\log^2 n + {\rm O}(1).
$$
The theorem is proved.
\end{proof}

We now estimate the sum over the nontrivial zeros of the Riemann
zeta-function.
 Since $\Re \rho \leqslant 1-{c\over {\log T}}$
for ${|\Im \rho | \leqslant T}$ (see \cite{karatsuba} or \cite{tenenbaum}) and
$|\Gamma(-(\sigma +it))|\ll \er^{-{{\pi} \over 2}|t|}|t|^{-1/2}$
 uniformly in $0\leqslant \sigma \leqslant 1$,
putting $T=\log \log n$, we get

\begin{equation*}
\begin{split}
R_n&:=\left| \sum_{\rho}\Gamma(-\rho)(\log n)^{\rho}\right| \ll
(\log n)^{1-{c\over {\log T}}}
\left| \sum_{|\Im \rho |\leqslant T}\Gamma(-\rho)\right| +
\log n \sum_{|\Im \rho |> T} |\Gamma(-\rho)|
\\
&\ll
\log n\left( (\log n)^{-{c\over {\log T}}} + {\rm e}^{-T} \right)
\ll (\log n) {\rm e}^{-c{{\log \log n}\over {\log \log \log n}}};
\end{split}
\end{equation*}
here we used the well-known fact that
$\sum_{n<|\Im \rho |\leqslant n+1}1\ll \log n$.

To estimate the covariances of $(D_{nk}-1)^+$
the following elementary lemma will be useful.

\begin{lem}
\label{coeff(z)g(z)}
  Let $f(z)$ and $g(z)$ be analytic functions. Then
$$
\left[
{{f(z)g(z)}\over {1-z}}
\right]_{(n)}
-\left[
{f(z)\over {1-z}}
\right]_{(n)}
\left[
{g(z)\over {1-z}}
\right]_{(n)}=
-\sum_{{\scriptstyle {1\leqslant i,j \leqslant n}}\atop{\scriptstyle {i+j>n}}}
[f(z)]_{(i)}[g(z)]_{(j)}.
$$
\end{lem}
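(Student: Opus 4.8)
The plan is to reduce everything to bookkeeping with Taylor coefficients. Write $f(z)=\sum_{i\geqslant 0}f_iz^i$ and $g(z)=\sum_{j\geqslant 0}g_jz^j$, so that $f_i=[f(z)]_{(i)}$ and $g_j=[g(z)]_{(j)}$. The first step is to record the elementary fact that dividing by $1-z$ replaces a sequence of Taylor coefficients by its sequence of partial sums, so that
$$
\left[\frac{f(z)}{1-z}\right]_{(n)}=\sum_{i=0}^nf_i,\qquad
\left[\frac{g(z)}{1-z}\right]_{(n)}=\sum_{j=0}^ng_j,
$$
and, using $[f(z)g(z)]_{(k)}=\sum_{i+j=k}f_ig_j$,
$$
\left[\frac{f(z)g(z)}{1-z}\right]_{(n)}=\sum_{k=0}^n[f(z)g(z)]_{(k)}=\sum_{\substack{i,j\geqslant 0\\ i+j\leqslant n}}f_ig_j .
$$

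Next I would compare the two index sets. The product of the partial sums is $\bigl(\sum_{i=0}^nf_i\bigr)\bigl(\sum_{j=0}^ng_j\bigr)=\sum_{0\leqslant i\leqslant n,\ 0\leqslant j\leqslant n}f_ig_j$, and since the constraint $i+j\leqslant n$ (with $i,j\geqslant 0$) automatically forces $i\leqslant n$ and $j\leqslant n$, the triangular sum displayed above is a sub-sum of this square sum. Subtracting, the left-hand side of the lemma equals $-\sum f_ig_j$ taken over the pairs with $0\leqslant i\leqslant n$, $0\leqslant j\leqslant n$ and $i+j>n$. On this remaining set, $i+j>n$ together with $j\leqslant n$ gives $i\geqslant 1$, and symmetrically $j\geqslant 1$, so the summation range is exactly $1\leqslant i,j\leqslant n$ with $i+j>n$, which is the asserted right-hand side.

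There is essentially no obstacle: everything is a finite identity among coefficients, so convergence plays no role, and the only point requiring a little care is the index-range bookkeeping, namely checking that the surviving lower limits really are $1$ rather than $0$. I would therefore present the argument as the two short displays above followed by the one-line comparison of index sets.
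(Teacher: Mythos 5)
Your proof is correct and is the natural one: express all three bracketed quantities as sums of $f_ig_j$ over explicit index sets (the triangle $i+j\leqslant n$ and the square $0\leqslant i,j\leqslant n$), take the difference, and note that on the complementary region $i+j>n$, $j\leqslant n$ forces $i\geqslant 1$ and vice versa. The paper states this lemma without proof, regarding it as elementary, and your argument is exactly the intended bookkeeping.
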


 In the following two lemmas, we prove the same estimates for
 the covariances of  $(D_{nk}(\xi(P))-1)^+$
as those obtained in  \cite{arat_bar_tav} for $(D_{nk}(Z)-1)^+$,
where $Z$ is a vector with components that are
independent random Poisson variables with
parameters ${\theta / j}$.

\begin{lem}
\label{covD_nk+}
For $(k,l)=1$, $k,l\geqslant 2$, we have the estimate
$$
{\rm cov}\bigl( (D_{nk}(\xi)-1)^+,(D_{nl}(\xi)-1)^+\bigr)
\ll {{\log n}\over {kl}}.
$$
\end{lem}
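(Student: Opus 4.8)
The plan is to turn the whole statement into a question about coefficients of explicit generating functions. Using the defining relation $(d-1)^+=d-1+I[d=0]$, write $(D_{nk}(\xi)-1)^+=D_{nk}(\xi)-1+I[D_{nk}(\xi)=0]$; then the covariance in question splits into the four pieces $\operatorname{cov}(D_{nk},D_{nl})$, $\operatorname{cov}(D_{nk},I[D_{nl}=0])$, $\operatorname{cov}(I[D_{nk}=0],D_{nl})$, $\operatorname{cov}(I[D_{nk}=0],I[D_{nl}=0])$ (equivalently, one assembles the generating function of $\mathbf{M}_n[(D_{nk}(\xi)-1)^+(D_{nl}(\xi)-1)^+]$ from these four). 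We may assume $k,l\le n$, since otherwise the relevant $D$ vanishes identically on $E_n$ and the left side is $0$.

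Each of the four bivariate generating functions is read off from (\ref{identity_for_Mn}) by taking $\hat f(j)$ constant on residue classes. Here $(k,l)=1$ is used: $\{j:k\mid j\}$ and $\{j:l\mid j\}$ meet exactly in $\{j:kl\mid j\}$, so only four classes occur — $\{kl\mid j\}$, $\{k\mid j,\ l\nmid j\}$, $\{l\mid j,\ k\nmid j\}$, $\{k\nmid j,\ l\nmid j\}$ — and giving $\hat f(j)$ a value $0$, $1$, or a marker $e^{\mathrm{i}t_\bullet}$ on each class, and using (\ref{product_for_In}) to extract the factor $(1-z)^{-1}$, every one of the four generating functions becomes $(1-z)^{-1}$ times a product over these classes. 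Differentiating in the markers and setting them to $0$, each covariance takes the shape $\big[\Psi/(1-z)\big]_{(n)}-\big[\Phi_1/(1-z)\big]_{(n)}\big[\Phi_2/(1-z)\big]_{(n)}$, with $\Psi,\Phi_1,\Phi_2$ built from $g_m(z)=\sum_{m\mid j}\tfrac{I_j(z/q)^j}{1-(z/q)^j}$ (so that $\sum_nz^n\mathbf{M}_nD_{nm}(\xi)=g_m/(1-z)$, cf.\ (\ref{gen_func_meanxi})), from $H_m(z)=\prod_{m\mid j}(1-(z/q)^j)^{I_j}$ (so that $\sum_nz^n\nu_n(D_{nm}=0)=H_m/(1-z)$, cf.\ (\ref{nu_nD_nk_is_0})), and from the second-derivative series $\gamma(z)=\sum_{kl\mid j}\tfrac{I_j(z/q)^j}{(1-(z/q)^j)^2}$. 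Applying Lemma~\ref{coeff(z)g(z)} to each piece and collecting, one obtains
$$
\operatorname{cov}\big((D_{nk}(\xi)-1)^+,(D_{nl}(\xi)-1)^+\big)=-\sum_{\substack{1\le i,j\le n\\ i+j>n}}[P]_{(i)}[Q]_{(j)}+\Big[\frac{\gamma-g_{kl}(H_k+H_l)+H_kH_l(H_{kl}^{-1}-1)}{1-z}\Big]_{(n)},
$$
where $P=g_k-1+H_k$, $Q=g_l-1+H_l$ (so $[P]_{(0)}=[Q]_{(0)}=0$), and the last term is the ``overlap'' contribution coming from the shared class $\{kl\mid j\}$.

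The tail sum is the easy part. From (\ref{assympt_In}) in the form $0\le I_j\le q^j/j$ one gets $|[g_m]_{(i)}|\le 2/i$, and Lemma~\ref{coefF_k(z)}(ii) gives $|[H_m]_{(i)}|\ll 1/i$ for $i\ge1$, all constants $q$-independent; both vanish unless $m\mid i$. Hence $|[P]_{(i)}|\ll 1/i$ supported on multiples of $k$, $|[Q]_{(j)}|\ll 1/j$ supported on multiples of $l$, and the tail sum is dominated by the elementary estimate
$$
\sum_{\substack{k\mid i,\ l\mid j\\ i+j>n,\ i,j\le n}}\frac1{ij}=\frac1{kl}\sum_{\substack{a\le n/k,\ b\le n/l\\ ka+lb>n}}\frac1{ab}\ \ll\ \frac{\log n}{kl},
$$
which I would prove by splitting the inner sum according to whether $n-ka\ge l$ or $n-ka<l$, reducing the first part to the convergent integral $\int_0^1 t^{-1}\log\tfrac1{1-t}\,dt$ and the second to at most $l/k+1$ terms of size $\ll k/n$.

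The overlap contribution is the crux. Its first summand is harmless by itself: $\gamma$ is supported on multiples of $kl$ with $|[\gamma]_{(i)}|\ll 1/i$, so $\big[\gamma/(1-z)\big]_{(n)}=\sum_{kl\mid i\le n}[\gamma]_{(i)}\ll\frac{\log n}{kl}$. But the other two summands are individually enormous: since $H_{kl}^{-1}=(1-z^{kl})^{-1/(kl)}\exp\{-F_{kl}(z)\}$, the coefficient $\big[H_{kl}^{-1}/(1-z)\big]_{(n)}$ grows like $(n/(kl))^{1/(kl)}$, so each of $\big[H_kH_l(H_{kl}^{-1}-1)/(1-z)\big]_{(n)}$ and $\big[g_{kl}(H_k+H_l)/(1-z)\big]_{(n)}$ is of order $(n/(kl))^{1/(kl)}$, and only their combination is small. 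The main obstacle is precisely to exhibit this cancellation and rearrange the overlap contribution into a form in which the $n$-dependence collapses to the scale $\log n/(kl)$. Here $(k,l)=1$ enters once more: factoring $H_k=H_{kl}\prod_{k\mid j,\ kl\nmid j}(1-(z/q)^j)^{I_j}$ and similarly $H_l$, the overlap contribution reorganizes into a combination controlled by $\mathbf{M}_nD_{n,kl}(\xi)=\frac1{kl}\sum_{j\le[n/(kl)]}\frac1j+O(q^{-kl/2})\ll\frac{\log n}{kl}$ (from (\ref{M_nD_nk})), by $\nu_n(D_{n,kl}=0)\le1$, and by $\sum_{kl\mid j\le n}1/j\ll\frac{\log n}{kl}$ — this is exactly the mechanism underlying the corresponding bound in the independent-Poisson model of \cite{arat_bar_tav}, where the identity $(D_{n,kl}-p)^+=D_{n,kl}-p+p\,I[D_{n,kl}=0]$ collapses the covariance down to $\operatorname{Var}(D_{n,kl})$ plus lower-order terms. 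Once this is done, the rest of the proof — the generating-function bookkeeping and the two elementary sums — is routine.
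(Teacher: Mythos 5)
Your overall strategy follows the paper's: split $(D-1)^+=D-1+I[D=0]$, read off the joint generating functions from \eqref{identity_for_Mn}, apply Lemma~\ref{coeff(z)g(z)} to reduce to a tail sum plus a remainder, and bound the tail sum by the elementary estimate $\sum_{k\mid i,\,l\mid j,\,i+j>n}\tfrac1{ij}\ll\tfrac{\log n}{kl}$; that much is sound and parallels the paper. But what you call the ``overlap contribution'' --- the crux, by your own words --- is not carried out. You write that it ``reorganizes into a combination controlled by'' $\mathbf{M}_nD_{n,kl}$, $\nu_n(D_{n,kl}=0)$ and $\sum_{kl\mid j\le n}1/j$, but no such reorganization is exhibited; this is precisely the nontrivial step and asserting it without producing it leaves a genuine gap in the proof.

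Moreover, the reasoning you give for why this step is delicate is itself off. You claim that each of $\bigl[H_kH_l(H_{kl}^{-1}-1)/(1-z)\bigr]_{(n)}$ and $\bigl[g_{kl}(H_k+H_l)/(1-z)\bigr]_{(n)}$ is of order $(n/(kl))^{1/(kl)}$ because $\bigl[H_{kl}^{-1}/(1-z)\bigr]_{(n)}$ is; but $H_kH_l\sim(1-z)^{1/k+1/l}$ near $z=1$ dominates the singularity $(1-z)^{-1/(kl)}$ of $H_{kl}^{-1}$ (note $1/k+1/l>1/(kl)$), so $H_kH_lH_{kl}^{-1}/(1-z)$ has \emph{decaying} coefficients, and the second term does not even involve $H_{kl}^{-1}$. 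The actual failure mode is subtler: bounding the cross-term $g_{kl}H_k$ coefficient-by-absolute-value with $|[H_k]_{(j)}|\ll1/j$ (Lemma~\ref{coefF_k(z)}(ii)) yields only $\sum_{kl\mid i,\,k\mid j,\,i+j\le n}1/(ij)\ll\log^2 n/(k^2l)$, which exceeds $\log n/(kl)$ when $k$ is small; to do better one must exploit that the partial sums $\sum_{j\le N}[H_k]_{(j)}=\nu_N(D_{Nk}=0)$ are themselves small. The paper never confronts this because it does not force all four covariances through one monolithic Lemma~\ref{coeff(z)g(z)} extraction. Instead it treats them one at a time, and in each the $kl$-overlap dissolves for a different, concrete reason: for $\mathrm{cov}(I[D_{nk}=0],I[D_{nl}=0])$ it first peels off the $kl$-factors as $S_2=U_{k,l}(1-\exp\{F_{kl}\})$ and $S_3=U_{k,l}\exp\{F_{kl}\}(1-(1-z^{kl})^{1/(kl)})$, shows $[S_2]_{(n)}\ll2^{-kl/2}$ and $[S_3]_{(n)}\ll\log n/(kl)$ directly, and only then applies Lemma~\ref{coeff(z)g(z)} to the clean product $S_1$; for $\mathrm{cov}(I[D_{nk}=0],\xi_j)$ with $k\mid j$ the product $I[D_{nk}=0]\,\xi_j$ is identically zero, killing the overlap; and for $\mathrm{cov}(D_{nk},D_{nl})$ the overlapping diagonal just contributes $\sum_{kl\mid j\le n}\mathrm{Var}(\xi_j)\ll\sum_{kl\mid j\le n}1/j\ll\log n/(kl)$. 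You would need to supply an argument of comparable precision for the combined overlap term before your version of the proof is complete.
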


\begin{proof} Since $(d-1)^+=d-1+I(d=0)$, we have
\begin{multline*}
{\rm cov}((D_{nk}-1)^+,\ (D_{nl}-1)^+)={\rm cov}(D_{nk},D_{nl})+
{\rm cov}(D_{nk},I[D_{nl}=0])
\\+
{\rm cov}(I[D_{nk}=0],D_{nl}=0)+{\rm cov}(I[D_{nk}=0],I[D_{nl}=0]).
\end{multline*}
Putting, in (\ref{identity_for_Mn}),
$$
\hat f(m)=\begin{cases}
0,&\text{if
 $k|m$   or\ $l|m$,} \cr
1, &\text{otherwise},
\end{cases}
$$
and using (\ref{identity_for_Mn}), we get
\begin{equation*}
\begin{split}
U_{k,l}(z)&:=\sum_{n=0}^{\infty}
{\nu_n}\big(D_{nk}(\xi)=0,D_{nl}(\xi)=0\big)z^n=
 {1\over {1-z}}\prod_{n\geqslant 1\colon\  k|n \vee l|n}{\left( 1-{ \left({z\over q}\right)}^n
\right)}^{I_n}
\\
&={1\over {1-z}}{{\prod_{n\geqslant 1\colon\ k|n}{\left( 1-{ \left({z\over q}\right)}^n
\right)}^{I_n} \prod_{n\geqslant 1\colon\ l|n}{\left( 1-{ \left({z\over q}\right)}^n
\right)}^{I_n} }\over {\prod_{n\geqslant 1\colon\ kl|n}{\left( 1-{ \left({z\over q}\right)}^n
\right)}^{I_n} }}
\\
&={1\over {1-z}}
{  {(1-z^k)^{1/k}(1-z^l)^{1/l}}\over { (1-z^{kl})^{1/kl} } }
\exp \{ F_k(z)+F_l(z)-F_{kl}(z) \}.
\end{split}
\end{equation*}
Hence, we have

\begin{eqnarray}
&&{\nu_n}\big(D_{nk}(\xi)=0,D_{nl}(\xi)=0\big)=\left[
{  {(1-z^k)^{1/k}(1-z^l)^{1/l}}\over { (1-z)} } \exp \big\{ F_k(z) +F_l(z) \big\}
\right]_{(n)}
\cr
&&\qquad +\left[
U_{k,l}(z) \big(1- \exp \{ F_{kl}(z) \} \big)
\right]_{(n)}+
\left[
U_{k,l}(z) \exp \{ F_{kl}(z) \}\big(1-(1-z^{kl})^{1/kl}\big)
\right]_{(n)}
\cr
&&\quad =[S_1(z)+S_2(z)+S_3(z)]_{(n)}.\nonumber
\end{eqnarray}

Since  $0\leqslant [U_{k,l}(z)]_{(j)}\leqslant [ {1\over {1-z}}
]_{(j)}$  and, by Lemma \ref{coefF_k(z)},
$|[1- \exp \{ F_{kl}(z) \} ]_{(j)}|\leqslant [
 {(1-{( {z/{ {\sqrt 2}}}) }^{kl})}^{-1}-1]_{(j)}$
 for $j\geqslant 0$, we have

\begin{eqnarray}
\big|[S_2(z)]_{(n)}\big|&\leqslant&
\left[
{1\over {1-z}}
 \left( {\left(
 1-{\left( {z\over { {\sqrt 2}}}\right) }^{k}
        \right)}^{-1}-1 \right)
\right]_{(n)}\cr
&=&
\left[
{1\over {1-z} }\left(
\sum_{j\geqslant 1}{ \left( {z\over {\sqrt 2}} \right)}^{klj}
\right)
\right]_{(n)}\ll {1\over{ {2}^{kl/2}}}.\nonumber
\end{eqnarray}
Similarly, since  $|[(1-z^{kl})^{1\over {kl}}-1]_{(j)}|\leqslant [
\sum_{s=1}^{\infty}{{z^{kls}}\over {kls}} ]_{(j)}$
for $j\geqslant 0$, we have
$$
\big|[S_3(z)]_{(n)}\big|\leqslant \left[
{1\over {1-z} }
{\left(
 1-{\left( {z\over { {\sqrt 2}}}\right) }^{k}
        \right)}^{-1}
\left(
\sum_{j\geqslant 1}{{z^{klj}}\over klj}
\right) \right]_{(n)}
\ll {{\log n}\over {kl}}.
$$

Applying the estimates obtained and using (\ref{M_nD_nk}), we get

\begin{eqnarray}
&&{\rm cov}\big(I[D_{nk}(\xi)=0],I[D_{nl}(\xi)=0]\big)\cr
&&\quad =\left[
{  { (1-z^k)^{1/k}\exp \{ F_k(z) \} (1-z^l)^{1/l}\exp \{ F_l(z) \} }
\over {1-z} }                \right]_{(n)}
\cr
&&\qquad -\left[
{  { (1-z^k)^{1/k}\exp \{ F_k(z) \} }
\over {1-z} }                \right]_{(n)}
\left[
{  { (1-z^l)^{1/l}\exp \{ F_l(z) \} }
\over {1-z} }                \right]_{(n)}
+{\rm O}\left( { {\log n} \over {kl} } \right).\nonumber
\end{eqnarray}

Applying estimate (ii)  of Lemma \ref{coefF_k(z)} and Lemma \ref{coeff(z)g(z)}, we obtain
\begin{eqnarray}
&&{\rm cov}\big(I[D_{nk}(\xi)=0],I[D_{nl}(\xi)=0]\big)
\cr
&&\quad =-
\sum_{  {\scriptstyle i+j>n}\atop {\scriptstyle 1\leqslant i,j \leqslant n}}
\left[
(1-z^k)^{1/k}\exp \{ F_k(z) \}
 \right]_{(i)}\left[
(1-z^l)^{1/l}\exp \{ F_l(z) \}
 \right]_{(j)}+
{\rm O}\left( { {\log n} \over {kl} } \right)\cr
&&\quad \ll
\sum_{  {\scriptstyle i+j>n}\atop {\scriptstyle 1\leqslant i,j \leqslant n}}
\left[
\left( 1+\sum_{s=1}^{\infty} {{z^{ks}}\over {ks}} \right)
 \right]_{(i)}
\left[
\left( 1+\sum_{s=1}^{\infty} {{z^{ls}}\over {ls}} \right)
 \right]_{(j)}
 +{{\log n} \over {kl}}\cr
&&\quad =
\sum_{  {\scriptstyle i,j\colon\  ik+jl>n}\atop {\scriptstyle 1\leqslant ik,jl \leqslant n}}
\left[
\left( 1+\sum_{s=1}^{\infty} {{z^s}\over {ks}} \right)
 \right]_{(i)}
\left[
\left( 1+\sum_{s=1}^{\infty} {{z^s}\over {ls}} \right)
 \right]_{(j)}
+{  {\log n}\over {kl} }.\nonumber
\end{eqnarray}

Therefore, we have
$$
{\rm cov}\big(I[D_{nk}(\xi)=0],I[D_{nl}(\xi)=0]\big)\ll { {\log n}\over {kl}}+
\sum_{  {\scriptstyle i,j\colon\  ik+jl>n}\atop {\scriptstyle 1\leqslant ik,jl \leqslant n}}
{1\over {kilj}}.
$$

Since
\begin{equation*}
\begin{split}
{1\over {kl}}\sum_{  {\scriptstyle ik+jl>n}\atop {\scriptstyle 1\leqslant ik,jl \leqslant n}}
{1\over {ij}}&=
{1\over {kl}}\sum_{i=1}^{[n/k]}{1\over i}
\sum_{{{n-ki}\over l}<j \leqslant {n \over l}}{1\over j}
\\
&={1\over {kl}}
\sum_{1\leqslant i \leqslant {n\over {2k}}}{1\over i}
\sum_{{{n-ki}\over l}<j \leqslant {n \over l}}{1\over j}
 +{1\over {kl}}\sum_{{n\over {2k}} \leqslant i \leqslant {n\over k} }{1\over i}
\sum_{{{n-ki}\over l}<j \leqslant {n \over l}}{1\over j}
\\
&\ll {1\over {kl}}
\sum_{1\leqslant i \leqslant {n\over {2k}}}{1\over i}
+{1\over {kl}}\sum_{{n\over {2k}}
\leqslant i \leqslant {n\over k} }{{\log n}\over i}
\ll { {\log n}\over {kl}},
\end{split}
\end{equation*}
we finally have
\begin{equation}
\label{covI_nk_is_0andI_nk_is_0}
{\rm cov}\big(I[D_{nk}(\xi)=0],I[D_{nl}(\xi)=0]\big)
\ll { {\log n}\over {kl}}.
\end{equation}

To estimate ${\rm cov}(I[D_{nk}(\xi)=0],\xi_j)$
we consider two cases: $k \nmid j$ and $k|j$.

1) If $k \nmid j$, then
$$
{\rm cov}\big(I[D_{nk}(\xi)=0],\xi_j\big)
={\bf M_n}\big(\xi_j I[D_{nk}(\xi)=0]\big)-{\bf M_n}\xi_j
{\nu_n}(D_{nk}(\xi)=0).
$$
Putting, in $(\ref{identity_for_Mn})$,
$$
\hat f(s)=\begin{cases} \er^{it},&\text{for $s=j$}, \cr 0, &\text{if
$k|s$}, \cr 1,&\text{for remaining  $s$}
\end{cases}
$$
we obtain
$$
\sum_{n\geqslant 0}{\bf M_n}\big({\rm e}^{it\xi_j}I[D_{nk}(\xi)=0]\big)z^n
={{(1-z^k)^{1\over k}}\over {1-z}}
\exp \{ F_k(z) \}
{  {\left( {1-{\left({z \over q} \right)}^j} \right)^{I_j}}
\over {\left( {1-{\left({z \over q} \right)}^j}{\rm e}^{it} \right)^{I_j}}  }.
$$
Differentiating the obtained equality with respect to $t$
and putting $t=0$, we have

\begin{equation}
\label{M_nxi_jID_nk}
\sum_{n\geqslant 0}{\bf M_n}\big(\xi_jI[D_{nk}(\xi)=0]\big)z^n=
{{(1-z^k)^{1\over k}}\over {1-z}} \exp \{ F_k(z) \}
{   {z^j} \over {1-{\left({z \over q} \right)^j} }  }{ I_j \over {q^j} }.
\end{equation}

Hence, applying Lemma \ref{coeff(z)g(z)} and (\ref{gen_func_meanxi}), (\ref{nu_nD_nk_is_0}), and (\ref{M_nxi_jID_nk}), we have

\begin{eqnarray}
\label{covI_D_nk_and_xi_j}
&&{\rm cov}\big(I[D_{nk}(\xi)=0],\xi_j\big)
=\left[
{{(1-z^k)^{1\over k}}\over {1-z}} \exp \{ F_k(z) \}
{   {z^j} \over {1-{\left({z \over q} \right)^j} }  }{ I_j \over {q^j} }
\right]_{(n)}
\cr
&&\qquad -\left[  {1\over {1-z}}{   {z^j} \over {1-{\left({z \over q} \right)^j} }  }
{ I_j \over {q^j} }\right]_{(n)}
\left[
      {{(1-z^k)^{1\over k}}\over {1-z}} \exp \{ F_k(z) \}
\right]_{(n)}
\cr
&&\quad =-\sum_{  {\scriptstyle r+s>n}\atop {\scriptstyle 1\leqslant r,s \leqslant n}}
\left[
{   {z^j} \over {1-{\left({z \over q} \right)^j} }  }{ I_j \over {q^j} }
\right]_{(r)}
\left[
{(1-z^k)^{1\over k}}
\exp \{ F_k(z) \}
\right]_{(s)}
\cr
&&\quad \ll \sum_{  {\scriptstyle s,r\colon\ rj+sk>n}\atop {\scriptstyle 1\leqslant rj,sk \leqslant n}}
{1\over j}\left[ {   z \over {\left(1-{z \over{ q^j}} \right)} }
\right]_{(r)}
\left[
\left(
1+{1\over k}\sum_{m=1}^{\infty}{{z^m}\over m}
\right)
\right]_{(s)}
\cr
&&\quad ={1\over {jk}}
\sum_{  {\scriptstyle s,r\colon\ rj+sk>n}\atop {\scriptstyle 1\leqslant rj,sk \leqslant n}}
{1\over {q^{j(r-1)}s}}=
{1\over {jk}}\sum_{   {{n-j}\over k } <s\leqslant {n\over k}  }
{1\over s}+
{1\over {jk}}
\sum_{  {\scriptstyle rj+sk>n}\atop {\scriptstyle 1\leqslant rj,sk \leqslant n, r\geqslant
2}}
{1\over {q^{j(r-1)}s}}
\cr
&&\quad \ll {1\over {jk}}\left(
1+\log {n\over {n-j+1} }
\right)+{ {\log n } \over {jq^jk} }.
\end{eqnarray}

2) Now suppose that $k|j$.
Since $I[D_{nk}(\xi)=0]\xi_j\equiv 0$, in this case,  we have

\begin{equation}
\label{cov_xi_jD_nk}
{\rm cov}\big(I[D_{nk}(\xi)=0]\xi_j\big)
=-{\nu_n}(D_{nk}(\xi)=0){\bf M_n}\xi_j\ll {1\over
j}.
\end{equation}

Now we can estimate
 ${\rm cov}(I[D_{nk}(\xi)=0],D_{nl}(\xi))$:
\begin{eqnarray}
&&{\rm cov}\big(I[D_{nk}(\xi)=0],D_{nl}(\xi)\big)\cr
&&\quad =\sum_{j\leqslant n\colon\ l|j,k|j}
{\rm cov}\big(I[D_{nk}(\xi)=0],\xi_j\big)+
\sum_{j\leqslant n\colon\ l|j,k \not| j}{\rm
cov}\big(I[D_{nk}(\xi)=0],\xi_j\big)
\cr
&&\quad \ll \sum_{j\leqslant n\colon\ kl|j}{1\over j}+
\sum_{j\leqslant n\colon\ l|j}\left(
{1\over {jk}}\left(
1+\log {n\over {n-j+1} }
\right)+{ {\log n } \over {jq^jk} }
\right)
\cr
&&\quad \ll {{\log n}\over {kl}}
+\sum_{j\leqslant n\colon\ l|j}{1\over {kj}}\log {n\over {n-j+1} }
\cr
&&\quad \ll {{\log n}\over {kl}}
+\sum_{j\leqslant n/2\colon\ l|j}{1\over {kj}}{j\over n}
+\sum_{n/2<j\leqslant n\colon\ l|j}{1\over {kn}}\log n\ll {{\log n}\over {kl}};\nonumber
\end{eqnarray}

here we used estimates (\ref{covI_D_nk_and_xi_j}) and (\ref{cov_xi_jD_nk}).

Let  $i \neq j$. Then, as before, putting, in (\ref{identity_for_Mn}),
 $\hat f(i)={\rm e}^{it_1}$,
$\hat f(j)={\rm e}^{it_2}$, and $\hat f(m)=1 $
for the remaining  $m$, we obtain the generating function of
 ${\bf M_n}{\rm e}^{it_1\xi_i+it_2\xi_j}$.
Differentiating the obtained formula with respect to
 $t_1$ and $t_2$ and putting $t_1=t_2=0$, we get
$$
\sum_{n=1}^{\infty}{\bf M_n}\xi_i\xi_jz^n={ I_i \over {q^i} }{ I_j \over {q^j} }
{1\over {1-z}}
{   {z^{i+j}} \over {\left(1-{\left({z \over q} \right)^i}\right)
{\left(1-{\left({z \over q} \right)^j}\right) }  }  }.
$$
   From this and from (\ref{gen_func_meanxi}), applying Lemma \ref{coeff(z)g(z)}, we have

\begin{eqnarray}
{\rm cov}(\xi_i,\xi_j)
&=&\left[
{ I_i \over {q^i} }{ I_j \over {q^j} }
{1\over {1-z}}
{   {z^i} \over {1-{\left({z \over q} \right)^i} }  }
{   {z^j} \over {1-{\left({z \over q} \right)^j} }  }
                 \right]_{(n)}
\cr
&&\quad              - \left[
{ I_i \over {q^i} }
{1\over {1-z}}
{   {z^i} \over {1-{\left({z \over q} \right)^i} }  }
              \right]_{(n)}
                  \left[
                { I_j \over {q^j} }
{1\over {1-z} }
{   {z^j} \over {1-{\left({z \over q} \right)^j} }  }
                 \right]_{(n)}\cr
&=&
-{ I_i \over {q^i} }{ I_j \over {q^j} }
\sum_{  {\scriptstyle is+jr>n}\atop {\scriptstyle 1\leqslant is,rj \leqslant n}}
{1\over {q^{i(s-1)}q^{j(r-1)}}}.\nonumber
\end{eqnarray}

Hence, it follows that, for  $i\neq j$, we have

\begin{equation}
\label{cov_xi_ixi_j}{\rm cov}(\xi_i,\xi_j)\ll
\begin{cases}
{1\over {ij}}{1\over {q^{\min \{ i,j \} }}},&\hbox{\rm if }
i+j\leqslant n, \cr
{1\over {ij}}, &\hbox{\rm if } i+j>n.
\end{cases}
\end{equation}

For $i=j$, by (\ref{mean_xi_square}) we have ${\rm cov}(\xi_i,\xi_i)=D\xi_i\ll {1/i}$.

Applying the estimates obtained, we have
\begin{eqnarray}
\label{cov_D_nkD_nl}
&&{\rm cov}\big(D_{nk}(\xi),D_{nl}(\xi)\big)\cr
&&\quad =
\sum_{ {\scriptstyle 1\leqslant i,j\leqslant n}
\atop { {\scriptstyle i+j\leqslant n}
\atop {\scriptstyle k|i, l|j, i\not= j}}}
 {\rm cov}(\xi_i,\xi_j)+
\sum_{ {\scriptstyle 1\leqslant i,j\leqslant n}
\atop {{\scriptstyle i+j> n}
\atop {\scriptstyle k|i, l|j, i\not= j}} }
 {\rm cov}(\xi_i,\xi_j)+
\sum_{  {\scriptstyle kl|j}\atop {\scriptstyle 1\leqslant j\leqslant n}}
 {\rm cov}(\xi_i,\xi_j)\cr
&&\quad \ll \sum_{   ki+lj\leqslant n}{1\over {kilj}}{1\over {q^{\min \{ ki,lj \} }}}+
\sum_{  {\scriptstyle ki+lj>n}\atop {\scriptstyle 1\leqslant ki,lj \leqslant n}}
{1\over {kilj}}
+\sum_{j=1}^{\left[ {n/ {kl}} \right]}
{1\over {jkl}}\ll {{\log n}\over {kl}}.
\end{eqnarray}

Using estimates (\ref{covI_nk_is_0andI_nk_is_0}), (\ref{covI_D_nk_and_xi_j}), (\ref{cov_xi_jD_nk}), and (\ref{cov_D_nkD_nl}), we obtain the required
estimate
$$
{\rm cov}\bigl( (D_{nk}(\xi)-1)^+,(D_{nl}(\xi)-1)^+\bigr)
\ll {{\log n}\over {kl}}.
$$
The lemma is proved.
\end{proof}

\begin{lem}
\label{covD_np^s}
Let $p$ be a prime number, and let $1\leqslant s\leqslant t$.
Then we have
$$
{\rm cov}\bigl( (D_{np^s}(\xi)-1)^+,(D_{np^t}(\xi)-1)^+\bigr)
\ll {{\log n}\over {p^t}}.
$$
\end{lem}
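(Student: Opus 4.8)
The plan is to mirror the proof of Lemma~\ref{covD_nk+}: writing $(d-1)^+=d-1+I[d=0]$ and using bilinearity of the covariance, it suffices to bound separately the four quantities ${\rm cov}(D_{np^s},D_{np^t})$, ${\rm cov}(D_{np^s},I[D_{np^t}(\xi)=0])$, ${\rm cov}(I[D_{np^s}(\xi)=0],D_{np^t})$ and ${\rm cov}(I[D_{np^s}(\xi)=0],I[D_{np^t}(\xi)=0])$, and to show each is $\ll (\log n)/p^t$. The feature that makes the prime-power case easier than the coprime case of Lemma~\ref{covD_nk+} is the divisibility $p^s\mid p^t$: whenever $p^t\mid j$ one also has $p^s\mid j$, so $\{D_{np^s}(\xi)=0\}\subseteq\{D_{np^t}(\xi)=0\}$ and $\xi_j\,I[D_{np^s}(\xi)=0]\equiv 0$ for every $j$ with $p^t\mid j$.

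For the covariance of the two indicators I would use this nesting directly: the product of the indicators equals $I[D_{np^s}(\xi)=0]$, so the covariance is $\nu_n(D_{np^s}=0)\bigl(1-\nu_n(D_{np^t}=0)\bigr)$. Since $\nu_n(D_{np^s}=0)\le 1$ and, by Lemma~\ref{nu_nD_nk_iszero} together with $0\le 1-{\rm e}^{-x}\le x$, one has $1-\nu_n(D_{np^t}=0)\ll p^{-t}\sum_{j\le n/p^t}j^{-1}+p^{-2t}\ll (\log n)/p^t$, this term is controlled; note that no generating-function estimate is needed here, whereas the analogous step in Lemma~\ref{covD_nk+} was the most laborious. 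For ${\rm cov}(I[D_{np^s}(\xi)=0],D_{np^t})$ I would expand $D_{np^t}=\sum_{j\le n,\ p^t\mid j}\xi_j$; by the vanishing just noted each summand equals $-\nu_n(D_{np^s}=0)\,{\bf M_n}\xi_j\ll 1/j$, exactly as in (\ref{cov_xi_jD_nk}), and $\sum_{p^t\mid j,\ j\le n}1/j\ll (\log n)/p^t$.

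The term ${\rm cov}(D_{np^s},I[D_{np^t}(\xi)=0])$ is the only one genuinely using estimate (\ref{covI_D_nk_and_xi_j}). Expanding $D_{np^s}=\sum_{p^s\mid i,\ i\le n}\xi_i$ and splitting by whether $p^t\mid i$, the indices with $p^t\mid i$ contribute $\ll\sum_{p^t\mid i,\ i\le n}1/i\ll(\log n)/p^t$ via $\xi_i I[D_{np^t}(\xi)=0]\equiv 0$; for $p^s\mid i$ with $p^t\nmid i$, estimate (\ref{covI_D_nk_and_xi_j}) applied with modulus $p^t$ gives ${\rm cov}(\xi_i,I[D_{np^t}(\xi)=0])\ll \frac1{ip^t}\bigl(1+\log\frac{n}{n-i+1}\bigr)+\frac{\log n}{iq^ip^t}$, and summing over $i=p^sa\le n$, splitting the harmonic-type part at $p^sa\le n/2$ and using $\log\frac{n}{n-i+1}\ll i/n$ there just as in Lemma~\ref{covD_nk+}, one obtains $\ll(\log n)/p^t$. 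Finally, ${\rm cov}(D_{np^s},D_{np^t})=\sum_{p^s\mid i,\ p^t\mid j}{\rm cov}(\xi_i,\xi_j)$: the diagonal $i=j$ forces $p^t\mid i$ and contributes $\sum_{p^t\mid i,\ i\le n}{\rm cov}(\xi_i,\xi_i)\ll\sum_{p^t\mid i,\ i\le n}1/i\ll(\log n)/p^t$ by (\ref{mean_xi_square}), while the off-diagonal part is $\ll\frac1{ij\,q^{\min\{i,j\}}}$ for $i+j\le n$ and $\ll\frac1{ij}$ for $i+j>n$ by (\ref{cov_xi_ixi_j}); after the substitution $i=p^sa,\ j=p^tb$ both resulting double sums are $\ll(\log n)/p^{s+t}\le(\log n)/p^t$, by the same manipulation as in the computation of (\ref{cov_D_nkD_nl}).

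The only real difficulty is bookkeeping with the harmonic sums over arithmetic progressions, in particular making sure the $\log\frac{n}{n-i+1}$ factor arising from (\ref{covI_D_nk_and_xi_j}) is summed via the dichotomy $i\le n/2$ versus $i>n/2$ so that the bound stays of order $(\log n)/p^t$ rather than $(\log^2 n)/p^t$; this is precisely the device already used in the proof of Lemma~\ref{covD_nk+}, and carrying it through with $p^s,p^t$ in place of two coprime moduli requires no new idea.
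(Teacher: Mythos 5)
Your proposal mirrors the paper's proof almost step for step: the same $(d-1)^+=d-1+I[d=0]$ decomposition, the same exploitation of the nesting $\{D_{np^s}=0\}\subseteq\{D_{np^t}=0\}$ to get ${\rm cov}(I[D_{np^s}=0],I[D_{np^t}=0])=\nu_n(D_{np^s}=0)\bigl(1-\nu_n(D_{np^t}=0)\bigr)$ controlled by Lemma~\ref{nu_nD_nk_iszero} and $1-e^{-x}\le x$, the same vanishing $\xi_jI[D_{np^s}=0]\equiv 0$ for $p^t\mid j$, the same appeal to (\ref{covI_D_nk_and_xi_j}), (\ref{cov_xi_jD_nk}), (\ref{mean_xi_square}) and (\ref{cov_xi_ixi_j}) with the diagonal/off-diagonal split for ${\rm cov}(D_{np^s},D_{np^t})$. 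It is correct and essentially identical to the paper's argument.
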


\begin{proof} Applying Lemma \ref{nu_nD_nk_iszero}, we have
\begin{eqnarray}
&&{\rm cov}\big(I[D_{np^s}(\xi)=0],I[D_{np^t}(\xi)=0]\big)
\cr
&&\quad
={\nu_n}\big(D_{np^s}(\xi)=0\big)-{\nu_n}\big(D_{np^s}(\xi)=0\big){\nu_n}\big(D_{np^t}(\xi)=0\big)\cr
&&\quad =
{\nu_n}\big(D_{np^s}(\xi)=0\big)\big(1-{\nu_n}(D_{np^t}(\xi)=0)\big)\cr
&&\quad \ll
1-\exp\left\{ {-{1\over {p^t}}\sum_{j=1}^{[n/p^t]}{1\over j} }\right\}
+{1\over {p^{2t}}}\ll
{{\log n}\over {p^t}};\nonumber
\end{eqnarray}
here we have used the inequality
$1-{\rm e}^{-x}\leqslant x$ for $x\geqslant 0$.

Since, for  $s\leqslant t$, from $p^t|j$  it follows
that $I[D_{np^s}(\xi)=0]\xi_j=0$, we have
$$
{\rm cov}\big(I[D_{np^s}(\xi)=0],D_{np^t}(\xi)\big)=
-\sum_{j\leqslant n \colon\  p^t|j}{\nu_n}(D_{np^s}=0){\bf M_n}\xi_j
\ll {{\log n}\over {p^t}}.
$$

Applying estimates (\ref{covI_D_nk_and_xi_j}) and (\ref{cov_xi_jD_nk}), we have
\begin{multline*}
{\rm cov}\big(I[D_{np^t}(\xi)=0],D_{np^s}(\xi)\big)
=\sum_{j\leqslant n \colon\  p^s |j}
{\rm cov}\big(I[D_{np^t}(\xi)=0],\xi_j\big)
\\
\ll \sum_{j\leqslant n \colon\  p^t|j}{1\over j}+{{\log n}\over {p^t}}+
\sum_{j\leqslant n \colon\  p^s|j}{1\over {jp^t}}{\log {n\over {n+1-j}}}
\ll {{\log n}\over {p^t}}.\nonumber
\end{multline*}

Applying estimates (\ref{cov_xi_ixi_j}), we have
\begin{multline*}
{\rm cov}\big(D_{np^s}(\xi),D_{np^t}(\xi)\big)=
\sum_{ {\scriptstyle 1\leqslant i,j \leqslant n}\atop {\scriptstyle p^s|i,p^t|j} }
{\rm cov}(\xi_i,\xi_j)=
\sum_{ j\leqslant n \colon\ p^t|j}{\rm cov}(\xi_j,\xi_j)
\\
 +
\sum_{ {{\scriptstyle 1\leqslant i,j \leqslant n}
         \atop {\scriptstyle p^s|i,p^t|j}}
         \atop {\scriptstyle i\not= j}    }
{\rm cov}(\xi_i,\xi_j)
\ll {{\log n}\over {p^t}}+
\sum_{        {     {\scriptstyle p^s|i,p^t|j}
              \atop {\scriptstyle 1\leqslant i,j \leqslant n}    }
              \atop {\scriptstyle i\not= j}                 }
{1\over {ij}}{1\over q^{\min \{ i,j \} }}+
\sum_{         {        {\scriptstyle p^s|i,p^t|j}
                  \atop {\scriptstyle i+j>n}       }
                  \atop {\scriptstyle 1\leqslant i,j \leqslant n}       }
{1\over {ij}}\ll
{{\log n}\over {p^t} }.\nonumber
\end{multline*}

   From the estimates obtained it follows  that
$$
{\rm cov}\bigl( (D_{np^s}-1)^+,(D_{np^t}-1)^+\bigr) \ll {{\log n}\over {p^t}}.
$$
The lemma is proved.
\end{proof}

The following proposition, which is completely similar to
Proposition  2.3 of \cite{barbour_tavare}, gives the desired estimate of closeness of the
random variables $\log O_n(\xi(P)) $ and $\log P_n(\xi(P)) $.

\begin{prop}  For every fixed $K>0$, we have
$$
{\nu_n}\left(
{  { |\log P_n(\xi)-\log O_n(\xi)-\mu_n|} \over {\log^{3/2} n} } >
K\left( {{\log \log n}\over \log n}\right) ^{2/3}
\right)\ll \left( {{\log \log n}\over \log n}\right) ^{2/3},
$$
where $\mu_n={\bf M}(\log P_n(\xi)-\log O_n(\xi))$.
\end{prop}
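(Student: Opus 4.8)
The plan is to prove the estimate by a second-moment argument, after splitting the representation (\ref{logP_minus_logO}) at a threshold that is a small power of $\log n$. Write $T_n:=\log P_n(\xi)-\log O_n(\xi)=\sum_{p}\sum_{s\geqslant 1}(D_{np^s}(\xi)-1)^+\log p$, where only the terms with $p^s\leqslant n$ are nonzero. Fix $R:=\log^2 n$ and set $U_n=\sum_{p^s\leqslant R}(D_{np^s}(\xi)-1)^+\log p$ and $V_n=T_n-U_n=\sum_{p^s>R}(D_{np^s}(\xi)-1)^+\log p\geqslant 0$, so that $\mu_n=\mathbf{M}_nU_n+\mathbf{M}_nV_n$ and $T_n-\mu_n=(U_n-\mathbf{M}_nU_n)+(V_n-\mathbf{M}_nV_n)$. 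Throughout put $\lambda:=K(\log\log n/\log n)^{2/3}\log^{3/2}n$; the goal is to bound $\nu_n(|T_n-\mu_n|>\lambda)$ by $O((\log\log n/\log n)^{2/3})$.

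First I would control the tail $V_n$. Since $(d-1)^+=d-1+I[d=0]$, one has $\mathbf{M}_n(D_{nk}-1)^+=\mathbf{M}_nD_{nk}-1+\nu_n(D_{nk}=0)$; combining (\ref{M_nD_nk}) with Lemma \ref{nu_nD_nk_iszero} this equals $(a_k-1+e^{-a_k})+O(k^{-2})$, where $a_k=\frac1k\sum_{j\leqslant n/k}\frac1j\ll(\log n)/k$. Because $0\leqslant a-1+e^{-a}\leqslant a^2/2$ for $a\geqslant 0$, it follows that $\mathbf{M}_n(D_{np^s}-1)^+\ll(\log n)^2 p^{-2s}$, hence $\mathbf{M}_nV_n\ll(\log n)^2\sum_{p^s>R}\frac{\log p}{p^{2s}}\ll(\log n)^2\frac{\log R}{R}\ll\log\log n$. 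In particular $\mathbf{M}_nV_n=o(\lambda)$, so for all large $n$ the event $\{|V_n-\mathbf{M}_nV_n|>\lambda/2\}$ is contained in $\{V_n>\lambda/4\}$, and Markov's inequality gives $\nu_n(V_n>\lambda/4)\leqslant 4\mathbf{M}_nV_n/\lambda\ll(\log\log n)^{1/3}(\log n)^{-5/6}$, which is $o((\log\log n/\log n)^{2/3})$.

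Next I would estimate $\mathrm{Var}(U_n)=\sum_{p^s\leqslant R}\sum_{\ell^t\leqslant R}\mathrm{cov}\bigl((D_{np^s}-1)^+,(D_{n\ell^t}-1)^+\bigr)\log p\,\log\ell$. For distinct primes $p\neq\ell$ the moduli $p^s$ and $\ell^t$ are coprime, so Lemma \ref{covD_nk+} bounds the covariance by $O(\log n/(p^s\ell^t))$; for $p=\ell$ Lemma \ref{covD_np^s} bounds it by $O(\log n/p^{\max(s,t)})$, which also covers the diagonal $s=t$. Summing, and using $\sum_{p\leqslant R}\sum_{s\geqslant 1}\frac{\log p}{p^s}\ll\log R$ together with $\sum_{p\leqslant R}\log^2 p\sum_{m\geqslant 1}(2m-1)p^{-m}\ll\sum_{p\leqslant R}\frac{\log^2 p}{p}\ll(\log R)^2$, one obtains $\mathrm{Var}(U_n)\ll\log n(\log R)^2\ll\log n(\log\log n)^2$. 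Chebyshev's inequality then yields $\nu_n(|U_n-\mathbf{M}_nU_n|>\lambda/2)\leqslant 4\,\mathrm{Var}(U_n)/\lambda^2\ll\frac{\log n(\log\log n)^2}{K^2(\log\log n)^{4/3}(\log n)^{5/3}}=\frac1{K^2}(\log\log n/\log n)^{2/3}$. Adding the two probability bounds proves the proposition.

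The main obstacle is the estimate $\mathbf{M}_n(D_{np^s}-1)^+\ll(\log n)^2 p^{-2s}$: it is what makes the contribution of the long cycles, i.e. the large prime powers, negligible, and it rests on the cancellation $a-1+e^{-a}=O(a^2)$ together with the precision of Lemma \ref{nu_nD_nk_iszero} (a cruder bound for $\nu_n(D_{np^s}=0)$ would not suffice, since a naive Chebyshev estimate of $\mathrm{Var}(T_n)$ alone only gives $O((\log n/\log\log n)^{4/3})$, which is useless). A secondary point is reducing $\mathrm{Var}(U_n)$ to order $\log n(\log\log n)^2$ — which is precisely why the covariance estimates of Lemmas \ref{covD_nk+} and \ref{covD_np^s} were established — and checking that the single choice $R=\log^2 n$ simultaneously keeps $(\log R)^2/\lambda^2$ and $(\log n)^2\log R/(R\lambda)$ of size $O((\log\log n/\log n)^{2/3})$.
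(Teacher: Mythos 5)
Your proof is correct and rests on the same two pillars as the paper's argument: the second-moment bound via Lemmas \ref{covD_nk+} and \ref{covD_np^s} for the variance of the ``small'' terms, and the quadratic decay $\mathbf{M}_n(D_{nk}-1)^+\ll(\log n/k)^2$ (from Lemma \ref{nu_nD_nk_iszero}) for the expectation of the ``large'' terms, with the same threshold $\log^2 n$ and the same Chebyshev/Markov endgame. The one genuine difference is the decomposition: you split $T_n=U_n+V_n$ by the single criterion $p^s\lessgtr\log^2 n$, whereas the paper uses a three-way split $V_1+V_2+V_3$ in which $V_1$, $V_2$ collect the $s=1$ terms with $p\leqslant\log^2n$ and $p>\log^2n$ respectively, and $V_3$ collects \emph{all} higher prime powers $s\geqslant 2$ regardless of size, handled by a separate variance bound $\mathbf{D}V_3\ll\log n$. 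Your version absorbs the small higher prime powers into the variance bucket $U_n$ (which works, since Lemma \ref{covD_np^s} is stated for any $1\leqslant s\leqslant t$ and the resulting sum $\sum_{p\leqslant R}\log^2p\sum_{m}(2m-1)p^{-m}\ll(\log R)^2$ does not spoil the bound $\mathrm{Var}(U_n)\ll\log n(\log\log n)^2$) and the large ones into the Markov bucket $V_n$; the net effect is one fewer case to analyze, at the cost of the slightly weaker $\mathbf{M}_nV_n\ll\log\log n$ where the paper gets $\mathbf{M}V_2\ll1$, which is immaterial since Markov's inequality over-performs on that bucket anyway.

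Two small remarks. First, your claimed bound $\mathbf{M}_nV_n\ll\log\log n$ is in fact $O(1)$ if you estimate the tail $\sum_{m>R}\Lambda(m)/m^2\ll1/R$ by partial summation with $\psi(t)\ll t$, rather than the cruder $\log R/R$; this changes nothing, but it is worth knowing the slack is there. Second, when you invoke Lemma \ref{covD_nk+} for $p\neq\ell$ you need $k=p^s\geqslant 2$ and $l=\ell^t\geqslant 2$, which automatically holds; and for the diagonal $p=\ell$, $s=t$, the covariance is just $\mathbf{D}(D_{np^s}-1)^+$, which Lemma \ref{covD_np^s} covers under $s\leqslant t$ with equality allowed. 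You implicitly used both facts; they hold, but it is good practice to say so explicitly since Lemma \ref{covD_nk+} does carry the side conditions $(k,l)=1$, $k,l\geqslant 2$.
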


\begin{proof} In the proof of this proposition, we repeat the arguments of
Barbour and Tavare  in the proof of the corresponding result.
We have
\begin{multline*}
\log P_n(\xi)-\log O_n(\xi)=
\left(
\sum_{p\leqslant \log^2 n }+\sum_{p>\log^2n}
\right) (D_{np}(\xi)-1)^+\log p
\\ \quad +
\sum_p\sum_{s\geqslant 2}(D_{np^s}(\xi)-1)^+\log p
=V_1+V_2+V_3.\nonumber
\end{multline*}
Applying Lemmas \ref{covD_nk+} and \ref{covD_np^s}, we have
$$
{\bf D} V_1 \ll  \sum_{p\leqslant \log^2 n}{{\log n}\over p}{\log^2 p}+
\sum_{{\scriptstyle p\not= q}\atop {\scriptstyle p,q\leqslant \log ^2 n}}
{{\log n}\over {pq}}\log p\log q\ll
\log n (\log \log n)^2.
$$
   From Lemma \ref{nu_nD_nk_iszero} it follows that
$$
{\bf M}(D_{nk}-1)^+\ll \min \left\{
\left( {{\log n}\over k}\right),\left( {{\log n}\over k}\right)^2
\right\}.
$$
Therefore,
$$
{\bf M}V_2\ll \sum_{p>\log^2n}{{\log^2 n}\over {p^2}}\log p\ll 1.
$$
We estimate
$$
{\bf D}V_3\ll \sum_p \log^2 p\sum_{s,t\geqslant 2}
{{\log n}\over {p^{\max \{s,t\} }}}+
\sum_{p\not= q}\log p \log q \sum_{s,t\geqslant 2}
{  {\log n} \over {p^sq^t}  }\ll \log n.
$$

Applying the Chebyshev inequality, we get
$$
{\nu_n}\left( {{|V_1-{\bf M}V_1|}\over {\log^{3/2} n}}>{1\over 3}K
\left( {{\log \log n}\over \log n}\right) ^{2/3} \right)
\ll \left( {{\log \log n}\over \log n}\right) ^{2/3},
$$
$$
{\nu_n}\left( {{|V_2-{\bf M}V_2|}\over {\log^{3/2} n}}>{1\over 3}K
\left( {{\log \log n}\over \log n}\right) ^{2/3} \right)
\ll  {1\over { {(\log \log n)}^{2/3} \log ^{5/6}n} },
$$
$$
{\nu_n}\left( {{|V_3-{\bf M}V_3|}\over {\log^{3/2} n}}>{1\over 3}K
\left( {{\log \log n}\over \log n}\right) ^{2/3} \right)
\ll  {1\over { {(\log \log n)}^{4/3} \log ^{2/3}n } }.
$$
Hence, we obtain the proof of the proposition.
\end{proof}

The proved proposition enables us to replace the investigation
of closeness of $\log O_n(\xi(P))$ to the normal law
by the investigation of the simpler quantity
$$
\log P_n(\xi(P))=\sum_{i=1}^n\xi_i(P)\log i,
$$
which is a linear combination of the variables
 $\xi_i(P)$.

%
%
%
%

Further we use the same notation
  ${\bf M_n}$ to denote the mean values of
$f(\alpha)$ and $f(\xi)$ on $S_n$ and $E_n$, respectively.

We  denote by $\phi_{n,\alpha}(t)$ and $\phi_{n,\xi}(t)$
the characteristic functions of the random variables
$$
{ {\log P_n(\alpha (\sigma))-{1\over 2}\log^2 n}
\over {(1/\sqrt 3 )\log^{3/2}n}}\quad \hbox{\rm and}\quad
{ {\log P_n(\xi (P))-{1\over 2}\log^2 n}
\over {(1/\sqrt 3 )\log^{3/2}n}},
$$
respectively. 
%
%

To estimate $\phi_{n,\alpha}(t)$ we apply Theorem \ref{meanM1} with $d_j\equiv 1$ and $p=\infty$.
Putting in (\ref{manst_general})
$$
\hat f(k)=\exp \left\{ {it\log k}\over {{( {1\over3}\log^3n) }^{1/2}}
\right\},\quad  p=\infty,
$$
we get
$$
\rho=\rho(\infty)\ll {\frac{|t|}{\log^{1/2}n}}
$$
For $|t|\leqslant  \varepsilon \log^{1/2}n$, where
$\varepsilon$ is a sufficiently small fixed number,
we have $\rho \leqslant \delta$ and

\begin{eqnarray}
&&{\bf M_n}\exp\left\{ it { {\log P_n(\alpha (\sigma))}
\over {(1/\sqrt 3 )\log^{3/2}n}}\right\}\cr
&&\quad  =
\exp \left\{ S_n\left(
 {t\over { (1/ \sqrt{3})\log^{3/2}n} }
           \right) \right\} \left(
    1+{\rm O}\left({ {|t|^2}\over {\log n }  }   \right)
                    \right);\nonumber
\end{eqnarray}
here
$$
S_n(t)=\sum_{m=1}^n {{{\rm e}^{it\log m}-1}\over m}.
$$

 Let us estimate the quantity $\exp \{ S_n(t) \}$. We have

\begin{eqnarray}
S_n(t)
&=&\sum_{m=1}^n {{{\rm e}^{it\log m}-1}\over m}
=\sum_{m=1}^n {1\over {m^{1-it}} }-\log n-\gamma +
{\rm O}\left( {1\over n} \right)
\cr
&=&\zeta (1-it)+{{n^{it}}\over {it}}-\log n -\gamma +{\rm O}\left( {1\over n}\nonumber
\right)
\end{eqnarray}

for $|t|\leqslant 1$. Here we have used the formula
$$
\zeta (s)=\sum_{m\leqslant n}{1\over {m^s}}+{{n^{1-s}}\over {s-1}}+
{\rm O}\left( {{|s|}\over {\sigma}}{1\over {n^{\sigma}}} \right)
$$
for $\Re s=\sigma >0$ (see, e.g., \cite{karatsuba}).

Since $\zeta (1-it)=-{1\over {it}}+\omega (t)$, where $\omega (t)$ is
an analytic function and $\omega (0)=\gamma$, for  $|t|\leqslant 1$,
we have

\begin{equation}
\label{S_n(t)}
S_n(t)={{n^{it}-1}\over {it}}-\log n+
{\rm O}\left( {1\over n}+|t| \right).
\end{equation}

Applying the Taylor expansion for $|t|\leqslant 1$, we have

\begin{equation}
\label{S_n(t)2}
S_n(t)={1\over {it}}\left(
\sum_{k=1}^4{{(it\log n)^k}\over {k!}}+
{\rm O}(|t|^5 \log^5 n))
     \right)-
\log n +{\rm O}\left( {1\over n}+|t| \right).
\end{equation}

Let
$$
D_n(t)=S_n\left(
 {t\over { (1/ \sqrt{3})\log^{3/2}n} }
           \right)
-{it {{\sqrt 3}\over 2}\log^{1/2} n}.
$$
Then, for $|t|\leqslant \log^{3/2}n $, we have
$$
D_n(t)=-{{t^2}\over 2}+(it)^3{  {3^{3/2}}\over {4!} }{1\over {\log^{1/2}n} }
+{\rm O}\left(
{1\over n} +{{|t|^4}\over {\log n}} +{{|t|}\over {\log^{3/2}n }}
  \right).
$$

If $|t|\leqslant\log^{1/6}n$, then
\begin{multline}
\label{phi_alphan}
\phi_{\alpha,n} (t)
=\exp \{ D_n(t) \} \left(
    1+{\rm O}\left({ {|t|^2}\over {\log n }  }   \right)
                    \right)
={\rm e}^{-{{t^2}\over 2}}\left(
       1+(it)^3{{3^{3/2}}\over {4!}}{1\over {\log^{1/2}n}}\right.
       \\
+\left.{\rm O}\left(
  {1\over n^{c}}+ {{|t|^4+|t|^6}\over {\log n}}
+{{|t|}\over {\log^{3/2} n}}+{{|t|^2}\over {\log n}}
           \right)
    \right).
\end{multline}
   From (\ref{S_n(t)})  we have, for $\varepsilon_1 >0$,
\begin{multline}
\label{exp_S_n(t)}
\left| \exp \left\{ S_n\left(
 {t\over { (1/ \sqrt{3})\log^{3/2}n} }
           \right) \right\} \right|
       \\
=\exp \left\{  \left(
                 {
  { \sin \left(   { {t\sqrt{3}} \over {\log^{1/2} n} } \right) }\over
  { {t\sqrt{3}} \over {\log^{1/2} n} }
                 }
-1  \right) \log n+{\rm O}(1)
      \right\}\ll {1\over {n^{\lambda }}}
\end{multline}
for $\varepsilon_1\log^{1/2}n \leqslant t \leqslant \log^{3/2}n$;
here $\lambda=\lambda (\varepsilon_1)
=1-\sup_{u>\varepsilon_1\sqrt{3}}{{\sin u}\over u}$.

 Theorem 5 gives an estimate of the characteristic function of
 $\phi_{n,\alpha}(t)$
for $|t|\leqslant \log^{1/6}n$. We further  estimate
 $\phi_{n,\alpha}(t)$ for $|t|\leqslant \log^{3/2}n$.
The general Theorem~A of Manstavi\v cius~\cite{manstberry}   gives the desired
estimate for $|t|\leqslant \log^{1/2}n$
but its application becomes difficult in the region
$\log^{1/2}n\leqslant |t|\leqslant \log^{3/2}n$,
since, for $|t|\geqslant \log^{1/2}n$,
its remainder term has an increasing  multiplier.

\begin{thm}
\label{th_turlarge}
\begin{equation}
\label{phi_nalpha_smalt}
\phi_{n,\alpha}(t)\ll
\er^{-{{t^2}\over {4}}}\quad \hbox{for}\quad
|t|\leqslant \delta_1 \log^{1/2}n,
\end{equation}

\begin{equation}
\label{phi_nalpha_larget}
\phi_{n,\alpha}(t)\ll {1\over {n^u}}\quad \hbox{for}\quad
\delta_1\log^{1/2}n<|t|\leqslant \log^{3/2}n,
\end{equation}

where $u$ and $\delta_1$ are some fixed positive constants.
\end{thm}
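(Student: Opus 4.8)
Write $t'=t/\big((1/\sqrt3)\log^{3/2}n\big)$ and $v=t'\log n=t\sqrt3/\log^{1/2}n$, so that $|\phi_{n,\alpha}(t)|=|M_n|$, where $\sum_{n\ge0}M_nz^n=\exp\{\sum_{k\ge1}k^{it'}z^k/k\}$; this is the setting of Theorem \ref{meanM1} with $d_j\equiv1$ (hence $p_n\equiv1$ and $L_n(1)=S_n(t')$) and $\hat f(k)=\exp\{it'\log k\}$. I would split $|t|\le\log^{3/2}n$ into three ranges. Fix $\varepsilon$ to be the constant for which the hypothesis $\rho=\rho(\infty)\ll|t|/\log^{1/2}n\le\varepsilon$ of Theorem \ref{meanM1} holds (the $\varepsilon$ appearing in the derivation of (\ref{phi_alphan})), and fix a still smaller $\delta_1\le\varepsilon$. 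Applying Theorem \ref{meanM1} with $p=\infty$ exactly as in the lines leading to (\ref{phi_alphan}) gives, for $|t|\le\varepsilon\log^{1/2}n$, that $\phi_{n,\alpha}(t)=\exp\{D_n(t)\}\big(1+O(|t|^2/\log n)\big)$ with $D_n(t)=S_n(t')-it\tfrac{\sqrt3}{2}\log^{1/2}n$, the correction factor being $1+O(\varepsilon^2)$. In the subrange $|t|\le\delta_1\log^{1/2}n$ I would use $\Re D_n(t)=-t^2/2+O\big(1/n+|t|^4/\log n+|t|/\log^{3/2}n\big)$ (the real part of the cubic term in (\ref{phi_alphan}) vanishes); since $|t|^4/\log n\le\delta_1^2t^2$ there, choosing $\delta_1$ so small that the implied constant times $\delta_1^2$ is at most $1/4$ yields $\Re D_n(t)\le-t^2/4+O(1)$, hence $|\phi_{n,\alpha}(t)|\ll e^{-t^2/4}$, which is (\ref{phi_nalpha_smalt}).

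In the subrange $\delta_1\log^{1/2}n<|t|\le\varepsilon\log^{1/2}n$ the correction factor above is still bounded, and the modulus of $\exp\{D_n(t)\}$ equals $|\exp\{S_n(t')\}|$; since $|t|\ge\delta_1\log^{1/2}n$, estimate (\ref{exp_S_n(t)}) applies with $\varepsilon_1=\delta_1$ and gives $|\phi_{n,\alpha}(t)|\ll|\exp\{S_n(t')\}|\ll n^{-\lambda(\delta_1)}$. This settles (\ref{phi_nalpha_larget}) on this subrange, with an exponent to be intersected later with the one coming from the hard range.

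The hard range $\varepsilon\log^{1/2}n<|t|\le\log^{3/2}n$ is where the genuinely new work lies, since here Theorem A of Manstavi\v{c}ius and Theorem \ref{meanM1} both break down: for every fixed $p<\infty$ one has $\rho(p)\to\infty$ (for instance $\rho(2)^2=2\log n\,(1-\sin v/v)+O(1)$ with $1-\sin v/v\ge\lambda(\varepsilon)>0$), while $\rho(\infty)$ stays bounded away from $0$; and Theorem \ref{thmeanM1} is of no help, because for this $\hat f$ the exceptional factor $E(u)$ ($u\le\eta$ fixed) is a positive power of $n$ that need not be smaller than the power saving $|\exp\{S_n(t')\}|=n^{\sin v/v-1+o(1)}$ furnished by (\ref{exp_S_n(t)}). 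I would instead estimate $M_n$ directly from the Cauchy integral over $|z|=e^{-1/n}$ by the saddle-point method: the saddle $z_0$ of $\exp\{L(z)\}z^{-n}$, fixed by $z_0L'(z_0)=n$, lies at distance $O(1/n)$ from $e^{-1/n}$; the relevant second derivative is of order $n$; and the saddle-point value has modulus $\ll\exp\{\Re L(z_0)\}/n\ll n^{\sin v/v-1}e^{O(1)}\ll n^{-\lambda(\varepsilon)}$, precisely the bound predicted by the computation behind (\ref{exp_S_n(t)}); this would give (\ref{phi_nalpha_larget}) with, say, $u=\tfrac12\lambda(\varepsilon)$, and $\delta_1,u$ are then fixed as the smallest exponents obtained over the three ranges. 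The step I expect to be the main obstacle is making this saddle-point estimate rigorous uniformly in $t'$: one must show that the integrand decays quadratically away from the saddle and that the tails stay $\ll n^{-\lambda(\varepsilon)}$. Unlike the small-$t$ case, where the smallness of $\rho$ renders the saddle-point error terms negligible and Theorem \ref{meanM1} encapsulates exactly this, here $\rho$ is large and one must control $\Re L(e^{-1/n}e^{i\psi})=\sum_{k}e^{-k/n}k^{-1}\cos(t'\log k+k\psi)$ by hand — for $|\psi|$ up to a small negative power of $n$ by a Lipschitz bound reducing it to $\Re S_n(t')$, and for larger $|\psi|$ by exponential-sum cancellation (partial summation, exploiting that $k^{it'}$ varies slowly while $e^{ik\psi}$ oscillates) — the delicate part being the intermediate angles and the possible resonance between the frequencies $t'$ and $\psi$. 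I would organise this through a graded decomposition of the circle together with the Lindel\"of expansion $L(z)=\zeta(1-it')+\Gamma(it')(-\log z)^{-it'}+O\big(|\log z|\big)$ of the polylogarithm near $z=1$, which exhibits the singularity of $\exp\{L(z)\}$ at $z=1$ and permits reading off $M_n$ along a Hankel-type contour; tracking $\Re\big(\Gamma(it')(-\log z)^{-it'}\big)$ along that contour reproduces the factor $n^{\sin v/v}$ uniformly.
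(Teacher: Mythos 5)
Your treatment of the small-$t$ range matches the paper's: both pass through Theorem \ref{meanM1} with $p=\infty$ to get $\phi_{n,\alpha}(t)=\exp\{D_n(t)\}(1+O(|t|^2/\log n))$ and then exploit $\Re D_n(t)=\log n\,(\sin v/v-1)+O(1)$ with $v=t\sqrt3/\log^{1/2}n$; the paper simply applies the inequality $\sin u/u\le 1-u^2/9$ for $|u|\le 2$ to conclude $|\phi_{n,\alpha}(t)|\ll e^{-t^2/3}$, while you would use the error term $|t|^4/\log n\le\delta_1^2 t^2$, which gives the same order of conclusion. Your subsidiary estimate on $\delta_1\log^{1/2}n<|t|\le\varepsilon\log^{1/2}n$ via (\ref{exp_S_n(t)}) also appears in the paper, where it bounds the piece $I_3$ below.

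For the hard range $\varepsilon\log^{1/2}n<|t|\le\log^{3/2}n$ your route is genuinely different from the paper's, and you do not complete it. You propose a direct saddle-point analysis of $M_n=\frac{1}{2\pi i}\oint\exp\{L(z)\}z^{-n-1}\,dz$ and correctly flag the main obstacle: controlling $\Re L(e^{-1/n}e^{i\psi})$ uniformly requires a graded decomposition in $\psi$, exponential-sum cancellation, and a Lindelöf-type expansion — none of which you carry out. The paper sidesteps this entirely with an $L^2$ trick. It writes the recurrence $N_n(t)=\frac1n\sum_{k=1}^n e^{it\log k}N_{n-k}(t)$ for the Taylor coefficients, applies Cauchy--Schwarz to get $|N_n(t)|^2\le\frac1n\sum_{k\ge0}|N_k(t)|^2$, and invokes Parseval to convert the sum into $\frac{1}{2\pi n}\int_{-\pi}^\pi\bigl|\exp\{\sum_{k\le n}k^{it'-1}e^{ixk}\}\bigr|^2\,dx$. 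The inner sum is evaluated pointwise in $x$ by partial summation, giving $\frac{e^{it'\log(1/|x|)}-1}{it'}+O(1)$ for $1/n\le|x|\le\pi$; the circle is then split at $|x|=1/\sqrt n$ and $|x|=1/n$ and each piece is bounded by elementary estimates, with (\ref{exp_S_n(t)}) handling $|x|<1/n$. This $L^2$ route avoids the need to localize the saddle and to rule out resonance between $t'$ and $\psi$; the Cauchy--Schwarz step costs only a square root, which the exponential decay absorbs. In short, your plan is plausible in outline, but its central step for (\ref{phi_nalpha_larget}) is only sketched and is harder than the paper's elementary $L^2$/partial-summation argument.
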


\begin{proof} From (\ref{S_n(t)})  and (\ref{phi_alphan}) for $|t|\leqslant \delta\sqrt{\log n}$ we have
\begin{equation}
\begin{split}
\label{phi_alphan}
|\phi_{\alpha,n} (t)|
&\ll|\exp \{ D_n(t) \}|=\left| \exp \left\{ S_n\left(
 {t\over { (1/ \sqrt{3})\log^{3/2}n} }
           \right) \right\} \right|
       \\
       &=\exp \left\{  \left(
                 {
  { \sin \left(   { {t\sqrt{3}} \over {\log^{1/2} n} } \right) }\over
  { {t\sqrt{3}} \over {\log^{1/2} n} }
                 }-1
  \right) \log n+{\rm O}(1)
      \right\}
\end{split}
\end{equation}
Applying here the inequality $\frac{\sin u}{u}\leqslant 1-\frac{u^2}{9}$, which is true for $|u|\leqslant 2$, we have
$$
|\phi_{\alpha,n} (t)|\ll e^{-t^2/3},
$$
for $|t|\leqslant \delta\sqrt{\log n}$.

The estimate (\ref{phi_nalpha_smalt}) is proven.

In the proof of  estimate (\ref{phi_nalpha_larget}), we  use some ideas of~\cite{manstberry} and~\cite{manstadditive}.
Let
$$
\sum_{n\geqslant 0}N_n(t)z^n
=\exp \left\{ \sum_{k=1}^n{{{\rm e}^{it\log k}}\over k}z^k
 \right\}.
$$
 Differentiating this identity with respect to $z$,
 we can easily note that $N_n$
satisfy the recurrent relation
$$
N_n(t)={1\over n}\sum_{k=1}^n \er^{it\log k}N_{n-k}(t).
$$

Applying the Cauchy inequality and the Parseval identity, we have

\begin{equation}
\label{|N_n(t)|}
|N_n(t)|\leqslant \left( {1\over n}\sum_{k=1}^{\infty} |N_k(t)|^2 \right)^{1/2}
\leqslant
\left(
{1\over {2\pi n}}\int\limits_{-\pi}^{\pi}
\left|
\exp \left\{ \sum_{k=1}^n{{{\rm e}^{it\log k}}\over k}{\rm e}^{ixk}
 \right\}
\right|^2\dr x
\right)^{1/2}.
\end{equation}

Integrating by parts  for
$\pi\geqslant |x| \geqslant{1\over n}$ and $t\in {\sym R}$,
we have
$$
\sum_{k=1}^n{1\over {k^{1-it}}}{\rm e}^{ixk}
=
\int\limits_{1/\pi}^n{1\over {y^{1-it}}}
{\rm d}\left( \sum_{k<y}{\rm e}^{ixk} \right)
=(1-it)\int\limits_{1/\pi}^n{\rm e}^{ix}
{{1-{\rm e}^{ix[y]}}\over {1-{\rm e}^{ix}} }{1\over
{y^{2-it}}}{\rm d}y+{\rm O}(1)
$$
\begin{eqnarray}
&&\quad ={{1-it}\over {1-{\rm e}^{ix}} }{\rm e}^{ix}
\int\limits_{1/\pi}^{1/ {|x|}}
{ {1-{\rm e}^{ix[y]}}\over {y^{2-it}} }{\rm d}y+{\rm O}\left(
  { { |1-it|}\over {|{\rm e}^{ix}-1|} }\int\limits_{1/ {|x|}}^n
{{{\rm d}y}\over {y^2}}
\right)\nonumber
\\
&&\quad ={{1-it}\over {1-{\rm e}^{ix}} }{\rm e}^{ix}\int\limits_{1/\pi}^{1/ {|x|}}
{ {1-{\rm e}^{ixy}}\over {y^{2-it}} }{\rm d}y+{\rm O}\left(
{{|1-it|}\over {|1-{\rm e}^{ix}|} }\int\limits_{1/\pi}^{1/{|x|}}
{ { |1-{\rm e}^{ ix\{ y\} } | }\over {|y^2|} }{\rm d}y \right) +
{\rm O}(|1-it|)\nonumber
\\
&&\quad =-{{1-it}\over {1-{\rm e}^{ix}} }{\rm e}^{ix}\int\limits_{1/\pi}^{1/ {|x|}}
{ {ixy}\over {y^{2-it}} }{\rm d}y+
{{1-it}\over {1-{\rm e}^{ix}} }{\rm e}^{ix}\int\limits_{1/\pi}^{1/ {|x|}}
{ {1-{\rm e}^{ixy}+ixy}\over {y^{2-it}} }{\rm d}y+{\rm O}(|1-it|)\nonumber
\\
&&\quad ={{1-it}\over {{\rm e}^{ix}-1} }{\rm e}^{ix}
\int\limits_{1/\pi}^{1/ {|x|}}{ {ixy}\over {y^{2-it}} }{\rm d}y+
{\rm O}\left(
 {{|1-it|}\over {|x|}}\int\limits_{1/\pi}^{1/ {|x|}}{{|xy|^2}\over
 {y^2}}{\rm d}y
\right)
+{\rm O}(|1-it|)\nonumber
\\
&&\quad ={{1-it}\over {{\rm e}^{ix}-1} }{\rm e}^{ix}ix
\int\limits_{1/\pi}^{1/ {|x|}}{ {{\rm d}y}\over {y^{1-it}} }+{\rm O}(|1-it|)
={{1-it}\over {{\rm e}^{ix}-1} }{\rm e}^{ix}ix{{|x|^{-it}-1}\over {it}}+
{\rm O}(|1-it|)\nonumber
\\ \label{S(t)assympt}
&&\quad ={{{\rm e}^{it\log{1\over |x|}}-1}\over {it} }+{\rm O}(|1-it|).
\end{eqnarray}
Hence, it follows that

\begin{equation}
\label{expsum}
\left|
\exp \left\{ \sum_{k=1}^n{{{\rm e}^{it\log k}}\over k}{\rm e}^{ixk}
 \right\}
\right|\ll \exp \left\{  {{
\sin \left( t\log {1\over |x|} \right)
                       }\over t} \right\}
\end{equation}

for $|t|\leqslant 1$ and $|x|>{1\over n}$.

Since
$$
\big|\phi_{n,\alpha}(t)\big|=
\bigg\vert   N_n\bigg( {t\over {\big( {1\over 3}\log^3n \big)^{1/2} } }
\bigg)
\bigg|,
$$
putting
$$
t'={t\over {\left( {1\over 3}\log^3n \right)^{1/2} } }\quad \hbox{for}\
\log^{3/2}n \geqslant |t| \geqslant  \delta_1 \log^{1/2}n,
$$
from (\ref{|N_n(t)|}) and (\ref{expsum}) we get
\begin{multline*}
|\phi_{n,\alpha}(t)|^2
\ll
{1\over n}\left( \int\limits_{\pi \geqslant |x| \geqslant {1\over \sqrt{n}}}+
{1\over n}\int\limits_{ {1\over \sqrt{n}}>|x|\geqslant {1\over n} }+
{1\over n}\int\limits_{|x|<{1\over n}}\right)
\\
 \times\left|
\exp \left\{ \sum_{k=1}^n
{{{\rm e}^{it'\log k}}\over k}
{\rm e}^{ixk} \right\}
\right|^2{\rm d}x=
I_1+I_2+I_3.
\end{multline*}

Since $ {{
\sin( t\log {(1/ |x|)})}\over t}\leqslant \log {(1/ |x|)}$,
we have
\begin{equation*}
\begin{split}
I_1&\leqslant{1\over n}\int\limits_{\pi \geqslant |x| \geqslant {1\over \sqrt{n}}}
\exp \Big\{ 2\log {1\over {|x|}}\Big\}{\rm d}x\ll {1\over
{\sqrt{n}}}.
\\
I_2&\ll {1\over n}\int\limits_{ {1\over \sqrt{n}}>|x|\geqslant {1\over n} }
\exp \left\{ 2\log {1\over {|x|}}\left(
  {{
\sin \left( t'\log {1\over |x|} \right)
                       }\over {t'\log {1\over {|x|}}}}
\right)  \right\} {\rm d}x\cr
\\
&\leqslant {1\over n}\int\limits_{ {1\over \sqrt{n}}>|x|\geqslant {1\over n} }
\exp \left\{ 2\log {1\over {|x|}}(1-\varepsilon)  \right\} {\rm d}x
\leqslant {1\over n}
\int\limits_{1>|x|>{1\over n}}
{{{\rm d}x}\over {|x|^{2(1-\varepsilon )}}}\ll {1\over
{n^{2\varepsilon }} },\nonumber
\end{split}
\end{equation*}
where
$1-\varepsilon:=\max_{u\geqslant (\delta_1/ 2){\sqrt 3}}{{\sin u}\over u}$.

Finally, since
$
\exp \{ \sum_{k=1}^n
{{{\rm e}^{it'\log k}}\over k}
{\rm e}^{ixk} \}
=
n\exp \left\{
S_n(t')+{\rm O}(1)
\right\}$
for $|x|\leqslant {1\over n}$,  applying (\ref{exp_S_n(t)})
with $\varepsilon_1=\delta_1$, we have
$$
I_3\ll \exp \{ 2\Re S_n(t') \} \ll  {1\over {n^{\lambda (\delta_1)}}}.
$$
The theorem is proved.
\end{proof}
\begin{thm}
\label{difff_S_n-f_E_n}
Let $a_1, a_2,\ldots ,a_n$ be real numbers.
We  denote
$$
f_{S_n}(t)={\bf M_n}{\rm e}^{it\sum_{k=1}^{n}a_k\alpha_k(\sigma) }\quad
\hbox{and}\quad
f_{E_n}(t)={\bf M_n}{\rm e}^{it\sum_{k=1}^{n}a_k\xi_k(P)}.
$$
If
$\max_{1\leqslant k\leqslant n}{{|a_k|}/ {q^{k/4}}}\leqslant \beta_n,$
then, for $|t|\leqslant {1\over {\beta_n}}$, we have
$
|f_{S_n}(t)-f_{E_n}(t)|\ll \beta_n |t|.
$
\end{thm}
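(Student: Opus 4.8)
The plan is to compare the two sides through their generating functions and to use the coefficient‑domination lemma stated just after (\ref{product_for_In}). Put $\hat f(j)=\er^{\,\i t a_j}$; since $f_{S_n}(t)$ and $f_{E_n}(t)$ depend only on $a_1,\dots,a_n$, set $\hat f(j)=1$ for $j>n$. Then (\ref{genfunc}) gives $\sum_{m\geqslant 0}f_{S_m}(t)z^m=\exp\{G(z)\}$ with $G(z)=\sum_{j\geqslant1}\hat f(j)z^j/j$, while (\ref{identity_for_Mn}) together with $-\log(1-w)=\sum_{m\geqslant1}w^m/m$ gives $\sum_{m\geqslant0}f_{E_m}(t)z^m=\exp\{G(z)+E(z)\}$. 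Here, inserting $I_j=q^j/j+A_jq^{j/2}/j$ from (\ref{assympt_In}) and noting that (\ref{product_for_In}) forces the resulting correction to vanish at $t=0$, one gets
\[
E(z)=\sum_{j\geqslant1}\frac{A_j(\hat f(j)-1)}{j\,q^{j/2}}z^{j}+\sum_{j\geqslant1}\sum_{m\geqslant2}\frac{I_j(\hat f(j)^m-1)}{m\,q^{jm}}z^{jm}.
\]
Consequently $f_{E_n}(t)-f_{S_n}(t)=\big[\exp\{G(z)\}(\exp\{E(z)\}-1)\big]_{(n)}$, and it remains to dominate the coefficients of $\exp\{G(z)\}$ and of $\exp\{E(z)\}-1$.

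The key estimate is the one for $[E(z)]_{(N)}$. Since $|\hat f(j)|=1$ we have $|\hat f(j)^m-1|\leqslant m|\hat f(j)-1|\leqslant m|t|\,|a_j|\leqslant m|t|\beta_n q^{j/4}$ for $j\leqslant n$; also $|A_j|\leqslant2$ and $0<I_j\leqslant q^j/j$ by (\ref{assympt_In}). Hence the $z^N$‑coefficient of the first sum is bounded by $2|t|\beta_n q^{-N/4}$, and each term of the second sum with $jm=N$, $m\geqslant2$ (so $j\leqslant N/2$) is bounded by $|t|\beta_n q^{j(1-m)}q^{j/4}/j\leqslant|t|\beta_n q^{-3N/8}$, there being at most $d(N)$ of them. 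Using $q\geqslant2$ this yields an absolute constant $C$ with
\[
\big|[E(z)]_{(N)}\big|\leqslant C|t|\beta_n\,q^{-N/8}\qquad(N\geqslant1),
\]
so $E(z)$ is dominated coefficientwise by $\tilde E(z):=C|t|\beta_n\sum_{N\geqslant1}q^{-N/8}z^N$, whose total mass $\kappa:=\tilde E(1)$ satisfies $\kappa\ll|t|\beta_n$ (again with an absolute constant, for $q\geqslant2$); in particular $\kappa\ll1$ since $|t|\leqslant1/\beta_n$. The factor $|t|$ here comes precisely from the vanishing of $E(z)$ at $t=0$.

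To finish, I would iterate the coefficient‑domination property: $\big|[\exp\{E(z)\}-1]_{(s)}\big|\leqslant[\exp\{\tilde E(z)\}-1]_{(s)}$ for all $s$, while $\big|[\exp\{G(z)\}]_{(s)}\big|=|f_{S_s}(t)|\leqslant1=[(1-z)^{-1}]_{(s)}$. Therefore, using that $\exp\{\tilde E(z)\}-1$ has nonnegative coefficients summing to $\exp\{\tilde E(1)\}-1=e^{\kappa}-1$,
\[
|f_{E_n}(t)-f_{S_n}(t)|\leqslant\Big[\frac{\exp\{\tilde E(z)\}-1}{1-z}\Big]_{(n)}=\sum_{s=0}^{n}[\exp\{\tilde E(z)\}-1]_{(s)}\leqslant e^{\kappa}-1\ll\kappa\ll|t|\beta_n,
\]
which is the assertion.

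The main obstacle is the second step: a crude bound $|\hat f(j)^m-1|\leqslant2$ gives only $|[E(z)]_{(N)}|=O(1/N)$, whose partial sums grow like $\log n$ and are useless. One must observe that each term carries the factor $q^{j(1-m)}$ with $j\leqslant N/2$, hence a genuine geometric factor $q^{-N/2}$, and only then combine it with the at most polynomially large factor $q^{j/4}$ coming from $|a_j|\leqslant\beta_nq^{j/4}$. Once this is arranged, checking that all constants can be taken independent of $q$ (using only $q\geqslant2$, $A_j\in[-2,0]$ and $I_j\leqslant q^j/j$) is routine.
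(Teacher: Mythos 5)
Your proof is correct and takes essentially the same route as the paper's: both write $F_{E_n}(z)/F_{S_n}(z)=\exp\{E(z)\}$, observe that $E(z)$ vanishes at $t=0$ so that the inequality $|{\rm e}^{iu}-1|\leqslant|u|$ extracts the factor $|t|$, show the coefficients of $E(z)$ decay geometrically in $q$ using $|a_j|\leqslant\beta_n q^{j/4}$ together with $I_j\leqslant q^j/j$ and $|A_j|\leqslant 2$, and then conclude by coefficient domination. The only cosmetic difference is in the final bookkeeping: the paper factors $H(z,t)=\beta_n|t|\sum\gamma_m(t)z^m$ and uses $\epsilon^k\leqslant\epsilon$ for $0\leqslant\epsilon\leqslant1$, whereas you dominate $E$ by $\tilde E$ and use ${\rm e}^{\kappa}-1\ll\kappa$; these are equivalent.
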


\begin{proof}
Let $a_k=0$ for $k>n$.
Taking $f(k)={\rm e}^{ita_k}$  in (\ref{identity_for_Mn}), we have
$$
F_{E_n}(z)=\sum_{m=0}^{\infty}f_{E_m}(t)z^n=\prod_{k=1}^{\infty}
\left(
  {{ 1-\left( {z\over q} \right)^k{\rm e}^{ita_k} } }
\right)^{-I_k},
$$
$$
F_{S_n}(z)=\sum_{m=0}^{\infty}f_{S_m}(t)z^m=\exp
 \left\{ \sum_{k=1}^{\infty} {{{\rm e}^{ita_k}}\over k}z^k \right\},
$$
\begin{equation}
\label{F_E_n}
F_{E_n}(z)=\exp \left\{
 \sum_{k=1}^{\infty}{ {{\rm e}^{ita_k}}\over k}z^k+H(z,t)\right\}
=F_{S_n}(z)\exp \{ H(z,t) \},
\end{equation}
where
\begin{multline*}
H(z,t)=-\sum_{k=1}^{\infty }
{{q^k}\over k} \left[ \log \left( 1-\left( {z\over q}
\right)^k{\rm e}^{ita_k}\right)
 +\left( {z\over q} \right)^k
 {\rm e}^{ita_k} \right]
 \\
  +\sum_{k=1}^{\infty }A_k{{q^{k/2}}\over k}\log \left( 1- \left( {z\over q}
\right)^k
{\rm e}^{ita_k} \right);\nonumber
\end{multline*}
here  $A_k$  are the same numbers as in $(\ref{assympt_In})$.
For $t=0$, we have
$$
{1\over {1-z}}=
\prod_{k=1}^{\infty}{{\left( 1-\left( {z\over q} \right)^k \right)^{-I_k}}}=
{{\exp \{ H(z,0) \} }\over {1-z} },
$$
and, therefore, $H(z,0)=0$. Using this identity,
from (\ref{F_E_n}) we have
\begin{equation*}
\begin{split}
F_{E_n}(z)
&=F_{S_n}(z)\exp \{ H(z,t)-H(z,0) \}
\\
&=F_{S_n}(z)\exp \left\{
\sum_{k=1}^{\infty} {{q^k}\over k}\sum_{j=2}^{\infty}\left( {z\over q} \right)^{jk}
{{({\rm e}^{itja_k}-1)}\over j}\right.\cr
&\quad \left.+
\sum_{k=1}^n {{q^{k/2}}\over k}A_k
\sum_{j=1}^{\infty}\left( {z\over q} \right)^{jk}
{{({\rm e}^{itja_k}-1)}\over j}\right\}
\\
&=F_{S_n}(z)\exp \left\{
 |t|\sum_{k=1}^{\infty}
 {{q^k}\over k}\sum_{j=2}^{\infty}\left( {z\over q} \right)^{jk}
a_kb_{kj}(t)\right.
\\
&\quad \left.+
|t|\sum_{k=1}^{\infty} {{q^{k/2}}\over k}A_k
\sum_{j=1}^{\infty}\left( {z\over q} \right)^{jk}
a_kb_{kj}(t)\right\},
\end{split}
\end{equation*}
where $|b_{kj}|\leqslant 1$. Therefore, we have
$$
F_{E_n}(z)=F_{S_n}(z)\exp \left\{\beta_n |t|\sum_{m=1}^{\infty} \gamma_m(t)z^m
 \right\},
$$
where $|\gamma_m(t)|\leqslant {A\over {q^{m/4}}}$ with an
absolute constant $A$.
Recalling that
 $\beta_n|t|\leqslant 1$  for $s\geqslant 1$, we get
\begin{multline*}
\left[   \exp \left\{\beta_n |t|\sum_{m=1}^{\infty} \gamma_m(t)z^m
 \right\}      \right]_{(s)}
\leqslant
  \left[  1+\sum_{k=1}^{\infty}{1\over {k!}}
 \left( \beta_n|t|\sum_{m=1}^{\infty} \gamma_m(t)z^m
 \right)^k     \right]_{(s)}
\\
 \leqslant  \left[  1+\beta_n|t|\sum_{k=1}^{\infty}{1\over {k!}}
 \left( \sum_{m=1}^{\infty} {A\over {q^{m/4}}}z^m
 \right)^k     \right]_{(s)}
=
\left[  \beta_n|t| \exp \left\{ \sum_{m=1}^{\infty} {A\over {q^{m/4}}}z^m
 \right\}      \right]_{(s)}
 \\
\ll {{\beta_n|t|}\over {q^{s/8}}}.
\end{multline*}

Since $[F_{S_n}(z)]_{(k)}\leqslant 1$,
using the inequality obtained before, we have
\begin{equation*}
\begin{split}
f_{E_n}(t)&=[F_{E_n}(z)]_{(n)}=
\left[ F_{S_n}(z)\exp \left\{ \beta_n|t|\sum_{m=1}^{\infty} \gamma_m(t)z^m
 \right\} \right]_{(n)}
\\
&=f_{S_n}(t)+{\rm O}\left(\beta_n|t|\sum_{k=1}^n {1\over {q^{k/8}}} \right)
=f_{S_n}(t)+{\rm O}(\beta_n|t|).
\end{split}
\end{equation*}

The theorem is proved.
\end{proof}

Applying Theorem~\ref{difff_S_n-f_E_n} with
$a_k={{\log k }\over {({1/{\sqrt 3})}}\log^{3/2} n}$,
we have
\begin{equation}
\label{phi_alpha_minus_phi_xi}
\big|\phi_{n,\alpha}(t)-\phi_{n,\xi}(t)\big|\ll
{{|t|}\over {\log^{3/2}n}}
\end{equation}
for $|t|\leqslant \log^{3/2}n$.

Let
$$
G_n(x)=\Phi(x)
+{{3^{3/2}}\over {24\sqrt{2\pi}}}
{{(1-x^2)\er^{-{x^2/2}}}\over \sqrt{\log n}}.
$$
Then we have
$$
\int\limits_{-\infty}^{+\infty}{\rm e}^{itx}\,\dr G_n(x)={\rm e}^{-{{t^2}\over 2}}
\left( 1+(it)^3{{3^{3/2}}\over {4!}} {1\over {\log^{1/2}n}} \right)
=:g_n(t).
$$
 Let us also denote
$$
F_n(x)={\nu_n}\left(
{{\log P_n(\xi)-{1\over 2}\log^2n }
\over {\left( {1\over 3}\log^3 n \right)^{1/2}  }}<x
\right).
$$
Applying the generalized Esseen inequality (see e. g. \cite{petrov}), we have
\begin{multline*}
\sup_{x\in {\sym R}}\big|G_n(x)-F_n(x)\big|\ll \int\limits_{-\log^{3/4}n}^{\log^{3/4}n}
{{|\phi_{n,\xi}(t)-g_n(t)|}\over {|t|}}{\rm d}t+{1\over {\log^{3/4}n}}
\\
\leqslant \int\limits_{-\log^{3/4}n}^{\log^{3/4}n}
{{|\phi_{n,\alpha}(t)-g_n(t)|}\over {|t|}}{\rm d}t+
\int\limits_{-\log^{3/4}n}^{\log^{3/4}n}
{{|\phi_{n,\alpha}(t)-\phi_{n,\xi }(t)|}\over {|t|}}\,\dr t
+{1\over {\log^{3/4}n}}.
\end{multline*}
Using~(\ref{phi_alpha_minus_phi_xi}), we can estimate the second integral
in this equality by ${\rm O}(\log^{-3/4}n)$.
We estimate the first integral using, in the interval
$|t|\leqslant {1\over {\log^2 n}}$,
the estimate
\begin{equation*}
\begin{split}
\big|\phi_{n,\alpha}(t)-1\big|
&\leqslant \big|\phi_{n,\alpha}(t)-\phi_{n,\xi}(t)\big|
+\big|1-\phi_{n,\xi}(t)\big|
\\
&\ll |t|\left( {{1}\over {\log^{3/2}n}}
+  {\bf M_n}\left| { {\log P_n(\xi (P))-{(1/ 2)}\log^2 n}
\over {(1/\sqrt 3 )\log^{3/2}n}}\right|    \right)
\ll |t|\log^{1/2} n
\end{split}
\end{equation*}
and, in the intervals
${1\over {\log^2n}}<|t|\leqslant  \log^{1/6}n$,
$ \log^{1/6}n\leqslant |t| <\delta_1 \log^{1/2}n$, and
$\delta_1 \log^{1/2}n<|t|<\log^{2/3}n$,
estimates  (\ref{phi_alphan}), (\ref{phi_nalpha_smalt}), and (\ref{phi_nalpha_larget}), respectively.

Finally, we have

\begin{equation}
\label{supG_n-F_n}
\sup_{x\in {\sym R}}|G_n(x)-F_n(x)|\ll {1\over {\log^{3/4}n}}.
\end{equation}

To estimate the closeness between the distribution functions of
$\log O_n(\xi)$ and $\log P_n(\xi)$,
we  use the following lemma, which is a generalization of
Lemma~2.5 of~\cite{barbour_tavare}.

\begin{lem}
\label{cmp_x_and_u}
 Let $U$ and $X$ be random variables.
Suppose that $\sup_{x\in {\sym R}}|P(U<x)-G(x)|\leqslant \eta$,
 where $G(x)$ is a differentiable function
satisfying the condition
 $|G'(x)|\leqslant C$. Then, for each  $\varepsilon >0$, we have
$$
\sup_{x\in {\sym R}}|P(U+X<x)-G(x)|\ll \eta +\varepsilon +
P(|X|>\varepsilon),
$$
where the constant in the symbol
 $\ll$ depends on $C$ only.
\end{lem}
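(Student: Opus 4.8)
The plan is to run a routine conditioning-and-smoothing argument, splitting according to whether the perturbation $X$ is small or large. Fix $x\in\sym R$ and $\varepsilon>0$. On the event $\{|X|\leqslant\varepsilon\}$ one has the sandwich $\{U<x-\varepsilon\}\subset\{U+X<x\}\subset\{U<x+\varepsilon\}$, so that
$$
P(U+X<x)\leqslant P(U<x+\varepsilon)+P(|X|>\varepsilon),\qquad
P(U+X<x)\geqslant P(U<x-\varepsilon)-P(|X|>\varepsilon).
$$

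Next I would feed in the hypothesis $\sup_y|P(U<y)-G(y)|\leqslant\eta$ together with the Lipschitz bound on $G$. From $|G'|\leqslant C$ and the mean value theorem, $|G(x\pm\varepsilon)-G(x)|\leqslant C\varepsilon$. Hence
$$
P(U<x+\varepsilon)\leqslant G(x+\varepsilon)+\eta\leqslant G(x)+C\varepsilon+\eta,
$$
and similarly $P(U<x-\varepsilon)\geqslant G(x)-C\varepsilon-\eta$. Combining these with the two displayed inequalities yields
$$
G(x)-C\varepsilon-\eta-P(|X|>\varepsilon)\leqslant P(U+X<x)\leqslant G(x)+C\varepsilon+\eta+P(|X|>\varepsilon),
$$
i.e. $|P(U+X<x)-G(x)|\leqslant\eta+C\varepsilon+P(|X|>\varepsilon)$. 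Taking the supremum over $x$ gives the claim, with the constant in $\ll$ equal to $\max\{1,C\}$, which depends on $C$ only.

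There is essentially no obstacle here; the only mild subtlety is the strict-versus-nonstrict inequalities in the events (because the distribution functions are taken in the $\{<x\}$ convention used throughout the paper), but the inclusions above hold verbatim with strict inequalities, so no care beyond that is needed. If one wants to avoid even writing $C$, one may also phrase the final bound as $\ll\eta+\varepsilon+P(|X|>\varepsilon)$ after choosing to absorb $C$ into the implied constant, which is exactly the form stated in the lemma.
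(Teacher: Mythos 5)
Your proof is correct, and it is worth noting that the paper actually states Lemma~\ref{cmp_x_and_u} without giving any proof at all: it is introduced only with the remark that it is ``a generalization of Lemma~2.5 of \cite{barbour_tavare}.'' Your argument supplies exactly the standard reasoning that the author is implicitly appealing to. The decomposition on $\{|X|\leqslant\varepsilon\}$ versus $\{|X|>\varepsilon\}$, the sandwich of events, the mean value theorem applied to $G$, and the absorption of $C$ into the implied constant are all right and are the natural (and essentially the only) route. The one small stylistic slip is that your displayed ``sandwich'' $\{U<x-\varepsilon\}\subset\{U+X<x\}\subset\{U<x+\varepsilon\}$ should be read as inclusions after intersecting all three sets with $\{|X|\leqslant\varepsilon\}$; the two probability inequalities you then write down are nonetheless exactly the correct consequences, so this does not affect the argument.
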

 \begin{proof}[Proof of theorem \ref{cltxith}] Putting, in Lemma~\ref{cmp_x_and_u},
$$
X=-{  { \log P_n(\xi)-\log O_n(\xi)-
\mu_n}
\over {(1/\sqrt 3)\log^{3/2} n} }\quad \hbox{\rm and}\quad
U={  { \log P_n(\xi)-{\bf M_n}\log P_n(\xi)} \over {(1/\sqrt 3)\log^{3/2} n} },
$$
 $\varepsilon=( (\log \log n)/ \log n) ^{2/3}$,
$\eta ={1/{\log^{3/4}n}}$ and then applying
 (\ref{supG_n-F_n}) and Proposition 1, we obtain the proof of the theorem.
\end{proof}

\begin{proof}[Proof of theorems \ref{cltaltha} and \ref{meanoalpha} ]
 In order to prove that the results of this paper hold in the case of the
 group $S_n$, one can either repeat
the proofs given above in a simplified way, or pass to the limit
as $q\to \infty$ for fixed $n$ and using the facts that
$( \xi_1(P),\ldots ,\xi_n(P) ) \to ( \alpha_1(\sigma),\ldots,
\alpha_n(\sigma) )$ weakly as $q\to \infty$ and  that
the constants in the symbols ${\rm O}$ and $\ll$ do not depend on $q$
in all the proofs given above.
\end{proof}
\chapter{Functions on $S_n^{(k)}$}
\label{ch_sub}
\section {Means of multiplicative functions on $S_n^{(k)}$}
As before we denote
$$
S_n^{(k)}=\{ \sigma=x^k  | x\in S_n  \}.
$$
 In \cite{minpav} Mineev and Pavlov proved the following criterion to determine whether $\sigma$ belongs to the set $S_n^{(k)}$.
\begin{thmabc}
Suppose $k$ has the following decomposition into the product of prime numbers
 $k=p_1^{l_k(p_1)}p_2^{l_k(p_2)}\ldots p_s^{l_k(p_s)}$.  Let us define the function
 $$
 q_k(j)=\prod_{p|(k,j)}p^{l_k(p)}.
 $$
 Then $\sigma \in S_n^{(k)}$  if and only if
 $$
 q_k(j)|\alpha_j(\sigma)
 $$
 for all $1\leqslant j \leqslant n$.
\end{thmabc}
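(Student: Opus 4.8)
The plan is to reduce the statement to two elementary ingredients: how cycle lengths transform under taking powers, and a short number-theoretic analysis of which divisors of $k$ can arise. Write $k=p_1^{l_k(p_1)}\cdots p_s^{l_k(p_s)}$ as in the statement, and recall that if $c$ is a single cycle of length $m$ then $c^k$ is a product of $\gcd(m,k)$ disjoint cycles, each of length $m/\gcd(m,k)$. Hence, if $x\in S_n$ has $\beta_m$ cycles of length $m$ for every $m$, then
$$
\alpha_j(x^k)=\sum_{m\colon\ m/\gcd(m,k)=j}\gcd(m,k)\,\beta_m .
$$
The first step is to determine the $m$ occurring in this sum. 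Putting $d=\gcd(m,k)$ one has $m=jd$ and $d\mid k$, and, using $\gcd(jd,k)=\gcd(jd,(k/d)d)=d\gcd(j,k/d)$, the requirement $\gcd(jd,k)=d$ is equivalent to $\gcd(j,k/d)=1$. If a prime $p$ divides both $j$ and $k$ then it cannot divide $k/d$, so its full power $p^{l_k(p)}$ must divide $d$; conversely any $d$ with $q_k(j)\mid d\mid k$ has $k/d$ coprime to $j$. Thus the contributing $m$'s are exactly $m=jd$ with $q_k(j)\mid d\mid k$, and in particular $q_k(j)$ divides every term of the sum. This already proves necessity: if $\sigma=x^k$ then $q_k(j)\mid\alpha_j(\sigma)$ for all $1\leqslant j\leqslant n$.

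For sufficiency I would construct a root of $\sigma$ cycle by cycle. Assume $q_k(j)\mid\alpha_j(\sigma)$ for all $j$. Fix $j$ with $\alpha_j(\sigma)>0$, put $d=q_k(j)$ and $m=jd$, and group the $\alpha_j(\sigma)$ cycles of $\sigma$ of length $j$ into $\alpha_j(\sigma)/d$ blocks of $d$ cycles each. For one block, consisting of disjoint $j$-cycles $c_0,\dots,c_{d-1}$ with $c_i=(a^{(i)}_0\,a^{(i)}_1\cdots a^{(i)}_{j-1})$, I claim $c_0c_1\cdots c_{d-1}=\tau^k$ for a single $m$-cycle $\tau$ on the same $jd$ points. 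Identify $\sym Z/m\sym Z$ with the union of the supports so that $\tau$ is addition by $1$; since $\gcd(k,m)=d$ (by the computation above, as $k/d$ is coprime to $j$), the orbits of addition by $k$ on $\sym Z/m\sym Z$ are the $d$ cosets of $d\sym Z/m\sym Z$, each of size $j$, and on each coset addition by $k$ is a $j$-cycle. Taking $0,1,\dots,d-1$ as coset representatives and sending $i+\nu k\ (\mathrm{mod}\ m)$ to $a^{(i)}_{\nu\bmod j}$ gives a well-defined bijection under which $\tau^k$ maps $a^{(i)}_\nu\mapsto a^{(i)}_{\nu+1}$ on the orbit of $i$, i.e. agrees with $c_i$ there; hence $\tau^k=c_0\cdots c_{d-1}$. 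Letting $x$ be the product of all the cycles $\tau$ obtained from all blocks and all $j$, the supports are disjoint and exhaust the $n$ points because $\sum_j j\,\alpha_j(\sigma)=n$, so $x\in S_n$ and $x^k=\sigma$, that is, $\sigma\in S_n^{(k)}$.

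The cycle-length-under-powers formula and the identification of the admissible divisors $d$ are routine. The one step that requires care — and the crux of the argument — is the explicit construction of the single $m$-cycle $\tau$ with $\tau^k=c_0\cdots c_{d-1}$: one must choose the bijection $\sym Z/m\sym Z\leftrightarrow$ (union of supports) so that it is simultaneously compatible with the full cycle $\tau$ and with the partition into orbits of $\tau^k$, and this is exactly the place where the relations $\gcd(k,m)=q_k(j)$ and $\gcd(j,k/q_k(j))=1$ are essential.
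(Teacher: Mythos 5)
Your proof is correct. Note, however, that the paper does not prove this theorem at all: it is stated as a black-box result of Mineev and Pavlov, cited from \cite{minpav}, and is only used as a tool in Chapter~3. Your argument is therefore self-contained where the paper relies on the literature. The argument itself is the natural one: the necessity direction correctly reduces to the classical fact that a cycle of length $m$ raised to the $k$-th power splits into $\gcd(m,k)$ cycles of length $m/\gcd(m,k)$, together with the elementary observation that the divisors $d$ of $k$ for which $m=jd$ contributes to $\alpha_j$ are precisely those with $q_k(j)\mid d$ (via the identity $\gcd(jd,k)=d\gcd(j,k/d)$). The sufficiency direction is done carefully: choosing $d=q_k(j)$ and $m=jd$ gives $\gcd(m,k)=d$, the $d$-block is labelled by coset representatives $0,\dots,d-1$ of $d\mathbb{Z}/m\mathbb{Z}$, and the bijection $(i,\nu)\mapsto i+\nu k\pmod m$ is well-defined on $\{0,\dots,d-1\}\times\mathbb{Z}/j\mathbb{Z}$ because $jk\equiv 0\pmod m$; under it, the shift by $k$ on $\mathbb{Z}/m\mathbb{Z}$ is exactly the given product of $j$-cycles. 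This is a complete and correct proof of the criterion the paper takes for granted.
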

 We have
\begin{equation*}
\begin{split}
{\bf M_n^{(k)}}f
&=\frac{1}{|S_n^{(k)}|} \sum_{\sigma \in S_{n}^{(k)}}f(\sigma)=
\frac{1}{|S_n^{(k)}|}\sum_{\scriptstyle s_1+2s_2 +\cdots +ns_n=n
\atop \scriptstyle q_k(l)|s_l}\prod_{j=1}^n\hat f(j)^{s_j}n!\prod_{j=1}^n{\frac{1}{j^{s_j}s_j!}}
\\
&=\frac{n!}{|S_n^{(k)}|}\sum_{\scriptstyle s_1+2s_2 +\cdots +ns_n=n
\atop \scriptstyle q_k(l)|s_l}\prod_{j=1}^n{\left(\frac{\hat f(j)}{j}\right)^{s_j}\frac{1}{s_j!}}.
\end{split}
\end{equation*}
Here we have used the well known fact that the quantity of
  $\sigma \in S_n$ such that
$\alpha_j(\sigma)=s_j$ for $1\leqslant j \leqslant n$, equals
$$
n!\prod_{j=1}^{n} {\frac{1}{s_j!j^{s_j}}},
$$ when
$s_1+2s_2+\cdots +ns_n=n$.
Hence one can easily see that the following identity holds
\begin{equation}
\label{f_gen_func}
\begin{split}
\sum_{n=0}^\infty\frac{|S_n^{(k)}|}{n!}{\bf M_n^{(k)}}fz^n&=\prod_{j=1}^\infty\left(
1+\sum_{{\scriptstyle j\geqslant 1}\atop{\scriptstyle q_k(j)|s}}\left(\frac{\hat f(j)z^j}{j}\right)^s\frac{1}{s!} \right)\nonumber
\\
&=\exp \biggl\{ \sum_{{\scriptstyle j\geqslant 1}\atop{ \scriptstyle (j,k)=1}}\frac{\hat f(j)}{j}z^j \biggr\}\prod_{(j,k)>1}\left(
1+\sum_{{s\geqslant 1}\atop{q_k(j)|s}}\left(\frac{\hat f(j)z^j}{j}\right)^s\frac{1}{s!} \right)
\nonumber
\\
&=\exp \biggl\{   \sum_{{j \geqslant 1}\atop{(j,k)=1}}\frac{\hat f(j)}{j}z^j \biggr\}H_k(f;z).
\end{split}
\end{equation}

Further we will assume that $|\hat f(j)|\leqslant 1$.
  We will denote by  ${\bf M}_n(f)={\bf M}_n^{(k)}(f)$
the mean value of $f(\sigma)$ on the subset $S_n^{(k)}$.
 Let us define
$$
\mu_n(p)=\biggl(\frac{1}{n}\sum_{\scriptstyle 1\leqslant j\leqslant n \atop \scriptstyle (j,k)=1 }|\hat f(j)-1|^p\biggr)^{1/p}.
$$
In the works \cite{manstberry},  \cite{manstadditive} and \cite{zakvoronoi} there have been obtained the estimates of the mean values of the multiplicative functions on whole group $S_n$.

The following theorem establishes analogous estimate for ${\bf M_n^{(k)}}f$.

\begin{thm}
\label{mean}
Suppose $|\hat f(j)|\leqslant 1$. Then 
$$
{\bf M_n^{(k)}}{f}=\exp \biggl\{ \sum_{\substack{1\leqslant j\leqslant n \\ \scriptstyle (j,k)=1} }{\frac{\hat f(j)-1}{ j}} \biggr\}\prod_{\substack{ j\leqslant n \\ \scriptstyle (j,k)>1}}
\left(\frac{1+\displaystyle{\sum_{s \geqslant 1:q_k(j)|s}}\frac{\hat f(j)^s}{j^ss!}}{1+\displaystyle{\sum_{s \geqslant 1:q_k(j)|s}}\frac{1}{j^ss!}} \right)  +O\left(\mu_n(p)+\frac{1}{n^{\beta}}\right),
$$
for $p>\frac{1}{\beta}$. Here $\beta=\frac{\phi(k)}{k}$,
if $k$ is prime, and
$$
\beta=\min_{d:d|k,d>1}\frac{\phi(k)}{k}\left(1- {\mu(d)}\prod_{p|d}\frac{1}{p-1}\right),
$$
if  $k$ is composite, where $\mu(d)$ -- Moebius function, $\phi(k)$ -- Euler function.
\end{thm}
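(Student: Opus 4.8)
The plan is to mimic, for the constraint defining $S_n^{(k)}$, the argument by which Theorem~\ref{meanf} was deduced from Theorem~\ref{fundthm1}. First I would use the factorisation of the generating function contained in~(\ref{f_gen_func}). Put $F(z)=\sum_{n\ge0}\frac{|S_n^{(k)}|}{n!}\,{\bf M_n^{(k)}}f\,z^n$ and, for $\hat f\equiv1$, $P_k(z)=\sum_{n\ge0}\frac{|S_n^{(k)}|}{n!}z^n=\exp\bigl\{\sum_{(j,k)=1}z^j/j\bigr\}H_k(1;z)$, where $H_k(1;z)=\prod_{(j,k)>1}\bigl(1+\sum_{s\ge1:\,q_k(j)\mid s}(z^j/j)^s/s!\bigr)$. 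Then $F(z)=P_k(z)\,m(z)$ with
$$
m(z)=\exp\Bigl\{\sum_{(j,k)=1}\tfrac{\hat f(j)-1}{j}z^j\Bigr\}\prod_{(j,k)>1}\frac{1+\sum_{s\ge1:\,q_k(j)\mid s}(\hat f(j)z^j/j)^s/s!}{1+\sum_{s\ge1:\,q_k(j)\mid s}(z^j/j)^s/s!}.
$$
Since $\hat f(j)$ with $j>n$ does not affect ${\bf M_n^{(k)}}f$, set $\hat f(j)=1$ for $j>n$; then $m(z)$ is a polynomial times a rational function whose denominator does not vanish on $|z|\le1$, $m(1)$ is precisely the main term asserted, and writing $m(z)=\sum_\ell m_\ell z^\ell$ and $P_{k,n}=|S_n^{(k)}|/n!$ we get ${\bf M_n^{(k)}}f=\frac1{P_{k,n}}\sum_{\ell=0}^n m_\ell P_{k,n-\ell}$. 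Thus the problem is to compare this Voronoi-type mean of $\sum_\ell m_\ell$, with weights $P_{k,n}$, to $m(1)$.

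The new analytic input is the shape of $P_k$. From $\sum_{(j,k)=1}z^j/j=-\sum_{d\mid k}\frac{\mu(d)}{d}\log(1-z^d)$ one obtains $P_k(z)=(1-z)^{-\gamma_0}\Psi(z)$, where $\gamma_0=\sum_{d\mid k}\frac{\mu(d)}{d}=\phi(k)/k$ and $\Psi(z)=\bigl(\prod_{d\mid k}(\tfrac{1-z^d}{1-z})^{-\mu(d)/d}\bigr)H_k(1;z)$. The product $H_k(1;z)$ converges absolutely and uniformly on $|z|\le1$ — each factor is $1+O(j^{-2})$ since $q_k(j)\ge2$ — and is analytic in $|z|<1$, regular and positive at $z=1$; hence $\Psi$ is regular and bounded away from $0$ and $\infty$ on $[0,1]$. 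Consequently $P_k(x)\asymp(1-x)^{-\gamma_0}$, and $zP_k'(z)/P_k(z)\le c/(1-z)$ on $[0,1)$, so Theorem~\ref{lowcoefth} applied to $P_k$, together with the matching upper bound, gives $P_{k,n}\asymp n^{\gamma_0-1}\asymp P_k(e^{-1/n})/n$ and the analogues of Lemmas~\ref{pvbound} and~\ref{diffpnm}. The feature absent in Chapter~1 is that $\Psi$ has further algebraic singularities on $|z|=1$: at a primitive $d$-th root of unity $\zeta$ with $d\mid k$, $d>1$, it behaves like $(1-z/\zeta)^{-\gamma_d}$ with $\gamma_d=\mu(d)\,\gamma_0\prod_{p\mid d}\frac1{p-1}$. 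One has $\gamma_d<\gamma_0$ for all such $d$, so they do not affect the order of $P_{k,n}$, but they contribute terms $\ll n^{\gamma_d-1}$ to the differences $P_{k,n+s}-P_{k,n}$, and hence to the remainder below.

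Next I would bound $S(m;j)=\sum_{\ell=1}^j\ell\,m_\ell P_{k,j-\ell}$. Differentiating gives $zm'(z)=m(z)G(z)$, where $G(z)=\sum_{(j,k)=1}(\hat f(j)-1)z^j$ plus a power series whose coefficients are $\ll|\hat f(j)-1|/j$ and supported on the $j$ with $(j,k)>1$; hence $|S(m;j)|\ll\sum_{\ell\le j}\lambda_\ell P_{k,j-\ell}$ with $\lambda_\ell=|\hat f(\ell)-1|$ for $(\ell,k)=1$ and $\lambda_\ell\ll|\hat f(\ell)-1|/\ell$ otherwise. Feeding this into the $S_n^{(k)}$-analogue of Theorem~\ref{fundthm1}, interchanging summations and using the bounds above, the remainder is
$$
\ll n^{-\gamma_0}\sum_{\ell\le n}\lambda_\ell\,\ell^{\gamma_0-1}+\frac1n\sum_{\ell\le n}\lambda_\ell\Bigl(1+\log\tfrac n\ell\Bigr)+\sum_{d\mid k,\,d>1}\bigl(\text{contribution of }\zeta_d\bigr).
$$
Hölder's inequality with exponents $p,p'$ bounds the first two sums by $O(\mu_n(p))$ as long as $p'(\gamma_0-1)>-1$, i.e. $p>1/\gamma_0=k/\phi(k)$ (cf. Lemma~\ref{sumpn}). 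The $\zeta_d$-contributions are governed by the local exponent $-\gamma_d$ of $P_k$ at $\zeta_d$, and are $\ll n^{-(\gamma_0-\gamma_d)}=n^{-\gamma_0(1-\mu(d)\prod_{p\mid d}1/(p-1))}$; minimising over $d\mid k$, $d>1$, together with the principal term $n^{-\gamma_0}$, produces exactly $n^{-\beta}$, and when $k$ is prime only $d=k$ occurs, $\gamma_k<0$, so $\beta=\gamma_0=\phi(k)/k$. Finally $m(e^{-1/n})$ is replaced by $m(1)$ at the same cost since $|m(1)-m(e^{-1/n})|\le\sum_\ell|m_\ell|(1-e^{-\ell/n})\ll\mu_n(p)+n^{-\beta}$ by the same splitting. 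Collecting the estimates proves the theorem.

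The main difficulty is the root-of-unity book-keeping just described. Unlike in Chapter~1, the weight generating function $P_k$ is not of the form $\exp\{\sum d_jz^j/j\}$ with $d_j$ bounded below by a positive constant — the $d_j$ oscillate between $\approx 1$ and $\approx 0$ — so Theorem~\ref{fundthm1} does not apply verbatim; one must first establish its $S_n^{(k)}$-analogue for weights of the type $(1-z)^{-\gamma_0}\times(\text{analytic with algebraic singularities on }|z|=1)$, and then verify that the coefficient asymptotics attached to the singularities $(1-z/\zeta)^{-\gamma_d}$ assemble to give the sharp exponent $\beta$ rather than the crude $\gamma_0$. The secondary, routine points still needing a check are the uniform convergence and non-vanishing of $H_k(1;z)$ near $[0,1]$, and the bound $\sum_\ell|m_\ell|\ll1$ together with the smallness of the coefficients of $H_k(f;z)/H_k(1;z)$ in terms of $\mu_n(p)$, which justify the reductions above.
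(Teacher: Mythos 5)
Your plan is essentially sound, but it organizes the reduction differently from the paper, and the difference is more than cosmetic. The paper never applies a Voronoi-type theorem to the full weight sequence $c_n=|S_n^{(k)}|/n!$. Instead it splits $c_n{\bf M}^{(k)}_nf=\sum_j M_{n-j}[H_k(f;z)]_{(j)}$, bounds $[H_k(f;z)]_{(j)}\ll j^{-2}$ (Lemma~\ref{hcoef}), and applies the Voronoi-type mean estimate (Lemma~\ref{meanmp}, via Theorem~\ref{fundthm}) only to the ratio $M_m/p_m$ where $p(z)=\exp\{\sum_{(j,k)=1}z^j/j\}$ is in the canonical exponential form. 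That keeps the contour-integral analysis (Lemma~\ref{g_nx}) confined to the pure pole/branch structure of $p(z)$; the $H_k$-factor is processed afterwards by a trivial coefficient-decay argument. You instead fold $H_k(1;z)$ into the weight function $P_k(z)=p(z)H_k(1;z)$, treat $c_n$ directly as Voronoi weights, and push the entire ratio $H_k(f;z)/H_k(1;z)$ into the multiplier $m(z)$. That is a legitimate route, since $H_k(1;z)$ is analytic and nonvanishing on the closed disc and does not alter the singular structure of $P_k$ on $|z|=1$, so the analogue of Theorem~\ref{fundthm} for $P_k$-weights should hold with the same exponent $\beta=\gamma_0-\gamma'$. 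But you leave exactly this analogue — the hardest point — as something that ``must first be established''; the paper's two-step decomposition is precisely the device that avoids having to redo the $g_{n,x}$-type contour estimates for a weight that is not of pure exponential form. Two further small corrections: $m(z)$ is not ``a polynomial times a rational function'' (the $H_k$ factors, even truncated, are entire, not polynomial), though its regularity on the closed disc is indeed what matters; and the coefficients of $z\,m'(z)/m(z)$ coming from the $H_k$-ratio are indexed by $\ell=j\,q_k(j)$ rather than by $\ell=j$, so your $\lambda_\ell$-bookkeeping needs to be restated in terms of the $\ell$-th coefficient of the logarithm of the ratio, controlled by $\ll\min_{j q_k(j)=\ell}|\hat f(j)-1|/\ell$. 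These are fixable, and the root-of-unity singularity analysis you sketch, assembling to $n^{-\beta}$, is in accord with the paper's $\gamma'=\max_{d\mid k_0,\,d>1}\max\{\gamma_d,0\}$.
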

 The next theorem is the analog of the well known Halsz - Wirsing result for the multiplicative functions on natural numbers. Analogous result for multiplicative functions on permutations has been obtained in \cite{manstadditive}.
\begin{thm}
\label{halash} Supose we have a fixed sequence of complex numbers
$\hat f(j)$ such that $|\hat f(j)|\leqslant 1$. Then there are two
possible cases concerning the asymptotic behavior of the
corresponding sequence of means ${\bf M_n^{(k)}}f$.
\begin{enumerate}
\item
If the series
\begin{equation}
\label{halseries}
\sum_{\substack{j\geqslant 1\\(j,k)=1}}\frac{1-\Re (\hat f(j)e^{-ixj})}{j}
\end{equation}
diverges for every $x\in [-\pi ,\pi ]$ then
$$
\lim_{n\to \infty}{\bf M_n^{(k)}}f=0
$$
\item
If
$$
\sum_{\substack{j\geqslant 1\\(j,k)=1}}\frac{1-\Re (\hat f(j)e^{-ix_0j})}{j} <\infty,
$$
for some $x_0\in [-\pi, \pi]$, then
$$
{\bf M_n^{(k)}}f=e^{ix_0n}\exp \biggl\{ \sum_{\scriptstyle 1\leqslant j\leqslant n \atop \scriptstyle (j,k)=1 }{\frac{\hat f(j)e^{-ix_0j}-1}{ j}} \biggr\}\prod_{\scriptstyle j\leqslant n \atop \scriptstyle (j,k)>1}
\left(\frac{1+\displaystyle{\sum_{s \geqslant 1:q_k(j)|s}}\frac{(\hat f(j)e^{-ix_0j})^s}{j^ss!}}{1+\displaystyle{\sum_{s \geqslant 1:q_k(j)|s}}\frac{1}{j^ss!}} \right)  +o(1).
$$
\end{enumerate}
\end{thm}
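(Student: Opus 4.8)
The dichotomy naturally splits into the "convergent" alternative (item~2), which I would derive as a direct corollary of Theorem~\ref{mean} after a rotation, and the "divergent" alternative (item~1), which is the genuine Hal\'asz--Wirsing statement and needs the harder argument.

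\textbf{Proof of item 2.} Suppose $\sum_{(j,k)=1}\frac{1-\Re(\hat f(j)e^{-ix_0j})}{j}<\infty$ for some $x_0\in[-\pi,\pi]$, and put $\hat g(j):=\hat f(j)e^{-ix_0j}$. Reading off the identity (\ref{f_gen_func}), replacing $\hat f$ by $\hat g$ amounts to the substitution $z\mapsto ze^{-ix_0}$ in the generating series, so that ${\bf M_n^{(k)}}f=e^{ix_0n}\,{\bf M_n^{(k)}}g$; it therefore suffices to apply Theorem~\ref{mean} to $g$. The only point to verify is that the error term $\mu_n(p)$ for $g$ tends to $0$ for a suitable $p$. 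Since $|\hat g(j)|\leqslant1$ we have $1-\Re\hat g(j)\geqslant\frac12|1-\hat g(j)|^2$, so the hypothesis gives $\sum_{(j,k)=1}\frac{|1-\hat g(j)|^2}{j}<\infty$, whence by Kronecker's lemma $\frac1n\sum_{j\leqslant n,\,(j,k)=1}|1-\hat g(j)|^2\to0$; choosing $p=\max\{2,\lceil1/\beta\rceil+1\}$ and using $|1-\hat g(j)|^p\leqslant2^{p-2}|1-\hat g(j)|^2$ yields $\mu_n(p)\to0$ with $p>1/\beta$. Theorem~\ref{mean} then gives ${\bf M_n^{(k)}}g=(\text{the stated product over }j\leqslant n)+o(1)$, and multiplying through by $e^{ix_0n}$ produces exactly the asserted formula.

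\textbf{Proof of item 1.} Now assume (\ref{halseries}) diverges for every $x\in[-\pi,\pi]$; I must show ${\bf M_n^{(k)}}f\to0$. Write $F(z)=\exp\{L(z)\}H_k(f;z)$ with $L(z)=\sum_{(j,k)=1}\frac{\hat f(j)}{j}z^j$, so that $p_n\,{\bf M_n^{(k)}}f=[z^n]F(z)$, where $p_n:=|S_n^{(k)}|/n!$. By inclusion--exclusion $\exp\{\sum_{(j,k)=1}z^j/j\}=\prod_{d\mid k}(1-z^d)^{-\mu(d)/d}$, whose dominant singularity at $z=1$ is of type $(1-z)^{-\gamma_0}$ with $\gamma_0=\phi(k)/k=\sum_{d\mid k}\mu(d)/d$, while $H_k(1;z)$ is entire; hence $p_n\asymp n^{\gamma_0-1}$ and $p(r):=\sum_m p_m r^m\asymp(1-r)^{-\gamma_0}$ as $r\uparrow1$ (cf.~\cite{flajolet},~\cite{pavlov}). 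Since $|\hat f(j)|\leqslant1$ and $H_k$ has bounded logarithmic derivative near $|z|=1$, one gets $|F(re^{i\theta})|\ll p(r)\exp\{-R(r,\theta)\}$ with $R(r,\theta):=\sum_{(j,k)=1}\frac{r^j(1-\Re(\hat f(j)e^{ij\theta}))}{j}\geqslant0$. Each term of $R(r,\theta)$ is nonnegative and nondecreasing in $r$, and $R(r,\theta)\to\sum_{(j,k)=1}\frac{1-\Re(\hat f(j)e^{ij\theta})}{j}=\infty$ pointwise in $\theta$ by hypothesis, so Dini's theorem on the compact set $[-\pi,\pi]$ gives $m(r):=\min_{|\theta|\leqslant\pi}R(r,\theta)\to\infty$. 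The plan from here is to run the Hal\'asz argument of Manstavi\v cius~\cite{manstadditive} in the present $\gamma_0$-weighted form: choose a resonance point $\theta_0=\theta_0(r)$ realizing $m(r)$; use the triangle inequality $\sqrt{1-\Re(ab)}\leqslant\sqrt{1-\Re a}+\sqrt{1-\Re b}$ ($|a|,|b|\leqslant1$) together with $\sum_{(j,k)=1}\frac{r^j(1-\cos j\psi)}{j}\asymp\gamma_0\log(1+n|\psi|)$ to obtain $R(r,\theta)\gg m(r)+\gamma_0\log(1+n|\theta-\theta_0|)-O(1)$ outside a $\theta$-interval of length $\asymp1/n$ around $\theta_0$; then combine Cauchy's formula on $|z|=r=1-1/n$ with the Parseval identity $\sum_m|[z^m]F|^2r^{2m}=\frac1{2\pi}\int_{-\pi}^{\pi}|F(re^{i\theta})|^2\,d\theta$ and the coefficient comparison $n|[z^n]F|\leqslant\sum_{m<n}|[z^m]F|$ coming from $F'=L'F$ (note $[z^m]L'$ equals $\hat f(m+1)$ when $(m+1,k)=1$ and $0$ otherwise, so has modulus $\leqslant1$). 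Carried through as in the classical Hal\'asz theorem, this gives $|{\bf M_n^{(k)}}f|\ll(1+m(r))e^{-m(r)}+O((\log n)^{-1/2})$ with $r=1-1/n$, and $m(r)\to\infty$ makes the right-hand side $o(1)$.

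\textbf{Main obstacle.} The delicate point is precisely this last step. The naive estimates $|{\bf M_n^{(k)}}f|\ll n\,\sup_{\theta}|F(re^{i\theta})|/p(r)\ll n\,e^{-m(r)}$, or even the refined $\ll n^{1-c\gamma_0}e^{-m(r)/3}$ obtained after isolating the resonance, both lose a positive power of $n$, which is fatal because $m(r)\to\infty$ may be arbitrarily slow. The extra saving must be extracted from the oscillation $e^{-in\theta}$ via the $L^2$/Parseval identity coupled with the coefficient-comparison recursion, exactly as in Hal\'asz's original argument; the $S_n^{(k)}$-specific ingredients (the $\prod_{d\mid k}(1-z^d)^{-\mu(d)/d}$ singularity and the harmless entire factor $H_k$) enter only as bookkeeping, and the divergence hypothesis is used solely through $m(r)\to\infty$.
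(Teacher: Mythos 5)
Your item~2 is correct and coincides with the paper's argument: apply Theorem~\ref{mean} to $\hat g(j)=\hat f(j)e^{-ix_0j}$, use the rotation $z\mapsto ze^{-ix_0}$ in the generating identity (\ref{f_gen_func}) to get ${\bf M_n^{(k)}}f=e^{ix_0n}{\bf M_n^{(k)}}g$, and check $\mu_n(p)\to0$ via $1-\Re w\geqslant\frac12|1-w|^2$ and Kronecker's lemma. For item~1 you have identified exactly the right ingredients --- the recursion $n|M_n|\leqslant\sum_{m<n}|M_m|$ coming from $M'=MU_n'$, Parseval applied to $M'$, the sup-bound $|M(e^{ix})|\ll n^{\gamma_0}e^{-J(n)}$, and a Dini/compactness argument giving $J(n)\to\infty$ --- and this is precisely what the paper's Lemma~\ref{mean_forlarge_t} and Corollary~\ref{isvada} already package, so the paper's proof is essentially a two-line invocation of Corollary~\ref{isvada}. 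However you present your item~1 as a ``plan'' rather than a proof, and the plan is more elaborate than necessary. The loss of a power of $n$ that you flag as the ``main obstacle'' is already cured by the $T$-split inside the recursion of Lemma~\ref{mean_forlarge_t}, namely $n|M_m|\leqslant 2\sum_{j\leqslant T}|M_j|+\frac{2}{T}\sum_{T<j\leqslant n}j|M_j|$, followed by Cauchy--Schwarz and Parseval on the second sum and optimization over $T=[\epsilon n]$. Feeding in $\max_x|M(e^{ix})|\ll n^{\gamma_0}e^{-J(n)}$ and the \emph{crude} Parseval bound $\int_{-\pi}^{\pi}|U_n'(e^{ix})|^2\,dx\ll n$ then gives ${\bf M_n^{(k)}}f\ll e^{-\frac{\gamma_0}{\gamma_0+1}J(n)}+n^{-\gamma_0}$ with no residual power of $n$. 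The Hal\'asz resonance decomposition $R(r,\theta)\gg m(r)+\gamma_0\log(1+n|\theta-\theta_0|)$ that you propose to insert is therefore not required for the qualitative $o(1)$ conclusion (it would only sharpen the constant in the exponent), and since you leave it unexecuted, your proof of item~1 is incomplete as written even though the underlying strategy is sound and is the same one the paper uses.
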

In what follows we assume that $k$ is a fixed natural number, therefore
we will often omit the index $k$ in the notation of the mean value  ${\bf M}_nf={\bf M}_n^{(k)}f$ and measure $\nu_n=\nu_n^{(k)}$.

Further we will denote $k_0=\prod_{p|k}p$.
Putting in (\ref{f_gen_func}) $\hat f(j)\equiv 1$, we obtain
$$
F(z)=\sum_{n=0}^\infty\frac{|S_n^{(k)}|}{n!}z^n=p(z)H_k(1;z)
$$
where
$$
p(z)=\sum_{j=0}^\infty p_jz^j=\exp \biggl\lbrace \sum_{{\scriptstyle j\geqslant 1}\atop{\scriptstyle (j,k)=1}}\frac{z^j}{j} \biggr\rbrace=
\prod_{m|k}\frac{1}{(1-z^m)^{\frac{\mu(m)}{m}}}=\prod_{j=0}^{k_0-1}
\frac{1}{\bigl(1-ze^{-2\pi i \frac{j}{k_0}}\bigr)^{\gamma_j}},
$$
and $\gamma_j=\frac{\phi(k)}{k}{\mu(l_j)}\prod_{p|l_j}\frac{1}{p-1}$, where
$l_j=\frac{k_0}{(j,k_0)}$
for $1\leqslant j <k_0$, $\gamma_0=\frac{\phi(k)}{k}$.
In the work of Mineev and Pavlov it has been proved that $\gamma_j<\gamma_0=\frac{\phi(k)}{k}$, if  $j\not= 0$.
One can easily see also that $|\gamma_j|\leqslant \gamma_0$, moreover,
$|\gamma_j|=\gamma_0$ if and only if $l_j=2$, that is when $\frac{j}{k_0}=\frac{1}{2}$.

Further we will denote by $\epsilon$
some positive, fixed number, not necessarily the same in different places.

Suppose $f(z)=\sum_{j=0}^\infty a_jz^j$,  we will the following
notation for the $n$-th coefficient in the Taylor expansion of
$f$: $[f(z)]_{(n)}=a_n$. Further we will often use some simple
properties of this notation, which we formulate in a form of the
lemma.
%
\begin{lem}
\label{coef}
Suppose $u(z)$, $v(z)$, $U(z)$, $V(z)$, $\psi (z)$
are analytic in the vicinity of zero and such that
  $|[u(z)]_{(n)}|\leqslant [U(z)]_{(n)}$, $|[v(z)]_{(n)}|\leqslant [V(z)]_{(n)}$ and
$[\psi (z)]_{(n)}\geqslant 0$ for $n\geqslant 0$.
Then for $n\geqslant 0$ the following inequalities hold:

\begin{enumerate}
\item
 $$
\bigl|[u(z)v(z)]_{(n)}\bigr|\leqslant [U(z)V(z)]_{(n)}, 
$$
\item
$$
\bigl|[e^{u(z)}]_{(n)}\bigr|\leqslant [e^{U(z)}]_{(n)}, 
$$
\item
$$
\bigr|[e^{\epsilon u(z)}-1]_{(n)}\bigl|\leqslant \epsilon [e^{U(z)}-1]_{(n)} ,\quad \hbox{\it for}\quad 0\leqslant \epsilon\leqslant 1, 
$$
\item
$$
[U(z)]_{(n)}\leqslant [U(z)(1+V(z))]_{(n)},
$$
\item
$$
0\leqslant\frac{1}{2}\left[\psi(z)(e^{\psi(z)}-1)\right]_{(n)} \leqslant\left[1+e^{\psi(z)}\psi(z)-e^{\psi(z)} \right]_{(n)}\leqslant \left[\psi(z)(e^{\psi(z)}-1)\right]_{(n)}. 
$$
\end{enumerate}
\end{lem}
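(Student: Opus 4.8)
The plan is to prove all five assertions by direct manipulation of Taylor coefficients, using throughout the observation that $U(z)$, $V(z)$ and $\psi(z)$ have nonnegative coefficients: indeed $[U(z)]_{(n)}\geqslant|[u(z)]_{(n)}|\geqslant 0$, likewise $[V(z)]_{(n)}\geqslant 0$, and $[\psi(z)]_{(n)}\geqslant 0$ is a hypothesis. A preliminary remark I would record is that a product of power series with nonnegative coefficients again has nonnegative coefficients, so in particular $[\psi(z)^m]_{(n)}\geqslant 0$ and $[U(z)^m]_{(n)}\geqslant 0$ for all $m,n\geqslant 0$.

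For item 1, I would expand the Cauchy product $[u(z)v(z)]_{(n)}=\sum_{j=0}^{n}[u(z)]_{(j)}[v(z)]_{(n-j)}$, apply the triangle inequality, and bound each factor by $[U(z)]_{(j)}$ and $[V(z)]_{(n-j)}$; since these are nonnegative the resulting sum is exactly $[U(z)V(z)]_{(n)}$. Iterating gives $|[u(z)^j]_{(n)}|\leqslant[U(z)^j]_{(n)}$ for every $j\geqslant 0$, which is the engine for items 2 and 3. For item 2, expand $e^{u(z)}=\sum_{j\geqslant 0}u(z)^j/j!$ coefficientwise and estimate $|[e^{u(z)}]_{(n)}|\leqslant\sum_{j\geqslant 0}[U(z)^j]_{(n)}/j!=[e^{U(z)}]_{(n)}$. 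For item 3, the same computation applied to $e^{\epsilon u(z)}-1=\sum_{j\geqslant 1}\epsilon^j u(z)^j/j!$ gives $|[e^{\epsilon u(z)}-1]_{(n)}|\leqslant\sum_{j\geqslant 1}\epsilon^j[U(z)^j]_{(n)}/j!$, and since $0\leqslant\epsilon\leqslant 1$ we have $\epsilon^j\leqslant\epsilon$ for $j\geqslant 1$, so this is at most $\epsilon\sum_{j\geqslant 1}[U(z)^j]_{(n)}/j!=\epsilon[e^{U(z)}-1]_{(n)}$. Item 4 is immediate: $[U(z)(1+V(z))]_{(n)}=[U(z)]_{(n)}+[U(z)V(z)]_{(n)}$, and $[U(z)V(z)]_{(n)}=\sum_{j}[U(z)]_{(j)}[V(z)]_{(n-j)}\geqslant 0$.

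Item 5 is the only one needing an actual computation. From $e^{\psi(z)}\psi(z)=\sum_{m\geqslant 1}\psi(z)^m/(m-1)!$ and $e^{\psi(z)}=\sum_{m\geqslant 0}\psi(z)^m/m!$ one obtains
\[
1+e^{\psi(z)}\psi(z)-e^{\psi(z)}=\sum_{m\geqslant 1}\Bigl(\tfrac{1}{(m-1)!}-\tfrac{1}{m!}\Bigr)\psi(z)^m=\sum_{m\geqslant 2}\tfrac{m-1}{m}\cdot\tfrac{\psi(z)^m}{(m-1)!},
\]
while $\psi(z)(e^{\psi(z)}-1)=\sum_{m\geqslant 2}\psi(z)^m/(m-1)!$. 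Taking the $n$-th coefficient, and using that $[\psi(z)^m]_{(n)}\geqslant 0$ together with $\tfrac12\leqslant\tfrac{m-1}{m}<1$ for every $m\geqslant 2$, yields simultaneously the nonnegativity of the middle term and both of the claimed two-sided bounds. I expect no real obstacle here; the only points requiring care are the propagation of nonnegativity of coefficients through products and exponentials (so that coefficientwise majorization is preserved under summation) and the elementary rearrangement of the exponential series in item 5.
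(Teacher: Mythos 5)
Your proof is correct in all five parts, and the argument you give — Cauchy product plus triangle inequality for item 1, coefficientwise majorization of $u^j$ by $U^j$ and termwise summation of the exponential series for items 2 and 3, splitting off the $UV$ cross term for item 4, and the explicit rearrangement $1+e^\psi\psi-e^\psi=\sum_{m\geqslant 2}\tfrac{m-1}{m}\cdot\tfrac{\psi^m}{(m-1)!}$ with $\tfrac12\leqslant\tfrac{m-1}{m}<1$ for item 5 — is precisely the elementary calculation the paper has in mind. The paper in fact states this lemma without proof, treating it as routine, so your write-up supplies exactly the argument being implicitly invoked.
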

%

The estimates which are similar to those of the lemmas
\ref{hcoef}, \ref{md} and \ref{meanalph}, have been obtained in
the work of Pavlov. For the sake of completeness we give here
somewhat more elementary their proofs. We will estimate the $n$-th
Taylor coefficient of the function $H_k(f;z)$.
%
\begin{lem}
\label{hcoef}
For $n\geqslant 1$ and $|\hat f(j)|\leqslant 1$ we have
$$
[H_k(f;z)]_{(n)}=O\left( \frac{1}{n^2}\right).
$$
\end{lem}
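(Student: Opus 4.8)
The plan is to majorize the Taylor coefficients of $H_k(f;z)$ by those of an explicit series with nonnegative coefficients and then to exploit the structural fact that every nonconstant monomial contributed by the index $j$ has degree at least $2j$. Starting from the product formula for $H_k(f;z)$ in (\ref{f_gen_func}),
$$
H_k(f;z)=\prod_{(j,k)>1}\Bigl(1+\sum_{s\geq 1:\,q_k(j)\mid s}\frac{\hat f(j)^s}{j^s\,s!}\,z^{js}\Bigr),
$$
I would first note that for each $j$ with $(j,k)>1$ there is a prime $p\mid(j,k)$, so $q_k(j)=\prod_{p\mid(j,k)}p^{l_k(p)}\geq p\geq 2$; hence the $j$-th factor has constant term $1$ and all remaining terms of degree $\geq jq_k(j)\geq 2j$. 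Since $|\hat f(j)|\leq 1$, part (1) of Lemma~\ref{coef}, applied factor by factor (only finitely many factors affect any given coefficient), gives $\bigl|[H_k(f;z)]_{(n)}\bigr|\leq \bigl[\prod_{(j,k)>1}(1+r_j(z))\bigr]_{(n)}$, where $r_j(z)=\sum_{s\geq 1:\,q_k(j)\mid s}z^{js}/(j^s s!)$ has nonnegative coefficients.

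Next I would pass to an exponential majorant. Because $r_j$ has nonnegative coefficients, every Taylor coefficient of $e^{r_j(z)}-\bigl(1+r_j(z)\bigr)$ is nonnegative, so $1+r_j(z)\preceq e^{r_j(z)}$ coefficientwise; multiplying these coefficientwise inequalities (all series involved having nonnegative coefficients) yields $\prod_{(j,k)>1}(1+r_j(z))\preceq \exp\{R(z)\}$, where $R(z):=\sum_{(j,k)>1}r_j(z)$. Thus it suffices to prove $[\exp\{R(z)\}]_{(n)}=O(1/n^2)$.

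I would then estimate $R$ directly. Its $m$-th coefficient is $\sum\frac{1}{j^{m/j}(m/j)!}$ over divisors $j\mid m$ with $(j,k)>1$, $q_k(j)\mid(m/j)$ and $m/j\geq q_k(j)$; writing $t=m/j\geq 2$ the corresponding term is $t^t/(m^t t!)$. The value $t=2$ forces $q_k(j)=2$, hence $j=m/2$, so it occurs for at most one $j$ and contributes at most $2/m^2$; for $t\geq 3$ the bound $t!\geq(t/e)^t$ gives a term $\leq (e/m)^t$, and summing the geometric tail gives $O(1/m^3)$. Therefore $0\leq[R(z)]_{(m)}\leq C/m^2$ for all $m\geq 1$ with an absolute constant $C$, while $R(1)=\sum_{(j,k)>1}r_j(1)\leq\sum_{j\geq 1}j^{-2}\sum_{s\geq 2}(s!)^{-1}<\infty$; call this finite number $G$. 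Finally, for $n\geq 1$, $[\exp\{R(z)\}]_{(n)}=\sum_{l\geq 1}\frac{1}{l!}[R(z)^l]_{(n)}$, and in any representation $n_1+\cdots+n_l=n$ some $n_i\geq n/l$; bounding that one factor by $Cl^2/n^2$ and summing the other $l-1$ factors freely (dropping the sum constraint) over the $l$ choices of which index is large gives $[R(z)^l]_{(n)}\leq Cl^3G^{l-1}/n^2$, whence $[\exp\{R(z)\}]_{(n)}\leq\frac{C}{n^2}\sum_{l\geq 1}\frac{l^3G^{l-1}}{l!}=\frac{C(G^2+3G+1)e^G}{n^2}=O(1/n^2)$. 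Chaining the inequalities yields $\bigl|[H_k(f;z)]_{(n)}\bigr|=O(1/n^2)$.

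The routine ingredients are the coefficient-comparison bookkeeping and the elementary geometric estimates; the one step that needs genuine care is the last one — keeping the bound on $[R(z)^l]_{(n)}$ in a form that is summable against $1/l!$, with a single $l$-independent constant $C$ (which may depend on $k$ only through the finitely many small-$m$ coefficients of $R$), and arranging the "one large summand, the rest free" argument so no constraint is used twice. An alternative, slightly less elementary approach would observe that $H_k(1;z)$ extends analytically to $|z|<1$ with $\log H_k(1;z)$ having only a mild $(1-z)\log(1-z)$-type singularity at $z=1$, and then invoke a transfer theorem; I would prefer the elementary majorization above since the surrounding text advertises elementary proofs.
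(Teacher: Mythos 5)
Your proof is correct, and it takes a genuinely different route from the paper in the final step. You share the paper's first two reductions: majorize $|[H_k(f;z)]_{(n)}|$ by the $n$-th coefficient of a nonnegative series, then pass to an exponential majorant. The paper's majorant is the cruder $g(z)=\exp\{k^2\sum_{j\geq 1}z^j/j^2\}$, obtained by comparing each factor to $\exp\{(z^j/j)^{q_k(j)}\}$, using $j^{q_k(j)}\geq j^2$, and then absorbing $q_k(j)\leq k$ into a factor $k^2$; yours keeps the sharper $\exp\{R(z)\}$ with $R=\sum_{(j,k)>1}r_j$. The paper then extracts $g_n=O(1/n^2)$ from the differential recurrence $ng_n=k^2\sum_{j<n}g_j/(n-j)$, split at $j=n/2$, bounding the first range by $g(1)<\infty$ and the second by a tail estimate $\sum_{n/2<j\leq n}g_j\ll 1/n$, the latter pulled out of $g''(x)\ll 1/(1-x)$ at $x=e^{-1/n}$. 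You instead estimate $[R]_{(m)}=O(1/m^2)$ directly via the $t=m/j$ parametrization and Stirling, then control $[\exp\{R\}]_{(n)}=\sum_{l\geq 1}[R^l]_{(n)}/l!$ by a pigeonhole on the largest part, obtaining $[R^l]_{(n)}\leq Cl^3G^{l-1}/n^2$ and an absolutely convergent sum in $l$. Your argument is somewhat more self-contained (it needs neither the second-derivative bound nor the coefficient recurrence), and it localizes the $1/n^2$ decay in $R$ itself rather than recovering it a posteriori; the place requiring care, which you flag, is tracking the $l$-dependence so that the sum over $l$ converges. The paper's recurrence device is terser once the tail bound is in hand and is the standard trick elsewhere in the chapter (for instance in the proof of Lemma~\ref{f_mj}); both routes are fully elementary, and a small bonus of yours is that the constant is absolute, not merely depending on $k$ as in the paper's $k^2$-based majorant.
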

%
\begin{proof}
Applying lemma \ref{coef} one can easily see that for $n\geqslant 0$ and $|\hat f(j)|\leqslant 1$
\begin{equation*}
\begin{split}
|[H_k(f;z)]_{(n)}|&\leqslant \left[  \prod_{{\scriptstyle j\geqslant 1}\atop{\scriptstyle (j,k)>1}}\left(
1+\sum_{q_k(j)|s}\left(\frac{z^j}{j}\right)^s\frac{1}{s!} \right)\right]_{(n)}
\\
&\leqslant \left[
 \prod_{{\scriptstyle j\geqslant 1}\atop{\scriptstyle (j,k)>1}}\left(
1+\sum_{l=1}^\infty \left(\frac{z^j}{j}\right)^{lq_k(j)}\frac{1}{l!} \right)\right]_{(n)}
=\left[ \exp \biggl\lbrace \sum_{{\scriptstyle j\geqslant 1}\atop{\scriptstyle (j,k)>1}}\frac{z^{jq_k(j)}}{j^{q_k(j)}}   \biggr\rbrace \right]_{(n)}
\\
&\leqslant
\left[ \exp \biggl\lbrace \sum_{{\scriptstyle j\geqslant 1}\atop{\scriptstyle (j,k)>1}}\frac{z^{jq_k(j)}}{j^{2}}   \biggr\rbrace \right]_{(n)}
\leqslant \left[ \exp \biggl\lbrace k^2 \sum_{j=1}^\infty\frac{z^{j}}{j^2}   \biggr\rbrace \right]_{(n)};
\end{split}
\end{equation*}
here we have used the fact that $2\leqslant q_k(j)\leqslant k$ for $(j,k)>1$.
Therefore $|[H_k(f;z)]_{(n)}|\leqslant g_n$, where
$$
g(z)=\sum_{n=0}^\infty g_nz^n=  \exp \biggl\lbrace k^2 \sum_{j=1}^\infty\frac{z^{j}}{j^2}   \biggr\rbrace.
$$
One can easily see that $g''(x)\ll \frac{1}{1-x}$ for $0\leqslant x \leqslant 1$.
Therefore
$$
\sum_{j=1}^\infty j^2g_jx^j\ll \frac{1}{1-x}.
$$
Putting here $x=e^{-1/n}$, we obtain
$$
\sum_{n\leqslant j\leqslant 2n}g_j \ll \frac{1}{n}.
$$

Since $zg'(z)=k^2g(z)\sum_{m=1}^\infty \frac{z^m}{m}$, then
$$
ng_n=k^2\sum_{j=0}^{n-1}\frac{g_j}{n-j}\leqslant \frac{2k^2}{n}\sum_{j\leqslant n/2}g_j +
k^2\sum_{n/2< j \leqslant n}g_j,
$$
therefore $g_n=O\left(\frac{1}{n^2} \right)$, whence it follows that$[H_k(f;z)]_{(n)}=O\left(\frac{1}{n^2} \right)$.

The lemma is proved.
\end{proof}

The Lemma \ref{hcoef} shows that the main contribution to the value of ${\bf M_n^{(k)}}f$
is done by the coefficients of the function
$$
F(z)=\sum_{m=0}^\infty M_mz^m=\exp \biggl\lbrace \sum_{(j,k)=1}\frac{\hat f(j)}{j}z^j \biggr\rbrace 
$$
This generating function may be regarded as a special case of more general generating function (\ref{genfunc2}) of Chapter 1  if we put
\begin{equation}
\label{d_jkcoprime}
d_j=\begin{cases}
1,&\text{if $(j,k)=1$}\\
0,&\text{if $(j,k)>1$}
\end{cases}.
\end{equation}
Unfortunately, Theorem \ref{meanf} is not directly applicable
here, as in our case the parameters $d_j$ are not bounded from
bellow by a positive constant. The proof of Theorem \ref{meanf}
was based on estimate of  Theorem \ref{fundthm1}, in the proof of
which we used the condition $d_j\geqslant d^->0$. In a general
case this condition can hardly be removed, because the behavior of
corresponding $p_j$ might become irregular. Let us take for
example in
$$
p(z)=\exp \left\lbrace \sum_{j=1}^{\infty}d_j\frac{z^j}{j} \right\rbrace
$$
$$
d_j=\begin{cases}
1,&\text{if $2|j$}\\
0,&\text{if $2\not|j$}
\end{cases},
$$
we will have then $p(z)=\sum_{j=0}^\infty p_j=\frac{1}{(1-z^2)^{1/2}}$ which gives  $p_{2j+1}=0$ for $j\geqslant 0$.

However, in our case when $d_j$ are defined by (\ref{d_jkcoprime})
the function $p(z)$ has a good analytic continuation beyond the
unit disc and good behavior of its special points which enables us
to establish the analog of Theorem \ref{fundthm1}.

As in Chapter 1 we define $g_{j,x}$ by means of relation
$$
G_x(z)=\frac{p(z)}{p(zx)}=\sum_{j=0}^\infty g_{j,x}z^j.
$$
\begin{lem}
\label{g_nx}
For $0\leqslant x \leqslant e^{-1/n}$ we have
$$
g_{n,x}=\frac{p_n}{p(x)}
+O\left(n^{\gamma' -1}(1-x)^{\gamma'} +n^{\gamma -2}(1-x)^{\gamma-1}+\frac{1}{n} \right),
$$
where $\gamma'=\max_{j\not=0}\bigl( \max\left\lbrace \gamma_j ,0 \right\rbrace \bigr)$.
\end{lem}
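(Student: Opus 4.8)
The plan is to estimate $g_{n,x}-p_n/p(x)$ directly as one Taylor coefficient, using — unlike in Theorem~\ref{fundthm1}, where only analyticity inside the unit disc is available — the explicit product $p(z)=\prod_{j=0}^{k_0-1}(1-z\omega_j)^{-\gamma_j}$, $\omega_j=e^{-2\pi ij/k_0}$, which gives $p$ a single-valued analytic continuation outside the closed unit disc with branch cuts along the $k_0$ rays $\{t\bar\omega_j:t\geqslant 1\}$, $\bar\omega_j=\omega_j^{-1}$, of exponent $\gamma_0=\phi(k)/k$ at $\bar\omega_0=1$ and of exponent $\gamma_j$ (with $|\gamma_j|\leqslant\gamma_0$ and $\gamma_j<\gamma_0$) at $\bar\omega_j$, $j\neq 0$. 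I take $k\geqslant 2$ — for $k=1$ one is in the situation of Chapter~1 and $g_{n,x}=p_n/p(x)$ exactly — and $0<x<1$, the value $x=0$ being trivial; I write $\gamma$ for $\gamma_0$.

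The starting point is the identity, immediate from $p(z)=p(zx)G_x(z)$,
$$
g_{n,x}-\frac{p_n}{p(x)}=\frac1{p(x)}\,[z^n]\,\Xi_x(z),\qquad
\Xi_x(z):=p(x)\,p(z)\Bigl(\frac1{p(zx)}-\frac1{p(x)}\Bigr).
$$
Its point is that $\tfrac1{p(zx)}-\tfrac1{p(x)}$ vanishes to first order at $z=1$, removing one power of the dominant singularity $(1-z)^{-\gamma}$ of $p$: as $z\to 1$ one has $\Xi_x(z)\sim\frac{xp'(x)}{p(x)}\,\Psi(1)\,(1-z)^{1-\gamma}$, where $\Psi(z):=\prod_{j\geqslant 1}(1-z\omega_j)^{-\gamma_j}$ and $\frac{xp'(x)}{p(x)}=\sum_{(j,k)=1}x^j\ll\frac1{1-x}$, while near $\bar\omega_j$ with $j\neq 0$ one has $\Xi_x(z)=(1-z\omega_j)^{-\gamma_j}h(z)$ with $h$ analytic there and $h(\bar\omega_j)=O\!\bigl(p(x)(1-x)^{\gamma_j}\bigr)$ when $\gamma_j>0$ (using $1/p(\bar\omega_j x)\asymp(1-x)^{\gamma_j}$), whereas for $\gamma_j<0$ the local exponent is the harmless $+|\gamma_j|>0$. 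Since $\sum_{m\mid k}\mu(m)=0$ for $k\geqslant 2$, $p$ is bounded at infinity, so $\Xi_x$ is bounded on the circle $|z|=2$ uniformly in $x\in[0,1)$, apart from an integrable singularity in the exceptional case $2x=1$.

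I would then run the keyhole computation from the proof of Lemma~\ref{diffcmn}, now with $k_0$ slits. Deforming $|z|=r<1$ out to $|z|=2$ with indentations along the rays $\{t\bar\omega_j:1\leqslant t\leqslant 2\}$ and shrinking the indentation radii to $0$ (legitimate because every exponent is $<1$ in absolute value) gives, up to sign conventions,
$$
[z^n]\Xi_x(z)=O\!\bigl(2^{-n}\bigr)-\sum_{j=0}^{k_0-1}\frac{\bar\omega_j^{-n}}{2\pi i}\int_1^2[\Xi_x]_j(t)\,t^{-n-1}\,dt,
$$
where $[\Xi_x]_j$ is the jump of $\Xi_x$ across the $j$-th slit. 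For $j=0$ this jump is $\asymp(t-1)^{1-\gamma}\frac1{1-x}$ near $t=1$; integrating against $t^{-n-1}$ and dividing by $p(x)\asymp(1-x)^{-\gamma}$ yields the term $n^{\gamma-2}(1-x)^{\gamma-1}$. For $j\neq 0$ with $\gamma_j>0$ the jump is $\asymp(t-1)^{-\gamma_j}(1-x)^{\gamma_j-\gamma}$ near $t=1$, contributing $O\!\bigl((1-x)^{\gamma_j}n^{\gamma_j-1}\bigr)$ after division by $p(x)$; since $(1-x)^{\gamma_j}n^{\gamma_j-1}=n^{-1}\bigl((1-x)n\bigr)^{\gamma_j}$ and $0<\gamma_j\leqslant\gamma'$, this power is at most its value at one of the endpoints $\gamma_j=\gamma'$ or $\gamma_j=0$, giving the term $n^{\gamma'-1}(1-x)^{\gamma'}$, resp.\ $n^{-1}$; the slits with $\gamma_j\leqslant 0$ have bounded integrands and contribute $O(n^{-1})$. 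Using also $1/p(x)\asymp(1-x)^{\gamma}$ uniformly, which follows from $\sum_{(j,k)=1}x^j/j=\gamma\log\frac1{1-x}+O(1)$, and collecting everything, one obtains the claimed estimate.

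The step I expect to be genuinely delicate is the uniformity in $x$ near $e^{-1/n}$ at the outer ends of the slits. There the branch points $\bar\omega_j/x$ of $1/p(zx)$ lie only $\asymp 1/n$ beyond the unit circle — on the very scale on which the Hankel part of the contour lives — so on the sides of a keyhole the factor $1/p(zx)$ is bounded not by an absolute constant but only by $O(n^{\gamma})=o(n)$, and $t^{-n-1}$ has not yet begun to decay. One must verify that the portions of the slits between $|z|=1$ and $|z|=1/x$ — exactly the regime controlled by the $\int_{1+1/m}^{2}$ estimate in the proof of Lemma~\ref{diffcmn}, only now with the extra second branch point present on each ray — stay within $O(n^{-1})+O\!\bigl(n^{\gamma-2}(1-x)^{\gamma-1}\bigr)$; this uses only that all the exponents are $<1$ in absolute value and that every relevant singularity sits on the single scale $1/n$.
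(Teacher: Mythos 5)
Your overall plan — push $g_{n,x}-p_n/p(x)$ through Cauchy's formula and a keyhole contour with slits along the $k_0$ rays through $\bar\omega_j=e^{2\pi ij/k_0}$ — is the same plan the paper executes. The genuine twist you add is to package everything into the single coefficient $[z^n]\Xi_x(z)$ with $\Xi_x(z)=p(x)p(z)\bigl(1/p(zx)-1/p(x)\bigr)$, so that the cancellation at $z=1$ (the extra factor of $(1-z)$) is visible from the start. The paper instead computes $g_{n,x}$ and $p_n$ separately by singularity analysis (getting $g_{n,x}=\tfrac{A_k}{p(x)}\binom{n+\gamma_0-1}{n}+\mathrm{error}$ and $p_n=A_k\binom{n+\gamma_0-1}{n}+O(n^{\gamma'-1})$) and then subtracts, letting the binomial main terms cancel. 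Your formulation is cleaner conceptually but the actual contour estimates are essentially identical.

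However, the step you explicitly defer — uniformity in $x$ up to $x=e^{-1/n}$, where the auxiliary branch points $\bar\omega_j/x$ of $1/p(zx)$ sit at distance $\asymp 1/n$ from the circle, i.e.\ exactly on the Hankel scale — is not a routine verification; it is the heart of the lemma, and it is the one place where the proof would fail without a specific device. The paper's device is the identity $1-xz=(1-x)+x(1-z)$. On the $j=0$ slit this produces
$$
\Bigl(\tfrac{1-xz}{1-z}\Bigr)^{\gamma_0}
=\tfrac{(1-x)^{\gamma_0}}{(1-z)^{\gamma_0}}\Bigl(1+\tfrac{x(1-z)}{1-x}\Bigr)^{\gamma_0},
$$
and the bound $\bigl|(1+w)^{\gamma_0}-1\bigr|\ll |w|$, valid uniformly for $w$ in a sector, turns the correction into $\ll |1-z|^{1-\gamma_0}/(1-x)$, which the Hankel integral converts to $n^{\gamma_0-2}(1-x)^{\gamma_0-1}$ after dividing by $p(x)$. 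On the $j\neq 0$ slits the paper bounds $|p(z)/p(zx)|$ by $\bigl|(1-xy)/(1-y)\bigr|^{\gamma_j}$ directly and uses $|1-xy|\ll (1-x)+|1-y|$ to split the integral into the pieces $n^{\gamma_j-1}(1-x)^{\gamma_j}$ and $n^{-1}$. Your proposal correctly names the difficulty and correctly predicts the answer, but the assertion that "this uses only that all the exponents are $<1$ in absolute value and that every relevant singularity sits on the single scale $1/n$" is precisely where a concrete argument is still owed; as written, the $j\neq 0$ jump estimate and the $j=0$ local expansion are only taken at leading order in $|z-1|\ll 1-x$, which is a vanishingly small part of the contour when $1-x\asymp 1/n$. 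Importing the paper's decomposition (or an equivalent inequality) closes the gap, and with that addition the proof would be complete.
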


\begin{proof}
Applying Cauchy formula we have
$$
g_{n,x}=\frac{1}{2\pi i}\int _{C_{\phi,R}}\frac{p(z)}{p(zx)}\frac{dz}{z^{n+1}}
=\sum_{j=0}^{k-1}
\frac{1}{2\pi i}\int _{C_{\phi,R,j}}\frac{p(z)}{p(zx)}\frac{dz}{z^{n+1}}
+O\left( \frac{1}{R^n} \right).
$$
The integration contour in this formula is
$C_{\phi,R}=C_R\cup\cup_{j=0}^{k_0-1}C_{\phi,R,j}$. Here for
$\gamma_j>0$, $C_{\phi,R,j}$ consists of the intervals on the
complex plain, connecting the points
  $e^{2\pi i j/k}$ and $Re^{2\pi i j/k}$. For $\gamma_j<0$, $C_{\phi,R,j}$
  consists of the intervals
$\{z=e^{2\pi i j/k}(1+re^{i\phi})\}$ and $\{z=e^{2\pi i j/k}(1+re^{-i\phi})\}$, where $1\leqslant r \leqslant r_0$, and
 $r_0$
 corresponds to the point of intersection with the circle $|z|=R$,  $0<\phi <\frac{\pi}{2}$ -- fixed, sufficiently small angle.

Here $2 \leqslant R \leqslant 6$
is chosen in such a way that $|1-Rx|\geqslant 1/2$.

Let $\gamma_j>0$, then
\begin{equation*}
\begin{split}
\frac{1}{2\pi i}\int _{C_{\phi,R,j}}\frac{p(z)}{p(zx)}\frac{dz}{z^{n+1}}
&\ll
\int_1^R\left|\frac{1-xy}{1-y} \right|^{\gamma_j} \frac{dy}{y^{n+1}}=
\frac{1}{n}\int_0^{Rn}\left|\frac{1-xe^{-u/n}}{1-e^{-u/n}} \right|^{\gamma_j}
e^{-u}\,du
\\
&\ll \frac{1}{n}\int_0^{Rn}\left(\frac{n}{u} \right)^{\gamma_j} |1-x+x(1-e^{-u/n})|^{\gamma_j}
e^{-u}\,du
\\
&\ll n^{\gamma_j -1}(1-x)^{\gamma_j} +\frac{1}{n}
\end{split}
\end{equation*}
and for $\gamma_j<0$
\begin{equation*}
\begin{split}
\frac{1}{2\pi i}\int _{C_{\phi,R,j}}\frac{p(z)}{p(zx)}\frac{dz}{z^{n+1}}&\ll\int _{C_{\phi,R,j}}\left|\frac{1-z}{1-xz} \right|^{(-\gamma_j)} \frac{dz}{z^{n+1}}
\\
&\ll
\int_0^{R}\left|\frac{r}{1-x(1+re^{i \phi})} \right|^{(-\gamma_j)} \frac{dr}{|1+re^{i \phi}|^{n+1}}
\\
&\ll \int_0^{R}\frac{ dr}{(1+r \cos \phi)^{n+1}}\ll \frac{1}{n},
\end{split}
\end{equation*}
because
$$
|1-x(1+re^{i \phi})|=|(1-x)e^{-i\phi /2}-rxe^{i\phi /2}|\geqslant \left| \sin \frac{\phi}{2} \right| \bigl|
(1-x) +rx
\bigr|
$$
and $|1+re^{i \phi}|\geqslant 1+r \cos \phi$.

 Hence
$$
g_{n,x}=\frac{1}{2\pi i}\int _{C_{\epsilon,R,0}}
\frac{p(z)}{p(zx)}
\frac{dz}{z^{n+1}}
   +O\left(\sum_{\gamma_j >0}n^{\gamma_j -1}(1-x)^{\gamma_j}+ \frac{1}{n} \right).
$$
Denoting $V_{x}(z)=\prod_{l=1}^{k-1}
{\left( \frac{1-xze^{-2\pi i l/k}}{1-ze^{-2\pi i l/k}}\right)}^{\gamma_j}$, we obtain
$\frac{p(z)}{p(zx)}=V_{x}(z){\left( \frac{1-xz}{1-z}\right)}^{\gamma_0}$.
From the above estimate we have
\begin{equation*}
\begin{split}
\frac{1}{2\pi i}\int _{C_{\epsilon,R,0}}
\frac{p(z)}{p(zx)}
\frac{dz}{z^{n+1}}&=
\frac{V_{x}(1)}{2\pi i}\int _{C_{\epsilon,R,0}}{\left( \frac{1-xz}{1-z}\right)}^{\gamma_0}\frac{dz}{z^{n+1}}
\\
&\quad+\frac{1}{2\pi i}\int _{C_{\epsilon,R,0}}(V_{x}(z)-V_{x}(1)){\left( \frac{1-xz}{1-z}\right)}^{\gamma_0}\frac{dz}{z^{n+1}}
\\
&=I_1 +I_2.
\end{split}
\end{equation*}
Let us estimate $I_1$.
\begin{equation*}
\begin{split}
I_1&=
\frac{V_{x}(1)(1-x)^{\gamma_0}}{2\pi i}\int _{C_{\epsilon,R,0}} \frac{dz}{(1-z)^{\gamma_0}z^{n+1}}
\\
&\quad+\frac{V_{x}(1)}{2\pi i}\int _{C_{\epsilon,R,0}}
\frac{(1-x)^{\gamma_0}}{(1-z)^{\gamma_0}}
\left( \left(1+\frac{x(1-z)}{1-x} \right)^{\gamma_0} -1 \right) \frac{dz}{z^{n+1}}
\\
&=V_{x}(1)(1-x)^{\gamma_0}{{n+\gamma_0 -1}\choose {n}}+O(n^{-1})
\\
&\quad+O\left(\frac{|V_{x}(1)|}{2\pi i}\int _{C_{\epsilon,R,0}}
\frac{(1-x)^{\gamma_0}}{|1-z|^{\gamma_0}}
 \left|\frac{x(1-z)}{1-x} \right| \frac{dz}{z^{n+1}}\right)
 \\
&=V_{x}(1)(1-x)^{\gamma_0}{{n+\gamma_0 -1}\choose {n}}+O(n^{-1})
+O((1-x)^{\gamma_0 -1}n^{\gamma_0 -2}).
\end{split}
\end{equation*}
Because $V_x'(z)\ll 1$ in the vicinity of the point $z=1$, we have
$$
I_2\ll \int _{C_{\epsilon,R,0}}|1-z|{\left| \frac{1-xz}{1-z}\right|}^{\gamma_0}\frac{dz}{|z|^{n+1}}
\ll \int _{C_{\epsilon,R,0}}|1-z|^{1-\gamma_0}\frac{dz}{|z|^{n+1}}\ll n^{\gamma_0 -2 }.
$$
We have
$$
V_x(1)=\lim_{z\to 1}\frac{p(z)}{p(zx)}\left( \frac{1-z}{1-xz}\right)^{\gamma_0}=
\frac{A_k}{p(x)(1-x)^{\gamma_0}},
$$
where
$A_k=\lim_{z\to 1}(1-z)^{\gamma_0}p(z)$.

Applying the earlier obtained estimates we have
$$
g_{n,x}=\frac{A_k}{p(x)}{{n+\gamma_0 -1}\choose {n}}+O\left(\frac{1}{n}
+(1-x)^{\gamma_0 -1}n^{\gamma_0 -2}+
n^{\gamma' -1}(1-x)^{\gamma'}\right).
$$
Putting here $x=0$ and noting that $g_{n,0}=p_n$ we have

\begin{equation}
\label{p_nassympt}
p_n=A_k{{n+\gamma_0 -1}\choose {n}}+O(n^{\gamma' -1}).
\end{equation}

Inserting this estimate into the previous estimate we obtain the proof of the lemma.
\end{proof}
The following result has been proved in the work \cite{pavlov}.
\begin{thmabc}[\cite{pavlov}]
\label{pavlovpn}
For $n\geqslant 1$ we have
$$
c_n=\frac{|S_n^{(k)}|}{n!}=\frac{n^{\gamma_0 -1}}{\Gamma(\gamma_0)}A_kH_k(1;1)
\left(1+O(n^{-\beta}) \right)=
p_nH_k(1;1)
\left(1+O(n^{-\beta}) \right),
$$
where
$A_k=\lim_{x\to 1}p(x)(1-x)^{\gamma_0}$, $\beta =\gamma_0 -\gamma'$ and $\gamma'=\max_{j\not=0}\bigl( \max\left\lbrace \gamma_j ,0 \right\rbrace \bigr)$.
\end{thmabc}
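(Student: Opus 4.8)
The plan is to read off $c_n$ as the $n$-th Taylor coefficient of $F(z)=p(z)H_k(1;z)$ and then estimate the resulting convolution, feeding in the asymptotics of $p_n$ that already dropped out of the proof of Lemma~\ref{g_nx} together with the coefficient bound of Lemma~\ref{hcoef}. Throughout I would assume $k\geqslant 2$, so that $\gamma_0=\phi(k)/k<1$; the case $k=1$ is trivial, since then $S_n^{(k)}=S_n$, $p(z)=(1-z)^{-1}$, $H_k(1;z)\equiv 1$, and $c_n=p_n=1$.

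First I would write $h_m=[H_k(1;z)]_{(m)}$, so that $c_n=\sum_{m=0}^{n}p_{n-m}h_m$. Lemma~\ref{hcoef} gives $h_0=1$ and $h_m\ll m^{-2}$ for $m\geqslant 1$; in particular $\sum_{m\geqslant 0}h_m$ converges, by Abel's theorem to the (positive, finite) constant $H_k(1;1)$, and $\sum_{m>M}|h_m|\ll M^{-1}$. For the factor $p_n$ I would invoke estimate (\ref{p_nassympt}), obtained inside the proof of Lemma~\ref{g_nx}, namely $p_n=A_k\binom{n+\gamma_0-1}{n}+O(n^{\gamma'-1})$. Combined with the classical expansion $\binom{n+\gamma_0-1}{n}=\frac{n^{\gamma_0-1}}{\Gamma(\gamma_0)}(1+O(1/n))$ and with the inequalities $0\leqslant\gamma'<\gamma_0<1$ (so that the $O(n^{\gamma_0-2})$ term is absorbed by $O(n^{\gamma'-1})$), this yields
$$p_n=\frac{A_k\,n^{\gamma_0-1}}{\Gamma(\gamma_0)}\bigl(1+O(n^{-\beta})\bigr),\qquad\beta=\gamma_0-\gamma'>0,$$
and, as by-products of the very same estimate, the crude bounds $p_j\ll j^{\gamma_0-1}$ for $j\geqslant 1$ and $\sum_{j\leqslant N}p_j\ll N^{\gamma_0}$.

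The core of the argument is then to split the convolution at $m=n/2$. On the range $m\leqslant n/2$ I would factor out $p_{n-m}=\frac{A_k n^{\gamma_0-1}}{\Gamma(\gamma_0)}(1-m/n)^{\gamma_0-1}(1+O(n^{-\beta}))=\frac{A_k n^{\gamma_0-1}}{\Gamma(\gamma_0)}(1+O(m/n))(1+O(n^{-\beta}))$, sum against $h_m$, and use $\sum_{m\leqslant n/2}h_m=H_k(1;1)+O(1/n)$ together with $\sum_{m\leqslant n/2}|h_m|\,m/n\ll n^{-1}\log n$; since $\beta<1$ forces $n^{-1}\log n=o(n^{-\beta})$, this partial sum equals $\frac{A_k n^{\gamma_0-1}}{\Gamma(\gamma_0)}H_k(1;1)(1+O(n^{-\beta}))$. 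On the range $n/2<m\leqslant n$ everything is bounded trivially: there $|h_m|\ll n^{-2}$, so $\bigl|\sum_{n/2<m\leqslant n}p_{n-m}h_m\bigr|\ll n^{-2}\sum_{j\leqslant n/2}p_j\ll n^{\gamma_0-2}=O(n^{\gamma_0-1}\cdot n^{-1})$, which again is $o(n^{\gamma_0-1-\beta})$ because $\beta<1$. Adding the two pieces gives $c_n=\frac{A_k n^{\gamma_0-1}}{\Gamma(\gamma_0)}H_k(1;1)(1+O(n^{-\beta}))$, and feeding the displayed asymptotic for $p_n$ back into this produces the equivalent form $c_n=p_nH_k(1;1)(1+O(n^{-\beta}))$, which is exactly the assertion.

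I expect the only genuinely delicate step to be the error bookkeeping: one must check that every stray factor of $n^{-1}$ or $n^{-1}\log n$ — produced by the expansion $(1-m/n)^{\gamma_0-1}=1+O(m/n)$, by the tail $\sum_{m>n/2}h_m$, and by the short block $n/2<m\leqslant n$ — really is $o(n^{-\beta})$. This rests entirely on the two strict inequalities $\gamma'<\gamma_0$ (the Mineev--Pavlov fact quoted just above this theorem) and $\gamma_0<1$ for $k\geqslant 2$, which together give $0<\beta<1$. By contrast, the passage from (\ref{p_nassympt}) to $p_j\ll j^{\gamma_0-1}$ and $\sum_{j\leqslant N}p_j\ll N^{\gamma_0}$ is immediate (again because $\gamma'<\gamma_0$ makes the $O(j^{\gamma'-1})$ remainder lower order), so no analytic work on $p(z)$ beyond what the proof of Lemma~\ref{g_nx} already supplies is needed.
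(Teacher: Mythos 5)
Your proposal is correct and mirrors the paper's own argument: both split the convolution $c_n=\sum_m p_{n-m}h_m$ at $m=n/2$, both invoke estimate (\ref{p_nassympt}) to approximate $p_{n-m}$ on the large block and Lemma~\ref{hcoef} to bound $h_m\ll m^{-2}$ on the short block, and both rely on $0<\beta<1$ (from $0\leqslant\gamma'<\gamma_0<1$) to absorb the stray $n^{-1}\log n$ and $n^{\gamma_0-2}$ errors into $O(n^{-\beta})$. The only cosmetic difference is that you factor out $n^{\gamma_0-1}$ and expand $(1-m/n)^{\gamma_0-1}=1+O(m/n)$ directly, whereas the paper works with the binomial coefficient $\binom{n-j+\gamma_0-1}{n-j}$ and replaces it by $\binom{n+\gamma_0-1}{n}$ before inserting the Stirling-type asymptotic.
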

In formulation of this theorem in \cite{pavlov} the constant $\beta$ has not been written explicitly as $\beta =\gamma_0 -\gamma'$, although this  formula could be easily obtained from the proof of the theorem there. Therefore, and also in order to make our exposition self-contained we
present the proof of this theorem here.
\begin{proof}[Proof of theorem \ref{pavlovpn} ]
 As $\sum_{n=0}^\infty\frac{|S_n^{(k)}|}{n!}=p(z)H_k(f,z)$ therefore applying Lemma \ref{hcoef} and estimate (\ref{p_nassympt}) we have
\begin{equation*}
\begin{split}
\frac{|S_n^{(k)}|}{n!}&=\sum_{j=0}^np_{n-j}[H_k(f,z)]_{(j)}=
\sum_{j\leqslant n/2}A_k{{n-j+\gamma_0 -1}\choose {n-j}}[H_k(f,z)]_{(j)}
\\
&\quad+
O(n^{\gamma' -1}) +O\left(\frac{1}{n^2}\sum_{j\leqslant n/2}p_j\right)
\\
&=A_k\binom{n+\gamma_0 -1}{n}\sum_{j\leqslant n/2}[H_k(f,z)]_{(j)}+O\left( n^{\gamma_0-2}\log n \right)+O(n^{\gamma' -1})
\\
&\quad+O\left(\frac{p(e^{-1/n})}{n^2}\right)=
\frac{A_kn^{\gamma_0-1}}{\Gamma(\gamma_0)}H_k(1,1)+O\left( n^{\gamma_0-2}\log n \right)+O(n^{\gamma' -1}).
\end{split}
\end{equation*}
The theorem is proved.
\end{proof}

\begin{thm}
\label{fundthm}
Let  $f(z)=\sum_{n=0}^{\infty}a_nz^n$
for $|z|<1$. Then for $n\geqslant 1$ we have
\begin{multline*}
\left|{\frac{1}{p_n}}\sum_{k=0}^na_kp_{n-k}-f(e^{-1/n})-{\frac{S(f;n)}{np_n}}\right|
\\
\leqslant C\left( {\frac{1}{n^\beta}}\sum_{j=1}^n {\frac{|S(f;j)|}{p(e^{-1/j})}}j^{\beta -1}+ {\frac{1}{p(e^{-1/n
})}}\sum_{j>n}{\frac{|S(f;j)|}{j}}e^{-j/n}\right),
\end{multline*}
where $S(f;m)=\sum_{k=1}^ma_kkp_{m-k}$, \ $\beta =\gamma_0 -\gamma'$,
and $C=C(k)$ -- constant which depends on  $k$ only, and $\gamma'=\max_{j\not=0}\bigl( \max\left\lbrace \gamma_j ,0 \right\rbrace \bigr)$ . 
\end{thm}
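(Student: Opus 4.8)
The plan is to repeat, essentially line by line, the proof of Theorem~\ref{fundthm1}, replacing each appeal to the lower bound $d_j\geqslant d^->0$ by the analytic information about $p(z)$ recorded in estimate~(\ref{p_nassympt}) and Lemma~\ref{g_nx}. First I would observe that, exactly as in Chapter~1, $\sum_{m=1}^{\infty}S(f;m)z^m=zf'(z)p(z)$, so that $ma_m=\sum_{k=1}^{m}S(f;k)q_{m-k}$, where $q(z)=1/p(z)=\sum_{m\geqslant 0}q_mz^m$. Substituting this into $\sum_{k=0}^{n}a_kp_{n-k}-p_nf(e^{-1/n})$ and rearranging the resulting double sum precisely as in the proof of Theorem~\ref{fundthm1} leads to
\begin{multline*}
\sum_{k=0}^{n}a_kp_{n-k}-p_nf(e^{-1/n})-\frac{S(f;n)}{n}
=\sum_{j=1}^{n-1}S(f;j)\left(f_{n-j,j}-p_n\int_0^{e^{-1/n}}\frac{x^{j-1}}{p(x)}\,dx\right)\\
+p_n\sum_{j\geqslant n}S(f;j)\int_0^{e^{-1/n}}\frac{x^{j-1}}{p(x)}\,dx ,
\end{multline*}
where, writing $G_x(z)=p(z)/p(zx)=\sum_m g_{m,x}z^m$ as in Lemma~\ref{g_nx}, one has $f_{m,j}=\sum_{s=0}^{m}\frac{p_{m-s}q_s}{s+j}=\int_0^1 g_{m,x}x^{j-1}\,dx\geqslant 0$.

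The second step is to collect the auxiliary estimates needed to bound the right-hand side, which are the analogs of Lemmas~\ref{pvbound}--\ref{boundofg}. Since the parameters $d_j$ of~(\ref{d_jkcoprime}) satisfy $0\leqslant d_j\leqslant 1$, Lemma~\ref{intbounds} applies unchanged (with $A=1$ when $u=p$ and $A=2$ when $u=p^2$), giving $\int_0^1 x^{j-1}/p(x)\,dx\ll 1/\bigl(jp(e^{-1/j})\bigr)$ for $j\geqslant 1$ and $\int_0^{e^{-1/n}}x^{j-1}/p(x)\,dx\ll e^{-j/n}/\bigl(jp(e^{-1/n})\bigr)$ for $j\geqslant n$. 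Differentiating $F_j(z):=p(z)\int_0^1 x^{j-1}/p(zx)\,dx$ yields $zF_j'(z)=F_j(z)\sum_k d_kz^k+1-jF_j(z)$, whence $f_{0,j}=1/j$ and, via the recurrence $f_{m,j}=\frac1{m+j}\sum_{k=1}^m d_kf_{m-k,j}$ together with Lemma~\ref{intbounds}, $f_{m,j}\ll 1/j^2$ for $j\geqslant m\geqslant 1$ (the analog of Lemma~\ref{f_mj}). From~(\ref{p_nassympt}) and the standard asymptotics of binomial coefficients one gets $p_n\asymp n^{\gamma_0-1}\asymp p(e^{-1/n})/n>0$ and the analog of Lemma~\ref{diffpnm}, namely $|p_{n+s}-p_n|\ll p_n\bigl((s/n)^{\beta}+n^{-\beta}\bigr)$ for $0\leqslant s\leqslant n/2$, where $\beta=\gamma_0-\gamma'$. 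Finally, integrating the pointwise bound of Lemma~\ref{g_nx} against $x^{j-1}\,dx$ over $[0,e^{-1/n}]$ and inserting the previous estimate for $|p_m-p_n|$ produces the analog of Lemma~\ref{boundofg},
$$
\int_0^{e^{-1/n}}\Bigl|g_{m,x}-\frac{p_m}{p(x)}\Bigr|x^{j-1}\,dx\ll\frac{p(e^{-1/n})}{n}\,\frac{j^{\beta-1}}{n^{\beta}}\,\frac{1}{p(e^{-1/j})},\qquad 1\leqslant j\leqslant m\leqslant n .
$$

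With these estimates in hand, the last step is the same endgame as in the proof of Theorem~\ref{fundthm1}: split the $j$-sum in the identity above into the ranges $j\leqslant n/2$, $n/2<j\leqslant n-1$, $j\geqslant n$; write $\int_0^1=\int_0^{e^{-1/n}}+\int_{e^{-1/n}}^1$ and dispose of the tail via $\int_{e^{-1/n}}^1 x^{j-1}g_{n-j,x}\,dx\ll n^{-2}$ for $j\leqslant n/2$; then apply the analog of Lemma~\ref{boundofg} and the analog of Lemma~\ref{diffpnm} (with Lemma~\ref{intbounds}) to the first range, the analog of Lemma~\ref{f_mj} to the middle range, and Lemma~\ref{intbounds} to the last range. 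Collecting the contributions and using $p_n\asymp p(e^{-1/n})/n$ reproduces exactly the asserted bound $\ll\frac1{n^{\beta}}\sum_{j\leqslant n}\frac{|S(f;j)|}{p(e^{-1/j})}j^{\beta-1}+\frac1{p(e^{-1/n})}\sum_{j>n}\frac{|S(f;j)|}{j}e^{-j/n}$. The main obstacle is precisely what the paragraph preceding the theorem warns about: without a positive lower bound on $d_j$ the sequence $p_n$ need not behave regularly, so the whole scheme rests on trading the real-variable Lemmas~\ref{pvbound}--\ref{boundofg} for their analytic-continuation counterparts; the delicate point is carrying the sharp exponent $\beta=\gamma_0-\gamma'$ — the gap between the singularity of $p(z)$ at $z=1$ and those at the other $k_0$-th roots of unity — through the integration step that yields the analog of Lemma~\ref{boundofg}, since it is $\beta$ and not $\min\{\gamma_0,1\}$ that governs the error.
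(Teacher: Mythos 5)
Your proposal reproduces the paper's own argument: the paper proves Theorem~\ref{fundthm} precisely by repeating the proof of Theorem~\ref{fundthm1} step by step, keeping Lemma~\ref{f_mj} and Lemma~\ref{intbounds} unchanged and replacing the real-variable inputs that relied on $d_j\geqslant d^->0$ (Lemmas~\ref{pvbound}, \ref{diffpnm} and \ref{boundofg}) with the analytic inputs Lemma~\ref{g_nx} and Theorem~\ref{p_nassympt}, exactly as you describe. The only difference is that you write out explicitly the analogs of Lemma~\ref{diffpnm} and Lemma~\ref{boundofg} that the paper's terse proof (``absolutely analogous'') leaves implicit; your stated analog of Lemma~\ref{boundofg} is, strictly as written, only valid when $m\asymp n$, but since it is applied with $m=n-j$, $j\leqslant n/2$, this causes no harm.
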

\begin{proof}
The proof of this lemma is absolutely analogous to that of the theorem
  \ref{fundthm1}. Therefore we will repeat only the main steps.

From the proof of theorem \ref{fundthm1} we have
\begin{multline*}
R_n=\sum_{k=0}^na_kp_{n-k}-p_n
f(e^{-1/n})
-{\frac{S(f;n)}{n}}
\\
=\sum_{j=1}^{n-1}S(f;j)\left( f_{n-j,j}
-p_n \int_{0}^{e^{-1/n}} {\frac{x^{j-1}}{p(x)}}\,dx \right)
\\
+p_n
\sum_{j=n}^{\infty}S(f;j)\int_{0}^{e^{-1/n}} {\frac{x^{j-1}}{p(x)}}\,dx,
\end{multline*}
where
$$
f_{m,j}=\int_0^1g_{m,x}x^{j-1}\,dx\geqslant 0.
$$

Therefore
\begin{equation*}
\begin{split}
|R_n|&\leqslant  \sum_{1\leqslant j \leqslant n/2}|S(f;j)|\left|
\int_{0}^{1}{x^{j-1}}g_{n-j,x}
\,dx -p_{n-j} \int_{0}^{e^{-1/n}} {\frac{x^{j-1}}{p(x)}}\,dx
\right|
\\
&\quad+\sum_{1\leqslant j \leqslant n/2}|S(f;j)| |p_n -p_{n-j}|
\int_{0}^{e^{-1/n}} {\frac{x^{j-1}}{p(x)}}\,dx
\\
&\quad+
\sum_{n/2\leqslant j \leqslant n-1}|S(f;j)| f_{n-j,j}\,
dx
+p_n \sum_{j\geqslant n}|S(f;j)|\int_{0}^{e^{-1/n}}
{\frac{x^{j-1}}{p(x)}}\,dx.
\end{split}
\end{equation*}
In the proof of the Lemma \ref{f_mj} we did not use the condition
$d_j\geqslant d^->0$ therefore its estimate $f_{m,j}\ll
\frac{1}{j^2}$ for $j\geqslant m \geqslant1$ remains valid in the
present case also.

The proof of Theorem \ref{fundthm1}  gives
$$
\int_{e^{-1/n}}^{1}{x^{j-1}}g_{n-j,x}\,dx\ll{\frac{1}{n^2}},
$$
for $j\leqslant n/2$.

Applying these estimates together with the Lemma
 \ref{g_nx} and Theorem \ref{p_nassympt} we obtain the proof of the theorem.
 \end{proof}

Let us denote
$$
L_n(z)=\sum_{\scriptstyle 1\leqslant j\leqslant n \atop \scriptstyle (j,k)=1 }{\frac{\hat f(j)-1}{j}}z^k \quad \hbox{\it and}
\quad \quad \rho (p)=\left( \sum_{\scriptstyle 1\leqslant j\leqslant n \atop \scriptstyle (j,k)=1 }{\frac{|\hat f(j)-1|^p}{j}}
\right)^{1/p};
$$
we will also assume that
$$
\rho (\infty)=\lim_{p\to \infty}\max_{\scriptstyle 1\leqslant j \leqslant
n \atop \scriptstyle (j,k)=1}|\hat f(j) -1|.
$$

%

\begin{lem}
\label{meanmp}
For any fixed $\infty
\geqslant p>\frac{1}{\beta}$ we have
$$
{\frac{M_n}{p_n}}=\exp \biggl\{ \sum_{\scriptstyle 1\leqslant j\leqslant n \atop \scriptstyle (j,k)=1 }{\frac{\hat f(j)-1}{j}} \biggr\} +O(\mu_n(p)).
$$
\end{lem}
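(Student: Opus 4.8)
The plan is to imitate the proof of Theorem~\ref{meanf}, using Theorem~\ref{fundthm} in place of Theorem~\ref{fundthm1} and the asymptotics $p_n\asymp n^{\gamma_0-1}$, $p(e^{-1/n})\asymp n^{\gamma_0}$ furnished by estimate~(\ref{p_nassympt}) (equivalently Theorem~\ref{pavlovpn}) in place of (\ref{p_n2}) and (\ref{plwbound}). First I would set $\hat f(j)=1$ for $j>n$, which leaves $M_n$ unchanged, and write $F(z)=p(z)m(z)$ with $m(z)=\exp\{L_n(z)\}=\sum_j m_j z^j$, so that $M_n=\sum_{l=0}^n p_l m_{n-l}$ and Theorem~\ref{fundthm} applies with $f=m$, $a_l=m_l$. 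From $z m'(z) p(z)=F(z)\,zL_n'(z)$ one reads off $S(m;j)=\sum_{l\leqslant j,\,(l,k)=1}(\hat f(l)-1)M_{j-l}$, and since $|\hat f|\leqslant 1$ forces $|M_j|\leqslant p_j$ by Lemma~\ref{coef}(2), it follows that $|S(m;j)|\leqslant\sum_{l\leqslant n,\,(l,k)=1}|\hat f(l)-1|\,p_{j-l}$. Thus Theorem~\ref{fundthm} reduces the lemma to bounding each of $|m(e^{-1/n})-\exp\{L_n(1)\}|$, $|S(m;n)|/(np_n)$, and the two remainder sums by $O(\mu_n(p))$.

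Since $\Re L_n\leqslant 0$, the first quantity is at most $\sum_{l\leqslant n}\frac{|\hat f(l)-1|}{l}|1-e^{-l/n}|\leqslant\frac1n\sum_{l\leqslant n}|\hat f(l)-1|$, which is $\leqslant\mu_n(p)$ by H\"older with exponents $p$ and $q=p/(p-1)$ applied to $\sum|\hat f(l)-1|\cdot 1$. For $|S(m;n)|/(np_n)$ I would split the sum over $l$ at $n/2$: for $l\leqslant n/2$ one has $p_{n-l}\ll n^{\gamma_0-1}\ll p_n$, so that part is $\ll\frac1n\sum_{l\leqslant n/2}|\hat f(l)-1|\leqslant\mu_n(p)$; for $n/2<l\leqslant n$ the substitution $m=n-l$, the bound $p_m\ll m^{\gamma_0-1}$, and H\"older give a contribution $\ll\frac{1}{np_n}\,n^{1/p}\mu_n(p)\,n^{\gamma_0-1/p}\ll\mu_n(p)$, where $\sum_{m<n/2}m^{(\gamma_0-1)q}\ll n^{(\gamma_0-1)q+1}$ because $p>1/\beta\geqslant 1/\gamma_0$. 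For the first remainder sum of Theorem~\ref{fundthm} I would interchange the order of summation and show that $\sum_{l\leqslant j\leqslant n}\frac{j^{\beta-1}p_{j-l}}{p(e^{-1/j})}\ll l^{\beta-1}$ for $\beta<1$ (splitting $j\geqslant 2l$, where the summand is $\asymp j^{\beta-2}$, from $l\leqslant j<2l$, where $\sum_{m<l}p_m\ll p(e^{-1/l})\asymp l^{\gamma_0}$; the case $k=1$, $\beta=1$ gives $\ll 1+\log(n/l)$ instead and is handled the same way). Then $\frac{1}{n^\beta}\sum_{l\leqslant n}|\hat f(l)-1|\,l^{\beta-1}\ll\mu_n(p)$ by H\"older, using $\sum_{l\leqslant n}l^{(\beta-1)q}\ll n^{(\beta-1)q+1}$, which is valid precisely when $q(1-\beta)<1$, i.e.\ when $p>1/\beta$. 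For the tail sum I would once more interchange sums and use $\sum_{j>n}\frac{e^{-j/n}}{j}p_{j-l}\leqslant\frac1n e^{-l/n}p(e^{-1/n})$, whence $\frac{1}{p(e^{-1/n})}\sum_{j>n}\frac{e^{-j/n}}{j}|S(m;j)|\ll\frac1n\sum_{l\leqslant n}|\hat f(l)-1|\leqslant\mu_n(p)$.

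The proof is essentially accounting once Theorem~\ref{fundthm} and the asymptotics for $p_n$ are in hand; the delicate points are the two interchanges of summation and the range of $l$ near $n$, where one cannot afford the trivial bound $|\hat f(l)-1|\leqslant 2$ and must retain the $|\hat f(l)-1|^p$ weighting throughout. The hypothesis $p>1/\beta$ is used exactly to make the H\"older sums $\sum_{l\leqslant n}l^{(\beta-1)q}$ and $\sum_{m<n/2}m^{(\gamma_0-1)q}$ grow like a positive power of $n$; since $\gamma_0\geqslant\beta$, this single condition also yields $p>1/\gamma_0$, which is what the second of these requires. No lower bound of the form $d_j\geqslant d^->0$ is needed here, because the possible irregularity of the coefficients $p_j$ that such a bound would rule out has already been absorbed into Theorem~\ref{fundthm} via the analytic continuation of $p(z)$ beyond the unit disc.
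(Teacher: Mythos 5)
Your proof is correct, but it follows a genuinely different route from the paper's. You keep the raw bound $|S(h;j)|\leqslant\sum_{l\leqslant j,\,(l,k)=1}|\hat f(l)-1|\,p_{j-l}$, interchange the order of summation inside each remainder term of Theorem~\ref{fundthm}, and invoke H\"older separately for each resulting sum in $l$ --- exactly the strategy of the proof of Theorem~\ref{meanf} in Chapter~1, transplanted with $\theta$ replaced by $\beta$ and the asymptotics $p_m\asymp m^{\gamma_0-1}$, $p(e^{-1/m})\asymp m^{\gamma_0}$ supplied by (\ref{p_nassympt}). The paper instead applies the Cauchy--H\"older inequality \emph{once}, directly to $S(h;m)=\sum_{l\leqslant m,\,(l,k)=1}(\hat f(l)-1)M_{m-l}$, using $|M_{m-l}|\leqslant p_{m-l}\ll(m-l)^{\gamma_0-1}$, to obtain the single clean estimate $|S(h;m)|\ll\mu_n(p)\,(n/m)^{1/p}\,m^{\gamma_0}$; substituting this into Theorem~\ref{fundthm} then reduces each of the three terms to a one-line power-sum computation, with $p>1/\beta$ entering only to force $\beta-1/p>0$ (the condition $p>1/\gamma_0$ needed for $\sum p_j^q$ being automatic from $\beta\leqslant\gamma_0$). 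Both arguments are valid; the paper's is shorter because the H\"older step is done up front, producing a bound for $|S(h;m)|$ that is uniform in $m$ and plugs into all three terms without any interchange of summation, whereas your version requires the more careful bookkeeping of Theorem~\ref{meanf} (three separate H\"older applications and two sum interchanges) but emphasizes that the lemma is literally the $S_n^{(k)}$-analogue of that theorem. You are also right that no lower bound $d_j\geqslant d^->0$ is needed here --- the role of that hypothesis in Chapter~1 is entirely absorbed into the proof of Theorem~\ref{fundthm}, via the analytic continuation of $p(z)$ and Lemma~\ref{g_nx}, which is the point the paper makes before stating it.
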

\begin{proof}
 We will suppose that $\hat f(j)=1$ for $j>n$.
We have
$$
F(z)=\sum_{m=0}^\infty M_mz^m=\exp \biggl\lbrace \sum_{(j,k)=1}\frac{\hat f(j)}{j}z^j \biggr\rbrace  =p(z)\exp\{ L_n(z) \}=p(z)h(z),
$$
where $h(z)=\sum_{j=0}^\infty h_jz^j=\exp\{ L_n(z) \}$. Therefore
$$
\frac{M_n}{p_n}=\frac{1}{p_n}\sum_{j=0}^nh_jp_{n-j}.
$$
Applying theorem  \ref{fundthm} with $a_j=h_j$, we have
\begin{equation*}
\begin{split}
S(m;h)&=[zp(z)h'(z)]_{(m)}=\left[zp(z)h(z)L_n'(z)\right]_{(m)}=\left[zF(z)L_n'(z)\right]_{(m)}
\\
&=\sum_{\scriptstyle 1\leqslant j \leqslant m \atop \scriptstyle (j,k)=1}{(\hat f(j)-1)}M_{m-j}.
\end{split}
\end{equation*}
Taking into account that   $|M_m|\leqslant
p_m=\frac{A_k}{\Gamma(\gamma_0)}m^{\gamma_0-1}(1+o(1))$,   and
applying  Cauchy inequality with parameters
$\frac{1}{p}+\frac{1}{q}=1$, we obtain
\begin{equation*}
\begin{split}
|S(m;h)|&\leqslant\biggl( \sum_{\scriptstyle 1\leqslant j \leqslant m \atop \scriptstyle (j,k)=1}|\hat f(j)-1|^p\biggr)^{1/p}\biggl( \sum_{j=0}^mp_j^q \biggr)^{1/q}
\ll  \mu_n(p)n^{1/p}
\biggl(1+ \sum_{j=1}^mj^{(\gamma_0-1)q} \biggr)^{1/q}
\\
&\ll\mu_n(p)n^{1/p}m^{\frac{(\gamma_0-1)q+1}{q}}\ll
\mu_n(p)\left( \frac{n}{m} \right)^{1/p}m^{\gamma_0}.
\end{split}
\end{equation*}
Inserting this estimate into the inequality of theorem \ref{fundthm}, we have
$$
\left|\frac{M_n}{p_n} -h(e^{-1/n})\right|\ll \mu_n(p).
$$
Since
$$
h(e^{1/n})=h(1)\bigl(1+O(\mu_n(p))\bigr)
$$
and
$$
h(1)=\exp\{ L_n(1)\}=\exp \biggl\{ \sum_{\scriptstyle 1\leqslant j\leqslant n \atop \scriptstyle (j,k)=1 }{\frac{\hat f(j)-1}{j}} \biggr\},$$
hence we obtain the proof of the lemma.
\end{proof}
\begin{proof}[Proof of theorem \ref{mean}]
Because
$$
\frac{|S_n^{(k)}|}{n!}{\bf M_{n}}(f)=\sum_{j=0}^nM_{n-j}[H_k(f;j)]_{(j)},
$$
and $[H_k(f;z)]_{(j)}\leqslant [H_k(z)]_{(j)}=O(j^{-2})$,  $|M_m|\leqslant p_m=O(m^{\gamma_0 -1})$, then
$$
\frac{|S_n^{(k)}|}{n!}{\bf M_{n}}(f)=\sum_{j\leqslant
n/2}M_{n-j}[H_k(f;z)]_{(j)} +O\left( n^{\gamma_0-2} \right).
$$
Applying here  the asymptotic
$M_{n-j}=p_{n-j}\bigl(\exp\{L_{n-j}(1)\}+\mu_{n-j}(p)\bigr)$, we
obtain the proof of the theorem.
\end{proof}
The proof of the next theorem is obtained by applying theorem \ref{fundthm} to function $h_n(z)=\exp\left\lbrace L_N(z)\right\rbrace
-\exp\left\lbrace L_n(1)\right\rbrace L_N(z)$
and using the estimates of Lemma \ref{hcoef}. The calculations are absolutely analogous to those of the proof of Theorem \ref{meanM1}, therefore we will not repeat them here
\begin{thm}
\label{meanmn}
For any fixed $\infty
\geqslant p>\frac{1}{\beta}$ there exists such positive $\delta=\delta (k,p)$ that if $\rho \leqslant
\delta$ then
$$
{\bf M_n}f=\frac{H(f;1)}{H(1,1)}\exp \{  L_N(1) \} \left( 1+
\sum_{\scriptstyle 1\leqslant j\leqslant N\atop \scriptstyle (j,k)=1}{\frac{\hat f(j)-1}{j}} \left( {\frac{p_{N-j}}{p_{N}}}-1 \right) +O\left(\rho^2 +\frac{1}{n^{\epsilon}} \right)\right),
$$
where $\epsilon>0$ -- fixed sufficiently small number.
\end{thm}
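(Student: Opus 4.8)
The plan is to follow the proof of Theorem~\ref{meanM1} almost verbatim, using Theorem~\ref{fundthm} in place of Theorem~\ref{fundthm1} and the sharp asymptotic~(\ref{p_nassympt}) in place of the two-sided bounds of Lemma~\ref{pvbound}, and then to pass from $M_n$ to ${\bf M_n^{(k)}}f$ through the factorisation $\sum_{m\geqslant0}\frac{|S_m^{(k)}|}{m!}{\bf M_m^{(k)}}f\,z^m=p(z)\exp\{L_N(z)\}\,H_k(f;z)$ together with Lemma~\ref{hcoef} and Theorem~\ref{pavlovpn}. Throughout one assumes $\hat f(j)=1$ for $j>N$ and writes $F(z)=\sum_mM_mz^m=p(z)\exp\{L_N(z)\}$, so that $\frac{|S_n^{(k)}|}{n!}{\bf M_n^{(k)}}f=\sum_{j=0}^nM_{n-j}[H_k(f;z)]_{(j)}$. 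Since $|[H_k(f;z)]_{(j)}|\ll j^{-2}$ by Lemma~\ref{hcoef} and $|M_m|\leqslant p_m\ll m^{\gamma_0-1}$ by~(\ref{p_nassympt}), the terms with $j>n/2$ contribute only $O(n^{\gamma_0-2})$, and everything reduces to a sufficiently precise asymptotic for $M_m/p_m$.

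To produce that asymptotic I would run the standard bootstrap. Define $R_m$ by
\[
\frac{M_m}{p_m}=\exp\{L_m(1)\}\left(1+\sum_{\substack{1\leqslant j\leqslant m\\(j,k)=1}}\frac{\hat f(j)-1}{j}\left(\frac{p_{m-j}}{p_m}-1\right)+R_m\right),
\]
put $R=\sup_m|R_m|$ (finite because $L_m=L_N$ for $m\geqslant N$ and $|M_m|\leqslant p_m$), and apply Theorem~\ref{fundthm} to $h_n(z)=\exp\{L_N(z)\}-\exp\{L_n(1)\}L_N(z)$, whose $S$-transform is $S(h_n;m)=\sum_{1\leqslant l\leqslant m,\,(l,k)=1}(\hat f(l)-1)\bigl(M_{m-l}-p_{m-l}\exp\{L_n(1)\}\bigr)$. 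Inserting the tentative formula for $M_{m-l}$ and estimating by Cauchy's inequality with exponents $p,q$ ($1/p+1/q=1$), the Lipschitz bound $|L_a(1)-L_b(1)|\ll\rho\,|\log(a/b)|^{1/q}$ (the analogue of Lemma~\ref{diffLmn}), the two-sided estimate $p_m\asymp m^{\gamma_0-1}$, and $\sum_{1\leqslant j\leqslant m,\,(j,k)=1}\frac{1}{j}\bigl|\frac{p_{m-j}}{p_m}-1\bigr|^q\ll1$ (the analogue of Lemma~\ref{sumpn} for the present $p_m$, which holds because $q(\gamma_0-1)>-1$, a consequence of $p>1/\beta\geqslant1/\gamma_0$), one is led, just as in the proof of Theorem~\ref{meanM1}, to $|S(h_n;m)|\ll|\exp\{L_n(1)\}|\,\rho(\rho+R)\,p(e^{-1/m})\bigl((n/m)^{\epsilon/q}+(m/n)^{\epsilon/q}\bigr)$ for a suitably small fixed $\epsilon$.

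Feeding this into the inequality of Theorem~\ref{fundthm} makes the two error sums collapse to $O\bigl(|\exp\{L_n(1)\}|\,\rho(\rho+R)\bigr)$, provided $\epsilon$ is small enough that $\frac{1}{n^\beta}\sum_{j\leqslant n}j^{\beta-1}(n/j)^{\epsilon/q}$ and its companions remain $O(1)$; it is here that the admissible range $p>1/\beta$ (rather than merely $p>1/\gamma_0$) is used, the binding constraint coming from the interplay of $p_m\asymp m^{\gamma_0-1}$ with the $n^{-\beta}$-level regularity of the $p_m$ encoded in~(\ref{p_nassympt}). Combining this with $M_n=[p(z)\exp\{L_N(z)\}]_{(n)}$ and the Taylor expansions $\exp\{L_N(e^{-1/n})\}=\exp\{L_n(1)\}\bigl(1+(L_N(e^{-1/n})-L_n(e^{-1/n}))+(L_n(e^{-1/n})-L_n(1))+O(\rho^2)\bigr)$, in which $|L_N(e^{-1/n})-L_n(e^{-1/n})|\ll\rho$ and $|L_n(e^{-1/n})-L_n(1)|\ll\rho$, one obtains $|R_n|\leqslant A(k,p)\,\rho(\rho+R)$ for all $n$; taking the supremum gives $R\leqslant A(\rho^2+\rho R)$, hence $R\leqslant2A\rho^2$ as soon as $\delta\leqslant1/(2A)$. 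This yields $\frac{M_n}{p_n}=\exp\{L_N(1)\}\bigl(1+\sum_{1\leqslant j\leqslant N,\,(j,k)=1}\frac{\hat f(j)-1}{j}(\frac{p_{N-j}}{p_N}-1)+O(\rho^2)\bigr)$, the inner sum being itself $O(\rho)$ by Cauchy and the Lemma~\ref{sumpn}-analogue.

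Finally I would assemble the pieces. Write $\sum_{j=0}^nM_{n-j}[H_k(f;z)]_{(j)}=M_n\sum_{j=0}^n[H_k(f;z)]_{(j)}+\sum_{j=1}^n(M_{n-j}-M_n)[H_k(f;z)]_{(j)}$; by Lemma~\ref{hcoef} the first sum equals $M_n\bigl(H_k(f;1)+O(1/n)\bigr)$. For the second, from~(\ref{p_nassympt}) ($p_{n-j}/p_n=1+O(j/n+n^{-\beta})$ for $j\leqslant n/2$), the Lipschitz bound ($\exp\{L_{n-j}(1)-L_n(1)\}=1+O(\rho(j/n)^{1/q})$) and the asymptotic for $M_m/p_m$ just proved, one finds $M_{n-j}/M_n-1=O\bigl(\rho^2+n^{-\beta}+j/n+\rho(j/n)^{a}\bigr)$ for $j\leqslant n/2$ with some fixed $a>0$; the apparently $O(\rho)$-size pieces carry the vanishing factor $(j/n)^{a}$, so the $j^{-2}$-weighted summation turns them into $O(n^{-\epsilon})$, which again rests on $p>1/\gamma_0$. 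Together with the $O(n^{\gamma_0-2})$ contribution of $j>n/2$, this gives $\sum_{j=0}^nM_{n-j}[H_k(f;z)]_{(j)}=M_nH_k(f;1)\bigl(1+O(\rho^2+n^{-\epsilon})\bigr)$. Dividing by $\frac{|S_n^{(k)}|}{n!}=p_nH_k(1;1)(1+O(n^{-\beta}))$ from Theorem~\ref{pavlovpn} and inserting the asymptotic for $M_n/p_n$ produces precisely the claimed formula, with $\epsilon$ any fixed positive number below $\min\{\beta,a,1/q\}$. The hard part is the second paragraph: carrying Manstavi\v{c}ius's chain of Cauchy inequalities faithfully over to the present setting while keeping every implied constant uniform in $n$, and checking that $p>1/\beta$ is exactly what makes the emerging geometric-type series converge.
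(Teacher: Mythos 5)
Your proof follows exactly the route the paper itself indicates — apply Theorem \ref{fundthm} to $h_n(z)=\exp\{L_N(z)\}-\exp\{L_n(1)\}L_N(z)$, run the bootstrap of Theorem \ref{meanM1} with the analogues of Lemmas \ref{diffLmn} and \ref{sumpn} (valid since $p>1/\beta>1/\gamma_0$ gives $q(\gamma_0-1)>-1$), and then combine with Lemma \ref{hcoef} and Theorem \ref{pavlovpn} to pass from $M_n/p_n$ to ${\bf M}_n^{(k)}f$. The paper suppresses all the details with the phrase "absolutely analogous", and your write-up, including the correct observation that $p>1/\beta$ is forced by the error exponent in Theorem \ref{fundthm} rather than by the sum-estimate analogue, and the explicit split $\sum_j M_{n-j}[H_k(f;z)]_{(j)}=M_n\sum_j[H_k(f;z)]_{(j)}+\sum_j(M_{n-j}-M_n)[H_k(f;z)]_{(j)}$, is a faithful execution of the same argument.
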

Let us define $M_m$ by means of relation
$$
M(z)=\exp \biggl\lbrace \sum_{{\scriptstyle j\leqslant n}\atop {\scriptstyle (j,k)=1}}\frac{\hat f(j)}{j}z^j \biggr\rbrace
=\exp \{ U_n(z) \}  =\sum_{m=0}^\infty M_mz^m,
$$
where $|{\hat f(j)}|\leqslant 1$; then $|M_m|\leqslant p_m$ for $m\geqslant 0$.
\begin{lem}
\label{mean_forlarge_t}
For any multiplicative function $f$ such that
$|\hat f(j)|\leqslant 1$, we have
$$
{\bf M_n}f\ll\left(\frac{1}{n^{2\gamma_0+1}}\int_{-\pi}^{\pi}|M(e^{ix})|^2|U_n'(e^{ix})|^2\,dx \right)^{\frac{\gamma_0}{2(\gamma_0 +1)}} +\frac{1}{n^{\gamma_0}}.
$$
\end{lem}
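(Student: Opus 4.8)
\textbf{Proof proposal for Lemma \ref{mean_forlarge_t}.}

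The plan is to imitate the classical large-$t$ bound for mean values of multiplicative functions (as in \cite{manstberry}, \cite{manstadditive}), adapting it to the coefficients $p_n$ of the function $p(z)$ attached to the subset $S_n^{(k)}$. First I would reduce ${\bf M_n}f$ to a coefficient of $M(z)$ up to a negligible error: since $\frac{|S_n^{(k)}|}{n!}{\bf M_n}f=\sum_{j=0}^nM_{n-j}[H_k(f;z)]_{(j)}$ and, by Lemma \ref{hcoef}, $[H_k(f;z)]_{(j)}=O(j^{-2})$, while $|M_m|\leqslant p_m=O(m^{\gamma_0-1})$ by \eqref{p_nassympt}, the tail $j>n/2$ contributes only $O(n^{\gamma_0-2})=O(p_n/n)$. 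Combined with Theorem \ref{pavlovpn}, which gives $\frac{|S_n^{(k)}|}{n!}\asymp p_n\asymp n^{\gamma_0-1}$, it suffices to estimate $|M_n|$ itself (plus the controlled remainder $O(n^{\gamma_0-1}\cdot n^{-1})$, i.e. $O(1/n^{\gamma_0})$ after normalizing).

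Next I would exploit the recurrence coming from $zM'(z)=M(z)\,zU_n'(z)$, namely $mM_m=\sum_{j=1}^m[zU_n'(z)]_{(j)}M_{m-j}$, to bound $|M_n|$ by an $\ell^2$-average. Applying the Cauchy--Schwarz inequality and Parseval's identity to the power series of $M(z)U_n'(z)$ (which represents $zM'(z)/z$, up to harmless bookkeeping) one gets
$$
n^2|M_n|^2\ll \sum_{k\geqslant 0}|[M(z)U_n'(z)]_{(k)}|^2
=\frac{1}{2\pi}\int_{-\pi}^{\pi}|M(e^{ix})U_n'(e^{ix})|^2\,dx,
$$
hence $|M_n|\ll\bigl(n^{-2}\int_{-\pi}^{\pi}|M(e^{ix})|^2|U_n'(e^{ix})|^2\,dx\bigr)^{1/2}$. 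This is the crude $L^2$ bound; the point is that it must be sharpened to match the stated exponent $\frac{\gamma_0}{2(\gamma_0+1)}$. To do this I would interpolate this $L^2$ estimate against the trivial bound $|M_n|\leqslant p_n\ll n^{\gamma_0-1}$: writing $|M_n|\leqslant |M_n|^{1-\vartheta}\cdot|M_n|^{\vartheta}$ and choosing $\vartheta=\frac{\gamma_0}{\gamma_0+1}$, one balances the two and extracts the power $\frac{\gamma_0}{2(\gamma_0+1)}$ on the integral, the remaining factor being absorbed into the $O(1/n^{\gamma_0})$ term after dividing by $p_n\asymp n^{\gamma_0-1}$. (More precisely, since ${\bf M_n}f=M_n/p_n+O(1/n)$ and $p_n\asymp n^{\gamma_0-1}$, the normalizing powers of $n$ combine so that the integral enters exactly with exponent $\frac{\gamma_0}{2(\gamma_0+1)}$ and with the weight $n^{-(2\gamma_0+1)}$ as written.)

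The main obstacle will be getting the bookkeeping of exponents to land exactly on $\frac{\gamma_0}{2(\gamma_0+1)}$ and $n^{-(2\gamma_0+1)}$, which requires using the precise asymptotic $p_n\sim \frac{A_k}{\Gamma(\gamma_0)}n^{\gamma_0-1}$ from \eqref{p_nassympt} / Theorem \ref{pavlovpn} rather than crude bounds, and carrying the interpolation carefully (the trivial bound $|M_n|\leqslant p_n$ must be invoked at the right power so that the contributions from $j>n/2$ in the first reduction step and from the interpolation error are both $O(n^{\gamma_0})$ before normalization). One also has to check that the integral on the right-hand side is genuinely finite, i.e. that $M(e^{ix})U_n'(e^{ix})$ is square-integrable on the circle; this is immediate because $M(z)$ and $U_n'(z)$ are polynomials (we truncated $\hat f(j)=1$ for $j>n$, so $U_n(z)=\sum_{j\leqslant n,(j,k)=1}\hat f(j)z^j/j$ is a polynomial of degree $\leqslant n$), so the Parseval step is legitimate with no convergence issues. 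Once the reduction to $|M_n|$ and the $L^2$-plus-interpolation estimate are in place, the lemma follows by collecting the error terms.
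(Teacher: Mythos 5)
Your reduction of ${\bf M_n}f$ to the coefficient $M_n$ (up to an acceptable error) is fine, and you have correctly identified the two ingredients the paper uses: the recurrence $mM_m=\sum_j[zU_n'(z)]_{(j)}M_{m-j}$ and Parseval on the unit circle. But the way you combine them does not produce the claimed exponent, and the gap is genuine, not bookkeeping.

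Your ``crude $L^2$ bound'' is the single-coefficient estimate
$n|M_n|=\bigl|[M(z)U_n'(z)]_{(n-1)}\bigr|\leqslant\bigl(\tfrac{1}{2\pi}\int_{-\pi}^{\pi}|M(e^{ix})U_n'(e^{ix})|^2\,dx\bigr)^{1/2}=:I^{1/2}$,
so $|M_n|\ll I^{1/2}/n$. You then interpolate this against $|M_n|\leqslant p_n\ll n^{\gamma_0-1}$ by writing $|M_n|=|M_n|^{1-\vartheta}|M_n|^{\vartheta}$ with $\vartheta=\gamma_0/(\gamma_0+1)$. The $I$-exponent does land on $\gamma_0/(2(\gamma_0+1))$, but the $n$-power does not: this interpolation gives $|M_n|/n^{\gamma_0-1}\ll I^{\vartheta/2}n^{-\gamma_0\vartheta}=I^{\gamma_0/(2(\gamma_0+1))}n^{-\gamma_0^2/(\gamma_0+1)}$, whereas the lemma asserts $n^{-(2\gamma_0+1)\gamma_0/(2(\gamma_0+1))}$. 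Since $(2\gamma_0+1)\gamma_0/(2(\gamma_0+1))-\gamma_0^2/(\gamma_0+1)=\gamma_0/(2(\gamma_0+1))>0$, your bound is weaker than the lemma by a factor $n^{\gamma_0/(2(\gamma_0+1))}$. There is no choice of $\vartheta$ that simultaneously matches both the $I$-power and the $n$-power; the single-coefficient Parseval estimate simply discards too much, and interpolating two bounds on the \emph{same} quantity $|M_n|$ cannot recover it.

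What the paper does instead is use the recurrence not to isolate one coefficient of $M(z)U_n'(z)$, but to bound $|M_m|$ for every $m\in[n/2,n]$ by the average
$|M_m|\leqslant\frac{2}{n}\sum_{j=0}^{n-1}|M_j|$
(this uses $|\hat f(j)|\leqslant1$, so $[zU_n'(z)]_{(j)}$ is bounded in modulus by $1$). Then it splits this sum at a free threshold $T$: the head is bounded trivially by $\sum_{j\leqslant T}|M_j|\leqslant\sum_{j\leqslant T}p_j\leqslant e\,p(e^{-1/T})\ll T^{\gamma_0}$; the tail is handled by inserting a weight $j/T$ and applying Cauchy--Schwarz \emph{to the range}, giving $\sum_{T<j\leqslant n}|M_j|\leqslant\frac{1}{T}\sum_{T<j\leqslant n}j|M_j|\leqslant\frac{\sqrt{n}}{T}\bigl(\sum_j|jM_j|^2\bigr)^{1/2}=\frac{\sqrt{n}}{T}\cdot\sqrt{I/2\pi}$. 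Writing $T=\varepsilon n$ yields
$|M_m|/n^{\gamma_0-1}\ll\varepsilon^{\gamma_0}+\frac{1}{\varepsilon}\bigl(n^{-(2\gamma_0+1)}I\bigr)^{1/2}$,
and minimizing over $\varepsilon\in[1/n,1]$ is what produces the exponent $\gamma_0/(\gamma_0+1)$ together with the $1/n^{\gamma_0}$ term (the latter coming from the boundary case $\varepsilon=1/n$). The interpolation happens \emph{inside the sum}, by choosing the split point, not a posteriori on a single coefficient. If you want to repair your argument, replace the one-coefficient Parseval step with this split-and-optimize estimate on $\sum_{j<n}|M_j|$; the rest of your reduction through $H_k(f;z)$ and Theorem \ref{pavlovpn} is compatible with the paper's.
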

\begin{proof}
Further we assume that $\hat f(j)=0$ for $j>n$, since $\hat f(j)$ for $j>n$ do not influence the value of  $M_n$.
Differentiating $M(z)$ by $z$
one can easily check that $M_n$ satisfy the recurrence relation
$$
 M_m=\frac{1}{m}\sum_{\scriptstyle (j,k)=1\atop \scriptstyle 1\leqslant j \leqslant m}{\hat f(j)}M_{m-j}
$$
for $m\geqslant 1$. Therefore for $1\leqslant T \leqslant n$ and $n/2\leqslant m \leqslant n$ we have
\begin{equation*}
\begin{split}
 |M_m|&\leqslant \frac{2}{n}\sum_{j=0}^{n-1}|M_{j}|\leqslant \frac{2}{n}\sum_{0\leqslant j \leqslant T}|M_{j}|+
 \frac{2}{nT}\sum_{T< j \leqslant n}j|M_{j}|
 \\
  &\leqslant 2e \frac{p(e^{-1/T})}{n}+2\frac{\sqrt{n}}{nT}
 \left( \sum_{j=1}^\infty|jM_j|^2 \right)^{1/2} .
 \end{split}
\end{equation*}
Putting here $T=[\epsilon n]$ with $1/n\leqslant\epsilon \leqslant 1$, for $n/2\leqslant m \leqslant n$ we obtain
\begin{equation*}
\begin{split}
\frac{|M_m|}{n^{\gamma_0-1}}&\ll \left( \frac{T}{n}\right)^{\gamma_0} +\frac{1}{n^{\gamma_0 -\frac{1}{2}}T}
\left( \int_{-\pi}^{\pi}|M'(e^{ix})|^2\,dx \right)^{1/2}
\\
&\ll
\epsilon^{\gamma_0} +\frac{1}{\epsilon}\left(\frac{1}{n^{2\gamma_0 +1}}\int_{-\pi}^{\pi}|M(e^{ix})|^2|U_n'(e^{ix})|^2\,dx \right)^{1/2}.
\\
&\ll\left(\frac{1}{n^{2\gamma_0+1}}\int_{-\pi}^{\pi}|M(e^{ix})|^2|U_n'(e^{ix})|^2\,dx \right)^{\frac{\gamma_0}{2(\gamma_0 +1)}} +\frac{1}{n^{\gamma_0}};
\end{split}
\end{equation*}
here we have taken the minimum by $1/n\leqslant\epsilon \leqslant 1$.

Lemma \ref{hcoef} gives $[H(f;z)]_{(m)}=O(m^{-2})$, and also  $|M_m|\leqslant p_m=O(m^{\gamma_0 -1})$, therefore
\begin{equation*}
\begin{split}
\frac{|S_n|}{n!}{\bf M_n}f&=\sum_{m=0}^nM_m[H(f;z)]_{(n-m)}
\\
&\ll  n^{\gamma_0 -1}\left(\frac{1}{n^{2\gamma_0+1}}\int_{-\pi}^{\pi}|M(e^{ix})|^2|U_n'(e^{ix})|^2\,dx \right)^{\frac{\gamma_0}{2(\gamma_0 +1)}} +\frac{1}{n}
+\frac{1}{n^2} \sum_{m\leqslant n/2}p_m
\\
&\ll
n^{\gamma_0 -1}\left(\frac{1}{n^{2\gamma_0+1}}\int_{-\pi}^{\pi}|M(e^{ix})|^2|U_n'(e^{ix})|^2\,dx \right)^{\frac{\gamma_0}{2(\gamma_0 +1)}} +\frac{1}{n} +n^{\gamma_0-2}.
\end{split}
\end{equation*}

The lemma is proved.
\end{proof}
\begin{corol}
\label{isvada}
Suppose $\hat f(j) \in \mathbb C$ and $|\hat f(j)|\leqslant 1$ for  $1\leqslant j \leqslant n$.
Let us denote
$$
J(n):=\min_{x\in[-\pi,\pi]}\sum_{\substack{1\leqslant j \leqslant n\\(j,k)=1}}\frac{1-\Re (\hat f(j)e^{-ix})}{j}
$$
Then
for the mean value of the corresponding multiplicative function $f$ we have
$$
{\bf M_n}f \ll \exp \left\{ -\frac{\gamma_0}{\gamma_0 +1}J(n)\right\} +\frac{1}{n^{\gamma_0}}
$$
\end{corol}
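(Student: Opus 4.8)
The plan is to derive Corollary~\ref{isvada} directly from Lemma~\ref{mean_forlarge_t} by bounding, from above, the integral
$$
\frac{1}{n^{2\gamma_0+1}}\int_{-\pi}^{\pi}|M(e^{ix})|^2|U_n'(e^{ix})|^2\,dx
$$
in terms of $J(n)$. First I would record the elementary identity coming from the recurrence for $M_m$: since $zM'(z)=M(z)zU_n'(z)$ and $U_n'(z)=\sum_{(j,k)=1,\,j\leqslant n}\hat f(j)z^{j-1}$, Parseval gives
$$
\frac{1}{2\pi}\int_{-\pi}^{\pi}|M(e^{ix})|^2|U_n'(e^{ix})|^2\,dx
=\sum_{m\geqslant 1}m^2|M_m|^2 .
$$
So the task reduces to bounding $\sum_{m\geqslant 1}m^2|M_m|^2$, or more conveniently $\sum_{m\geqslant 0}|M_m|^2$ together with a crude truncation, by $n^{2\gamma_0-1}\exp\{-2\gamma_0 J(n)/(\gamma_0+1)\}$ up to the harmless term $n^{2\gamma_0}$; indeed one can also proceed as in the proof of Lemma~\ref{mean_forlarge_t} and instead estimate $|M_m|^2$ for $n/2\leqslant m\leqslant n$ after first controlling $\sum_{k}|M_k|^2=\frac1{2\pi}\int_{-\pi}^\pi|M(e^{ix})|^2\,dx$.

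The key step, then, is the pointwise bound on $|M(e^{ix})|$. For each fixed $x$ we have
$$
|M(e^{ix})|=\Bigl|\exp\Bigl\{\sum_{\substack{j\leqslant n\\(j,k)=1}}\frac{\hat f(j)e^{ijx}}{j}\Bigr\}\Bigr|
=\exp\Bigl\{\sum_{\substack{j\leqslant n\\(j,k)=1}}\frac{\Re(\hat f(j)e^{ijx})}{j}\Bigr\},
$$
and writing $\Re(\hat f(j)e^{ijx})=1-\bigl(1-\Re(\hat f(j)e^{ijx})\bigr)$ this becomes
$$
|M(e^{ix})|=\exp\Bigl\{\sum_{\substack{j\leqslant n\\(j,k)=1}}\frac1j\Bigr\}\exp\Bigl\{-\sum_{\substack{j\leqslant n\\(j,k)=1}}\frac{1-\Re(\hat f(j)e^{ijx})}{j}\Bigr\}\ll n^{\gamma_0}\exp\{-J(n)\},
$$
using that $\sum_{j\leqslant n,(j,k)=1}1/j=\gamma_0\log n+O(1)$ (which follows from the factorisation of $p(z)$, i.e.\ from $p_n\asymp n^{\gamma_0-1}$ via $\sum_{j\leqslant n}p_j\asymp p(e^{-1/n})\asymp n^{\gamma_0}$) and the definition of $J(n)$ as the minimum over $x$. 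Note the quantity being minimised in the definition of $J(n)$ is exactly $\sum_{j\leqslant n,(j,k)=1}\bigl(1-\Re(\hat f(j)e^{-ijx})\bigr)/j$; here I should double-check that the exponent $e^{-ix}$ versus $e^{-ijx}$ in the statement of the corollary is a harmless re-parametrisation of the infimum over $x\in[-\pi,\pi]$ — for the multiplicative characters $e^{ijx}$ the relevant infimum is genuinely over the "linear-in-$j$" twists, matching the divergence condition in Theorem~\ref{halash}, so the bound $|M(e^{ix})|\ll n^{\gamma_0}e^{-J(n)}$ holds uniformly in $x$.

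With that pointwise bound in hand the rest is bookkeeping. Plugging $|M(e^{ix})|\ll n^{\gamma_0}e^{-J(n)}$ into the Parseval integral gives $\sum_{m}|M_m|^2\ll n^{2\gamma_0}e^{-2J(n)}$, hence $\sum_{m\geqslant1}m^2|M_m|^2\ll n^{2\gamma_0+2}e^{-2J(n)}$ trivially, but that is too lossy; instead I would follow the Lemma~\ref{mean_forlarge_t} route, using $\sum_m|M_m|^2\ll n^{2\gamma_0}e^{-2J(n)}$ to control the tail of $\sum jM_j$ and the elementary bound $\sum_{j\leqslant T}|M_j|\leqslant ep(e^{-1/T})\ll T^{\gamma_0}$ for the head, obtaining for $n/2\leqslant m\leqslant n$
$$
\frac{|M_m|}{n^{\gamma_0-1}}\ll \Bigl(\frac{T}{n}\Bigr)^{\gamma_0}+\frac{n^{\gamma_0}e^{-J(n)}}{n^{\gamma_0-\frac12}T},
$$
and optimising in $T=[\epsilon n]$ exactly as in that lemma yields $|M_m|\ll n^{\gamma_0-1}\bigl(e^{-J(n)}\bigr)^{\gamma_0/(\gamma_0+1)}+n^{\gamma_0-1}/n^{\gamma_0/(\gamma_0+1)}$. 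Finally feeding this, via $\frac{|S_n^{(k)}|}{n!}{\bf M_n}f=\sum_m M_m[H_k(f;z)]_{(n-m)}$ and Lemma~\ref{hcoef} together with $|M_m|\leqslant p_m\ll m^{\gamma_0-1}$ and $|S_n^{(k)}|/n!\asymp n^{\gamma_0-1}$ (Theorem~\ref{pavlovpn}), reproduces the argument at the end of the proof of Lemma~\ref{mean_forlarge_t} and gives ${\bf M_n}f\ll \exp\{-\tfrac{\gamma_0}{\gamma_0+1}J(n)\}+n^{-\gamma_0}$, which is the claim. The main obstacle I anticipate is purely notational: making sure the infimum defining $J(n)$ is matched to the correct family of twists and that the $\gamma_0/(2(\gamma_0+1))$ exponent in Lemma~\ref{mean_forlarge_t} interacts correctly with the $e^{-2J(n)}$ factor (the square in $|M(e^{ix})|^2$ is what turns $e^{-2J(n)}$ into the stated $e^{-\gamma_0 J(n)/(\gamma_0+1)}$), rather than any genuinely new estimate.
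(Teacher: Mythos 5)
The proposal is correct and matches the paper's approach: both apply Lemma~\ref{mean_forlarge_t}, establish the pointwise bound $\max_{x\in[-\pi,\pi]}|M(e^{ix})|\ll n^{\gamma_0}e^{-J(n)}$, and finish via Parseval's identity $\int_{-\pi}^{\pi}|U_n'(e^{ix})|^2\,dx\ll n$. Your suspicion about the definition of $J(n)$ is well founded: the statement's $e^{-ix}$ must be read as $e^{-ijx}$ (matching the series in Theorem~\ref{halash}), since that is what both the pointwise bound on $|M(e^{ix})|$ and the subsequent application in the proof of Theorem~\ref{halash} require. The paper's version is shorter because it simply pulls $\sup_x|M(e^{ix})|^2$ outside the integral of Lemma~\ref{mean_forlarge_t} rather than re-deriving the truncation argument as you do; in that re-derivation your intermediate display dropped the $\sqrt{n}$ coming from $\bigl(\int|U_n'|^2\bigr)^{1/2}\ll\sqrt{n}$, a harmless slip since you still state the correct conclusion.
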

\begin{proof}
Applying Lemma \ref{mean_forlarge_t} and noticing that
$$
\max_{x\in[-\pi,\pi]}|M(e^{ix})|\ll n^{\gamma_0} \exp \left\{ -J(n)\right\},
$$
we have
\begin{equation*}
\begin{split}
{\bf M_n}f &\ll \exp \left\{ -
\frac{\gamma_0}{\gamma_0 +1}J(n)\right\}
\left(\frac{1}{n}\int_{-\pi}^{\pi}|U_n'(e^{ix})|^2\,dx \right)^{\frac{\gamma_0}{2(\gamma_0 +1)}}
 +\frac{1}{n^{\gamma_0}}
 \\
&\ll\exp \left\{ -
\frac{\gamma_0}{\gamma_0 +1}J(n)\right\}
 +\frac{1}{n^{\gamma_0}},
 \end{split}
\end{equation*}
because by means  Parseval identity have
$$
\int_{-\pi}^{\pi}|U_n'(e^{ix})|^2dx=2\pi\sum_{\substack{1\leqslant j\leqslant n\\
(j,k)=1}}|\hat f(j)|^2\ll n.
$$
Hence we obtain the estimate of the corollary.
\end{proof}
\begin{proof}[Proof of theorem \ref{halash}]
1) Divergence of series (\ref{halseries}) at every point $x\in[-\pi,\pi]$ implies that
$J(n)\to \infty$, whence by Corollary \ref{isvada} we have
$$
\lim_{n\to \infty} {\bf M_n^{(k)}}f=0.
$$
2) If series (\ref{halseries}) converges at some point $x_0\in[-\pi,\pi]$ then
$$
\frac{1}{n}\sum_{\substack{1\leqslant j\leqslant n\\
(j,k)=1}}|\hat f(j)e^{-ix_0j}-1|^2 \to \infty \quad \mbox{as}\quad n\to \infty .
$$
Application of Theorem \ref{mean}
to $\hat f(j) \to \hat f(j)e^{-itx_0}$
gives the desired estimate.

The theorem is proved.
\end{proof}

\section{Distribution of $\log P_n(\alpha)$ on $S_n^{(k)}$  }

We now apply the results of the previous section to study the distribution of the additive function
$$
\log P_n(\alpha(\sigma))=\sum_{j=1}^n\alpha_j(\sigma)\log j
$$
on $S_n^{(k)}$.

Let us introduce the notation
$$
u_n(t)={\bf M_n}\exp\biggl\{it{\frac{\log  P_n(\alpha)
}{   \sqrt{\frac{\phi(k)}{3k}}\log^{3/2}n   } }\biggr\}.
$$
Further we will denote
 $$
 t'=\frac{t}{   \sqrt{\frac{\phi(k)}{3k}}\log^{3/2}n   }
 $$.

\begin{lem}
\label{ut}
$$
u_n(t)\ll\frac{1}{n^\epsilon},
$$
for $\delta\log^{1/2} n\leqslant t \leqslant \log^{3/2} n$. Here $\delta$ -- some fixed positive number.
\end{lem}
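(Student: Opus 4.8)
The plan is to reduce the claim, via Corollary~\ref{isvada}, to an estimate for an exponential sum of exactly the type already handled in the proof of Theorem~\ref{th_turlarge}. Since $\log P_n(\alpha(\sigma))=\sum_{j=1}^n\alpha_j(\sigma)\log j$ is additive, $u_n(t)$ is the mean value on $S_n^{(k)}$ of the multiplicative function $f$ with $\hat f(j)=e^{it'\log j}=j^{it'}$, which satisfies $|\hat f(j)|=1$; recall that in the present setting $\gamma_0=\phi(k)/k$. Applying Corollary~\ref{isvada},
$$
u_n(t)\ll \exp\left\{-\frac{\gamma_0}{\gamma_0+1}J(n)\right\}+\frac{1}{n^{\gamma_0}},\qquad
J(n)=\min_{x\in[-\pi,\pi]}\sum_{\substack{1\leqslant j\leqslant n\\ (j,k)=1}}\frac{1-\cos(t'\log j-x)}{j},
$$
so it is enough to prove that $J(n)\gg\log n$ uniformly for $\delta\log^{1/2}n\leqslant t\leqslant\log^{3/2}n$.

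First I would evaluate the inner sum by partial summation. Writing $\#\{j\leqslant y:(j,k)=1\}=\gamma_0 y+E(y)$ with $|E(y)|\leqslant\tau(k)$ and integrating $y^{it'-1}$ against $d(\gamma_0 y+E(y))$, one obtains, uniformly for $0\leqslant t'\leqslant\sqrt{3k/\phi(k)}$,
$$
\sum_{\substack{1\leqslant j\leqslant n\\ (j,k)=1}}\frac{e^{it'\log j}}{j}=\gamma_0\,\frac{n^{it'}-1}{it'}+O_k(1),\qquad
\sum_{\substack{1\leqslant j\leqslant n\\ (j,k)=1}}\frac{1}{j}=\gamma_0\log n+O_k(1).
$$
Hence, with $\theta:=t'\log n=t/(\sqrt{\phi(k)/(3k)}\log^{1/2}n)$,
$$
\sum_{\substack{1\leqslant j\leqslant n\\ (j,k)=1}}\frac{1-\cos(t'\log j-x)}{j}\geqslant \gamma_0\log n-\gamma_0\left|\frac{n^{it'}-1}{it'}\right|-O_k(1)=\gamma_0\log n\left(1-\frac{|\sin(\theta/2)|}{\theta/2}\right)-O_k(1).
$$

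The decisive observation is that throughout the range $\delta\log^{1/2}n\leqslant t\leqslant\log^{3/2}n$ one has $\theta\geqslant\delta_0:=\delta/\sqrt{\phi(k)/(3k)}>0$, i.e.\ $\theta$ is bounded away from $0$ (it may also be large, which only helps). Since $|\sin u|<u$ for every $u\neq0$ and $|\sin u|/u\to0$, the quantity $1-2\eta:=\sup_{u\geqslant\delta_0/2}|\sin u|/u$ is strictly less than $1$, with $\eta=\eta(k,\delta)>0$. Therefore $J(n)\geqslant 2\eta\gamma_0\log n-O_k(1)\geqslant\eta\gamma_0\log n$ for all $n\geqslant n_0(k,\delta)$, uniformly in $t$ and in $x$. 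Substituting this into the bound from Corollary~\ref{isvada} gives $u_n(t)\ll n^{-\eta\gamma_0^2/(\gamma_0+1)}+n^{-\gamma_0}\ll n^{-\epsilon}$ with $\epsilon=\min\{\eta\gamma_0^2/(\gamma_0+1),\gamma_0\}$, while for the finitely many $n<n_0$ the estimate is trivial after adjusting the implied constant. The only genuinely technical point is the uniform partial-summation estimate of the coprimality-restricted exponential sum, in particular checking that its error term is $O_k(1)$ over the whole $t$-range; the remainder is the same $|\sin u|/u$ argument as in (\ref{exp_S_n(t)}).
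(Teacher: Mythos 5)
Your proposal has a genuine gap, rooted in a typo in the paper that you inherit. In Corollary~\ref{isvada} the quantity $J(n)$ is displayed with $\hat f(j)e^{-ix}$, but this is inconsistent with Theorem~\ref{halash}, which uses $\hat f(j)e^{-ixj}$, and with the Corollary's own proof: the claim $\max_{x}|M(e^{ix})|\ll n^{\gamma_0}\exp\{-J(n)\}$ requires
$$
J(n)=\min_{x\in[-\pi,\pi]}\sum_{\substack{1\leqslant j\leqslant n\\(j,k)=1}}\frac{1-\Re(\hat f(j)e^{-ixj})}{j},
$$
because $|M(e^{ix})|=\exp\{\sum\Re(\hat f(j)e^{ixj})/j\}$. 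As literally printed the Corollary is false: taking $\hat f(j)=e^{ij\alpha}$ for a generic $\alpha$ gives $f(\sigma)=e^{in\alpha}$ constant (since $\sum j\alpha_j(\sigma)=n$), so $|{\bf M}_n^{(k)}f|=1$, while the $e^{-ix}$-version of $J(n)$ equals $\gamma_0\log n+O(1)$.

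Once the definition is corrected, your key inequality $J(n)\geqslant\gamma_0\log n-\gamma_0\left|\frac{n^{it'}-1}{it'}\right|-O_k(1)$ no longer follows. Minimizing $\sum\frac{1-\Re(\hat f(j)e^{-ixj})}{j}$ over $x$ is not a simple phase alignment: the exponential sum $\sum_{(j,k)=1}j^{it'}e^{-ixj}/j$ has peaks near every rational point $x\equiv 2\pi s_0/d_0$ with $d_0\mid k_0$, of height roughly $\gamma_0\,\mu(d_0)\prod_{p\mid d_0}\frac{1}{p-1}\cdot\frac{|u|^{-it'}-1}{it'}$ with $u=x-2\pi s_0/d_0$. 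For $d_0\in\{1,2\}$ the coefficient has modulus $1$, and the maximum of $\left|\frac{|u|^{-it'}-1}{it'}\right|$ over $|u|\geqslant 1/n$ is $\frac{2}{t'}\max_{0\leqslant s\leqslant\theta/2}|\sin s|$, which in the range $\theta\geqslant\pi$ equals $\frac{2}{t'}$ and strictly exceeds $\left|\frac{n^{it'}-1}{it'}\right|=\frac{2|\sin(\theta/2)|}{t'}$ whenever $|\sin(\theta/2)|<1$. So your proposed lower bound on $J(n)$ over-estimates the truth and the proof does not close. The final assertion $J(n)\gg\log n$ does turn out to be true, but establishing it requires the per-peak decomposition $U_n(e^{2\pi ix})=\sum_{d\mid k}\frac{\mu(d)}{d^{1-it'}}S_{t',n}(2\pi dx)+O(1)$, the Diophantine discussion of which denominators $d_0$ actually contribute, the saving factor $|\mu(d_0)\prod_{p\mid d_0}(p-1)^{-1}|\leqslant\frac12$ for $d_0\notin\{1,2\}$, and the $\frac{\sin u}{u}$ argument only for the two dominant peaks at $x\approx0$ and $x\approx1/2$. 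That is exactly the content of the paper's proof, which for technical reasons works with the $L^2$ bound of Lemma~\ref{mean_forlarge_t} rather than the $L^\infty$ bound of Corollary~\ref{isvada}; your approach could in principle be made to work in the $L^\infty$ setting as well, but only after carrying out this same peak-by-peak analysis, which your write-up omits entirely.
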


\begin{proof}
Putting in lemma \ref{mean_forlarge_t} $\hat f(j)=e^{it'\log j}$,
we obtain
$$
u_n(t)
\ll\left(\frac{1}{n^{2\gamma_0 +1}}\int_{0}^{1}|\exp\{U_n(e^{2\pi ix})\}|^2|U_n'(e^{2\pi ix})|^2dx \right)^{\frac{\gamma_0}{2(\gamma_0 +1)}} +\frac{1}{n^{\gamma_0}}.
$$
We have
\begin{equation*}
\begin{split}
U_n(e^{ix})&=\sum_{\scriptstyle 1\leqslant j\leqslant n \atop \scriptstyle (j,k)=1}\frac{e^{ixj}}{j^{1-it'}}
=\sum_{ j=1}^n\frac{e^{ixj}}{j^{1-it'}}\sum_{d|(j,k)}\mu(d)=
\sum_{d|k}\mu(d)
\sum_{\scriptstyle 1\leqslant j\leqslant n \atop \scriptstyle d|j}\frac{e^{ixj}}{j^{1-it'}}
\\
&=\sum_{d|k}\frac{\mu(d)}{d^{1-it'}}
\sum_{s=1}^{[n/d]}\frac{e^{ixds}}{s^{1-it'}}
=\sum_{d|k}\frac{\mu(d)}{d^{1-it'}}
\sum_{s=1}^{n}\frac{e^{ixds}}{s^{1-it'}} +O(1)
\\
&=\sum_{d|k}\frac{\mu(d)}{d^{1-it'}}
S_{t',n}(2\pi dx)+O(1),
\end{split}
\end{equation*}
where
$$
S_{t,n}(x)=\sum_{s=1}^{n}\frac{e^{2\pi ixs}}{s^{1-it}}.
$$
It is clare that $S_{t,n}(x)$ is periodic with period $1$: $S_{t,n}(x+1)=S_{t,n}(x)$.
It follows from (\ref{S(t)assympt})  that for $\frac{1}{n}\leqslant |x|\leqslant\frac{1}{2}$ we have
$$
S_{t,n}(x)=\sum_{s=1}^{n}\frac{e^{2\pi ixs}}{s^{1-it}}=\frac{|2\pi  x|^{-it}-1}{it} +O(1+|t|)
.
$$
It follows hence
$$
|S_{t,n}(x)|= O_\eta(1),
$$
for $x\not\in \cup_{m \in \sym Z}(m-\eta,m+\eta)$, if  $\eta$ -- fixed such that $0<\eta<\frac{1}{2}$ and $|t|\leqslant 1$.

Let us put
$\eta:=\frac{1}{4k_0}$ and
$$V_\eta=[0,1]\cap\bigcup_{j=0}^{k_0}\left( \frac{j}{k_0}-\eta,\frac{j}{k_0}+\eta \right)$$.

If $x\in [0,1]$, then from $dx\in \cup_{m \in \sym
Z}(m-\eta,m+\eta)$,  for $d|k_0$ it follows that  $x\in V_\eta$.
Conversely, if $x\not\in V_{\eta}$, then $dx\not\in \cup_{m \in
\sym Z}(m-\eta,m+\eta)$ for any $d|k_0$. Therefore for $x\not\in
V_{\eta}$ we have
$$
U_n(e^{2\pi ix})=\sum_{d|k}\frac{\mu(d)}{d^{1-it'}}S_{t'}(dx)+O_\eta(1)=O_\eta(1).
$$
It follows hence that
\begin{multline*}
\frac{1}{n^{2\gamma_0 +1}}\int_{x\in [1,0]\setminus V_\eta}|\exp\{U_n(e^{2\pi ix})\}|^2|U_n'(e^{2\pi ix})|^2dx
\\
\ll \frac{1}{n^{2\gamma_0 +1}} \int_{x\in [1,0]}|U'_n(e^{2\pi ix})|^2dx
\ll \frac{1}{n^{2\gamma_0}},
\end{multline*}
the last inequality follows from the Parseval identity.

Let us estimate the integral over $x\in V_\eta$.
Let $x=\frac{s_0}{d_0}+u$,
$|u|\leqslant \eta
$, where $(s_0,d_0)=1$ and $d_0|k_0$ .
If $dx\in \cup_{m \in \sym Z}(m-\eta,m+\eta)$, then there exists such  $s$,
$1\leqslant s< k_0$ that $\left|x -\frac{s}{d}\right|<\frac{\eta}{d}\leqslant\eta$.
Since the intervals
$\left(  \frac{j}{k_0}-\eta ,\frac{j}{k_0}+\eta\right)$
do not intersect, then $\frac{s}{d}=\frac{s_0}{d_0}$,
therefore in view of the fact that
$s_0$ and $d_0$ are coprime we have $s_0|s$ and $d_0|d$. Therefore if $d_0\nmid d|k_0$, then
 $dx\not \in \cup_{m \in \sym Z}(m-\eta,m+\eta)$ and $S_t(dx)=O(1)$.

 Therefore for $\frac{1}{n}<|u|<\eta$ we have
\begin{equation*}
\begin{split}
U_n(e^{2\pi ix})&=\sum_{d:d_0|d|k_0}\frac{\mu(d)}{d^{1-it'}}S_{t,n}(dx)
+O(1)
\\
&=\sum_{d:d_0|d|k_0}\frac{\mu(d)}{d^{1-it'}}S_{t,n}\left(d\left( x-\frac{s_0}{d_0}\right)
+\frac{d}{d_0}s_0\right)
+O(1)
\\
&=\sum_{s|\frac{k_0}{d_0}}\frac{\mu(sd_0)}{(sd_0)^{1-it'}}S_{t,n}(sd_0u)
+O(1)
\\
&=\sum_{s|\frac{k_0}{d_0}}\frac{\mu(sd_0)}{sd_0}\left( \frac{|u|^{-it'}-|sd_0|^{it'}}{it'}\right)
+O(1)
\\
&=\frac{|u|^{-it'}-1}{it'}\frac{\mu(d_0)}{d_0}\sum_{s|\frac{k_0}{d_0}}\frac{\mu(s)}{s}+O(1)
\\
&=\frac{|u|^{-it'}-1}{it'}\frac{\mu(d_0)}{d_0}\prod_{p|\frac{k_0}{d_0}}\left(1-\frac{1}{p} \right) +O(1)
\\
&=\gamma_0\frac{|u|^{-it'}-1}{it'}{\mu(d_0)}\prod_{p|d_0}\frac{1}{p-1} +O(1).
\end{split}
\end{equation*}
If  $|u|\leqslant \frac{1}{n}$ similarly we have
$$
U_n(e^{2\pi ix})=S_{t'}(0) \gamma_0 {\mu(d_0)}\prod_{p|d_0}\frac{1}{p-1} +O(1).
$$
Therefore if $x\in\left[ \frac{s_0}{d_0}-\eta,\frac{s_0}{d_0}+\eta \right] $,
 $(s_0,d_0)=1$ and $d_0\not =2$, then
$$
|U_n(e^{2\pi ix})|\leqslant \frac{\gamma_0}{2}\log n +O(1).
$$
Therefore
$$
\left| u_n(t)\right| ^{\frac{2(\gamma_0+1)}{\gamma_0}}\ll\frac{1}{n^{2\gamma_0+1}}
\int_{[-\eta,\eta]\cup I_{1/2}}|\exp\{U_n(e^{2\pi ix})\}|^2|U_n'(e^{2\pi ix})|^2\,dx+O\left( \frac{1}{n^{\gamma_0}}\right),
$$
here
$$
I_{1/2}=\begin{cases}
\varnothing,&\text{if $2\nmid k$ }\\
\left[\frac{1}{2}-\eta,\frac{1}{2}+\eta \right],&\text{if $2|k$}
        \end{cases}.
$$
Since for $|t'|\leqslant n$ we have $\zeta(1-it')=\sum_{s=1}^{n}\frac{1}{s^{1-it'}}-\frac{n^{it'}}{it'}+O(n^{-1})$ (see. \cite{karatsuba})
and $\zeta(z)=\frac{1}{z-1}+O(1)$ for $|z-1|\leqslant 1$, then for
$|t|\leqslant 1$ we have
$$
S_t(0)=\sum_{s=1}^{n}\frac{1}{s^{1-it'}}=\frac{n^{it'}-1}{it'}+O\left(
1\right).
$$
Now we have when $2\nmid k$ we have $I_{1/2}=\varnothing$ and applying the same considerations as in Theorem \ref{th_turlarge} we have
\begin{multline*}
\left| u_n(t)\right|^{\frac{2(\gamma_0+1)}{\gamma_0}}\ll\frac{1}{n^{2\gamma_0 +1}}
\int_{-\eta}^{\eta}|\exp\{U_n(e^{2\pi ix})\}|^2|U'_n(e^{2\pi ix})|^2\,dx+\frac{1}{n^{\gamma_0}}
\\
\ll\frac{1}{n^{2\gamma_0 +1}}
\int_{\frac{1}{n}\leqslant|t|\leqslant\eta}\exp \left\{ 2\gamma_0\log {\frac{1}{|x|}}\left(
  {\frac{
\sin \left( t'\log {\frac{1}{|x|}} \right)
                       }{t'\log {1\over {|x|}}}}
\right)  \right\}
|U_n'(e^{2\pi ix})|^2\,dx
\\
+\frac{1}{n^{2\gamma_0 }}\exp\left\{ 2\gamma_0 \log n \frac{\sin(t'\log n)}{t'\log n} \right\}
+ \frac{1}{n^{\gamma_0}}
\ll \frac{1}{n^{c}}.
\end{multline*}
In a similar way we estimate the integral over the interval $I_{1/2}=\left[ \frac{1}{2}-\eta,\frac{1}{2}+\eta\right]$  when $2|k$.

The lemma is proved.
\end{proof}

\begin{thm}
\label{clt_for_pn}
$$
 \sup_{x\in {\sym R}} \biggl|    \nu_{n}^{(k)} \biggl\{
 \frac{\log  P_n(\sigma)-{\bf M_n}\log P_n(\sigma)}{   \sqrt{\frac{\phi(k)}{3k}}\log^{3/2}n   }
 <x   \biggr\}-\Phi(x)
-
r_k(x){\frac{e^{-{x^2/2}}}{\sqrt{\log n}}} \biggr|
\ll\frac{1}{\log n},
$$
where $r_k(x)$ -- the same polynomial as in theorem \ref{clt}.
\end{thm}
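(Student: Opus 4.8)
The plan is to mimic the proof of Theorem~\ref{cltxith}: estimate the characteristic function of $\log P_n(\alpha)$ on $S_n^{(k)}$ in three overlapping ranges of $t$ and then feed the estimates into the generalized Esseen inequality. Throughout write $L=\log n$ and $\gamma_0=\phi(k)/k$, and for a parameter $t$ put $t'=t/(\sqrt{\gamma_0/3}\,L^{3/2})$, so that $|t'|\ll1$ when $|t|\le L^{3/2}$. Let $u_n(t)={\bf M_n}\exp\{it'\log P_n(\alpha)\}$ be the uncentred characteristic function; since $\log P_n(\alpha)=\sum_j\alpha_j(\sigma)\log j$ it is the weighted mean of the multiplicative function with $\hat f(j)=e^{it'\log j}$.

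First I would treat $|t|\le\delta_1L^{1/2}$. Here $\rho=\rho(\infty)\ll|t'|L\ll|t|L^{-1/2}\le\delta_1$, so Theorem~\ref{meanmn} applies and gives
$$
u_n(t)=\frac{H_k(f;1)}{H_k(1;1)}\,e^{L_n(1)}\Bigl(1+\sum_{\substack{1\le j\le n\\(j,k)=1}}\frac{\hat f(j)-1}{j}\Bigl(\frac{p_{n-j}}{p_n}-1\Bigr)+O(\rho^2+n^{-\epsilon})\Bigr).
$$
The exponent $L_n(1)=\sum_{j\le n,(j,k)=1}(j^{it'}-1)/j$ I would evaluate by Möbius inversion over the divisors of $k$ together with $\zeta(1-it')=-1/(it')+\gamma+O(|t'|)$ and the classical approximation $\sum_{s\le M}s^{\,it'-1}=(M^{it'}-1)/(it')+\zeta(1-it')+O(M^{-1})$; the constants cancel and one is left with $L_n(1)=\gamma_0\frac{n^{it'}-1}{it'}-\gamma_0L+O(|t'|+1/n)$, hence after Taylor expansion $L_n(1)=\gamma_0\bigl(\tfrac{(it')L^2}{2}+\tfrac{(it')^2L^3}{6}+\tfrac{(it')^3L^4}{24}\bigr)+O(\gamma_0|t'|^4L^5+|t'|)$ --- the exact analogue of~(\ref{S_n(t)})--(\ref{S_n(t)2}). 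The genuinely new ingredient, absent in the $S_n$ case (where $p_n\equiv1$ so this term vanishes), is $C_n:=\sum_{j\le n,(j,k)=1}\frac{\hat f(j)-1}{j}(\frac{p_{n-j}}{p_n}-1)$: using $p_{n-j}/p_n=(1-j/n)^{\gamma_0-1}(1+o(1))$ from Theorem~\ref{pavlovpn} and splitting $\log j=\log n+\log(j/n)$, one finds $C_n=it'\,C_0L+O(|t'|)$ with $C_0=\gamma_0\int_0^1\frac{(1-y)^{\gamma_0-1}-1}{y}\,dy$, exactly the constant occurring in $r_k$. Finally $H_k(f;1)/H_k(1;1)=1+O(|t'|)$ by the bounds on the $(j,k)>1$ factors used in Lemma~\ref{hcoef}.

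Combining these expansions and substituting $t'=t/(\sqrt{\gamma_0/3}\,L^{3/2})$, the quadratic term of $L_n(1)$ becomes $-t^2/2$, the cubic term becomes $\tfrac{\sqrt{3/\gamma_0}}{8}(it)^3L^{-1/2}$, the $C_n$-contribution becomes $C_0\sqrt{3/\gamma_0}\,(it)L^{-1/2}$, while the linear term of $L_n(1)$ together with the linear part of $C_n$ reproduce ${\bf M_n}\log P_n(\alpha)$; the latter I would also compute directly from ${\bf M_n}\alpha_j=\tfrac1j\,c_{n-j}/c_n$ for $(j,k)=1$ (with $c_m=|S_m^{(k)}|/m!$), obtaining by the same Möbius/Riemann-sum analysis ${\bf M_n}\log P_n(\alpha)=\tfrac{\gamma_0}{2}L^2+C_0L+O(1)$. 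Dividing $u_n(t)$ by $\exp\{it\,{\bf M_n}\log P_n(\alpha)/(\sqrt{\gamma_0/3}\,L^{3/2})\}$ removes the whole drift; writing $Y=(\log P_n(\alpha)-{\bf M_n}\log P_n(\alpha))/(\sqrt{\gamma_0/3}\,L^{3/2})$ and using $e^z=1+z+O(z^2)$ one gets, for $|t|\le L^{1/6}$,
$$
\phi_Y(t)=e^{-t^2/2}\Bigl(1+\tfrac{\sqrt{3/\gamma_0}}{8}\tfrac{(it)^3}{\sqrt L}+O\bigl(\tfrac{t^2+t^4+t^6}{L}+\tfrac{|t|}{L^{3/2}}+n^{-c}\bigr)\Bigr),
$$
and a short computation with the Hermite identities $(1-x^2)\varphi(x)=-\varphi''(x)$ and $\widehat{\varphi^{(m)}}(t)=(-it)^m e^{-t^2/2}$ shows the right-hand side equals $g_n(t):=\int e^{itx}\,dG_n(x)$ with $G_n(x)=\Phi(x)+r_k(x)e^{-x^2/2}/\sqrt L$, up to the stated error --- the precise way $C_0$ enters $r_k$ being dictated by this calculation together with the re-centering by the mean. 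For $L^{1/6}\le|t|\le\delta_1L^{1/2}$ the real part $\Re L_n(1)=\gamma_0L\bigl(\tfrac{\sin(t'L)}{t'L}-1\bigr)+O(1)\le-\tfrac{t^2}{3}+O(1)$ (from $\sin u/u\le1-u^2/9$ for $|u|\le2$) gives $|u_n(t)|\ll e^{-t^2/3}$, hence $|\phi_Y(t)-g_n(t)|\ll e^{-ct^2}$; and for $\delta_1L^{1/2}\le|t|\le L^{3/2}$ Lemma~\ref{ut} gives $|u_n(t)|\ll n^{-\epsilon}$ while $|g_n(t)|$ is super-polynomially small.

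Finally I would apply the generalized Esseen inequality with cutoff $T=L^{3/2}$,
$$
\sup_{x\in{\sym R}}|\nu_n^{(k)}(Y<x)-G_n(x)|\ll\int_{-T}^{T}\frac{|\phi_Y(t)-g_n(t)|}{|t|}\,dt+\frac1T,
$$
and split the integral at $L^{-2}$, $L^{1/6}$ and $\delta_1L^{1/2}$: on $|t|\le L^{-2}$ use $|\phi_Y(t)-1|\ll|t|L^{1/2}$; on $L^{-2}\le|t|\le L^{1/6}$ use the Edgeworth estimate (noting $\int_0^\infty e^{-ct^2}(t+t^3+t^5)\,dt<\infty$); on $L^{1/6}\le|t|\le\delta_1L^{1/2}$ use the bound $e^{-ct^2}$; on $\delta_1L^{1/2}\le|t|\le T$ use Lemma~\ref{ut}; and $1/T=L^{-3/2}$. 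Each piece is $\ll1/L$, giving the theorem. The main obstacle --- and the place where the computation must be carried out carefully --- is the analysis of the new term $C_n$ in the second and third steps: one must pin down the constant $C_0$ consistently in the expansion of $C_n$, in ${\bf M_n}\log P_n(\alpha)$, and in the Fourier transform of $G_n$, so that everything matches after re-centering by the mean. This is where Theorem~\ref{pavlovpn} is used essentially and where the argument departs from the $S_n$ proofs of Chapter~2.
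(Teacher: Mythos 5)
Your plan follows the paper's very closely: same three-range analysis of the characteristic function, same appeal to Theorem~\ref{meanmn}, the same M\"obius/$\zeta$-function evaluation of $L_n(1)$, the same use of Theorem~\ref{pavlovpn} to handle $p_{n-j}/p_n$, Lemma~\ref{ut} for large $|t|$, and the generalized Esseen inequality at the end. But you depart from the paper at the re-centering step, and the departure introduces a genuine gap.

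The paper never divides by $\exp\{it'\,{\bf M_n}\log P_n\}$. It divides $u_n(t)$ by $e^{it\sqrt{3\gamma_0\log n/4}}$, i.e.\ it subtracts $\frac{\gamma_0}{2}\log^2 n$, which is the drift coming out of $L_n(1)$, not the true mean ${\bf M_n}\log P_n=\frac{\gamma_0}{2}\log^2n+C_0\log n+O(1)$. After \emph{that} normalization the contribution of $\sum_{(j,k)=1}\frac{\hat f(j)-1}{j}\bigl(\frac{p_{n-j}}{p_n}-1\bigr)$ survives as a \emph{linear} Edgeworth term $\sqrt{3/\gamma_0}\,C_0(it)/\sqrt{\log n}$, so
$$
u_n(t)\,e^{-it\sqrt{3\gamma_0\log n/4}}=e^{-t^2/2}\Bigl(1+\frac{\sqrt{3/\gamma_0}}{\sqrt{\log n}}\bigl(\tfrac18(it)^3+C_0\,it\bigr)+O(\cdots)\Bigr),
$$
and it is exactly this $(it)$-term that, via $\widehat{\varphi'}(t)=(-it)e^{-t^2/2}$, produces the constant $-8C_0$ inside $r_k(x)=\sqrt{3/\gamma_0}\frac{1-8C_0-x^2}{8\sqrt{2\pi}}$.

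You, on the other hand, re-center by the true mean. The extra factor $e^{-it\cdot C_0\sqrt{3/\gamma_0}/\sqrt{\log n}+O(|t|\log^{-3/2}n)}$ exactly cancels the $C_0\,it$ term, which is how you arrive at
$\phi_Y(t)=e^{-t^2/2}\bigl(1+\tfrac{\sqrt{3/\gamma_0}}{8}\tfrac{(it)^3}{\sqrt{\log n}}+O(\cdots)\bigr)$ with no linear term --- the formula you wrote. But then this does \emph{not} equal $g_n(t)=\int e^{itx}\,dG_n(x)$ for the stated $G_n(x)=\Phi(x)+r_k(x)e^{-x^2/2}/\sqrt{\log n}$: by the very Hermite identities you invoke,
$g_n(t)=e^{-t^2/2}\bigl(1+\tfrac{\sqrt{3/\gamma_0}}{\sqrt{\log n}}\bigl(\tfrac18(it)^3+C_0\,it\bigr)\bigr)$, so
$\phi_Y(t)-g_n(t)=-e^{-t^2/2}\,\sqrt{3/\gamma_0}\,C_0(it)/\sqrt{\log n}+O(\cdots)$.
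Feeding this into the Esseen integral over $|t|\le\log^{1/6}n$ yields a contribution
$\asymp |C_0|\sqrt{3/\gamma_0}\,(\log n)^{-1/2}\int e^{-t^2/2}\,dt\asymp(\log n)^{-1/2}$,
not $O(1/\log n)$. So, as written, the argument does not close when $C_0\neq0$ (i.e.\ for $k>1$).

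To repair this you must pick one centering consistently: either keep the paper's normalization $e^{-it\sqrt{3\gamma_0\log n/4}}$ throughout the Esseen argument, in which case the $C_0\,it$ term is retained and matches $q_n$; or, if you insist on centering by ${\bf M_n}\log P_n$, compare $\phi_Y$ against $\Phi(x)+\sqrt{3/\gamma_0}\,\frac{1-x^2}{8\sqrt{2\pi}}\,e^{-x^2/2}/\sqrt{\log n}$ (the $k=1$ form, $C_0=0$) and only then translate the expansion back. Your closing remark that ``the precise way $C_0$ enters $r_k$ [is] dictated by this calculation together with the re-centering by the mean'' correctly senses that something delicate is going on here, but it does not resolve the inconsistency: re-centering by the mean \emph{removes} $C_0$ from the Edgeworth term rather than putting it into $r_k$.
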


\begin{proof}
Applying theorem  \ref{meanmn} with $p=\infty$ and
$\hat f(j)=\exp\left\{it\frac{\log j}{\sqrt{\frac{\phi(k)}{3k}}\log^{3/2}n}\right\}$ we obtain
\begin{equation*}
\begin{split}
u_n(t)&={\bf M_n}\exp\biggl\{it{\frac{\log  P_n(\alpha)
}{   \sqrt{\frac{\phi(k)}{3k}}\log^{3/2}n   } }\biggr\}
\\
&=\frac{H(f;1)}{H(1,1)}\exp\{L(1)\}\left(1 +\frac{it}{\sqrt{\frac{\phi(k)}{3k}}\log^{3/2}n}\sum_{\scriptstyle 1\leqslant j \leqslant n \atop (j,k)=1 \scriptstyle}\frac{\log j}{j}\left( \left( 1-\frac{j}{n} \right)^{\gamma_0-1} -1\right)\right.
\\
&\quad +\left.O\left( \frac{|t|^2+|t|}{\log n}+\frac{1}{n^\epsilon} \right)  \right),
\end{split}
\end{equation*}
for $|t|\leqslant \delta \sqrt{\log n}$. Putting as before $t'=\frac{t}{   \sqrt{\frac{\phi(k)}{3k}}\log^{3/2}n   }$ we have
\begin{equation*}
\begin{split}
L(1)&=\sum_{\scriptstyle 1\leqslant m\leqslant n \atop \scriptstyle (m,k)=1 }
\frac{m^{it'}-1}{m}=\sum_{d|k}\frac{\mu(d)}{d}
\sum_{ 1\leqslant l\leqslant n/d  }
\frac{(ld)^{it'}-1}{l}
\\
&=\sum_{d|k}\frac{\mu(d)}{d}\left(
d^{it'}\sum_{ 1\leqslant m\leqslant n/d  }
\frac{1}{m^{1-it'}}-\log \frac{n}{d} -c +O\left( \frac{1}{n}\right) \right)
\\
&=\sum_{d|k}\frac{\mu(d)}{d}\left(
d^{it'}\left( \zeta(1-it')+\frac{(n/d)^{it'}}{it'}  \right) -\log \frac{n}{d} -c +O\left( \frac{1}{n}\right) \right),
\end{split}
\end{equation*}
where $c$ -- Euler's constant. Here we have used the well known estimate  of  the Riemann Zeta function
$\zeta(s)=\sum_{m\leqslant
x}\frac{1}{m^{s}}+\frac{x^{1-s}}{s-1}+O(x^{-\Re s})$ which is true for
 $0<\sigma_0<\Re s<2$ and $x\geqslant \frac{|\Im s|}{\pi}$ (see
\cite{karatsuba}). As for  $|s-1|\leqslant 1$ we have
$\zeta(s)=\frac{1}{s-1}+c +O(|s-1|)$,
 therefore
\begin{equation*}
\begin{split}
L(1)&=\sum_{d|k}\frac{\mu(d)}{d}\left(
 \frac{n^{it'}-d^{it'}}{it'}  -\log \frac{n}{d} \right)  +O\left( |t'| +\frac{1}{n}\right)
 \\
 &=\gamma_0
 \frac{n^{it'}-1-it'\log n}{it'}   +O\left( |t'| +\frac{1}{n}\right)
\\
&=\gamma_0\left(\frac{it'}{2!}\log^2 n +\frac{(it')^2}{3!}\log^3 n+
\frac{(it')^3}{4!}\log^4 n
\right)     +O\left( |t'| +\frac{1}{n}+|t'|^4\log^5 n\right)
\\
&=\frac{\frac{it\gamma_0}{2}\log^2 n}{   \sqrt{\frac{\gamma_0}{3}}\log^{3/2}n   } -\frac{t^2}{2}+
\frac{3^{3/2}(it)^3}{4!\sqrt{\gamma_0}}\frac{1}{\log^{1/2} n}
    +O\left( \frac{|t|}{\log^{3/2} n} +\frac{1}{n}+\frac{|t|^4}{\log n}\right).
    \end{split}
\end{equation*}
Since $\frac{H(f;1)}{H(1;1)}=1+O\left( \frac{|t|}{\log^{3/2} n} \right)$ then finally we have
\begin{multline*}
u_n(t)e^{-it\sqrt{\frac{3\gamma_0}{4}\log
n}}=e^{-t^2/2}\left(1+\frac{1}{\sqrt{\log
n}}\sqrt{\frac{3}{\gamma_0}}\left( \frac{1}{8}(it)^3+C_0it
\right) \right.
\\
\left.+O\left(\frac{1}{n^\epsilon} +\frac{|t|+|t|^8}{\log n} \right)
\right)
\end{multline*}
for $|t|\leqslant \log^{1/6}n$. If $|t|\leqslant
\delta_0\sqrt{\log n}$, where $\delta_0$ --
fixed sufficiently small number then by means of similar calculations we have
$$
u_n(t)\ll e^{-t^2/2}.
$$
For small $t$ we will use crude estimate
$$
|u_n(t)-1|\leqslant |t|\frac{{\bf M_n}\log P_n(\alpha)
}{   \sqrt{\frac{\phi(k)}{3k}}\log^{3/2}n   }  \ll |t|\sqrt{\log
n}.
$$

We have
$$
q_n(t)=e^{-t^2/2}\left(1+\frac{1}{\sqrt{\log
n}}\sqrt{\frac{3}{\gamma_0}}\left( \frac{1}{8}(it)^3+C_0it \right)
\right)=\int_{-\infty}^{\infty}e^{itx}dQ_n(x),
$$
where
$$
Q_n(x)=\Phi(x)+
 \frac{1}{\sqrt{\log n}}\sqrt{\frac{3}{\gamma_0}}\left( \frac{(1-8C_0-x^2)}{8\sqrt{2\pi }}e^{-x^2/2}
\right).
$$
Applying the generalized inequality of Eseen \cite{petrov} we have
$$
\sup_{x\in \sym C}|F_n(x)-Q_n(x)|\ll
\int_{-T}^{T}\frac{|u_n(t)e^{-it\sqrt{\frac{3\gamma_0}{4}\log
n}}-q_n(t)|}{|t|} +O\left(\frac{1}{T}\right)
$$
with $T=\log n$
and using the earlier obtained estimates together with lemma \ref{ut}, we obtain the proof of the theorem.
\end{proof}

\section{Distribution of $\log O_n(\alpha)$ on $S_n^{(k)}$  }

While estimating the closeness of  $\log P_n(\alpha)$ and $\log O_n(\alpha)$
we, as before, will use the formula
$$
\log P_n(a)-\log O_n(a)=\sum_p \sum_{s\geqslant 1}(D_{n,p^s}-1)^+\log p,
$$
where the sum is taken over the all prime numbers, $a\in ({\sym Z}^+)^n$, $(d-1)^+=d-1+I[d=0]$ and
$$
D_{n,d}=D_{n,d}(a)=\sum_{j \leqslant n : d|j}a_j.
$$
Since $(d-1)^+=d-1+I[d=0]\geqslant 0$, therefore
$$
\Delta_{nd}:={\bf M_n}(D_{n,d}(\alpha)-1)^+=\nu_n(D_{n,d}=0)+{\bf M_n}D_{n,d}-1\geqslant 0.
$$

Putting in formula (\ref{f_gen_func}) $\hat f(j)=1$ for $d\not| j$ and
$\hat f(j)=0$ for $d | j$, we obtain
$$
\sum_{n=0}^\infty\frac{|S_n^{(k)}|}{n!}\nu_n(D_{n,d}=0)z^n=\prod_{d\nmid j}\left(
1+\sum_{{\scriptstyle s\geqslant 1}\atop{\scriptstyle q_k(j)|s}}\left(\frac{z^j}{j}\right)^s\frac{1}{s!} \right).
$$
In a similar way, putting $\hat f(j)=e^{it}$, for $d|j$ and $\hat f(j)=1$
in other cases,
we have
\begin{multline*}
\lefteqn{\sum_{n=0}^\infty\frac{|S_n^{(k)}|}{n!}{\bf M_n}e^{itD_{n,d}}z^n}
\\
=\prod_{j\geqslant 1:d\nmid j}\left(
1+\sum_{s\geqslant 1:q_k(j)|s}\left(\frac{z^j}{j}\right)^s\frac{1}{s!} \right)
\prod_{j\geqslant 1:d|j}\left(
1+\sum_{s\geqslant 1:q_k(j)|s}\left(\frac{e^{it}z^j}{j}\right)^s\frac{1}{s!} \right).
\end{multline*}
Differentiating this inequality by $t$ and putting $t=0$, we obtain
\begin{multline*}
\lefteqn{\sum_{n=0}^\infty\frac{|S_n^{(k)}|}{n!}{\bf M_n}D_{n,d}z^n}
\\
=\prod_{j=1}^\infty\left(
1+\sum_{s\geqslant 1:q_k(j)|s}\left(\frac{z^j}{j}\right)^s\frac{1}{s!} \right)
\sum_{j\geqslant 1:d|j}\frac{\displaystyle{\sum_{s\geqslant 1:q_k(j)|s} } \left(\frac{z^j}{j}\right)^s\frac{1}{(s-1)!}}{
1+\displaystyle{\sum_{s\geqslant 1:q_k(j)|s}}\left(\frac{z^j}{j}\right)^s\frac{1}{s!} }.
\end{multline*}
We have
$$
D_{n,d}=D_{n,d}'+D_{n,d}'',
$$
where
$$
D_{n,d}'=\sum_{\scriptstyle j\leqslant n \atop \scriptstyle (j,k)=1}a_j\quad\hbox{\it and} \quad
D_{n,d}''=\sum_{\scriptstyle j\leqslant n \atop \scriptstyle (j,k)>1}a_j.
$$
Later we will often use the estimate of the following lemma
\begin{lem} Let as before $c_n=\frac{|S_n^{(k)}|}{n!}$ then  we have
\label{l_cnestimate}
\begin{equation*}
\sum_{j\leqslant n:d|j}c_{n-j}\ll c_{n-[n/d]d}+\frac{n}{d}c_n,
\end{equation*}
for $d\leqslant n$.
\end{lem}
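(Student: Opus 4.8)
The plan is to reduce the claim to the sharp asymptotics for $c_n$ already recorded in Theorem~\ref{pavlovpn}. That theorem gives $c_n\asymp n^{\gamma_0-1}$ for $n\geqslant1$, and since $c_0=1$ one obtains the uniform two-sided bound $c_m\asymp(m+1)^{\gamma_0-1}$ valid for all $m\geqslant0$, with implied constants depending on $k$ only. I shall use this together with the elementary estimate $\sum_{t=1}^{T}t^{\gamma_0-1}\ll T^{\gamma_0}$, which holds for every $T\geqslant1$ because $0<\gamma_0\leqslant1$: it is an equality $T$ when $\gamma_0=1$ (i.e.\ $k=1$), and is comparable to $\int_0^Tx^{\gamma_0-1}\,dx$ when $\gamma_0<1$.

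First I would put the sum in a convenient shape. Writing $j=sd$ with $1\leqslant s\leqslant[n/d]$ and then setting $m=n-sd$, the index $m$ runs through the progression $\rho,\rho+d,\dots,n-d$, where $\rho=n-[n/d]d$ is the residue of $n$ modulo $d$, so $0\leqslant\rho<d$. Hence
$$
\sum_{j\leqslant n:\,d\mid j}c_{n-j}=\sum_{t=0}^{[n/d]-1}c_{\rho+td}.
$$
(Including, if desired, the term $j=0$ only adds $c_n$, which is already dominated by $(n/d)c_n$.)

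Next I would isolate the term $t=0$, namely $c_\rho=c_{n-[n/d]d}$, which is exactly the first term of the claimed bound; it is needed precisely to absorb the short index $m=\rho$. For the remaining terms, using $\gamma_0-1\leqslant0$ I would estimate $c_{\rho+td}\ll(\rho+td+1)^{\gamma_0-1}\leqslant(td)^{\gamma_0-1}$ for $t\geqslant1$, so that
$$
\sum_{t=1}^{[n/d]-1}c_{\rho+td}\ll d^{\gamma_0-1}\sum_{t=1}^{[n/d]-1}t^{\gamma_0-1}\ll d^{\gamma_0-1}\Bigl(\tfrac{n}{d}\Bigr)^{\gamma_0}=\frac{n^{\gamma_0}}{d}\asymp\frac{n}{d}\,n^{\gamma_0-1}\asymp\frac{n}{d}\,c_n,
$$
the last two steps invoking once more $c_n\asymp n^{\gamma_0-1}$. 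Combining the two contributions gives $\sum_{j\leqslant n:\,d\mid j}c_{n-j}\ll c_{n-[n/d]d}+(n/d)c_n$, as required.

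The argument is essentially a routine splitting of a convolution sum, so there is no genuine obstacle; the only place demanding a little attention is keeping the two regimes $\gamma_0=1$ and $\gamma_0<1$ in mind when bounding $\sum_{t=1}^{T}t^{\gamma_0-1}$, together with the remark that the boundary term $c_{n-[n/d]d}$, whose index may be as small as $0$, is genuinely necessary on the right-hand side and cannot be merged into $(n/d)c_n$ when $d$ is comparable to $n$.
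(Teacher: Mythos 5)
Your proof is correct and follows essentially the same route as the paper's: substitute $j=sd$, peel off the boundary term $c_{n-[n/d]d}$, bound the remaining terms via Theorem~\ref{pavlovpn} by $(sd)^{\gamma_0-1}$, sum up to get $n^{\gamma_0}/d$, and convert back to $(n/d)c_n$ using the two-sided asymptotics. The only differences are cosmetic (you handle the $j=0$ term separately, the paper folds it into the main range; you spell out the $\gamma_0=1$ versus $\gamma_0<1$ dichotomy for $\sum_{t\leqslant T}t^{\gamma_0-1}\ll T^{\gamma_0}$, the paper leaves it implicit).
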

\begin{proof} Let $r=n-[n/d]d$ then  applying Theorem \ref{pavlovpn} we have
\begin{equation*}
\begin{split}
\sum_{j\leqslant n:d|j}c_{n-j}&=\sum_{s=0}^{[n/d]}c_{r+sd}\ll c_r+
\sum_{s=1}^{[n/d]}(r+sd)^{\gamma_0-1}\ll c_r+
\sum_{s=1}^{[n/d]}(sd)^{\gamma_0-1}
\\
&\ll
c_r+d^{\gamma_0-1}\left( \frac{n}{d} \right)^{\gamma_0}\ll
c_r+\frac{n^{\gamma_0}}{d}\ll c_{n-[n/d]d}+\frac{n}{d}c_n.
\end{split}
\end{equation*}
The lemma is proved.
\end{proof}

\begin{lem}
\label{md}
For $1\leqslant j\leqslant n$ and $(d,k)=1$ we have
$$
{\bf M_n}D_{n,d}'=\frac{\gamma_0}{d}\log\frac{n}{d} +O\left( \frac{1}{d}+\frac{c_{n-d[n/d]}}{nc_n}
\right);
$$
for any $1\leqslant d\leqslant n$ we have
$$
{\bf M_n}D_{n,d}''\ll \frac{1}{d^2}  + \frac{c_{n-d[n/d]}}{n^2c_n}.
$$
\end{lem}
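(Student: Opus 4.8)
The plan is to reduce both quantities to sums of the individual cycle--count means ${\bf M_n}\alpha_j$ (with $\alpha_j=\alpha_j(\sigma)$), to compute each such mean from the master identity (\ref{f_gen_func}), and then to evaluate the resulting sums using the coefficient asymptotics of Theorem \ref{pavlovpn} together with Lemma \ref{l_cnestimate}. Writing $c_n=|S_n^{(k)}|/n!$ and $F(z)=\sum_n c_nz^n$, I would first specialize $\hat f(m)=e^{it}$ at $m=j$ and $\hat f(m)=1$ otherwise in (\ref{f_gen_func}) and differentiate at $t=0$. If $(j,k)=1$ only the exponential factor is affected, giving $\sum_n c_n{\bf M_n}\alpha_j z^n=(z^j/j)F(z)$, hence ${\bf M_n}\alpha_j=c_{n-j}/(jc_n)$ for $j\le n$. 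If $(j,k)>1$ only the $j$-th Euler factor $g_j(z)=1+\sum_{s\ge1:\,q_k(j)|s}(z^j/j)^s/s!$ of $H_k(1;z)$ is perturbed, giving $\sum_n c_n{\bf M_n}\alpha_j z^n=F(z)R_j(z)$ with $R_j(z)=g_j(z)^{-1}\sum_{s\ge1:\,q_k(j)|s}\tfrac{s}{j^s s!}z^{js}$; since $q_k(j)\ge2$, the elementary majorant estimates of Lemma \ref{coef} (bounding $g_j^{-1}$ coefficientwise by $(1-(g_j-1))^{-1}$ and $R_j$ by $\tfrac{z^j}{j}(e^{z^j/j}-1)(1-(g_j-1))^{-1}$) yield $[R_j(z)]_{(l)}=0$ unless $j\mid l$ and $l\ge2j$, and $|[R_j(z)]_{(rj)}|\ll j^{-r}$ uniformly in $j\ge2$, $r\ge2$.

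For the first estimate I would assume $(d,k)=1$, write $j=dl$ (so $(j,k)=1\iff(l,k)=1$), and obtain ${\bf M_n}D'_{n,d}=(dc_n)^{-1}\sum_{l\le n/d,\,(l,k)=1}c_{n-dl}/l$. Split the $l$-sum at $dl\le n/2$. In that range Theorem \ref{pavlovpn} gives $c_{n-dl}/c_n=(1-dl/n)^{\gamma_0-1}(1+O(n^{-\beta}\log n))$, and since $|(1-u)^{\gamma_0-1}-1|\ll u$ on $[0,\tfrac12]$ and $\sum_{l\le x,\,(l,k)=1}1/l=\gamma_0\log x+O(1)$ (M\"obius over divisors of $k$, using $\sum_{e\mid k}\mu(e)/e=\phi(k)/k=\gamma_0$), this part equals $\gamma_0\log(n/d)+O(1)$. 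In the remaining range $n/2<dl\le n$ one has $1/l\asymp d/n$, so by Lemma \ref{l_cnestimate} this part is $\ll\frac d{nc_n}\sum_{j\le n:\,d\mid j}c_{n-j}\ll\frac d{nc_n}\big(c_{n-d[n/d]}+\tfrac ndc_n\big)=O(1)+O\!\big(dc_{n-d[n/d]}/(nc_n)\big)$. Dividing by $d$ gives ${\bf M_n}D'_{n,d}=\tfrac{\gamma_0}{d}\log\tfrac nd+O\!\big(\tfrac1d+c_{n-d[n/d]}/(nc_n)\big)$.

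For the second estimate I would use $[F(z)R_j(z)]_{(n)}=\sum_m c_m[R_j(z)]_{(n-m)}\ll\sum_{r\ge2:\,rj\le n}c_{n-rj}/j^r$ to get ${\bf M_n}D''_{n,d}\ll c_n^{-1}\sum_{j\le n:\,d\mid j}\sum_{r\ge2:\,rj\le n}c_{n-rj}/j^r$. The $r=2$ term dominates; writing $j=dl$ and splitting at $2dl\le n/2$, the bulk contributes $\ll d^{-2}\sum_l l^{-2}=O(d^{-2})$ (there $c_{n-2dl}/c_n=O(1)$ because $0<\gamma_0\le1$), while for the terms with $2dl$ close to $n$ one has $l^{-2}\asymp d^2/n^2$ and Lemma \ref{l_cnestimate} (same argument with step $2d$) bounds $\sum_l c_{n-2dl}\ll c_{n-2d[n/(2d)]}+\tfrac ndc_n$; since $n-2d[n/(2d)]\in\{n-d[n/d],\,n-d[n/d]+d\}$ and $c_m$ is non-increasing in the relevant range, $c_{n-2d[n/(2d)]}\ll c_{n-d[n/d]}$, so this part is $O\!\big(c_{n-d[n/d]}/(n^2c_n)\big)+O(1/(nd))$, with $1/(nd)\le1/d^2$ as $d\le n$. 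The terms $r\ge3$, being more heavily damped in $j$, are handled by the same splitting and are of the same or smaller order, so ${\bf M_n}D''_{n,d}\ll d^{-2}+c_{n-d[n/d]}/(n^2c_n)$.

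The main obstacle, in both parts, is the range in which $dl$ (respectively $2dl$) lies within a constant fraction of $n$: there the ratio $c_{n-dl}/c_n$ is no longer $O(1)$, the smooth approximation $(1-dl/n)^{\gamma_0-1}$ breaks down, and one must sum the coefficients $c_{n-dl}$ directly. This is precisely where Lemma \ref{l_cnestimate} and the exact exponent $\gamma_0-1$ of Theorem \ref{pavlovpn} (together with $0<\gamma_0\le1$) are needed, and it is the source of the $c_{n-d[n/d]}$ error terms; the only genuinely delicate bookkeeping is matching the residue $n-2d[n/(2d)]$ against $n-d[n/d]$ and checking that, whenever the coefficient $c_{n-2d[n/(2d)]}$ is of order $1$, the residue $n\bmod d$ is small as well.
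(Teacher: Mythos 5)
Your derivation of ${\bf M}_nD'_{n,d}$ is essentially the paper's proof: you both write $c_n{\bf M}_nD'_{n,d}=\sum_{l\le n/d,\,(l,k)=1}c_{n-dl}/l$, split at $dl\le n/2$, replace $c_{n-dl}/c_n$ by $(1-dl/n)^{\gamma_0-1}+O(n^{-\beta})$ in the bulk (the paper phrases this as $|c_{n-sd}-c_n|$, but it is the same mean-value estimate; your extra factor $\log n$ in the error is superfluous), and dispatch the tail $n/2<dl\le n$ via Lemma~\ref{l_cnestimate}. Nothing to add there.

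For ${\bf M}_nD''_{n,d}$ the paper's route and yours start from the same generating-function formula for ${\bf M}_n\alpha_j$ but then diverge. The paper never analyzes the full rational factor $R_j(z)$: since the Euler factor $g_j(z)$ has nonnegative coefficients and constant term $1$, Lemma~\ref{coef} lets it \emph{drop the denominator} coefficientwise, $[F(z)R_j(z)]_{(n)}\le[F(z)\cdot\mathrm{num}_j(z)]_{(n)}$. Then using $s\ge q_k(j)\ge2$ it bounds $j^{-s}\le j^{-2}$ and $1/(s-1)!\ll s^{-4}$, and substitutes $m=js$ so that
$$
\sum_{j:d|j}\sum_{s\ge2:q_k(j)|s}\frac{z^{sj}}{j^s(s-1)!}\ \ll\ \sum_{m:d|m}\frac{z^m}{m^2}\sum_{s|m}\frac{1}{s^2}\ \ll\ \sum_{m:d|m}\frac{z^m}{m^2},
$$
which reduces everything to the single sum $\sum_{m\le n,\,d|m}c_{n-m}/m^2$ — handled exactly like your $r=2$ term. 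Your alternative (keeping $g_j^{-1}$, bounding the Taylor coefficients $b_r$ of $R_j$ in $w=z^j/j$ uniformly through $w(e^w-1)/(1-\psi(w))$, and working with the double sum over $(j,r)$) does work, and the uniform bound $|b_r|\ll1$ is correct because $\psi$ depends on $j$ only through $q_k(j)\in\{$divisors of $k_0$ that are $\ge2\}$ and $\psi(1)\le e-2<1$. But the handling of $r\ge3$ in your sketch is genuinely the place that needs more care than "of the same or smaller order": one has to check that $j\ge2$ forces $j^{-r}\le\min((2r/n)^r,2^{-r})$ on the tail, that $\sum_{r\ge2}(2r/n)^r\ll n^{-2}$, and (as you yourself flag) that $c_{n\bmod rd}\ll c_{n\bmod d}$ for every $r$. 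The paper's $m=js$ collapse is precisely the device that renders all of that bookkeeping unnecessary — a small but real simplification worth noticing.
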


\begin{proof}
Suppose $(d,k)=1$.  Since
\begin{multline*}
\frac{|S_n^{(k)}|}{n!}{\bf M_n}D_{n,d}'
\\= \left[ \prod_{j=1}^\infty\left(
1+\sum_{s\geqslant 1:q_k(j)|s}\left(\frac{z^j}{j}\right)^s\frac{1}{s!} \right)
\sum_{\scriptstyle j\geqslant 1:d|j \atop \scriptstyle (k,j)=1}\frac{z^j}{j}\right]_{(n)}
=
\left[ F(z)
\sum_{\scriptstyle j\geqslant 1:d|j \atop \scriptstyle (k,j)=1}\frac{z^j}{j}\right]_{(n)},
\end{multline*}
where $F(z)=\sum_{j=0}^\infty c_jz^j$ and $c_m=\frac{|S_m^{(k)}|}{m!}=cm^{\gamma_0-1}\left( 1+
O(m^{-\epsilon})  \right)$, therefore
\begin{equation*}
\begin{split}
c_n{\bf M_n}D_{n,d}'&=\sum_{\scriptstyle  j\leqslant n:d|j  \atop (j,k)=1 \scriptstyle }
\frac{c_{n-j}}{j}=\frac{1}{d}\sum_{\scriptstyle  s\leqslant \frac{n}{2d}  \atop (j,k)=1 \scriptstyle}
\frac{c_{n-sd}}{s}+O\left( \frac{1}{n}
\sum_{ \frac{n}{2d}\leqslant s\leqslant \frac{n}{d}  }{c_{n-sd}} \right)
\\
&=\frac{c_n}{d}\sum_{\scriptstyle  s\leqslant \frac{n}{2d}  \atop (s,k)=1 \scriptstyle}
\frac{1}{s}+O\left( \frac{1}{d}\sum_{s\leqslant \frac{n}{2d}}
\frac{|c_{n-sd}-c_n|}{s}\right) +O\left( \frac{c_{n-d[n/d]}}{n}
+\frac{c_n}{d} \right).
\end{split}
\end{equation*}
Since
$$
\sum_{\scriptstyle s\leqslant x \atop \scriptstyle (s,k)=1}\frac{1}{s}=\gamma_0
\log x +O\left( 1\right),
$$
then
\begin{multline*}
c_n{\bf M_n}D_{n,d}'=\gamma_0\frac{c_n}{d}\log \frac{n}{d}
\\
+
O\left( \frac{c_n}{d}\right) +
O\left(\frac{c_n}{d}+
\frac{1}{d} n^{\gamma_0-1-\epsilon}\log n\right)
+
O\left(\frac{c_{n-d[n/d]}}{n}
+\frac{c_n}{d}\right).
\end{multline*}
Dividing each side of this equation by $c_n$,
we obtain the first assertion of the lemma.

Let us prove the second estimate. We have
$$
\frac{|S_n^{(k)}|}{n!}{\bf M_n}D_{n,d}''\leqslant
\left[ F(z)
\sum_{\scriptstyle j\geqslant 1:d|j \atop \scriptstyle (k,j)>1} \sum_{s\geqslant 1:q_k(j)|s} \frac{z^{sj}}{j^s}\frac{1}{(s-1)!}     \right]_{(n)}.
$$
As here $s\geqslant 2$ and $(s-1)!\gg s^4$, then
 $$
\frac{|S_n^{(k)}|}{n!}{\bf M_n}D_{n,d}''
\ll
\left[ F(z)
\sum_{j\geqslant 1: d|j} \sum_{s\geqslant 1} \frac{z^{sj}}{j^2}\frac{1}{s^4}     \right]_{(n)}
=
\left[ F(z)
\sum_{m\geqslant 1:d|m}z^m \sum_{\scriptstyle j\geqslant 1:js=m \atop \scriptstyle d|j} \frac{1}{j^2s^4}  \right]_{(n)}
$$

$$
=\left[ F(z)
\sum_{m\geqslant 1:d|m}\frac{z^m}{m^2} \sum_{\scriptstyle j\geqslant 1:js=m \atop \scriptstyle d|j} \frac{1}{s^2}  \right]_{(n)}
\leqslant \left[ F(z)
\sum_{m\geqslant 1:d|m}\frac{z^m}{m^2}   \right]_{(n)}
\ll \frac{c_n}{d^2}  + \frac{c_{n-d[n/d]}}{n^2}.
$$
The lemma is proved.
\end{proof}

\begin{lem}
\label{meanalph}
For $d\geqslant \log n$  and $(d,k)=1$ we have
\begin{multline*}
0\leqslant {\bf M_n}(D_{n,d}(\alpha)-1)^+=\nu_n(D_{n,d}=0)+{\bf M_n}D_{n,d}-1
\\
\ll \left( \frac{\log n}{d} \right)^2 + \frac{\log n}{nd}\frac{c_{n-d[n/d]}}{c_n}.
\end{multline*}
If $(d,k)>1$, then
$$
0\leqslant {\bf M_n}(D_{n,d}(\alpha)-1)^+\leqslant {\bf M_n}D_{n,d}\ll \frac{1}{d^2}  + \frac{c_{n-d[n/d]}}{n^2c_n}.
$$
\end{lem}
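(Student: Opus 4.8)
\textit{Proof proposal.} The plan is to treat the two cases separately, and in the harder case $(d,k)=1$ to combine the generating functions of ${\bf M_n}D_{n,d}$ and of $\nu_n(D_{n,d}=0)$ \emph{before} any coefficient is extracted, so that the leading term $\tfrac{\gamma_0}{d}\log\tfrac nd$ of ${\bf M_n}D_{n,d}$ cancels the corresponding term of $1-\nu_n(D_{n,d}=0)$ and only genuinely second-order contributions remain.

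Consider first $(d,k)>1$. Then $d\mid j$ forces $(j,k)\geqslant(d,k)>1$, so $D_{n,d}'\equiv 0$ and $D_{n,d}=D_{n,d}''$; since $(D_{n,d}-1)^+=D_{n,d}-1+I[D_{n,d}=0]$ satisfies $0\leqslant(D_{n,d}-1)^+\leqslant D_{n,d}$ pointwise, we get $0\leqslant{\bf M_n}(D_{n,d}-1)^+\leqslant{\bf M_n}D_{n,d}''$, and the second estimate of Lemma \ref{md} is exactly the claimed bound, using $d\leqslant n$ to absorb $c_{n-d[n/d]}/(n^2c_n)$. Moreover, whenever $d$ is below a fixed multiple of $\log n$ the whole lemma is trivial, because then $(\log n/d)^2\asymp 1$ while ${\bf M_n}(D_{n,d}-1)^+\leqslant{\bf M_n}D_{n,d}\ll 1$ by Lemma \ref{md}; hence in the case $(d,k)=1$ we may assume $d\geqslant C\log n$ with $C$ as large as needed.

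So let $(d,k)=1$; nonnegativity of $\Delta_{nd}:={\bf M_n}(D_{n,d}-1)^+=\nu_n(D_{n,d}=0)+{\bf M_n}D_{n,d}-1$ is already recorded before the lemma. Because $(d,k)=1$ one has $q_k(dm)=q_k(m)$ for every $m$, so in the three generating-function identities preceding the lemma the factors indexed by $j=dm$ with $(m,k)=1$ are precisely $\exp\{z^{dm}/(dm)\}$. Put $\Lambda_d(z):=\sum_{(m,k)=1}z^{dm}/(dm)=\tfrac1dP(z^d)$, so that $e^{\Lambda_d(z)}=p(z^d)^{1/d}$, collect the $(m,k)>1$ terms of the ${\bf M_n}D_{n,d}$--identity into $\widetilde\Psi_d(z)$ (which satisfies $[F(z)\widetilde\Psi_d(z)]_{(n)}=c_n{\bf M_n}D_{n,d}''$) and those of the $\nu_n(D_{n,d}=0)$--identity into $W_d(z)$, writing $W_d(z)^{-1}=1+V_d(z)$. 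With $F(z)=p(z)H_k(1;z)=\sum_n c_nz^n$ the three identities combine into the exact relation
\begin{equation*}
\sum_{n\geqslant 0}c_n\Delta_{nd}\,z^n=F(z)\bigl(\Lambda_d(z)-1+e^{-\Lambda_d(z)}\bigr)+F(z)\widetilde\Psi_d(z)+F(z)e^{-\Lambda_d(z)}V_d(z),
\end{equation*}
and $\Lambda_d-1+e^{-\Lambda_d}=\sum_{i\geqslant 2}(-1)^i\Lambda_d^i/i!$ has lost its linear part.

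It remains to bound $[\,\cdot\,]_{(n)}/c_n$ of each summand, using $c_m\asymp m^{\gamma_0-1}$ (Theorem \ref{pavlovpn}) and Lemma \ref{l_cnestimate}. The middle summand is ${\bf M_n}D_{n,d}''$, controlled by Lemma \ref{md}. The third summand is estimated by dominating coefficientwise, via Lemma \ref{coef}(2), $|[e^{-\Lambda_d}]_{(m)}|\leqslant[p(z^d)^{1/d}]_{(m)}$ and bounding $|[V_d]_{(m)}|$ by a nonnegative series supported on multiples of $d$ of total mass $\ll 1/d^2$ (its factors are $1+O((z^{dm}/(dm))^{q_k(m)})$ with $q_k(m)\geqslant 2$), which after the usual split of the convolution against $F$ at $dm\leqslant n/2$ and $dm>n/2$ yields $\ll 1/d^2+c_{n-d[n/d]}/(n^2c_n)$. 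For the first summand one uses Lemma \ref{coef}(2),(5) to get $|[\Lambda_d-1+e^{-\Lambda_d}]_{(m)}|\leqslant[e^{\Lambda_d}-1-\Lambda_d]_{(m)}\leqslant[\Lambda_d(z)(e^{\Lambda_d(z)}-1)]_{(m)}$, where $e^{\Lambda_d(z)}=p(z^d)^{1/d}$ has coefficients on multiples of $d$ of size $\ll\tfrac{\gamma_0}{d}\cdot\tfrac{1}{i}$; the same split of the convolution turns the $i=2$ contribution $\Lambda_d^2/2$ into $\ll c_n(\tfrac1d\log\tfrac nd)^2$ plus a tail $\ll\tfrac{\log n}{nd}c_{n-d[n/d]}$, and since $d\geqslant C\log n$ makes $\tfrac{\log(n/d)}{d}$ small, the powers $\Lambda_d^i$ with $i\geqslant 3$ sum to a geometric series of the same order. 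The delicate point, and the only real obstacle, is precisely this first summand: one must extract $O((\log n/d)^2)$ and not merely the weaker $O(1/d)$ (which would already fail for $d\gg(\log n)^2$), which is what forces both the cancellation of the linear part of $\Lambda_d$ at the level of generating functions and the careful bookkeeping of the effective truncation that replaces $\Lambda_d$ by its partial sum of order $\tfrac{\gamma_0}{d}\log\tfrac nd$ once the convolution with $F$ restricts $dm$ to $dm\leqslant n$.
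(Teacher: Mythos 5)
Your plan is sound and, in outline, it is the paper's: factor the generating function so the linear part of $\Lambda_d(z)$ cancels, then estimate what remains by coefficientwise domination (Lemma \ref{coef}) followed by the usual split of the convolution against $F(z)$ at $dm\leqslant n/2$. Where the paper is leaner: it does not keep your exact $Fe^{-\Lambda_d}V_d$ term, but instead majorizes $c_n\nu_n(D_{n,d}=0)$ coefficientwise by $\bigl[Fe^{-\Lambda_d}\bigr]_{(n)}$, discarding the (nonpositive) $V_d$-contribution and retaining only a one-sided inequality --- which is all the lemma needs, and it saves your entire third estimate. And to bound the first summand the paper dominates $e^{\Lambda_d}-1$ coefficientwise by $(1-z^d)^{-1/d}-1$ and invokes the inequality $\bigl[(1-z)^{-v}\bigr]_{(m)}\leqslant v e^2\bigl[\log\frac{1}{1-z}\bigr]_{(m)}$, valid for $m\leqslant n$ precisely because $v\log n\leqslant 1$ under the stated hypothesis $d\geqslant\log n$; this collapses all higher powers of $\Lambda_d$ in one line, whereas your power-series expansion plus geometric series is morally the same calculation but forces your preliminary reduction to $d\geqslant C\log n$. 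One small slip in citation: the chain $|[\Lambda_d-1+e^{-\Lambda_d}]_{(m)}|\leqslant[e^{\Lambda_d}-1-\Lambda_d]_{(m)}\leqslant[\Lambda_d(e^{\Lambda_d}-1)]_{(m)}$ is not literally Lemma \ref{coef}(2),(5), since that lemma treats $1+e^\psi\psi-e^\psi$ rather than $e^\psi-1-\psi$; the inequality you need is nonetheless immediate from the coefficientwise bound $\sum_{i\geqslant 2}\psi^i/i!\leqslant\sum_{i\geqslant 2}\psi^i/(i-1)!$.
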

\begin{proof}
Suppose $(d,k)=1$. Let us denote $H_k(z)=H_k(1;z)$. Applying the
formulas which were obtained above together with lemma \ref{coef},
we have
\begin{equation*}
\begin{split}
\frac{|S_n^{(k)}|}{n!}\nu_n(D_{n,d}=0)&=\left[ \prod_{{\scriptstyle j\geqslant 1}\atop{\scriptstyle d\nmid j}}\left(
1+\sum_{{\scriptstyle s\geqslant 1}\atop{\scriptstyle q_k(j)|s}}\left(\frac{z^j}{j}\right)^s\frac{1}{s!} \right) \right]_{(n)}
\\
&=\left[\frac{p(z)}{p(z^d)^{1/d}} \prod_{\scriptstyle j\geqslant 1:d\nmid j \atop \scriptstyle (k,j)>1}\left(
1+\sum_{{\scriptstyle s\geqslant 1}\atop{\scriptstyle q_k(j)|s}}\left(\frac{z^j}{j}\right)^s\frac{1}{s!} \right) \right]_{(n)}
\\
&\leqslant \left[\frac{p(z)}{p(z^d)^{1/d}} \prod_{ {\scriptstyle j\geqslant 1}\atop{\scriptstyle(k,j)>1}}\left(
1+\sum_{{\scriptstyle s\geqslant 1}\atop{\scriptstyle q_k(j)|s}}\left(\frac{z^j}{j}\right)^s\frac{1}{s!} \right) \right]_{(n)}
\\
&=
\left[\frac{p(z)}{p(z^d)^{1/d}}H_k(z)\right]_{(n)}.
\end{split}
\end{equation*}
In a similar way we have
\begin{equation*}
\begin{split}
\frac{|S_n^{(k)}|}{n!}{\bf M_n}D_{n,d}&=\frac{|S_n^{(k)}|}{n!}{\bf M_n}D'_{n,d}
+\frac{|S_n^{(k)}|}{n!}{\bf M_n}D''_{n,d}
\\
&=\left[ p(z)H_k(z)
\sum_{\scriptstyle j\geqslant 1:d|j \atop \scriptstyle (k,j)=1}\frac{z^j}{j}\right]_{(n)}+
\frac{|S_n^{(k)}|}{n!}{\bf M_n}D''_{n,d}
\end{split}
\end{equation*}
Hence we have
\begin{multline*}
0\leqslant \frac{|S_n^{(k)}|}{n!}\Delta_{n,d}
\leqslant\left[ \frac{p(z)}{p(z^d)^{1/d}}H_k(z)\left( 1+p(z^d)^{1/d}\sum_{\scriptstyle j\geqslant 1:d|j \atop \scriptstyle (k,j)>1}\frac{z^j}{j}-p(z^d)^{1/d}\right) \right]_{(n)}
\\
+\frac{|S_n^{(k)}|}{n!}{\bf M_n}D''_{n,d}
=:S_1+S_2.
\end{multline*}
Since the coefficients in the Taylor expansion of the functions
 $\frac{p(z)}{p(z^d)^{1/d}}$ and $H_k(z)$ are positive, therefore $\left[ \frac{p(z)}{p(z^d)^{1/d}}H_k(z)\right]_{(m)}\geqslant 0$ for $m\geqslant 0$.
Putting  $\psi(z)=\sum_{\scriptstyle j\geqslant 1:d|j \atop \scriptstyle (k,j)>1}\frac{z^j}{j}$, we have $e^{\psi(z)}=p(z^d)^{1/d}$.
Applying here the estimate 4) of  lemma \ref{coef}, we obtain
$$
S_1\leqslant \left[ \frac{p(z)}{p(z^d)^{1/d}}H_k(z)\psi(z)\left(e^{\psi(z)}-1 \right) \right]_{(n)}.
$$
Since $[\psi(z)]_{(s)}\leqslant \left[ \frac{1}{d}\log \frac{1}{1-z^d}\right]_{(s)}$ and
$\left[ \frac{p(z)}{p(z^d)^{1/d}}\right] _{(s)}\leqslant [p(z)]_{(s)}$ for $s\geqslant 0$,
then applying here the estimates 1) and 3) of the lemma \ref{coef}, we have
\begin{multline*}
S_1\leqslant \left[ p(z)H_k(z)\frac{1}{d}\log \frac{1}{1-z^d}\left(\frac{1}{(1-z^d)^{1/d}}-1 \right) \right]_{(n)}
\\
\ll\left[ p(z)H_k(z)\frac{1}{d^2}\log^2 \frac{1}{1-z^d} \right]_{(n)}
\end{multline*}
for $d\geqslant \log n$.
Here we have used the fact that for
 $0<v <1$
\begin{equation*}
\begin{split}
\left[ \frac{1}{(1-z)^v}\right]_{(m)}&=\frac{v (v+1)\cdots (v +m-1)}{m!}
\\
&=\frac{v }{m}{\prod_{j=1}^{m-1}\left(1+\frac{v}{j}\right)}
\leqslant ve^2\left[ \log \frac{1}{1-z}\right]_{(m)},
\end{split}
\end{equation*}
if $1\leqslant m\leqslant n$ and $n$ is such that $v \log n \leqslant 1$.

Since $p(z)H_k(z)=F(z)=\sum_{j=0}^{\infty}c_jz^j$ with $c_m=\frac{|S_m^{k}|}{m!}$, then
finally we obtain
\begin{multline*}
S_1\ll \frac{1}{d^2}\sum_{1\leqslant j \leqslant \frac{n}{d}}\frac{\log j}{j}c_{n-jd}
\ll \frac{c_n}{d^2}\sum_{1\leqslant j \leqslant \frac{n}{2d}}\frac{\log j}{j}
+ \frac{1}{d^2}\sum_{\frac{n}{2d}\leqslant j \leqslant \frac{n}{d}-1}\frac{\log j}{j}c_{n-jd} \\+\frac{\log n}{nd}c_{n-d[n/d]}
\ll c_n\left( \frac{\log n}{d} \right)^2 + \frac{\log n}{nd}c_{n-d[n/d]}.
\end{multline*}
Lemma \ref{md} yields the estimate of  $S_2$: $S_2\ll \frac{c_n}{d^2}+
\frac{c_{n-d[n/d]}}{n^2}.$

Suppose now $(d,k)>1$. Then $D_{n,d}=D_{n,d}''$ and the desired estimate follows from the lemma \ref{md}.

The lemma is proved.
\end{proof}


\begin{lem}
\label{coefp}
There exists such a positive constant $\epsilon$, that
$$
\left[\frac{p(z)}{p(z^d)^{\frac{1}{d}}}\right]_{(n)}
=\frac{d^{\gamma_0/d}A_k^{1-1/d}}{\Gamma\left( \gamma_0\left(1-\frac{1}{d} \right) \right) }n^{\gamma_0\left(1-\frac{1}{d} \right)-1}
\bigl(1+O(n^{-\epsilon}) \bigr),
$$
if $C<d\leqslant \log ^{D}n$. Here $D$ --
is an arbitrary fixed positive constant and  $S$ --
sufficiently big positive number.
\end{lem}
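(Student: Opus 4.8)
The plan is to use singularity analysis (contour integration of the Cauchy coefficient formula, in the spirit of the proof of Lemma~\ref{g_nx}), after isolating the dominant singularity of $p(z)/p(z^d)^{1/d}$ at $z=1$. Write $p(w)=(1-w)^{-\gamma_0}\psi(w)$, where $\psi(w)=\prod_{1\le j<k_0}(1-we^{-2\pi ij/k_0})^{-\gamma_j}$ is analytic and non-vanishing near $w=1$ with $\psi(1)=A_k$. Using $1-z^d=(1-z)(1+z+\cdots+z^{d-1})$ one obtains
\[
\frac{p(z)}{p(z^d)^{1/d}}=(1-z)^{-\gamma_0(1-1/d)}\,\Theta_d(z),\qquad
\Theta_d(z)=\bigl(1+z+\cdots+z^{d-1}\bigr)^{\gamma_0/d}\,\frac{\psi(z)}{\psi(z^d)^{1/d}},
\]
where $\Theta_d$ is analytic on a disc of radius $\asymp 1/(dk_0)$ about $z=1$ and $\Theta_d(1)=d^{\gamma_0/d}A_k^{1-1/d}$. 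A short computation gives $\Theta_d'(1)/\Theta_d(1)=\gamma_0(d-1)/(2d)\le\gamma_0/2$, and by Cauchy's estimate on that disc, $\Theta_d'$ is $O(d)$ on a disc of radius $n^{-1/2}$ about $z=1$ (here $d\le\log^D n$ is used). Thus near $z=1$ the function behaves, up to a factor that differs from the constant $\Theta_d(1)$ by $O(d\,|z-1|)$, like $\Theta_d(1)(1-z)^{-\gamma_0(1-1/d)}$.

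I would then deform the Cauchy contour $|z|=r<1$ in $[z^n]\,p(z)/p(z^d)^{1/d}$ to a Hankel-type loop coming within distance $1/n$ of $z=1$, together with small loops of radius $n^{-1/2}$ around each remaining singularity on $|z|=1$ — the non-principal $k_0$-th roots of unity, the $d$-th roots of unity $\ne 1$ (where $p(z^d)^{1/d}$ has a branch point of exponent $\gamma_0/d$), and the $O(k_0 d)$ points $z$ with $z^d$ a non-principal $k_0$-th root of unity — closed off by an arc of the fixed circle $|z|=2$. The deformation is legitimate since $p$, hence $p(z^d)^{1/d}$, continues analytically off suitable radial cuts from the singular points on $|z|=1$ and stays bounded at infinity. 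Because $d\le\log^D n$, the spacing of the $d$-th roots of unity is $\gg n^{-1/2}$, so all these loops are pairwise disjoint and avoid $z=1$; this disjointness, holding uniformly in $d$, is what makes the argument run over the whole range $C<d\le\log^D n$.

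On the loop about $z=1$, inserting the factorization and writing $\Theta_d(z)=\Theta_d(1)+O(d\,|z-1|)$, the classical Hankel integral $\frac{1}{2\pi i}\oint(1-z)^{-\alpha}z^{-n-1}\,dz=n^{\alpha-1}/\Gamma(\alpha)\,\bigl(1+O(1/n)\bigr)$ — uniform for $\alpha=\gamma_0(1-1/d)$ in the compact interval $[\gamma_0(1-1/C),\gamma_0]\subset(0,1]$, on which $\Gamma$ is bounded away from $0$ — yields the main term $\dfrac{d^{\gamma_0/d}A_k^{1-1/d}}{\Gamma\!\bigl(\gamma_0(1-1/d)\bigr)}\,n^{\gamma_0(1-1/d)-1}$ with relative error $O\bigl(d/n\bigr)$. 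The loop about a non-principal $k_0$-th root $e^{2\pi ij/k_0}$ sees exponent $\gamma_j-\gamma_{jd\bmod k_0}/d\le\gamma'+\gamma_0/d$, contributing $O\bigl(n^{\gamma'+\gamma_0/d-1}\bigr)$; each of the $O(\log^D n)$ loops of the other two types contributes $O\bigl(n^{\gamma_0/d-1}\bigr)$; and the arc of $|z|=2$ contributes $O(2^{-n})$. Dividing by the main term and recalling $\beta=\gamma_0-\gamma'$ and $\gamma'\ge0$, all secondary contributions are $O\bigl((\log n)^{c}\,n^{-\beta+2\gamma_0/d}\bigr)$ for some $c=c(k,D)$; choosing the constant $C$ large enough that $2\gamma_0/C<\tfrac12\beta$ makes this $O\bigl(n^{-\beta/4}\bigr)$, so the lemma holds with $\epsilon=\min\{\beta/4,\tfrac12\}$.

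The main obstacle is exactly this uniformity in $d$: one must control the proliferating and $z=1$-clustering secondary singularities of $p(z^d)^{1/d}$, and, most delicately, verify that the secondary singularity at the non-principal $k_0$-th roots of unity has exponent $\le\gamma'+\gamma_0/d$ safely below the dominant exponent $\gamma_0(1-1/d)=\gamma_0-\gamma_0/d$ — which holds precisely when $2\gamma_0/d<\beta$, forcing $C$ to be taken of size $\asymp\gamma_0/\beta$. Everything else is routine singularity analysis; the factorization of the dominant singularity and the choice of loop radius $n^{-1/2}$ are the devices that make it go through uniformly.
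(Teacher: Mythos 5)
Your proposal is correct and follows essentially the same route as the paper's own proof: a Cauchy integral with slit/Hankel indentations at the $dk_0$-th roots of unity, the local factorization $p(z)/p(z^d)^{1/d}=(1-z)^{-\gamma_0(1-1/d)}\Theta_d(z)$ with $\Theta_d(1)=d^{\gamma_0/d}A_k^{1-1/d}$, the binomial (Hankel) asymptotic for the dominant exponent, and uniform bounds on the remaining branch points. Your bound $\Theta_d(z)=\Theta_d(1)+O(d\,|z-1|)$ obtained from Cauchy's estimate is slightly cruder than the paper's direct computation $\bigl((1-z^d)/(1-z)\bigr)^{\gamma_0/d}=d^{\gamma_0/d}+O(|1-z|)$, but since $d\leqslant\log^{D}n$ it still yields relative error $O(d/n)=O(n^{-1/2})$, which is absorbed by your choice $\epsilon=\min\{\beta/4,1/2\}$.
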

\begin{proof}

Let us denote by $K_d=C_2\cup\cup_{j=0}^{dk_0}L_j$ the integration contour, here $L_j$ denotes the sides of the intervals on the complex plain, connecting the points
 $e^{\frac{2\pi ij}{dk_0}}$ and $2e^{\frac{2\pi ij}{dk_0}}$. $C_2$ is the circle $|z|=2$. As  $\frac{1}{|p(z^d)|^{\frac{1}{d}}}\ll \frac{1}{(|z|^d-1)^{\frac{\gamma_0}{d}}}\ll \frac{1}{(|z|-1)^{\frac{\gamma_0}{d}}}$
for $1\leqslant |z|\leqslant 2$ and $p(z)\ll d^{\gamma_0}$ for $z\in L_j$, $k_0\nmid j$ then
applying the formula of Cauchy, we have
\begin{multline*}
\left[ \frac{p(z)}{p(z^d)^{\frac{1}{d}}}\right]_{(n)}=\frac{1}{2\pi i}\int_{K_d}\frac{p(z)}{p(z^d)^{\frac{1}{d}}}\frac{dz}{z^{n+1}}
=
\sum_{ j=0}^{dk_0-1}
\frac{1}{2\pi i}\int_{L_j}\frac{p(z)}{p(z^d)^{\frac{1}{d}}}\frac{dz}{z^{n+1}}
\\
\shoveleft{=\frac{1}{2\pi i}\int_{L_0}\frac{p(z)}{p(z^d)^{\frac{1}{d}}}\frac{dz}{z^{n+1}}
+O\left(\sum_{\scriptstyle 0< j< {dk_0} \atop \scriptstyle d\nmid j}d^{\gamma_0}\int_1^2
\frac{dr}{(r-1)^{\frac{\gamma_0}{d}}r^{n+1}} \right. }
\\
\shoveright{\left.+\sum_{s=1}^{k_0-1}\int_1^2
\frac{dr}{(r-1)^{\frac{\gamma_0}{d}+\gamma_s}r^{n+1}}\right) +O(2^{-n})}
\\
=I+O\left( d^{\gamma_0+1}n^{\frac{\gamma_{0}}{d}-1}
+\sum_{s=1}^{k_0-1}
n^{\gamma_s+\frac{\gamma_0}{d}-1}\right)
=
I+O\left( d^{\gamma_0+1}n^{\frac{\gamma_{0}}{d}-1}
+n^{\gamma'+\frac{\gamma_0}{d}-1}\right) ,
\end{multline*}
where $\gamma'=\max_{j\not=0}\gamma_j <\gamma_0$ and
\begin{equation}
\label{intg}
\begin{split}
I&=
\frac{1}{2\pi i}\int_{L(1,1+\frac{1}{2d})}\frac{p(z)}{p(z^d)^{\frac{1}{d}}}\frac{dz}{z^{n+1}}
+O\left( \int_{1+\frac{1}{2d}}^2\frac{1}{(r-1)^{\gamma_0}}\frac{dr}{r^{n+1}}\right)
\\
&=\frac{1}{2\pi i}\int_{L(1,1+\frac{1}{2d})}\frac{p(z)}{p(z^d)^{\frac{1}{d}}}\frac{dz}{z^{n+1}}
+O\left( d^{\gamma_0}e^{-\frac{n}{2d+1}}\right) ,
\end{split}
\end{equation}
and $L(1,1+\frac{1}{2d})$ is the part of  $L_0$, which lies inside of disc $|z-1|<\frac{1}{2d}$.
Here we have used the fact that
$|z^d|>c>1$ for
$|z|>1+\frac{1}{2d}$ and $|p(w)|\to1$,  if  $|w|\to \infty$.

Suppose $S=\{z\in {\sym C}:|\arg (z)|<1/(2k_0)\}$. Then for $z\in S$ we have
$$
p(z)=\prod_{j=0}^{k_0-1}
\frac{1}{(1-ze^{-2\pi i j/k})^{\gamma_j}}=\frac{u(z)}{(1-z)^{\gamma_0}},
$$
here $u(z)$ -- analytic function in the sector  $S$.
If  $z$ is such, that $|\arg(z)|<1/(4dk_0)$, then $z\in S$ and $z^d\in S$.
The previous formula yields
$$
\frac{p(z)}{p(z^d)^{\frac{1}{d}}}=
\frac{(1-z^d)^{\frac{\gamma_0}{d}}}{(1-z)^{\gamma_0}}
\frac{u(z)}{u(z^d)^{1/d}}=\frac{1}{(1-z)^{\gamma_0(1-1/d)}}\left( \frac{1-z^d}{1-z} \right)^{\frac{\gamma_0}{d}} \frac{u(z)}{u(z^d)^{1/d}}.
$$
With the same restrictions on $z$ we have
\begin{equation*}
\begin{split}
\left( \frac{1-z^d}{1-z} \right)^{\frac{\gamma_0}{d}}-d^{\frac{\gamma_0}{d}}
&=(1+z+\cdots+z^{d-1})^{\frac{\gamma_0}{d}}-d^{\frac{\gamma_0}{d}}
\\
&=d^{\frac{\gamma_0}{d}}
\left( \left(\frac{1+z+\cdots+z^{d-1}}{d} \right)^{\frac{\gamma_0}{d}}-1  \right).
\end{split}
\end{equation*}
Since $|z|\leqslant 1+ |z-1|\leqslant e^{|z-1|}$, then
$$
|z^j-1|=|z-1||1+z+\cdots+z^{j-1}|\leqslant j|z-1|e^{j|z-1|},
$$
therefore for  $|z-1|\leqslant 1/(2d)$
we have
\begin{multline*}
\left| \frac{(z-1)+(z^2-1)+\cdots+(z^{d-1}-1)}{d} \right|
\\
\leqslant
e\frac{|z-1|+2|z-1|+\cdots+(d-1)|z-1|}{d}
=\frac{ed}{2}|z-1|\leqslant \frac{e}{4}.
\end{multline*}
Applying this inequality together with the estimate $(1+z)^\theta=1+O(\theta |z|)$  for $z$ such that $|\arg(z)|\leqslant 1/(4k_0d)$ and $|1-z|\leqslant \frac{1}{4d}$, we obtain
\begin{equation}
\label{zdiference}
\left|\left( \frac{1-z^d}{1-z} \right)^{\frac{\gamma_0}{d}}-d^{\frac{\gamma_0}{d}}\right|
\ll |1-z|.
\end{equation}
In a similar way, having the same restrictions on $z$ we obtain
\begin{equation*}
\begin{split}
\left|\frac{u(z)}{u(z^d)^{\frac{1}{d}}}-\frac{u(1)}{u(1)^{\frac{1}{d}}}\right|&=
\left|\frac{u(z)-u(1)}{u(z^d)^{\frac{1}{d}}}+u(1)
\left(\frac{1}{u(z^d)^{\frac{1}{d}}}-\frac{1}{u(1)^{\frac{1}{d}}}\right) \right|
\\
&\ll |z-1|+
\left| \left(\frac{u(z^d)}{u(1)} \right)^{\frac{1}{d}} -1 \right| \ll |z-1|.
\end{split}
\end{equation*}
Finally we obtain that for $|\arg(z)|<1/(4dk_0)$ and $|1-z|\leqslant \frac{1}{4d}$ holds the estimate
$$
\frac{p(z)}{p(z^d)^{\frac{1}{d}}}=\frac{d^{\gamma_0/d}{u(1)}^{1-1/d}+O(|1-z|)}{(1-z)^{\gamma_0(1-1/d)}}.
$$
Putting this  estimate in (\ref{intg}) and recalling that $u(1)=A_k$, we obtain
\begin{equation*}
\begin{split}
I&=\frac{1}{2\pi i}\int_{L(1,1+\frac{1}{2d})}\frac{d^{\gamma_0/d}{A_k}^{1-1/d}+O(|1-z|)}{(1-z)^{\gamma_0(1-1/d)}z^{n+1}}dz
+O\left( d^{\gamma_0}e^{-\frac{n}{2d+1}}\right)
\\
&=\frac{1}{2\pi i}\int_{L(1,1+\frac{1}{2d})}\frac{d^{\gamma_0/d}{A_k}^{1-1/d}}{(1-z)^{\gamma_0(1-1/d)}z^{n+1}}dz
+O\left( n^{\gamma_0(1-1/d)-2}+d^{\gamma_0}e^{-\frac{n}{2d+1}}\right)
\\
&=\frac{d^{\gamma_0/d}A_k^{1-1/d}}{\Gamma\left( \gamma_0\left(1-\frac{1}{d} \right) \right) }n^{\gamma_0\left(1-\frac{1}{d} \right)-1}
\left(1+O\left(\frac{1}{n}\right) \right)+O\left( n^{\gamma_0(1-1/d)-2}+d^{\gamma_0}e^{-\frac{n}{2d+1}}\right),
\end{split}
\end{equation*}
because for $v>0$
\begin{equation*}
\begin{split}
\label{flajform}
\frac{1}{2\pi i}\int_{L(1,1+\frac{1}{2d})}\frac{dz}{(1-z)^{v}z^{n+1}}&=
\frac{1}{2\pi i}\int_{L_0\cup C_2}\frac{dz}{(1-z)^{v}z^{n+1}}
+O\left(2^{-n} + d^{v}e^{-\frac{n}{2d+1}}\right)
\\
&=\binom{n+v-1}{n}+O\left(d^{v}e^{-\frac{n}{2d+1}}\right)
\\
&=\frac{n^{v-1}}{\Gamma(v)}\left( 1+O\left(\frac{1}{n}\right)\right)
+O\left(d^{v}e^{-\frac{n}{2d+1}}\right),
\end{split}
\end{equation*}
the last estimate was proved in \cite{flajolet}.
\end{proof}

The proof of the next lemma is analogous.
\begin{lem}
\label{coefprl}
There exists such a positive constant $\epsilon$, that for  $C<l,r\leqslant \log ^{D}n$
and $(l,k)=1$, $(r,k)=1$, we have
$$
\left[\frac{p(z)p(z^{rl})^{\frac{1}{rl}}}{p(z^r)^{\frac{1}{r}}p(z^l)^{\frac{1}{l}}}\right]_{(n)}
=\frac{l^{\gamma_0/l}r^{\gamma_0/r}A_k^{1-\frac{1}{l}-\frac{1}{r}+\frac{1}{rl}}}{(lr)^{\frac{\gamma_0}{lr}}\Gamma\left( \gamma_0\left(1-\frac{1}{l}-\frac{1}{r}+
\frac{1}{lr} \right) \right) }n^{\gamma_0\left(1-\frac{1}{l}-\frac{1}{r}+\frac{1}{lr} \right)-1}
\bigl(1+O(n^{-\epsilon}) \bigr).
$$
\end{lem}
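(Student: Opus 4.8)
The plan is to reproduce the proof of Lemma~\ref{coefp} almost verbatim, the only structural novelty being that four dilates of $p$ now collide at the dominant singularity $z=1$. Write $G(z)=\dfrac{p(z)\,p(z^{rl})^{1/(rl)}}{p(z^{r})^{1/r}\,p(z^{l})^{1/l}}$ and set $v=\gamma_0\bigl(1-\tfrac1r\bigr)\bigl(1-\tfrac1l\bigr)=\gamma_0\bigl(1-\tfrac1l-\tfrac1r+\tfrac1{rl}\bigr)$, so that the assertion is $[G(z)]_{(n)}=\tfrac{r^{\gamma_0/r}l^{\gamma_0/l}A_k^{(1-1/r)(1-1/l)}}{(rl)^{\gamma_0/(rl)}\Gamma(v)}\,n^{v-1}\bigl(1+O(n^{-\epsilon})\bigr)$. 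Recalling $p(w)=\prod_{j=0}^{k_0-1}\bigl(1-we^{-2\pi ij/k_0}\bigr)^{-\gamma_j}$, every branch point of $G$ on $|z|=1$ is an $(rlk_0)$-th root of unity, so I would apply Cauchy's formula along the slit contour $K=C_2\cup\bigcup_{j=0}^{rlk_0-1}L_j$ of the proof of Lemma~\ref{coefp} (with $d$ replaced by $rl$), obtaining
$$[G(z)]_{(n)}=\frac1{2\pi i}\int_{L_0}\frac{G(z)}{z^{n+1}}\,dz+\sum_{j=1}^{rlk_0-1}\frac1{2\pi i}\int_{L_j}\frac{G(z)}{z^{n+1}}\,dz+O(2^{-n}),$$
and then restrict the integral over $L_0$ to the disc $|z-1|<\tfrac1{2rl}$ exactly as in~(\ref{intg}).

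The one genuinely new estimate is the bound on the arcs $L_j$ with base point $\zeta_j=e^{2\pi ij/(rlk_0)}\neq1$. From the factorisation through $p(w)=u(w)(1-w)^{-\gamma_0}$ one reads off that near such a $\zeta_j$ the order of the pole of $G$ is at most $\gamma_0\bigl(\tfrac1r+\tfrac1l+\tfrac1{rl}\bigr)$ when $\zeta_j^{k_0}\neq1$, and at most $(\gamma')^{+}+\gamma_0\bigl(\tfrac1r+\tfrac1l+\tfrac1{rl}\bigr)$ when $\zeta_j^{k_0}=1$; here $(\gamma')^{+}=\max\{\gamma',0\}$, $\gamma'=\max_{j\neq0}\gamma_j<\gamma_0$, and one uses that the exponents of modulus $\gamma_0$ occurring when $l_j=2$ have $\gamma_j=-\gamma_0<0$, hence produce zeros of the relevant factor rather than poles. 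Consequently, choosing the lower cutoff $C$ for $r$ and $l$ large enough (in terms of $\gamma_0-(\gamma')^{+}$, hence of $k$) that $2\gamma_0\bigl(\tfrac1r+\tfrac1l\bigr)<\gamma_0-(\gamma')^{+}$, the pole order at every non-principal $\zeta_j$ is at most $v-\delta$ for a fixed $\delta=\delta(k)>0$; combined with the crude bounds $|p(z)|\ll(rl)^{\gamma_0}$ and $|p(z^{m})|^{-1/m}\ll(|z|-1)^{-\gamma_0/m}$ on $1\le|z|\le2$ and the count $\le rlk_0$ of arcs, the slit integrals over the $L_j$ with $j\ge1$ total $O\bigl((rl)^{O(1)}n^{v-\delta-1}\bigr)$, which is $O(n^{v-1-\epsilon})$ for a slightly smaller $\epsilon>0$ since $r,l\le\log^{D}n$.

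On the principal arc, for $z$ with $|\arg z|\le\tfrac1{4k_0rl}$ and $|1-z|\le\tfrac1{4rl}$ all of $z,z^{r},z^{l},z^{rl}$ lie in the sector $S=\{|\arg w|<\tfrac1{2k_0}\}$ on which $p(w)=u(w)(1-w)^{-\gamma_0}$ with $u$ analytic and $u(1)=A_k$, whence
$$G(z)=\frac1{(1-z)^{v}}\cdot\frac{(1+z+\cdots+z^{r-1})^{\gamma_0/r}(1+z+\cdots+z^{l-1})^{\gamma_0/l}}{(1+z+\cdots+z^{rl-1})^{\gamma_0/(rl)}}\cdot\frac{u(z)\,u(z^{rl})^{1/(rl)}}{u(z^{r})^{1/r}\,u(z^{l})^{1/l}}.$$
Applying the estimate~(\ref{zdiference}) and the continuity estimate for $u$ from the proof of Lemma~\ref{coefp} once for each of $m=r,l,rl$, the product of the last two factors equals $\tfrac{r^{\gamma_0/r}l^{\gamma_0/l}A_k^{(1-1/r)(1-1/l)}}{(rl)^{\gamma_0/(rl)}}+O(|1-z|)$ on that arc. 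Substituting, extending the contour to the full Hankel loop (the tail costing $O\bigl((rl)^{\gamma_0}e^{-cn/(rl)}\bigr)$, negligible as $rl\le\log^{2D}n$), invoking the classical coefficient asymptotics $\bigl[(1-z)^{-v}\bigr]_{(n)}=\binom{n+v-1}{n}=\tfrac{n^{v-1}}{\Gamma(v)}\bigl(1+O(n^{-1})\bigr)$ quoted in~\cite{flajolet}, and observing that the $O(|1-z|)$ correction contributes only $O(n^{v-2})$, yields the principal term with the stated constant; together with the previous paragraph this gives the lemma.

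The main obstacle to watch is exactly the uniformity in $r$ and $l$ of the non-principal-arc bound: one must check that the pole of $G$ at each $(rlk_0)$-th root of unity other than $1$ is weaker than the pole at $1$ by a margin bounded below independently of $r$ and $l$, and it is the strict inequality $\gamma'<\gamma_0$ — valid for every $k$ by the Mineev--Pavlov analysis recalled before Lemma~\ref{coef}, together with the sign of the $\gamma_j$ of modulus $\gamma_0$ — that makes this possible once $C$ is taken large enough in terms of $k$; with that margin secured, the polylogarithmic losses produced by $r,l\le\log^{D}n$ are harmless.
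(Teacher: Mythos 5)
Your proposal reproduces the paper's proof essentially verbatim: the same slit contour $K_{rl}=C_2\cup\bigcup_j L_j$, the same separation of the principal ray $L_0$ from the other rays, the same local model $p(w)=u(w)(1-w)^{-\gamma_0}$ on a sector together with the estimate~(\ref{zdiference}) and the $u$--continuity bound to evaluate the principal contribution, and the same invocation of the coefficient asymptotics recorded in~(\ref{flajform}). The only (correct) amplification you add is to make explicit the case split $\zeta_j^{k_0}=1$ versus $\zeta_j^{k_0}\neq1$ on the non-principal arcs and to spell out that the margin $\gamma_0-\gamma'>0$, valid by the Mineev--Pavlov analysis, together with $C$ large in terms of $k$, is exactly what makes those arcs contribute $O(n^{v-1-\epsilon})$ uniformly for $l,r\leqslant\log^D n$; this is implicit in the paper's two error sums.
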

\begin{proof} As in proof of lemma \ref{coefp} we denote $K_{rl}=C_2\cup\cup_{j=0}^{rlk_0}L_j$ the integration contour, here $L_j$ denotes the sides of the intervals on the complex plain, connecting the points
 $e^{\frac{2\pi ij}{lrk_0}}$ and $2e^{\frac{2\pi ij}{lrk_0}}$. $C_2$, as before, is the circle $|z|=2$.
\begin{multline*}
\left[\frac{p(z)p(z^{rl})^{\frac{1}{rl}}}{p(z^r)^{\frac{1}{r}}p(z^l)^{\frac{1}{l}}}\right]_{(n)}=
\frac{1}{2\pi i}\int_{K_{rl}}\frac{p(z)p(z^{rl})^{\frac{1}{rl}}}{p(z^r)^{\frac{1}{r}}p(z^l)^{\frac{1}{l}}}
\frac{dz}{z^{n+1}}
\\
\shoveleft{=
\frac{1}{2\pi i}\int_{L_0}\frac{p(z)p(z^{rl})^{\frac{1}{rl}}}{p(z^r)^{\frac{1}{r}}p(z^l)^{\frac{1}{l}}}
\frac{dz}{z^{n+1}}+O\left(\sum_{\scriptstyle 0< j< {lrk_0} \atop \scriptstyle d\nmid j}(lr)^{\gamma_0}\int_1^2
\frac{dr}{(r-1)^{\gamma_0\left(\frac{1}{l}+\frac{1}{r}+\frac{1}{lr}\right)}r^{n+1}}\right.}
\\
\shoveright{\left.+\sum_{s=1}^{k_0-1}\int_1^2
\frac{dr}{(r-1)^{\gamma_0\left(\frac{1}{l}+\frac{1}{r}+\frac{1}{lr}\right)+\gamma_s}r^{n+1}}\right) +O(2^{-n})}
\\
\shoveleft{=\frac{1}{2\pi i}\int_{L_0}\frac{p(z)p(z^{rl})^{\frac{1}{rl}}}{p(z^r)^{\frac{1}{r}}p(z^l)^{\frac{1}{l}}}
\frac{dz}{z^{n+1}}}
\\+
O\left((rl)^{\gamma_0+1}n^{\gamma_0\left(\frac{1}{l}+\frac{1}{r}+\frac{1}{lr}\right)-1}
+n^{\gamma_0\left(\frac{1}{l}+\frac{1}{r}+\frac{1}{lr}\right)+\gamma'-1}
\right),
\end{multline*}
where $\gamma'=\max_{j\not=0}\gamma_j <\gamma_0$.
Applying estimate  (\ref{zdiference}) with $d=rl$ we obtain
\begin{multline*}
\frac{p(z)p(z^{rl})^{\frac{1}{rl}}}{p(z^r)^{\frac{1}{r}}p(z^l)^{\frac{1}{l}}}
\\=\frac{1}{(1-z)^{\gamma_0\left(1-\frac{1}{l}-\frac{1}{r}+\frac{1}{lr}\right)}}\left( \frac{1-z^l}{1-z} \right)^{\frac{\gamma_0}{l}}
\left( \frac{1-z^r}{1-z} \right)^{\frac{\gamma_0}{r}}
\left( \frac{1-z}{1-z^{rl}} \right)^{\frac{\gamma_0}{rl}}
 \frac{u(z)u(z^{lr})^{\frac{1}{lr}}}{u(z^l)^{\frac{1}{l}}u(z^r)^{\frac{1}{r}}}
\\
=\frac{r^{\gamma_0/r}l^{\gamma_0/l}u(1)^{\gamma_0\left(1-\frac{1}{l}-\frac{1}{r}+\frac{1}{lr}\right)}+O(|z-1|)}{(rl)^{\gamma_0/rl}(1-z)^{\gamma_0\left(1-\frac{1}{l}-\frac{1}{r}+\frac{1}{lr}\right)}},
\end{multline*}
when $|z-1|\leqslant\frac{1}{4k_0rl}$. Denoting as before by $L(1,1+\frac{1}{4lr})$ the part of  $L_0$, which lies inside of disc $|z-1|<\frac{1}{4rl}$, we have
\begin{multline*}
\frac{1}{2\pi i}\int_{L_0}\frac{p(z)p(z^{rl})^{\frac{1}{rl}}}{p(z^r)^{\frac{1}{r}}p(z^l)^{\frac{1}{l}}}
\frac{dz}{z^{n+1}}=
\frac{1}{2\pi i}\int_{L(1,1+\frac{1}{4lr})}\frac{p(z)p(z^{rl})^{\frac{1}{rl}}}{p(z^r)^{\frac{1}{r}}p(z^l)^{\frac{1}{l}}}
\frac{dz}{z^{n+1}}
\\
\shoveright{+O\left( \int_{1+\frac{1}{4lr}}^2\frac{dy}{(y-1)^{\gamma_0\left(1+\frac{1}{l}+\frac{1}{r}+\frac{1}{lr}\right)}y^{n+1}}\right)}
\\
=\frac{1}{2\pi i}\int_{L(1,1+\frac{1}{4lr})}
\frac{r^{\gamma_0/r}l^{\gamma_0/l}u(1)^{\gamma_0\left(1-\frac{1}{l}-\frac{1}{r}+\frac{1}{lr}\right)}+O(|z-1|)}{(rl)^{\gamma_0/rl}(1-z)^{\gamma_0\left(1-\frac{1}{l}-\frac{1}{r}+\frac{1}{lr}\right)}z^{n+1}}
+O\left((lr)^{\gamma_0}e^{-\frac{n}{4lr+1}}\right)
\\
\shoveleft{=\frac{r^{\gamma_0/r}l^{\gamma_0/l}u(1)^{\gamma_0\left(1-\frac{1}{l}-\frac{1}{r}+\frac{1}{lr}\right)}}{(rl)^{\gamma_0/rl}2\pi i}\int_{L(1,1+\frac{1}{4lr})}\frac{dz}{(1-z)^{\gamma_0\left(1-\frac{1}{l}-\frac{1}{r}+\frac{1}{lr}\right)}z^{n+1}}    }
\\
+O\left(n^{\gamma_0\left(1-\frac{1}{l}-\frac{1}{r}+\frac{1}{lr}\right)-2}\right)+O\left((lr)^{\gamma_0}e^{-\frac{n}{4lr+1}}\right).
\end{multline*}
Applying here (\ref{flajform}) with $d=2rl$ we complete the proof of the lemma.

\end{proof}

Let us denote
$$
H_k^{(d)}(z)=\prod_{j\geqslant 1:d|j}\left(
1+\sum_{s\geqslant 1:q_k(j)|s}\left(\frac{z^j}{j}\right)^s\frac{1}{s!} \right).
$$

\begin{lem}
\label{nud}
For $(k,d)=1$ and $C<d\leqslant \log ^{D}n$ we have
$$
\nu_n(D_{nd}=0)=\frac{\Gamma(\gamma_0)d^{\gamma_0/d}A_k^{-1/d}}{\Gamma\left( \gamma_0\left(1-\frac{1}{d} \right) \right) H_k^{(d)}(1)}n^{-\frac{\gamma_0}{d} }
\bigl(1+O(n^{-\epsilon}) \bigr).
$$
\end{lem}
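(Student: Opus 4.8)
The plan is to read $\nu_n(D_{nd}=0)$ off the generating function established just before the statement, namely $\sum_{n\geqslant 0}c_n\,\nu_n(D_{nd}=0)z^n=\prod_{j\geqslant 1:\,d|j}\!\!\!\!\!\!\!\diagdown\;\bigl(1+\sum_{s\geqslant 1:\,q_k(j)|s}(z^j/j)^s/s!\bigr)$ with $c_n=|S_n^{(k)}|/n!$ and the product over $j$ with $d\nmid j$, and to isolate the $d$-dependence in one explicit factor. Separating the factors according to whether $(j,k)=1$, and using $(d,k)=1$ (so the $j$ with $d|j$ are exactly $j=dm$, and for these $(j,k)=1\iff(m,k)=1$), the indices with $(j,k)=1$ and $d\nmid j$ contribute $\exp\bigl(\sum_{(j,k)=1}z^j/j-\tfrac1d\sum_{(m,k)=1}(z^d)^m/m\bigr)=p(z)p(z^d)^{-1/d}$, while the remaining (``composite'') factors combine into $H_k(1;z)/H_k^{(d)}(z)$. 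Hence, as functions analytic near $0$,
$$\prod_{j\geqslant 1:\,d\nmid j}\Bigl(1+\cdots\Bigr)=\frac{p(z)}{p(z^d)^{1/d}}\,Q_d(z),\qquad Q_d(z):=\frac{H_k(1;z)}{H_k^{(d)}(z)},$$
with $Q_d(0)=1$ and $Q_d(1)$ equal to the finite constant $H_k(1;1)/H_k^{(d)}(1)$ appearing in the statement; so $\nu_n(D_{nd}=0)=c_n^{-1}\bigl[\tfrac{p(z)}{p(z^d)^{1/d}}Q_d(z)\bigr]_{(n)}$.

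Next I would control the Taylor coefficients $q_m:=[Q_d(z)]_{(m)}$. By Lemma~\ref{hcoef}, $[H_k(1;z)]_{(m)}=O(m^{-2})$ for $m\geqslant 1$. Since every nontrivial term in a factor of $H_k^{(d)}(z)$ has degree $jq_k(j)\geqslant 2j\geqslant 2d$ and coefficient $O(j^{-2})$, the function $\log H_k^{(d)}(z)$ has coefficients bounded coefficientwise by $O(m^{-2})$ and supported on degrees $\geqslant 2d$; applying the recurrence argument of Lemma~\ref{hcoef} to $z(1/H_k^{(d)})'=-(1/H_k^{(d)})\,z(\log H_k^{(d)})'$ gives $[1/H_k^{(d)}(z)]_{(m)}=O(m^{-2})$ for $m\geqslant 1$ as well. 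Convolving, $q_0=1$, $q_m=O(m^{-2})$ for $m\geqslant 1$; in particular the series $\sum_{m}q_m=Q_d(1)$ converges absolutely and $\sum_{m>M}|q_m|\ll M^{-1}$ for every $M\geqslant 1$.

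Then I would extract the asymptotics of $\bigl[\tfrac{p(z)}{p(z^d)^{1/d}}Q_d(z)\bigr]_{(n)}=\sum_{m=0}^n\bigl[\tfrac{p(z)}{p(z^d)^{1/d}}\bigr]_{(n-m)}q_m$ by splitting at a threshold $M$. For $m\leqslant M$ I would invoke Lemma~\ref{coefp}, legitimate precisely in the range $C<d\leqslant\log^Dn$: $\bigl[\tfrac{p(z)}{p(z^d)^{1/d}}\bigr]_{(n-m)}=\tfrac{d^{\gamma_0/d}A_k^{1-1/d}}{\Gamma(\gamma_0(1-1/d))}(n-m)^{\gamma_0(1-1/d)-1}(1+O(n^{-\epsilon}))$, with $(n-m)^{\gamma_0(1-1/d)-1}=n^{\gamma_0(1-1/d)-1}(1+O(M/n))$; summing against $q_m$ and completing the partial sum to $Q_d(1)$ (error $\ll M^{-1}$) produces the main term $\tfrac{d^{\gamma_0/d}A_k^{1-1/d}}{\Gamma(\gamma_0(1-1/d))}Q_d(1)\,n^{\gamma_0(1-1/d)-1}$. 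For $m>M$ I would use the crude uniform bound $0\leqslant\bigl[\tfrac{p(z)}{p(z^d)^{1/d}}\bigr]_{(l)}\leqslant[(1-z)^{-1}]_{(l)}=1$ (valid since this function equals $\exp(\sum_{(j,k)=1,\,d\nmid j}z^j/j)$ and the exponent is coefficientwise $\leqslant-\log(1-z)$), so that part is $\ll\sum_{m>M}|q_m|\ll M^{-1}$. Dividing by $c_n=\tfrac{A_kH_k(1;1)}{\Gamma(\gamma_0)}n^{\gamma_0-1}(1+O(n^{-\beta}))$ from Theorem~\ref{pavlovpn} and simplifying $\tfrac{\Gamma(\gamma_0)A_k^{1-1/d}Q_d(1)}{A_kH_k(1;1)}=\tfrac{\Gamma(\gamma_0)A_k^{-1/d}}{H_k^{(d)}(1)}$ together with $n^{\gamma_0(1-1/d)-1}\cdot n^{1-\gamma_0}=n^{-\gamma_0/d}$ gives the claimed formula.

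The delicate point is the tail $m>M$: the main term carries exponent $\gamma_0(1-1/d)-1$, which can be as negative as about $-1/2$ (e.g. $\gamma_0$ near $1$ and $d$ small), so a naive choice like $M=\sqrt n$ yields a tail $n^{-1/2}$ merely comparable to the main term. The argument closes only because the hypothesis $d>C$ keeps $\gamma_0(1-1/d)$ bounded away from $0$: choosing $M=\lfloor n^{1-\gamma_0(1-1/d)+\epsilon}\rfloor$ makes the tail $\ll M^{-1}\ll n^{\gamma_0(1-1/d)-1-\epsilon}$, genuinely below the main term, while for $m\leqslant M$ the relative error $O(M/n)=O(n^{-(\gamma_0(1-1/d)-\epsilon)})$ is still $\ll n^{-\epsilon}$ after shrinking $\epsilon$; the $d$-dependent constants supplied by Lemma~\ref{coefp} are harmless since $d\leqslant\log^Dn$. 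Matching these exponents (and the $\epsilon$'s of Lemmas~\ref{coefp} and~\ref{hcoef}) is the only real bookkeeping; the rest is the routine convolution-and-division pattern already used in the proof of Theorem~\ref{pavlovpn}.
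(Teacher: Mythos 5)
Your proposal is correct and follows the same overall route as the paper: read $\nu_n(D_{nd}=0)$ off the generating function, factor it as $\frac{p(z)}{p(z^d)^{1/d}}Q_d(z)$ with $Q_d=H_k/H_k^{(d)}$, control $[Q_d]_{(m)}=O(m^{-2})$, split the Cauchy convolution, apply Lemma~\ref{coefp} to the main range, and divide by $c_n$ via Theorem~\ref{pavlovpn}. Two remarks on efficiency. First, for the coefficient bound on $Q_d$: your detour through $1/H_k^{(d)}$ (expanding $\log H_k^{(d)}$, running the Lemma~\ref{hcoef} recurrence, then convolving two $O(m^{-2})$ sequences) works but is heavier than needed; the paper simply notes that $H_k=H_k^{(d)}\cdot(H_k/H_k^{(d)})$ is a product of two series with nonnegative coefficients and $H_k^{(d)}(0)=1$, whence $[H_k/H_k^{(d)}]_{(m)}\leqslant[H_k]_{(m)}=O(m^{-2})$ directly. (Equivalently, $H_k/H_k^{(d)}$ is itself a subproduct of $H_k$ over $(j,k)>1,\,d\nmid j$, so the Lemma~\ref{hcoef} bound applies verbatim.) Second, for the tail: the paper splits at $M=n/2$ and, instead of the crude $[p(z)/p(z^d)^{1/d}]_{(l)}\leqslant 1$, bounds $\sum_{l\leqslant n/2}[p(z)/p(z^d)^{1/d}]_{(l)}$ by the generating function evaluated at $e^{-1/n}$, which is $\asymp n^{\gamma_0(1-1/d)}d^{\gamma_0/d}$; multiplied by $O(n^{-2})$ from the $H$-side this gives a tail $\ll n^{\gamma_0(1-1/d)-2+o(1)}$, safely one full power of $n$ below the main term, so no delicate balancing of the threshold is needed. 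Your $d$-dependent choice of $M$ and the observation that $d>C$ keeps $\gamma_0(1-1/d)$ bounded away from $0$ are also correct and close the argument, just with a worse (but still sufficient) error exponent; making that balancing explicit is actually a nice touch, since the paper glosses over exactly how the replacement $(n-s)^{\gamma_0(1-1/d)-1}\to n^{\gamma_0(1-1/d)-1}$ is carried out uniformly for $s$ up to $n/2$.
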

\begin{proof}
Since  $\left[\frac{H_k(z)}{H_k^{(d)}(z)}\right]_{(m)}\leqslant [H_k(z)]_{(m)}=O(m^{-2})$ for
$m\geqslant 1$, then for \hbox{$(k,d)=1$}
\begin{multline*}
\frac{|S_n^{(k)}|}{n!}\nu_n(D_{n,d}=0)=\left[\frac{p(z)}{p(z^d)^{\frac{1}{d}}}
\frac{H_k(z)}{H_k^{(d)}(z)}\right]_{(n)}
=\sum_{j=0}^n\left[\frac{p(z)}{p(z^d)^{\frac{1}{d}}}\right]_{(j)}
\left[
\frac{H_k(z)}{H_k^{(d)}(z)}\right]_{(n-j)}
\\
\shoveleft{=\sum_{n/2<j\leqslant n}\left[\frac{p(z)}{p(z^d)^{\frac{1}{d}}}\right]_{(j)}
\left[\frac{H_k(z)}{H_k^{(d)}(z)}\right]_{(n-j)}+
O\left(\frac{1}{n^2}\sum_{j\leqslant n/2}\left[\frac{p(z)}{p(z^d)^{\frac{1}{d}}}\right]_{(j)}\right)}
\\
\shoveleft{=\sum_{s\leqslant n/2}\left[\frac{p(z)}{p(z^d)^{\frac{1}{d}}}\right]_{(n-s)}
\left[\frac{H_k(z)}{H_k^{(d)}(z)}\right]_{(s)}+
O\left(\frac{1}{n^2}\frac{p(e^{-1/n})}{p(e^{d/n})^{\frac{1}{d}}}\right)}
\\
\shoveleft{=\sum_{s\leqslant n/2}\left(\frac{d^{\gamma_0/d}A_k^{1-1/d}}{\Gamma\left( \gamma_0\left(1-\frac{1}{d} \right) \right) }(n-s)^{\gamma_0\left(1-\frac{1}{d} \right)-1}
\bigl(1+O(n^{-\epsilon}) \bigr) \right)
\left[\frac{H_k(z)}{H_k^{(d)}(z)}\right]_{(s)} }
\\
\shoveright{+
O\left(n^{\gamma_0(1-\frac{1}{d})-2}\right)}
\\
=\frac{d^{\gamma_0/d}A_k^{1-1/d}}{\Gamma\left( \gamma_0\left(1-\frac{1}{d} \right) \right) }n^{\gamma_0\left(1-\frac{1}{d} \right)-1}
\frac{H_k(1)}{H_k^{(d)}(1)}+
O\left(n^{\gamma_0(1-\frac{1}{d})-1-\epsilon}\right).\hfill
\end{multline*}

As  $\frac{|S_n^{(k)}|}{n!}=\frac{A_kn^{\gamma_0-1}}{\Gamma(\gamma_0)}(1+O(n^{-\epsilon})$, we obtain the proof of the lemma.
\end{proof}

\begin{lem}
\label{nulr}
If $(k,l)=1$, $(k,r)=1$ and $C<l,r\leqslant \log ^{D}n$,
then
\begin{multline*}
\nu_n(D_{n,l}=0,D_{n,r}=0)
\\=\frac{H_k^{(lr)}(1)}{H_k^{(l)}(1)H_k^{(r)}(1)}\frac{\Gamma(\gamma_0)l^{\frac{\gamma_0}{l}}
r^{\frac{\gamma_0}{r}}A_k^{-\frac{1}{l}-\frac{1}{r}+\frac{1}{rl}}}{(lr)^{\frac{\gamma_0}{lr}}\Gamma\left( \gamma_0\left(1-\frac{1}{l}-\frac{1}{r}+
\frac{1}{lr} \right) \right) }n^{\gamma_0\left(-\frac{1}{l}-\frac{1}{r}+\frac{1}{lr} \right)}
\bigl(1+O(n^{-\epsilon}) \bigr).
\end{multline*}
\end{lem}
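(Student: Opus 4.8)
The plan is to adapt the proof of Lemma~\ref{nud}, now removing the two divisibility conditions $l\mid j$ and $r\mid j$ simultaneously. Throughout I treat the case $(l,r)=1$, which is the one needed in the sequel and is implicit in the statement (the right-hand side features $p(z^{lr})$ and $H_k^{(lr)}$, not a least common multiple); it guarantees $l\mid j\ \wedge\ r\mid j\iff lr\mid j$. As in the proof of Lemma~\ref{nud}, $H_k^{(d)}(z)$ denotes the product over the $j$ with $d\mid j$ and $(j,k)>1$, which is analytic at $z=1$.

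\emph{Step 1: the generating function.} Putting in (\ref{f_gen_func}) the multiplicative function with $\hat f(j)=0$ whenever $l\mid j$ or $r\mid j$ and $\hat f(j)=1$ otherwise gives
$$
\sum_{n=0}^{\infty}\frac{|S_n^{(k)}|}{n!}\,\nu_n(D_{n,l}=0,\,D_{n,r}=0)\,z^n
=\prod_{\substack{j\geqslant 1\\ l\nmid j,\ r\nmid j}}\Bigl(1+\sum_{s\geqslant 1:\,q_k(j)\mid s}\Bigl(\frac{z^j}{j}\Bigr)^{s}\frac{1}{s!}\Bigr).
$$
Writing the full product $F(z)=p(z)H_k(1;z)$ and each $\prod_{j:\,d\mid j}(\cdots)$ as the product of the factors with $(j,k)=1$, each equal to $\exp(z^j/j)$, and those with $(j,k)>1$, and using $(l,k)=(r,k)=1$ to identify $\exp\{\sum_{j:\,d\mid j,\ (j,k)=1}z^j/j\}=p(z^{d})^{1/d}$ for $d\in\{l,r,lr\}$, the inclusion--exclusion over $\{l\mid j\}$ and $\{r\mid j\}$ gives
$$
\sum_{n=0}^{\infty}\frac{|S_n^{(k)}|}{n!}\,\nu_n(D_{n,l}=0,\,D_{n,r}=0)\,z^n
=\frac{p(z)\,p(z^{lr})^{1/(lr)}}{p(z^{l})^{1/l}\,p(z^{r})^{1/r}}\;B_{l,r}(z),\qquad
B_{l,r}(z):=\frac{H_k(1;z)\,H_k^{(lr)}(z)}{H_k^{(l)}(z)\,H_k^{(r)}(z)},
$$
an analytic function near $0$ with $B_{l,r}(0)=1$.

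\emph{Step 2: extracting the $n$-th coefficient.} The $n$-th Taylor coefficient of the first factor is supplied by Lemma~\ref{coefprl}. For $B_{l,r}(z)$ I would prove $[B_{l,r}(z)]_{(m)}\ll m^{-2}$ for $1\leqslant m\leqslant n$, with an implied constant depending only on $k$ (hence uniform in $l,r$): by the majorant-series argument in the proof of Lemma~\ref{hcoef}, each of $H_k(1;z)$, $H_k^{(d)}(z)$ and of the reciprocals $1/H_k^{(d)}(z)=\exp\{-\log H_k^{(d)}(z)\}$ has Taylor coefficients $\ll m^{-2}$ (for $(j,k)>1$ one has $q_k(j)\geqslant 2$, so every monomial that occurs has exponent $\geqslant 2d$ and coefficient $\ll k^2m^{-2}$), and the property is stable under taking products. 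Writing
$$
\Bigl[\tfrac{p(z)p(z^{lr})^{1/(lr)}}{p(z^{l})^{1/l}p(z^{r})^{1/r}}\,B_{l,r}(z)\Bigr]_{(n)}
=\sum_{m=0}^{n}\Bigl[\tfrac{p(z)p(z^{lr})^{1/(lr)}}{p(z^{l})^{1/l}p(z^{r})^{1/r}}\Bigr]_{(n-m)}[B_{l,r}(z)]_{(m)},
$$
I would split the sum at $m=n/2$, exactly as in the proof of Lemma~\ref{nud}: the range $m\leqslant n/2$ contributes the main term of Lemma~\ref{coefprl} times $\sum_{m\leqslant n/2}[B_{l,r}(z)]_{(m)}=B_{l,r}(1)+O(1/n)$, while the range $m>n/2$, where $[B_{l,r}(z)]_{(m)}\ll n^{-2}$, contributes $O(n^{\gamma_0-2})$, since the coefficients of $\tfrac{p(z)p(z^{lr})^{1/(lr)}}{p(z^{l})^{1/l}p(z^{r})^{1/r}}=\exp\{\sum_{m:\,(m,k)=1,\ l\nmid m,\ r\nmid m}z^m/m\}$ are nonnegative with partial sums $\leqslant e\,p(e^{-1/n})\ll n^{\gamma_0}$; the error terms carried by Lemma~\ref{coefprl}, among them $(lr)^{\gamma_0}e^{-n/(4lr+1)}$, are absorbed because $lr\leqslant\log^{2D}n$. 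This yields
$$
\frac{|S_n^{(k)}|}{n!}\,\nu_n(D_{n,l}=0,\,D_{n,r}=0)
=\frac{l^{\gamma_0/l}r^{\gamma_0/r}A_k^{1-\frac1l-\frac1r+\frac1{lr}}}{(lr)^{\gamma_0/(lr)}\,\Gamma\bigl(\gamma_0(1-\tfrac1l-\tfrac1r+\tfrac1{lr})\bigr)}\;n^{\gamma_0(1-\frac1l-\frac1r+\frac1{lr})-1}\,B_{l,r}(1)\,\bigl(1+O(n^{-\epsilon})\bigr).
$$

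\emph{Step 3: conclusion, and the main difficulty.} Dividing by $|S_n^{(k)}|/n!$, which by Theorem~\ref{pavlovpn} equals $p_nH_k(1;1)(1+O(n^{-\epsilon}))=\frac{A_kn^{\gamma_0-1}}{\Gamma(\gamma_0)}(1+O(n^{-\epsilon}))$, and noting that $B_{l,r}(1)=\frac{H_k(1;1)H_k^{(lr)}(1)}{H_k^{(l)}(1)H_k^{(r)}(1)}$, the factor $H_k(1;1)$ cancels, the powers of $A_k$ combine to $A_k^{-\frac1l-\frac1r+\frac1{lr}}$, those of $n$ to $n^{\gamma_0(-\frac1l-\frac1r+\frac1{lr})}$, and the asserted formula follows. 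The main obstacle is the bookkeeping of Step~2: establishing the $m^{-2}$-decay of $[B_{l,r}(z)]_{(m)}$, in particular for the reciprocals $1/H_k^{(d)}(z)$, with a constant independent of $l,r$, and checking that every error term entering the computation — from Lemmas~\ref{coefprl} and \ref{hcoef} and from the tail $m>n/2$ — is genuinely $O(n^{-\epsilon})$ relative to the main term, uniformly for $C<l,r\leqslant\log^{D}n$.
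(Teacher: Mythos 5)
Your proof is correct and follows the paper's route exactly: you obtain the generating-function identity by inclusion--exclusion over the events $\{l\mid j\}$ and $\{r\mid j\}$, extract the $n$-th coefficient via Lemma~\ref{coefprl} with the convolution sum split at $n/2$, and divide by $|S_n^{(k)}|/n!$ using Theorem~\ref{pavlovpn}; this is precisely what the paper compresses into ``applying the same considerations as in lemma~\ref{nud}''. One streamlining worth noting in your Step~2: since $B_{l,r}(z)=\prod_{(j,k)>1,\ l\nmid j,\ r\nmid j}(1+\cdots)$ is itself a subproduct of $H_k(1;z)$ with nonnegative Taylor coefficients, Lemma~\ref{coef}~(4) gives $0\leqslant[B_{l,r}(z)]_{(m)}\leqslant[H_k(1;z)]_{(m)}\ll m^{-2}$ directly, with a constant depending only on $k$, so you never need to argue about the reciprocals $1/H_k^{(d)}(z)$, whose signed expansion would require an extra majorization step that your sketch does not spell out.
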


\begin{proof}
For $(k,l)=1$, $(k,r)=1$ we have
$$
\sum_{n=0}^\infty\frac{|S_n^{(k)}|}{n!}\nu_n(D_{n,r}=0,D_{n,l}=0)z^n=
\frac{p(z)p(z^{lr})^{\frac{1}{lr}}}{p(z^l)^{\frac{1}{l}}p(z^r)^{\frac{1}{r}}}
\frac{H_k(z)H_k^{(lr)}(z)}{H_k^{(l)}(z)H_k^{(r)}(z)}.
$$
Applying the same considerations as in lemma \ref{nud}, we complete the proof of the lemma.
\end{proof}

%

\begin{lem}
\label{fxy}
Suppose $f\in C^2[-\epsilon,\epsilon]$ and $f(0)=1$, then
$$
f(x+y)-f(x)f(y)\ll |xy|,
$$
for $|x|\leqslant \frac{\epsilon}{2}$ and $|y|\leqslant \frac{\epsilon}{2}$.
\end{lem}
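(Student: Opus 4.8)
The plan is to exploit the fact that the function
$$
h(x,y):=f(x+y)-f(x)f(y)
$$
vanishes identically on the two coordinate axes, and then to recover the factor $xy$ by integrating its mixed second partial derivative over the rectangle $[0,x]\times[0,y]$.

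First I would record that, since $f(0)=1$, we have $h(x,0)=f(x)-f(x)f(0)=0$ and likewise $h(0,y)=0$ for all $x,y$ with $|x|,|y|\leqslant \epsilon/2$. Because $f\in C^2[-\epsilon,\epsilon]$ and the maps $(x,y)\mapsto x+y$ and multiplication are smooth, the function $h$ is twice continuously differentiable on the square $[-\epsilon/2,\epsilon/2]^2$, with mixed partial derivative
$$
\frac{\partial^2 h}{\partial x\,\partial y}(x,y)=f''(x+y)-f'(x)f'(y),
$$
which is continuous there.

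Next, differentiating the identity $h(s,0)=0$ in $s$ gives $\partial h/\partial x\,(s,0)=0$, so by the fundamental theorem of calculus
$$
\frac{\partial h}{\partial x}(s,y)=\int_0^y\frac{\partial^2 h}{\partial x\,\partial y}(s,u)\,du,
$$
and, using $h(0,y)=0$ once more,
$$
h(x,y)=\int_0^x\frac{\partial h}{\partial x}(s,y)\,ds=\int_0^x\int_0^y\frac{\partial^2 h}{\partial x\,\partial y}(s,u)\,du\,ds.
$$
Setting $M:=\max_{|s|,|u|\leqslant \epsilon/2}\bigl|f''(s+u)-f'(s)f'(u)\bigr|<\infty$, the double integral is bounded in absolute value by $M\,|x|\,|y|$, which is precisely the asserted estimate $f(x+y)-f(x)f(y)\ll|xy|$ for $|x|,|y|\leqslant\epsilon/2$.

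There is no genuine obstacle here; the only point meriting a word of care is the joint $C^2$-regularity of $h$ (needed to justify the iterated integral and the boundedness of the mixed partial on the compact square), which follows at once from $f\in C^2$, and the observation that the implied constant depends only on the quantity $M$ above, i.e.\ on the $C^2$-norm of $f$ on $[-\epsilon,\epsilon]$.
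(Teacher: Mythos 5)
Your proof is correct, but it takes a different route from the paper's. You observe that $h(x,y)=f(x+y)-f(x)f(y)$ vanishes on both coordinate axes, compute the mixed partial $\partial^2 h/\partial x\,\partial y=f''(x+y)-f'(x)f'(y)$, and write $h$ as a double integral of this quantity over $[0,x]\times[0,y]$; the bound $|xy|$ then drops out as the area of the rectangle. The paper instead parameterizes along the ray $t\mapsto (xt,yt)$, applies the one-variable fundamental theorem of calculus to $t\mapsto f(xt+yt)-f(xt)f(yt)$, and then substitutes $f(yt)=1+O(t|y|)$ and $f(xt)=1+O(t|x|)$ plus the Lipschitz estimate $f'(xt+yt)-f'(xt)=O(|y|)$ to extract the factor $|xy|$. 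Your two-dimensional formulation is arguably cleaner and more conceptual: it makes transparent that the $|xy|$ comes from vanishing on the two axes and boundedness of the mixed second derivative, whereas the paper's one-dimensional argument has to do two small Taylor expansions by hand. Both arguments use only $f\in C^2$ and give the same uniform constant (controlled by the $C^2$-norm of $f$ on $[-\epsilon,\epsilon]$), so the content is the same.
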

\begin{proof}
We have
\begin{equation*}
\begin{split}
f(x+y)-f(x)f(y)&=\int_0^1\frac{d}{dt}\bigl(f(xt+yt)-f(xt)f(yt) \bigr)dt
\\
&=x\int_0^1\bigl(f'(xt+yt)-f'(xt)f(yt) \bigr)dt
\\
&\quad+y\int_0^1\bigl(f'(xt+yt)-f(xt)f'(yt) \bigr)dt.
\end{split}
\end{equation*}
Putting here $f(yt)=1+O(t|y|)$ in the first integral, and $f(xt)=1+O(t|x|)$ in the second,
we have
\begin{multline*}
f(x+y)-f(x)f(y)=x\int_0^1\bigl(f'(xt+yt)-f'(xt) \bigr)dt
\\+y\int_0^1\bigl(f'(xt+yt)-f'(yt) \bigr)dt+O(|xy|)
\ll |xy|.
\end{multline*}
The lemma is proved.
\end{proof}

\begin{proof}[The proof of  theorem \ref{riem}]
Since for  $(j,k)>1$
$$
\frac{|S_n^{(k)}|}{n!}{\bf M_n}\alpha_j
\ll
\left[ F(z)
\sum_{s\geqslant 1} \frac{z^{sj}}{j^2}\frac{1}{s^4}     \right]_{(n)},
$$
then
$$
{\bf M_n}\alpha_j\ll\frac{1}{j^2} +\frac{1}{n^{\gamma_0+1}}
$$
for $(j,k)>1$.
For $(k,j)=1$ we have
$
{\bf M_n} \alpha_j=\frac{c_{n-j}}{jc_{n}}.
$
Hence we obtain
$$
{\bf M_n}P_n(\alpha (\sigma))=\sum_{j\leqslant n}\log j{\bf M_n}\alpha_j=
\sum_{j\leqslant n, (j,k)=1}\frac{c_{n-j}\log j}{jc_{n}}+O(1).
$$
Applying here the estimate
 $c_m=cm^{\gamma_0-1}\left( 1+
O(m^{-\epsilon})  \right)$ we obtain
\begin{equation*}
\begin{split}
{\bf M_n}\log P_n(\alpha (\sigma))&=\sum_{\scriptstyle j\leqslant n-1 \atop \scriptstyle (j,k)=1}\frac{\log j}{j}
\left(1-\frac{j}{n} \right)^{\gamma_0-1} +O(1)
\\
&=\sum_{\scriptstyle j\leqslant n-1 \atop \scriptstyle (j,k)=1}\frac{\log j}{j}
+
\log n\sum_{\scriptstyle j\leqslant n-1 \atop \scriptstyle (j,k)=1}\frac{1}{j}
\left( \left(1-\frac{j}{n} \right)^{\gamma_0-1}-1 \right)
\\
&\quad-
 \frac{1}{n}\sum_{\scriptstyle j\leqslant n-1 \atop \scriptstyle (j,k)=1}\frac{\log \frac{n}{j}}{j/n}
\left( \left(1-\frac{j}{n} \right)^{\gamma_0-1}-1 \right)+O(1)
\\
&=\frac{\gamma_0}{2}\log^2 n +C_0\log n +O(1),
\end{split}
\end{equation*}
where
$C_0=\gamma_0\int_0^1\frac{(1-y)^{\gamma_0-1}-1}{y}dy$.

Application of lemma \ref{meanalph} gives
$$
\mu_n={\bf M_n}(\log P_n(\alpha)-\log O(\alpha))=
\sum_{\scriptstyle m\leqslant \log^2 n \atop \scriptstyle (m,k)=1}\Lambda(m)
{\bf M_n}(D_{n,m}-1)^+ +O(1).
$$
From lemmas \ref{nud} and \ref{meanalph}  we have
$$
{\bf M_n}(D_{n,m}-1)^+=\nu_n(D_{n,m}=0) +{\bf M_n}D_{n,m}-1=\lambda\left(\frac{m}{x} \right)+O\left(
\frac{\log m}{m} \right),
$$
for $m\leqslant \log^2n$ and $(m,k)=1$
where $\lambda(y)=e^{\frac{1}{y}}-1+\frac{1}{y}$ and $x=\gamma_0\log n$.  Therefore
$$
\mu_n=\sum_{\scriptstyle m=1 \atop \scriptstyle (m,k)=1}^{\infty}\Lambda(m)
\lambda\left(\frac{m}{x} \right) +O((\log \log n)^2)
=S(x)-K(x) +O((\log \log n)^2),
$$
where
$$
S(x)=\sum_{ m=1 }^{\infty}\Lambda(m)
\lambda\left(\frac{m}{x} \right)\quad \hbox{\it and}\quad
K(x)=\sum_{\scriptstyle m=1 \atop \scriptstyle (m,k)>1 }^{\infty}\Lambda(m)
\lambda\left(\frac{m}{x} \right).
$$
Since
$$
\int_0^\infty\lambda(x)x^{s-1}dx=\Gamma(-s),
$$
for $1<\Re s <2$, then applying Mellin's  inversion  formula we
have
\begin{multline*}
K(x)=\sum_{\scriptstyle m=1 \atop \scriptstyle (m,k)>1 }^{\infty}\Lambda(m)
\frac{1}{2\pi i}\int_{\sigma -i\infty}^{\sigma +i\infty}
\Gamma(-s)\left( \frac{m}{x} \right)^{-s}ds
\\
=
\frac{1}{2\pi i}\int_{\sigma -i\infty}^{\sigma +i\infty}
\biggl( \sum_{p|k}\frac{\log p}{p^s-1} \biggl)\Gamma(-s)x^{s}ds,
\end{multline*}
for $1<\sigma<2$. Changing the integration contour in the above
integral from the line
 $\Re s=\sigma$ to $\Re s =-\frac{1}{2}$ and calculating the residuals, we obtain
$$
K(x)=x\left( \sum_{p|k}\frac{\log p}{p-1} \right)+O(\log x).
$$
By means of similar considerations it has been proved in [10] that
$$
S(x)=x(\log x -1)+{\frac{\zeta'(0)}{\zeta(0)}}
-\sum_{\rho}\Gamma(-\rho)x^{\rho}+{\rm O}\left( {\frac{1}{\sqrt x}}\right) ,
$$
where $\sum_{\rho}$ denotes the sum over the non-trivial zeroes of the Rienmann Zeta function. Therefore
\begin{equation*}
\begin{split}
{\bf M_n }\log O(\alpha)&=
\frac{\gamma_0}{2}\log^2 n +C_0\log n-S(\gamma_0\log n)+K(\gamma_0\log n)
+O((\log \log n)^2)
\\
&=\frac{\gamma_0}{2}\log^2 n +C_0\log n- \gamma_0\log n(\log (\gamma_0\log n) -1)
\\
&\quad+\sum_{\rho}\Gamma(-\rho)(\gamma_0\log n)^{\rho}
+
C_1\gamma_0\log n+O((\log \log n)^2),
\end{split}
\end{equation*}
here
$C_1=\sum_{p|k}\frac{\log p}{p-1}$.

Theorem \ref{riem} is proved.
\end{proof}

\begin{lem}
\label{covrl}
 For $1\leqslant r,l,d\leqslant n^\epsilon$ and $(r,l)=1$ we have
$$
{\bf cov}(D_{n,r}',D_{n,r}')\ll\frac{\log n}{rl},\leqno1)
$$

$$
{\bf M_n}D_{n,r}'D_{n,l}''\ll \frac{\log n}{rl^2},\leqno2)
$$
$$
{\bf M_n}D_{n,r}''D_{n,l}''\ll \frac{1}{r^2l^2},\leqno3)
$$
$$
{\bf D_n}D_{n,d}\ll \frac{\log n}{d}.\leqno4)
$$
Here $\epsilon$ is a sufficiently small fixed number.
\end{lem}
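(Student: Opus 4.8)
The plan is to reduce all four estimates to the coefficient asymptotics of Theorem~\ref{pavlovpn}, namely $c_n=\frac{A_k}{\Gamma(\gamma_0)}n^{\gamma_0-1}(1+O(n^{-\epsilon}))$, together with Lemma~\ref{fxy} (applied to $f(x)=(1-x)^{\gamma_0-1}$, for which $f(0)=1$ and $f\in C^2[-1/2,1/2]$), Lemma~\ref{coef}, and the elementary summation bounds already exploited in the proofs of Lemma~\ref{md} and Lemma~\ref{l_cnestimate}. First I would write the relevant second moments through a bivariate generating function: putting $\hat f(i)={\rm e}^{it_1}$, $\hat f(j)={\rm e}^{it_2}$ in~(\ref{f_gen_func}), differentiating in $t_1,t_2$ and evaluating at $t_1=t_2=0$ gives, with $F(z)=p(z)H_k(1;z)=\sum_{m\geqslant 0}c_mz^m$, that for $(i,k)=(j,k)=1$, $i\neq j$ the generating function of ${\bf M_n}(\alpha_i\alpha_j)$ is $F(z)\frac{z^i}{i}\frac{z^j}{j}$, that ${\bf M_n}\alpha_i^2$ has generating function $F(z)\bigl(\frac{z^i}{i}+\frac{z^{2i}}{i^2}\bigr)$, and that for $(j,k)>1$ one gets instead $\frac{F(z)}{B_j(z)}\cdot\frac{z^i}{i}\cdot\sum_{s:\,q_k(j)|s}\frac{(z^j/j)^s}{(s-1)!}$, where $B_j(z)=1+\sum_{s:\,q_k(j)|s}(z^j/j)^s/s!$ and $[F(z)/B_j(z)]_{(m)}\leqslant c_m$ by Lemma~\ref{coef}. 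Summing over the residue classes $r|i$, $l|j$ then expresses ${\bf M_n}(D_{n,r}'D_{n,l}')$, ${\bf M_n}(D_{n,r}'D_{n,l}'')$, ${\bf M_n}(D_{n,r}''D_{n,l}'')$ and ${\bf M_n}D_{n,d}^2$ as explicit double sums in the $c_m$. I may assume throughout that $(r,k)=(l,k)=1$, since otherwise $D_{n,r}'=0$ (resp.\ $D_{n,l}'=0$) and the corresponding assertion is trivial.

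For part~1) the decisive observation is that the diagonal self-correction cancels: collecting the $i=j$ terms, which force $rl|i$, one gets $${\bf M_n}(D_{n,r}'D_{n,l}')={\bf M_n}D_{n,rl}'+\frac1{c_n}\sum_{\substack{r|i,\ l|j\\ (i,k)=(j,k)=1}}\frac{c_{n-i-j}}{ij},$$ so that ${\bf cov}(D_{n,r}',D_{n,l}')$ equals ${\bf M_n}D_{n,rl}'$ plus the convolution defect $\frac1{c_n}\sum_{i,j}\frac{c_{n-i-j}}{ij}-\bigl(\frac1{c_n}\sum_i\frac{c_{n-i}}{i}\bigr)\bigl(\frac1{c_n}\sum_j\frac{c_{n-j}}{j}\bigr)$ (the sums over $i$, $j$ running over the classes $r|i$, $(i,k)=1$ and $l|j$, $(j,k)=1$). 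Here ${\bf M_n}D_{n,rl}'\ll\frac{\log n}{rl}$ by Lemma~\ref{md}. For the convolution defect I would split the summation range. In the bulk $i+j\leqslant n/2$ one has $c_{n-s}/c_n=f(s/n)+O(n^{-\epsilon})$ for $s\leqslant n/2$, so by Lemma~\ref{fxy} the numerator $\frac{c_{n-i-j}}{c_n}-\frac{c_{n-i}}{c_n}\frac{c_{n-j}}{c_n}$ is $\ll\frac{ij}{n^2}+n^{-\epsilon}$, and the bulk contributes $\ll\frac1{n^2}\cdot\frac nr\cdot\frac nl+n^{-\epsilon}\log^2 n\ll\frac1{rl}$; the boundary region $i+j>n/2$ (so $i>n/4$ or $j>n/4$) is estimated crudely from $c_m\ll\min(1,m^{\gamma_0-1})$, the sparsity $\sum_{r|i,\ i>n/4}\frac1i\ll\frac1r$ and Lemma~\ref{l_cnestimate}, the large factor $c_n^{-1}\asymp n^{1-\gamma_0}$ being compensated by these gains, with outcome $\ll\frac{\log n}{rl}$. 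Hence ${\bf cov}(D_{n,r}',D_{n,l}')\ll\frac{\log n}{rl}$. This mirrors the proof of Lemma~\ref{covD_nk+} in Chapter~2, with $1/(1-z)$ there replaced by $F(z)$.

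Parts~2) and~3) are easier because the index set of $D'$ ($(j,k)=1$) is disjoint from that of $D''$ ($(j,k)>1$), so ${\bf M_n}(D_{n,r}'D_{n,l}'')$ has no diagonal, while in ${\bf M_n}(D_{n,r}''D_{n,l}'')$ the diagonal $\sum_{rl|i,\,(i,k)>1}{\bf M_n}\alpha_i^2\asymp(rl)^{-2}$ is already of the claimed order. By Lemma~\ref{coef} each factor attached to a $D''$-index $j$ is dominated coefficientwise by $j^{-2}\sum_{t\geqslant 2}z^{tj}$, whose convolution with $F$ at degree $n$ is $\ll j^{-2}(c_n+j^3/n^3)$; summing $\sum_{l|j}j^{-2}\ll l^{-2}$ and $\sum_{r|i}i^{-1}\ll r^{-1}\log n$ (the boundary pieces where $i$ or $j$ is of order $n$ disposed of as in part~1)) gives ${\bf M_n}(D_{n,r}'D_{n,l}'')\ll\frac{\log n}{rl^2}$ and ${\bf M_n}(D_{n,r}''D_{n,l}'')\ll\frac1{r^2l^2}$. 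For part~4) I would split ${\bf D_n}D_{n,d}={\bf D_n}D_{n,d}'+2\,{\bf cov}(D_{n,d}',D_{n,d}'')+{\bf D_n}D_{n,d}''$; the computation of part~1) with $r=l=d$ but both index classes equal to $\{d|i,\ (i,k)=1\}$ gives ${\bf D_n}D_{n,d}'\ll\frac{\log n}{d^2}+\frac{\log n}{d}\ll\frac{\log n}{d}$, the cross term is $\ll\frac{\log n}{d^3}$, and ${\bf D_n}D_{n,d}''\leqslant{\bf M_n}(D_{n,d}'')^2\ll\frac1{d^2}$ by the argument of parts~2)--3); adding these yields ${\bf D_n}D_{n,d}\ll\frac{\log n}{d}$ (when $(d,k)>1$ one simply has $D_{n,d}=D_{n,d}''$).

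The hard part will be the boundary region of the convolution defect in parts~1) and~4): there the weight $c_n^{-1}\asymp n^{1-\gamma_0}$ is large while the summation variables carry only logarithmic decay, so the bound $\frac{\log n}{rl}$ is recovered only by balancing simultaneously the sparsity of multiples of $r$ near $n$, the sparsity of multiples of $l$ near $n$, and the genuine size $c_m\asymp m^{\gamma_0-1}$ (rather than the crude $c_m\ll 1$) for $m$ of order $n$ — keeping track of these competing powers of $n$ uniformly in $r,l\leqslant n^{\epsilon}$, with $\epsilon$ chosen small relative to $\gamma_0$, is the technically exacting step.
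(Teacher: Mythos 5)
Your decomposition for part 1) -- isolating the diagonal as ${\bf M_n}D'_{n,rl}$ plus a convolution defect, handling the bulk by the coefficient asymptotics $c_m=cm^{\gamma_0-1}\left( 1+O(m^{-\epsilon})\right)$ together with a Lemma~\ref{fxy}-type cancellation, and disposing of the boundary through Lemma~\ref{l_cnestimate} -- reproduces the paper's own proof in all essentials.

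The gap is in parts 2) and 3), and therefore also in 4). You dominate the generating factor of $\alpha_j$ for $(j,k)>1$, namely $\sum_{q_k(j)\mid s}(z^j/j)^s/(s-1)!$, by $j^{-2}\sum_{t\geqslant 2}z^{tj}$, thereby discarding all decay in $s$. That decay is essential: with your bound the convolution with $F$ at degree $n$ becomes $j^{-2}\sum_{t\geqslant 2}c_{n-tj}$, which by Lemma~\ref{l_cnestimate} is of order $j^{-2}\bigl(c_{n-j[n/j]}+\tfrac{n}{j}c_n\bigr)$ rather than the $j^{-2}(c_n+j^3/n^3)$ you assert; after summing over $l\mid j$ and dividing by $c_n$, the term $\tfrac{n}{j^3}c_n$ alone produces something $\gg n/l^3$, far exceeding the target $\log n/(rl^2)$ (and the bound $1/l^2$ for ${\bf M_n}D''_{n,l}$ in Lemma~\ref{md}) once $l\leqslant n^{\epsilon}$. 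The paper retains the $s$-decay via the observation $(s-1)!\gg s^4$, which gives $1/(j^s(s-1)!)\ll 1/(j^2s^4)=1/(m^2s^2)$ with $m=js$; summing over $s$, the whole $D''$-block is dominated coefficientwise by $\sum_{l\mid m}z^m/m^2$, and it is this $1/m^2$ (depending on the actual degree $m$, not merely $1/l^2$ uniformly) that makes the convolution with $F$ converge and yields $c_n/l^2+O\bigl(c_{n-l[n/l]}/n^2\bigr)$. You need that sharper dominating series, or an explicit retention of the geometric factor $j^{-(s-2)}$, for 2)--4) to close.
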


\begin{proof}
 During the proof of the assertion 1) we will assume that $(rl,k)=1$,
since otherwise whether $D_{n,r}'=0$, or $D_{n,l}'=0$. Then for $j_1+j_2\leqslant n$,  $j_1\not=j_2$ and $(j_1j_2,k)=1$ we have
$$
{\bf M_n}\alpha_{j_1}\alpha_{j_2}=\frac{c_{n-j_1-j_2}}{c_nj_1j_2}.
$$
Therefore
\begin{equation*}
\begin{split}
{\bf cov}(D_{n,r}',D_{n,r}')&=
{\bf M_n}D_{n,r}'D_{n,l}'-{\bf M_n}D_{n,r}'{\bf M_n}D_{n,l}'
\\
&=
\sum_{\scriptstyle j_1\not=j_2, j_1,j_2\leqslant n/4\atop \scriptstyle r|j_1,l|j_2,(j_1j_2,k)=1}
\frac{c_nc_{n-j_1-j_2}-c_{n-j_1}c_{n-j_2}}{c^2_nj_1j_2}
\\
&\quad+\sum_{{j_1\not=j_2,\scriptstyle j_2+j_2\leqslant n}\atop
{\scriptstyle  j_1 \geqslant n/4 \vee  j_2 \geqslant n/4 \atop {\scriptstyle r|j_1,l|j_2,(j_1j_2,k)=1}}}
\frac{c_{n-j_1-j_2}}{c_nj_1j_2}
-\sum_{{\scriptstyle j_1\not=j_2, j_2,j_2\leqslant n}\atop
{\scriptstyle  j_1 \geqslant n/4 \vee  j_2 \geqslant n/4 \atop {\scriptstyle r|j_1,l|j_2,(j_1j_2,k)=1}}}
\frac{c_{n-j_1}c_{n-j_2}}{c^2_nj_1j_2}
\\
&\quad +\sum_{{\scriptstyle j\leqslant n :rl|j}\atop{\scriptstyle (j,k)=1}}{\bf D_n}\alpha_j=S_1+S_2+S_3+S_4.
\end{split}
\end{equation*}
Since
$c_m=\frac{|S_m^{(k)}|}{m!}=cm^{\gamma_0-1}\left( 1+
O(m^{-\epsilon})  \right)$ then one can easily obtain that
$$
S_1\ll
\sum_{\scriptstyle  j_1,j_2\leqslant n/4\atop \scriptstyle r|j_1,l|j_2}
\frac{\left|\left( 1-\frac{j_1+j_2}{n}\right)^{\gamma_0-1} -\left( 1-\frac{j_1}{n}\right)^{\gamma_0-1}\left( 1-\frac{j_2}{n}\right)^{\gamma_0-1}\right|}{j_1j_2}
+\frac{n^{-\epsilon}\log^2 n}{rl}\ll
\frac{1}{rl}.
$$
As for any $d\geqslant 1$ and $m\geqslant 1$  we have
$$
\sum_{j\leqslant m: d|j} c_{m-j}\ll 1+\frac{m^{\gamma_0}}{d},
$$ therefore
\begin{multline*}
S_2\ll \frac{1}{n} \sum_{\scriptstyle j_1+j_2\leqslant n \atop
\scriptstyle r|j_1,l|j_2}
\frac{c_{n-j_1-j_2}}{c_n}\left(\frac{1}{j_1}+\frac{1}{j_2} \right)
= \frac{1}{nc_n} \sum_{\scriptstyle j_1\leqslant n \atop
\scriptstyle r|j_1}\frac{1}{j_1}\sum_{\scriptstyle j_2\leqslant n-j_1\atop \scriptstyle l|j_2}
{c_{n-j_1-j_2}}
\\
+
\frac{1}{nc_n} \sum_{\scriptstyle j_2\leqslant n \atop
\scriptstyle l|j_2}\frac{1}{j_2}\sum_{\scriptstyle j_1\leqslant n-j_2\atop \scriptstyle r|j_1}
{c_{n-j_2-j_1}}
\ll\frac{\log n}{n^{\gamma_0}r}+\frac{\log n}{n^{\gamma_0}l}+\frac{\log n}{rl}\ll\frac{\log n}{rl}
\end{multline*}
for $r,l \leqslant n^{\gamma_0}$.

In a similar way we obtain that
for $r,l \leqslant n^{\gamma_0}$
$$
S_3\ll \frac{1}{n} \sum_{\scriptstyle j_1,j_2\leqslant n \atop
\scriptstyle r|j_1,l|j_2}
\frac{c_{n-j_1}c_{n-j_2}}{c^2_n}\left(\frac{1}{j_1}+\frac{1}{j_2} \right) \ll\frac{\log n}{rl}.
$$
Since for  $(j,k)=1$ we have
${\bf D_n}\alpha_j\leqslant {\bf M_n}\alpha_j^2=\frac{1}{j}\frac{c_{n-j}}{c_n}+\frac{1}{j^2}\frac{c_{n-2j}}{c_n}$ (here we assume that $c_m=0$ for $m<0$),
then
$$
S_4\ll \frac{\log n}{rl}+\frac{1}{n^{\gamma_0}}\ll \frac{\log n}{rl}
$$
for $r,l\leqslant n^{\gamma_0/2}$.

Assertion 1) is proved.

Applying the same considerations we obtain
\begin{equation*}
\begin{split}
c_n{\bf M_n}D_{n,r}'D_{n,l}''&\ll
\left[F(z) \sum_{r|j}\frac{z^j}{j}\sum_{l|m}\frac{z^m}{m^2} \right]_{(n)}=
\sum_{\scriptstyle j+m\leqslant n \atop \scriptstyle r|j, l|m}
\frac{c_{n-j-m}}{jm^2}
\\
&\leqslant
\sum_{\scriptstyle j,m\leqslant n/4 \atop \scriptstyle r|j, l|m}
\frac{c_{n-j-m}}{jm^2}
+\sum_{\scriptstyle j+m\leqslant n,j\geqslant n/4 \atop \scriptstyle r|j, l|m}
\frac{c_{n-j-m}}{jm^2}
\\
&\quad+
\sum_{\scriptstyle j+m\leqslant n,m\geqslant n/4 \atop \scriptstyle r|j, l|m}
\frac{c_{n-j-m}}{jm^2}\ll c_n\frac{\log n}{r l^2}
\end{split}
\end{equation*}
for $r,l \leqslant n^{\gamma_0}$.

In a similar way we prove 3).

One can easily see that
$$
{\bf D_n}D_{n,d}\leqslant 2 {\bf D_n}D_{n,d}'+2{\bf D_n}D_{n,d}''.
$$
Slightly changing the proof of assertion 1), one can check that ${\bf D_n}D_{n,d}'\ll\frac{\log n}{d}$ for $d\leqslant n^{\gamma_0}$. In a similar way  ${\bf D_n}D_{n,d}''\leqslant {\bf M_n}D_{n,d}^2\ll\frac{1}{d^2}$.
Hence follows the proof of the estimate 4).

The lemma is proved.
\end{proof}

\begin{lem}
\label{cov2}
If $(r,l)=1$,
$C<l\hbox{\ and \ }r\leqslant \log ^{D}n$,
then
$$
{\bf cov}(I[D_{n,l}=0],I[D_{n,r}=0])\ll\frac{\log n}{lr}.
$$
\end{lem}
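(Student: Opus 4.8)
The plan is to write
$$
\operatorname{cov}(I[D_{n,l}=0],I[D_{n,r}=0])
=\nu_n(D_{n,l}=0,D_{n,r}=0)-\nu_n(D_{n,l}=0)\nu_n(D_{n,r}=0),
$$
and to estimate each of the three probabilities by means of Lemmas \ref{nud} and \ref{nulr}, which give sharp asymptotic expansions for them whenever $(k,l)=(k,r)=1$ and $C<l,r\leqslant\log^{D}n$. However, as in the previous covariance lemmas, I must first dispose of the case $(rl,k)>1$: if $(l,k)>1$ then $D_{n,l}=D_{n,l}''$ counts only cycles whose length has a common factor with $k$, and on $S_n^{(k)}$ such quantities are tiny, so $\nu_n(D_{n,l}=0)=1+O(1/l)$ by Lemma \ref{md} (applied to the corresponding mean) and the covariance is $O(\log n/(lr))$ trivially; an analogous remark handles $(r,k)>1$. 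Hence I may assume $(l,k)=(r,k)=1$ and $(r,l)=1$ throughout the main argument.

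In that case the three asymptotic formulas all have the shape $c\cdot n^{-\gamma_0\sigma}\bigl(1+O(n^{-\epsilon})\bigr)$ with, respectively, exponents $\sigma=1/l$, $\sigma=1/r$, and $\sigma=1/l+1/r-1/(lr)$, and leading constants built from $A_k$, $\Gamma$-factors, the powers $l^{\gamma_0/l}$, $r^{\gamma_0/r}$, and the $H_k$-type products $H_k^{(l)}(1)$, $H_k^{(r)}(1)$, $H_k^{(lr)}(1)$. The key algebraic fact to check is that the leading term of $\nu_n(D_{n,l}=0,D_{n,r}=0)$ equals the product of the leading terms of $\nu_n(D_{n,l}=0)$ and $\nu_n(D_{n,r}=0)$: multiplying the two single formulas of Lemma \ref{nud} and comparing with the double formula of Lemma \ref{nulr}, the $\Gamma(\gamma_0)$'s, the powers of $A_k$, the powers $l^{\gamma_0/l}r^{\gamma_0/r}$, and the $H$-products all match, while the two $\Gamma$-denominators $\Gamma(\gamma_0(1-1/l))\Gamma(\gamma_0(1-1/r))$ from the product do \emph{not} coincide with the single denominator $(lr)^{\gamma_0/(lr)}\Gamma(\gamma_0(1-1/l-1/r+1/(lr)))$ from Lemma \ref{nulr}. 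This is exactly where Lemma \ref{fxy} enters: applying it with $f(x)=\Gamma(\gamma_0)/\Gamma(\gamma_0(1-x))$ (which is $C^2$ near $0$ with $f(0)=1$) at $x=1/l$, $y=1/r$ shows that the mismatch between $f(1/l+1/r)$ and $f(1/l)f(1/r)$ is $O(1/(lr))$, and a similar elementary estimate controls the factor $(lr)^{\gamma_0/(lr)}=1+O(\log(lr)/(lr))$; thus the cross term cancels up to an error of size $O(\log n/(lr))$ after multiplying by the overall scale.

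Concretely, I would set $P_l=\nu_n(D_{n,l}=0)$, $P_r=\nu_n(D_{n,r}=0)$, $P_{lr}=\nu_n(D_{n,l}=0,D_{n,r}=0)$, substitute the three expansions, factor out the common quantity $n^{-\gamma_0(1/l+1/r)}$ (note that $P_{lr}$ carries the extra benign factor $n^{\gamma_0/(lr)}$, which is $1+O(\log n/(lr))$ since $lr=O(\log^{2D}n)$), and estimate $P_{lr}-P_lP_r$ as a difference of explicit constants times that scale, plus $O(n^{-\epsilon})$ relative errors; the constant difference is $O(1/(lr))$ by the $\Gamma$-quotient computation above, and since the common scale is $\leqslant 1$ while we are allowed an error of $O(\log n/(lr))$, the bound follows. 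The main obstacle is purely bookkeeping: keeping the numerous $A_k$-powers, $\Gamma$-factors and $H_k^{(\cdot)}(1)$-products aligned so that everything except the $\Gamma$-quotient cancels exactly, and then invoking Lemma \ref{fxy} with the right choice of $f$; once the cancellation structure is identified there is no analytic difficulty, only careful algebra.
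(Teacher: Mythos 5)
Your plan is the paper's proof almost verbatim: write the covariance as $\nu_n(D_{n,l}=0,D_{n,r}=0)-\nu_n(D_{n,l}=0)\nu_n(D_{n,r}=0)$, substitute the asymptotics from Lemmas~\ref{nud} and~\ref{nulr}, cancel the common $H_k^{(\cdot)}(1)$, $A_k$ and power factors, and invoke Lemma~\ref{fxy} on the $\Gamma$-quotient to control the residue; the preliminary reduction in the case $(l,k)>1$ or $(r,k)>1$ also matches the paper, which passes to $I[D_{n,d}>0]\leqslant D_{n,d}$ and then uses Lemmas~\ref{md} and~\ref{covrl}.

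One step as written is wrong, though. You assert that $n^{\gamma_0/(lr)}=1+O(\log n/(lr))$ ``since $lr=O(\log^{2D}n)$.'' The upper bound on $lr$ is irrelevant here: what is needed is a \emph{lower} bound, and when $lr\ll\log n$ the exponent $\gamma_0\log n/(lr)$ is unbounded, so $n^{\gamma_0/(lr)}$ is not $1+O(\log n/(lr))$ and your ``factor out a common scale'' argument collapses. The patch is easy but should be stated: in that regime the two probabilities have genuinely different $n$-orders, so
$$
\bigl|\nu_n(D_{n,l}=0,D_{n,r}=0)-\nu_n(D_{n,l}=0)\nu_n(D_{n,r}=0)\bigr|\leqslant \nu_n(D_{n,l}=0,D_{n,r}=0)\ll n^{-\gamma_0(1/l+1/r-1/(lr))},
$$
and since $1/l+1/r-1/(lr)=(l+r-1)/(lr)\geqslant 1/\sqrt{lr}$, this is $\ll n^{-\gamma_0/\sqrt{lr}}\ll \log n/(lr)$ uniformly for $lr\leqslant\log n$. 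With that case handled separately, and the cancellation argument reserved for $lr\gg\log n$ where your estimate for $n^{\gamma_0/(lr)}$ is valid, the proof goes through.
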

\begin{proof}
 Let us suppose at first that $(k,l)=1$, $(k,r)=1$, then
\begin{multline*}
{\bf cov}(I[D_{n,l}=0],I[D_{n,r}=0])
\\=\nu_n(D_{n,l}=0,D_{n,r}=0)-\nu_n(D_{n,l}=0)\nu_n(D_{n,r}=0).
\end{multline*}
Inserting here the estimates of the lemmas \ref{nud} and \ref{nulr}
and applying the lemma \ref{fxy} to function
$f(x)=\frac{\Gamma(\gamma_0)}{\Gamma(\gamma_0(1+x))}$ with  $x=1/l$ and $y=1/r$, we obtain the desired estimate.

Suppose now $(k,r)>1$. Then $D_{n,r}=D_{n,r}''$.

Since $1-I[D_{n,d}=0]=I[D_{n,d}>0]\leqslant D_{n,d}$, therefore, applying the lemmas 10 and 3, we obtain
\begin{equation*}
\begin{split}
|{\bf cov}(I[D_{n,l}=0],I[D_{n,r}=0])|&=|{\bf cov}(I[D_{n,l}>0],I[D_{n,r}>0])|
\\
&\leqslant {\bf M_n}D_{n,l}D_{n,r}''
+ {\bf M_n}D_{n,l}{\bf M_n}D_{n,r}''
\\
&={\bf M_n}D_{n,l}'D_{n,r}'' + {\bf M_n}D_{n,l}'{\bf M_n}D_{n,r}''+{\bf M_n}D_{n,l}''D_{n,r}''
\\
&\quad + {\bf M_n}D_{n,l}''{\bf M_n}D_{n,r}''
\ll \frac{\log n}{lr^2},
\end{split}
\end{equation*}
for $r,l\leqslant \log^Dn$.

The lemma is proved.
\end{proof}

\begin{lem}
\label{vard}
If $1\leqslant d\leqslant n^\epsilon$, then
$$
{\bf D_n}(D_{n,d}-1)^+\leqslant {\bf D_n}I[D_{n,d}=0]+{\bf D_n}D_{n,d}  \ll \frac{\log n}{d}.
$$
\end{lem}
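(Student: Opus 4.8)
The plan is to start from the pointwise identity $(D_{n,d}-1)^+ = (D_{n,d}-1) + I[D_{n,d}=0]$, which is just the definition $(d-1)^+=d-1+I[d=0]$ specialized to $d=D_{n,d}(\alpha)$. Writing $X=D_{n,d}-1$ and $Y=I[D_{n,d}=0]$, I would first observe that $XY=-Y$ (on the event $\{D_{n,d}=0\}$ one has $X=-1$, and off that event $Y=0$), so that ${\bf M_n}(XY)=-\nu_n(D_{n,d}=0)$. Combined with ${\bf M_n}X={\bf M_n}D_{n,d}-1\geqslant -1$, which holds because $D_{n,d}=\sum_{j\leqslant n:d|j}\alpha_j\geqslant 0$, this gives ${\bf cov}(X,Y)={\bf M_n}(XY)-{\bf M_n}X\,{\bf M_n}Y=-\nu_n(D_{n,d}=0)\,{\bf M_n}D_{n,d}\leqslant 0$. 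Hence ${\bf D_n}(D_{n,d}-1)^+={\bf D_n}X+2\,{\bf cov}(X,Y)+{\bf D_n}Y\leqslant {\bf D_n}X+{\bf D_n}Y={\bf D_n}D_{n,d}+{\bf D_n}I[D_{n,d}=0]$, which is exactly the first inequality asserted in the lemma.

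It then remains to bound the two variances by $\log n/d$. For ${\bf D_n}D_{n,d}$ I would simply quote assertion 4) of Lemma \ref{covrl}, which gives ${\bf D_n}D_{n,d}\ll \log n/d$ throughout the range $1\leqslant d\leqslant n^\epsilon$. For ${\bf D_n}I[D_{n,d}=0]=\nu_n(D_{n,d}=0)\bigl(1-\nu_n(D_{n,d}=0)\bigr)$ I would use the trivial bound $\nu_n(D_{n,d}=0)\leqslant 1$ together with Markov's inequality for the non-negative integer random variable $D_{n,d}$, namely $1-\nu_n(D_{n,d}=0)=\nu_n(D_{n,d}\geqslant 1)\leqslant {\bf M_n}D_{n,d}$. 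Splitting $D_{n,d}=D_{n,d}'+D_{n,d}''$ and applying Lemma \ref{md} — so ${\bf M_n}D_{n,d}'\ll \log n/d$ when $(d,k)=1$, while ${\bf M_n}D_{n,d}''\ll d^{-2}+c_{n-d[n/d]}/(n^2c_n)$ always, the latter error being $\ll n^{-\gamma_0}$ by the asymptotic $c_m\asymp m^{\gamma_0-1}$ of Theorem \ref{pavlovpn} — yields ${\bf M_n}D_{n,d}\ll \log n/d$ once $\epsilon$ is taken small enough that $n^{-\gamma_0}\ll \log n/d$ on $1\leqslant d\leqslant n^\epsilon$. Therefore ${\bf D_n}I[D_{n,d}=0]\ll \log n/d$ as well, and combining the two estimates completes the proof.

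There is no serious obstacle here; the one point needing care is the bookkeeping in the second step — in particular treating separately the cases $(d,k)=1$ and $(d,k)>1$ (in the latter $D_{n,d}=D_{n,d}''$ and the bound is even stronger), and checking that the $O$-terms from Lemma \ref{md} and from the asymptotics of $c_m$ are indeed $\ll \log n/d$ for all $1\leqslant d\leqslant n^\epsilon$. For the small values $d\leqslant \log n$ one may, if convenient, simply invoke the trivial bound ${\bf D_n}I[D_{n,d}=0]\leqslant 1\leqslant \log n/d$ rather than the mean estimate. Everything else is a direct consequence of results already established.
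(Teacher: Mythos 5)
Your proof is correct and follows essentially the same route as the paper: the same pointwise identity $(D_{n,d}-1)^+=D_{n,d}-1+I[D_{n,d}=0]$, the same observation that $I[D_{n,d}=0]\cdot D_{n,d}\equiv 0$ forces the cross-covariance to be $-\,\nu_n(D_{n,d}=0)\,{\bf M_n}D_{n,d}\leqslant 0$, the same bound ${\bf D_n}I[D_{n,d}=0]\leqslant \nu_n(D_{n,d}>0)\leqslant {\bf M_n}D_{n,d}\ll \log n/d$ via Lemma \ref{md}, and the same citation of assertion 4) of Lemma \ref{covrl} for ${\bf D_n}D_{n,d}$. The extra bookkeeping you add on the error terms from Lemma \ref{md} (and the alternative trivial bound for $d\leqslant\log n$) is a correct but optional elaboration of steps the paper passes over silently.
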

\begin{proof}
 Since $(D_{n,d}-1)^+=I[D_{n,d}=0]+D_{n,d}-1$, then
\begin{equation*}
\begin{split}
{\bf D_n}(D_{n,d}-1)^+&={\bf D_n}I[D_{n,d}=0]+{\bf D_n}D_{n,d}+2{\bf cov}(I[D_{n,d}=0],D_{n,d})
\\
&\leqslant {\bf D_n}I[D_{n,d}=0]+{\bf D_n}D_{n,d},
\end{split}
\end{equation*}
as ${\bf cov}(I[D_{n,d}=0],D_{n,d})=-{\bf M_n}I[D_{n,d}=0]{\bf M_n}D_{n,d}\leqslant 0$.

Applying the inequality $\nu_n(D_{n,d}>0)\leqslant {\bf
M_n}D_{n,d}$ we obtain
\begin{equation*}
\begin{split}
{\bf D_n}(I[D_{n,d}=0])&=\nu_n(D_{n,d}=0)-\nu_n(D_{n,d}=0)^2
\\
&\leqslant \nu_n(D_{n,d}>0)
\leqslant {\bf M_n}D_{n,d}\ll \frac{\log n}{d};
\end{split}
\end{equation*}
the last inequality follows from the lemma \ref{md}.
Hence and from the estimate 4) of lemma \ref{covrl} we obtain the proof of the lemma.
\end{proof}
\begin{prop}
\label{propbarbour}
For any fixed $K>0$ we have

$$
{\nu_n}\left(
{  \frac{ |\log P_n(\alpha)-\log O_n(\alpha)-\mu_n|}{\log^{3/2} n} } >
K\left( {{\log \log n}\over \log n}\right) ^{2/3}
\right)\ll\left( {\frac{\log \log n}{\log n}}\right) ^{2/3},
$$
where $\mu_n={\bf M_n}\bigl(\log P_n(\alpha)-\log O_n(\alpha)\bigr)$.
\end{prop}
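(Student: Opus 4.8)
The plan is to follow the proof of the corresponding proposition for the polynomial ring $E_n$ (Proposition~1 above), which in turn goes back to Barbour and Tavar\'e: decompose $\log P_n(\alpha)-\log O_n(\alpha)$ according to prime powers, estimate the variance (or, where that would be wasteful, the mean) of each of a small number of blocks by means of the covariance and mean lemmas already proved for $S_n^{(k)}$, and then apply Chebyshev's and Markov's inequalities to the blocks one at a time. Starting from
$$
\log P_n(\alpha)-\log O_n(\alpha)=\sum_p\sum_{s\geqslant 1}(D_{n,p^s}(\alpha)-1)^+\log p,
$$
I would write the right-hand side as $V_1+V_2+V_3$, where $V_1=\sum_{p\leqslant\log^2 n}(D_{n,p}-1)^+\log p$ collects the small primes, $V_2=\sum_{p>\log^2 n}(D_{n,p}-1)^+\log p$ the large primes, and $V_3=V_3'+V_3''$ the prime powers of exponent $\geqslant 2$, split at a cutoff $p^s\leqslant\log^{D}n$ (this is $V_3'$) versus $p^s>\log^{D}n$ (this is $V_3''$), with $D$ a large fixed constant. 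Since $\mu_n={\bf M_n}V_1+{\bf M_n}V_2+{\bf M_n}V_3$, it is enough to bound, for each block $V$, the probability $\nu_n(|V-{\bf M_n}V|>\tfrac13 K\log^{3/2}n(\log\log n/\log n)^{2/3})$, using Chebyshev when ${\bf D_n}V$ is under control and Markov (remembering $V\geqslant 0$) when ${\bf M_n}V$ is under control.

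For $V_1$ I would write $(D_{n,p}-1)^+=(D_{n,p}-1)+I[D_{n,p}=0]$ and use ${\bf D_n}(X+Y)\leqslant 2{\bf D_n}X+2{\bf D_n}Y$, so that it suffices to bound the variances of $\sum_{p\leqslant\log^2 n}D_{n,p}\log p$ and of $\sum_{p\leqslant\log^2 n}I[D_{n,p}=0]\log p$ separately; this step conveniently circumvents the one covariance combination, namely ${\bf cov}(D_{n,r},I[D_{n,l}=0])$, that is not directly among Lemmas~\ref{covrl}--\ref{vard}. For the first sum I would use the fourth estimate of Lemma~\ref{covrl} (${\bf D_n}D_{n,p}\ll\log n/p$) on the diagonal and, for distinct $p,q\leqslant\log^2 n$, the bound ${\bf cov}(D_{n,p},D_{n,q})\ll\log n/(pq)$, which follows from the splitting $D_{n,d}=D_{n,d}'+D_{n,d}''$ on combining the first three estimates of Lemma~\ref{covrl} with the mean estimates of Lemma~\ref{md}; this gives ${\bf D_n}\bigl(\sum_p D_{n,p}\log p\bigr)\ll\log n(\log\log n)^2$. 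For the second sum I would use ${\bf D_n}I[D_{n,p}=0]\leqslant\nu_n(D_{n,p}>0)\leqslant{\bf M_n}D_{n,p}\ll\log n/p$ (Lemma~\ref{md}) on the diagonal, and Lemma~\ref{cov2} off the diagonal, which applies since $p,q\leqslant\log^2 n\leqslant\log^{D}n$ once $D\geqslant 2$ and after removing the finitely many primes $\leqslant C$ (these contribute only ${\rm O}(\log n)$), again getting $\ll\log n(\log\log n)^2$. Hence ${\bf D_n}V_1\ll\log n(\log\log n)^2$, and Chebyshev's inequality bounds the $V_1$-probability by ${\rm O}\bigl((\log\log n/\log n)^{2/3}\bigr)$.

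For $V_2$ every prime exceeds $\log n$, so Lemma~\ref{meanalph} gives ${\bf M_n}(D_{n,p}-1)^+\ll(\log n/p)^2$ (even smaller when $p\mid k$), whence ${\bf M_n}V_2\ll\log^2 n\sum_{p>\log^2 n}p^{-2}\log p\ll1$; since $V_2\geqslant 0$, Markov's inequality makes the $V_2$-probability ${\rm O}\bigl(\log^{-5/6}n(\log\log n)^{-2/3}\bigr)={\rm o}\bigl((\log\log n/\log n)^{2/3}\bigr)$. Similarly, for $V_3''$ Lemma~\ref{meanalph} gives ${\bf M_n}(D_{n,p^s}-1)^+\ll(\log n/p^s)^2$ for $p^s>\log^{D}n\geqslant\log n$, and an elementary estimate $\sum_{p^s>\log^D n,\,s\geqslant 2}p^{-2s}\log p\ll\log^{-3D/2}n\log\log n$ yields ${\bf M_n}V_3''\ll\log^{2-3D/2}n\log\log n$, which is ${\rm o}(\log^{-1}n)$ for $D\geqslant 3$, so Markov's inequality disposes of $V_3''$. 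Finally, for $V_3'$ I would once more split $(D_{n,p^s}-1)^+=(D_{n,p^s}-1)+I[D_{n,p^s}=0]$ and bound the two resulting variances: for powers of distinct primes the coprime covariance bounds of Lemmas~\ref{covrl} and~\ref{cov2} apply (all arguments are $\leqslant\log^{D}n$), while for two powers $p^s,p^t$ of the same prime with $s\leqslant t$ one uses the inclusion $\{D_{n,p^s}=0\}\subseteq\{D_{n,p^t}=0\}$, so that ${\bf cov}(I[D_{n,p^s}=0],I[D_{n,p^t}=0])\leqslant\nu_n(D_{n,p^t}>0)\ll\log n/p^{t}$, exactly as in the proof of Lemma~\ref{covD_np^s}, and for ${\bf cov}(D_{n,p^s},D_{n,p^t})\ll\log n/p^{\max(s,t)}$ the same second-moment formulas ${\bf M_n}\alpha_i\alpha_j=c_{n-i-j}/(c_n ij)$ (for $i\neq j$) that underlie the proof of Lemma~\ref{covrl}. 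Summing over prime powers gives ${\bf D_n}V_3'\ll\log n$, so Chebyshev's inequality makes the $V_3'$-probability ${\rm o}\bigl((\log\log n/\log n)^{2/3}\bigr)$, and adding the four estimates proves the proposition.

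I expect the main difficulty to be bookkeeping rather than new analytic input. The covariance lemmas available for $S_n^{(k)}$ carry the size restrictions $d\leqslant n^{\varepsilon}$ (Lemmas~\ref{covrl}, \ref{vard}) and $d\leqslant\log^{D}n$ (Lemma~\ref{cov2}), unlike their unrestricted counterparts in the polynomial setting, so one must route each prime and each prime power through the correct lemma and choose the cutoffs $\log^2 n$ and $\log^{D}n$ so that the large-argument tails are controlled purely by Lemma~\ref{meanalph} and Markov's inequality while the small-argument parts remain within the range of validity of Lemmas~\ref{covrl}--\ref{vard}. The one conceptual point worth double-checking is that the two decompositions $(D-1)^+=(D-1)+I[D=0]$ and $D=D'+D''$, used together, genuinely dispense with any need for a ${\bf cov}(D_{n,r},I[D_{n,l}=0])$ estimate, which is not provided among the lemmas.
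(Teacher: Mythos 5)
Your proposal is correct and follows essentially the same strategy as the paper's proof: expand $\log P_n(\alpha)-\log O_n(\alpha)$ as a sum of $(D_{n,p^s}-1)^+\log p$ over prime powers, truncate, control the second moment of the "small" part via the covariance lemmas (\ref{md}, \ref{covrl}, \ref{cov2}, \ref{vard}), control the mean of the "large" part via Lemma~\ref{meanalph}, and finish with Chebyshev/Markov.

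The minor differences from the paper's version are organizational. The paper splits as $V_1 = \sum_{m\leqslant C}$, $V_2 = \sum_{C<p^s\leqslant\log^2 n}$, $V_3=\sum_{m>\log^2 n}$ (so a single constant-size block absorbs the prime powers below $C$ where Lemma~\ref{cov2} does not apply, with the crude bound ${\bf D_n}V_1\leqslant C\log^3C\,\log n$), whereas you partition by exponent, rolling the small primes into $V_1$ and peeling them off separately inside the variance calculation. Both routings are fine once one notices, as you did, that the $p\leqslant C$ contribution is ${\rm O}(\log n)$ and ${\bf D_n}(X+Y)\leqslant 2{\bf D_n}X+2{\bf D_n}Y$ obviates cross-block covariances. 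The other small departure is your treatment of powers of the same prime: you use the nesting $\{D_{n,p^s}=0\}\subseteq\{D_{n,p^t}=0\}$ to bound ${\bf cov}(I[D_{n,p^s}=0],I[D_{n,p^t}=0])\leqslant\nu_n(D_{n,p^t}>0)\ll\log n/p^t$, whereas the paper uses Cauchy--Schwarz ${\bf D_n}^{1/2}I[D_{n,p^s}=0]\cdot{\bf D_n}^{1/2}I[D_{n,p^l}=0]$; both give a summable bound. Your observation that the two splittings $(D-1)^+=(D-1)+I[D=0]$ and $D=D'+D''$ jointly dispense with any need for a ${\bf cov}(D_{n,r},I[D_{n,l}=0])$ estimate is also the reasoning implicit in the paper's proof (via ${\bf D_n}V_2\leqslant 2{\bf D_n}V_2^{(1)}+2{\bf D_n}V_2^{(2)}$). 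No gap; the remaining work is, as you say, bookkeeping.
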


\begin{proof}
The proof of this proposition uses the considerations of A. D. Barbour and S. Tavar\`e of the work \cite{barbour_tavare}
\begin{equation*}
\begin{split}
\log P_n(\alpha)-\log O_n(\alpha)&=
\sum_{m\leqslant C}\Lambda(m)(D_{n,m}(\alpha)-1)^+
\\
&\quad+\sum_{C<p^s\leqslant \log^2 n }(D_{n,p^s}(\alpha)-1)^+\log p
\\
&\quad+\sum_{m>\log^2n} \Lambda(m)(D_{n,m}(\alpha)-1)^+
=V_1+V_2+V_3.
\end{split}
\end{equation*}
Applying lemma \ref{meanalph} we obtain
$$
{\bf M_n}V_3 \ll 1.
$$
Applying the recently proved lemma we have
\begin{multline*}
{\bf D_n}V_1\leqslant \log^2C{\bf D_n}\biggl(\sum_{m\leqslant C}(D_{n,m}(\alpha)-1)^+\biggr)
\\
\leqslant C\log^2C
\sum_{m\leqslant C}
{\bf D_n}(D_{n,m}(\alpha)-1)^+
\leqslant C\log^3C\log n.
\end{multline*}
Since $(D_{n,m}(\alpha)-1)^+=I[D_{n,m}(\alpha)=0]+D_{n,m}(\alpha)-1$ then denoting
$V_2^{(1)}=\sum_{C<p^s\leqslant \log^2 n }I[D_{n,p^s}=0]\log p$ and
$V_2^{(2)}=\sum_{C<p^s\leqslant \log^2 n }D_{n,p^s}\log p$ we have
\begin{equation*}
\begin{split}
{\bf D_n}V_2^{(1)}&=\sum_{C<p^sq^l\leqslant \log^2 n }
{\bf cov}(I[D_{n,p^s}=0],I[D_{n,q^l}=0])\log p \log q
\\
&\ll\sum_{\scriptstyle C<p^sq^l\leqslant \log^2 n \atop \scriptstyle p\not=q }
\frac{\log n}{p^sq^l}\log p \log q+
\sum_{\scriptstyle C<p^s\leqslant \log^2 n }\log^2 p{\bf D_n}I[D_{n,q^l}=0]
\\
&\quad+\sum_{\scriptstyle C<p\leqslant \log^2 n }
\sum_{s>l\geqslant 1}({\bf D_n}I[D_{n,p^s}=0])^{1/2}({\bf D_n}I[D_{n,p^l}=0])^{1/2}\log^2 p
\\
&\ll \log n (\log \log n)^2+
\log n\sum_{\scriptstyle C<p\leqslant \log^2 n }
\sum_{s>l\geqslant 1}\frac{\log^2 p}{p^{\frac{s+l}{2}}}
\ll\log n (\log \log n)^2.
\end{split}
\end{equation*}
Applying the same calculations we have
$$
{\bf D_n}V_2^{(2)}\ll\log n (\log \log n)^2.
$$
Therefore
$$
{\bf D_n}V_2\leqslant 2{\bf D_n}V_2^{(1)}+2{\bf D}V_2^{(2)}\ll\log n (\log \log n)^2.
$$
Applying the Chebyshev inequality we have
$$
{\nu_n}\left( {\frac{|V_1-{\bf M_n}V_1|}{\log^{3/2} n}}>{\frac{1}{3}}K
\left( {\frac{\log \log n}{\log n}}\right) ^{2/3} \right)
\ll {\frac{1}{ {(\log \log n)}^{4/3} \log ^{2/3}n } },
$$

$$
{\nu_n}\left( {\frac{|V_2-{\bf M_n}V_2|}{\log^{3/2} n}}>{\frac{1}{3}}K
\left( {\frac{\log \log n}{\log n}}\right) ^{2/3} \right)
\ll \left( {\frac{\log \log n}{\log n}}\right) ^{2/3},
$$

$$
{\nu_n}\left( {\frac{|V_3-{\bf M_n}V_3|}{\log^{3/2} n}}>{\frac{1}{3}}K
\left( {\frac{\log \log n}{\log n}}\right) ^{2/3} \right)
\ll {\frac{1}{ {(\log \log n)}^{2/3} \log ^{5/6}n} }.
$$
Hence follows the proof of the proposition.
\end{proof}




%
\begin{proof}[Proof of theorem \ref{clt}]
Putting in lemma \ref{cmp_x_and_u}
$
U=\frac{\log  P_n(\sigma)-{\bf M_n}\log P_n(\sigma)}{\sqrt{\frac{\phi(k)}{3k}}\log^{3/2}n   }
$
and
$
X=\frac{\log  P_n(\sigma)-\log O_n(\sigma)-\mu_n}{\sqrt{\frac{\phi(k)}{3k}}\log^{3/2}n   }
$
and applying the proposition 1, we obtain the proof of the theorem.
\end{proof}
\chapter*{Conclusions}
The asymptotic expansions proved in Chapters 2 and 3
show that the best possible estimate for the convergence rate of the
distribution function  of the logarithm of the order of a random
permutation on $S_n$ and
 $S_n^{(k)}$ is $\frac{1}{\sqrt{\log n}}$.

The similar conclusion is true for the distribution of the logarithm of the degree of the splitting field of a random polynomial.
\newpage
\thispagestyle{plain}

\tableofcontents

\end{document}